\newcommand{\A}{{\mathcal A}}
\newcommand{\As}[1][]{A_s #1}
\newcommand{\abs}[1]{\left|#1\right|}
\newcommand{\Bs}[1][]{B_s #1}
\newcommand{\bdry}[1]{\partial #1}
\newcommand{\bgdnorm}[2][]{\big\|#2\big\|_{#1}^\ast}
\newcommand{\bgset}[1]{\big\{#1\big\}}
\newcommand{\D}{{\mathcal D}}
\newcommand{\dint}{\ds{\int}}
\newcommand{\dist}[2]{\text{dist}\, (#1,#2)}
\newcommand{\dnorm}[2][]{\left\|#2\right\|_{#1}^\ast}
\newcommand{\ds}[1]{\displaystyle #1}
\newcommand{\eps}{\varepsilon}
\newcommand{\F}{{\mathcal F}}
\newcommand{\hquad}{\hspace{0.08in}}
\newcommand{\id}{id}
\newcommand{\incl}{\hookrightarrow}
\newcommand{\ins}[1]{\hquad \text{#1} \hquad}
\newcommand{\isom}{\approx}
\newcommand{\loc}{\text{loc}}
\newcommand{\M}{{\mathcal M}}
\newcommand{\N}{\mathbb N}
\newcommand{\norm}[2][]{\left\|#2\right\|_{#1}}
\renewcommand{\O}{\text{O}}
\renewcommand{\o}{\text{o}}
\newcommand{\PS}[1]{$(\text{PS})_{#1}$}
\newcommand{\pnorm}[2][]{\if #1'' \left|#2\right|_p \else \left|#2\right|_{#1} \fi}
\newcommand{\QED}{\mbox{\qedhere}}
\newcommand{\R}{\mathbb R}
\newcommand{\RP}{\R \text{P}}
\newcommand{\restr}[2]{\left.#1\right|_{#2}}
\newcommand{\seq}[1]{\left(#1\right)}
\newcommand{\set}[1]{\left\{#1\right\}}
\newcommand{\wto}{\rightharpoonup}
\newcommand{\Z}{\mathbb Z}
\DeclareMathOperator{\im}{im}
\newenvironment{enumroman}{\begin{enumerate}

}{\end{enumerate}}
\newtheorem{corollary}{Corollary}[section]
\newtheorem{lemma}[corollary]{Lemma}
\newtheorem{proposition}[corollary]{Proposition}
\newtheorem{theorem}[corollary]{Theorem}
\theoremstyle{definition}
\newtheorem{definition}[corollary]{Definition}
\theoremstyle{remark}
\newtheorem{example}[corollary]{Example}
\newtheorem{remark}[corollary]{Remark}
\numberwithin{equation}{section}
\title{\bf Variational methods for scaled functionals with applications to the Schr\"{o}dinger--Poisson--Slater equation\thanks{{\em MSC2010:} Primary 58E05, Secondary 47J30, 35Q55, 35B06, 35J20
\newline \indent\; {\em Key Words and Phrases:} scaled equations, scaled eigenvalue problems, critical groups, nonlinear linking geometries, local linking based on scaling, Schr\"{o}dinger--Poisson--Slater equation}}
\author{\bf Carlo Mercuri\\
Dipartimento di Scienze Fisiche, Informatiche e Matematiche\\
Universit\`{a} di Modena e Reggio Emilia\\
Via Campi 213/b, 41125 Modena, Italy\\
\em carlo.mercuri@unimore.it\\
[\medskipamount]
\bf Kanishka Perera\\
Department of Mathematics\\
Florida Institute of Technology\\
150 W University Blvd, Melbourne, FL 32901-6975, USA\\
\em kperera@fit.edu}
\date{}
\begin{document}

\maketitle

\begin{abstract}
We develop novel variational methods for solving scaled equations that do not have the mountain pass geometry, classical linking geometry based on linear subspaces, or $\Z_2$ symmetry, and therefore cannot be solved using classical variational arguments. Our contributions here include new critical group estimates for scaled functionals, nonlinear saddle point and linking geometries based on scaling, a notion of local linking based on scaling, and scaling-based multiplicity results for symmetric functionals. We develop these methods in an abstract setting involving scaled operators and scaled eigenvalue problems. Applications to subcritical and critical Schr\"{o}dinger--Poisson--Slater equations are given.
\end{abstract}

\newpage

{\small \tableofcontents}

\section{Introduction}

In this paper we provide new variational insights on a broad class of possibly nonlinear and nonlocal partial differential equations, such as, in the spirit of Berestycki and Lions \cite{MR695535,MR695536}, the zero-mass equation
\begin{equation} \label{152}
- \Delta u + \left(\frac{1}{4 \pi |x|} \star u^2\right) u = f(|x|,u) \quad \text{in } \R^3,
\end{equation}
by taking advantage of their scaling properties. To this aim we introduce new minimax principles that are applicable in instances where the mountain pass geometry, linking geometry based on linear subspaces, or $\Z_2$ symmetry may not be present, making most of the variational approaches based on variants of the classical work of Ambrosetti and Rabinowitz \cite{MR0370183} not directly applicable.

We begin by introducing an abstract notion of a scaling in a Banach space, identifying a set of axioms that allow us to develop novel variational methods and prove new abstract critical point theorems. Our abstract results include new critical group estimates for scaled functionals, nonlinear saddle point and linking geometries based on scaling, a notion of local linking based on scaling, and scaling-based multiplicity results for symmetric functionals. Our axiomatic approach to scaled operators also leads to a new notion of scaled nonlinear eigenvalue problems. We construct minimax eigenvalues of these eigenvalue problems using the $\Z_2$-cohomological index of Fadell and Rabinowitz \cite{MR0478189}. The choice of using this index instead of the classical genus is for the purposes of obtaining critical group estimates and constructing scaling-based linking sets.

When applied to equation \eqref{152}, our axiomatic approach yields a natural classification of different nonlinear regimes in terms of scaling properties of the equation. We give a range of applications that include existence and multiplicity results in different regimes, obtained with a unified approach to nonlinearities which may be either Sobolev-subcritical or critical (we also consider some supercritical nonlinearities). Other applications will be given in forthcoming contributions, some of which are mentioned at the end of this introduction, in a section on open problems.

\subsection{Motivations}

The nonlinear and nonlocal Schr\"odinger equation
\begin{equation} \label{NLSt}
i\, \partial_t \psi = - \Delta \psi + V_{\mathrm{ext}}(x)\, \psi + \left(\frac{1}{4 \pi |x|} \star |\psi|^2\right) \psi - |\psi|^{q-2}\, \psi, \quad (x,t) \in \R^3 \times \R,
\end{equation}
as well as its stationary version
\begin{equation} \label{spsx}
- \Delta u + (V_{\mathrm{ext}}(x) + \omega)\, u + \left(\frac{1}{4 \pi |x|} \star |u|^2\right) u = |u|^{q-2}\, u, \quad x \in \R^3,
\end{equation}
obtained from \eqref{NLSt} via the ansatz $\psi(x,t) = e^{i \omega t}\, u(x)$, are approximations of the Hartree--Fock model for a quantum many-body system of electrons under the effect of the external potential $V_{\mathrm{ext}}$ (see, e.g., Le Bris and Lions \cite{MR2149087} and references therein) and arise, as shown more recently, in models for semiconductors (see, e.g., Sanchez and Soler \cite{MR2032129}).

According to different contexts, equations \eqref{NLSt} and \eqref{spsx} are known under different names, such as {\em Schr\"odinger--Poisson's} (see, e.g., Ambrosetti \cite{MR2465993}), {\em Schr\"odinger--Poisson--Slater's} (see, e.g., Bokanowski et al.\! \cite{MR2013491}), {\em Schr\"odin\-ger--Poisson--$X_\alpha$'s} (see, e.g., Mauser \cite{MR1836081}), or {\em Maxwell--Schr\"o\-din\-ger--Poisson's} (see, e.g., Benci and Fortunato \cite{MR1659454} and Catto et al.\! \cite{MR3078678}). The function $|u|^2 : \R^3 \to \R$ in equation \eqref{spsx} represents the density of electrons in the original many-body system, whereas the convolution term represents the Coulombic repulsion between the electrons. The local term $|u|^{q-2}\, u$ was introduced by Slater \cite{Slater} with $q = 8/3$ as a local approximation of the exchange potential in the Hartree--Fock model (see, e.g., Bokanowski et al.\! \cite{MR2013491}, Bokanowski and Mauser \cite{MR1702877}, and Mauser \cite{MR1836081}). When $V_{\mathrm{ext}} \equiv 0$, as remarked by Ruiz \cite[p.351]{MR2679375}, equation \eqref{152} with $f(|x|,u) = |u|^{q-2}\, u$ can be regarded as a singular limit of
\begin{equation} \label{spsx2}
- \Delta v + v + \lambda \left(\frac{1}{4 \pi |x|} \star |v|^2\right) v = |v|^{q-2}\, v, \quad x \in \R^3,
\end{equation}
where $\lambda > 0$ and $q \in (2,3)$, noting that with the rescaling $u(x) = \omega^{1/(q-2)}\, v(\sqrt{\omega}\, x)$, with $\omega = \lambda^{(q-2)/2\,(3-q)}$, \eqref{spsx2} reduces to \eqref{spsx}.

The equation \eqref{spsx} is also related to the Thomas--Fermi--Dirac--von\;Weizs\"acker (TFDW) model of Density Functional Theory (DFT) (see, e.g., Lieb \cite{ MR629207,MR641371}, Le Bris and Lions \cite{MR2149087}, Lu and Otto \cite{MR3251907}, Frank et al.\! \cite{MR3762278}, and references therein). In the context of the TFDW model, the local nonlinearity takes the form $f(|x|,u) = u^{5/3} - u^{7/3}$, for which our Corollary \ref{Corollary 4} provides the existence of infinitely many solutions at negative energy levels. Finally, we stress that the mathematical choice of considering in equation \eqref {152} a more general local nonlinearity $f(|x|,u)$ is partly motivated by physical considerations again arising from DFT and quantum chemistry models. These, such as Kohn-Sham's, involve general local nonlinearities of the type we consider, which correspond to homogeneous electron gas approximations of the so-called exchange-correlation potential, which is not explicitly known (see, e.g., Anantharaman and Canc\`es \cite{MR2569902} and references therein).

\subsection{Overview of the theory}

\begin{definition}
Let $W$ be a reflexive Banach space. A scaling on $W$ is a continuous mapping $W \times [0,\infty) \to W,\, (u,t) \mapsto u_t$ satisfying
\begin{enumerate}
\item[$(H_1)$] $(u_{t_1})_{t_2} = u_{t_1 t_2}$ for all $u \in W$ and $t_1, t_2 \ge 0$,
\item[$(H_2)$] $(\tau u)_t = \tau u_t$ for all $u \in W$, $\tau \in \R$, and $t \ge 0$,
\item[$(H_3)$] $u_0 = 0$ and $u_1 = u$ for all $u \in W$,
\item[$(H_4)$] $u_t$ is bounded on bounded sets in $W \times [0,\infty)$,
\item[$(H_5)$] $\exists s > 0$ such that $\norm{u_t} = \O(t^s)$ as $t \to \infty$, uniformly in $u$ on bounded sets.
\end{enumerate}
\end{definition}

Denote by $W^\ast$ the dual of $W$. Recall that $q \in C(W,W^\ast)$ is a potential operator if there is a functional $Q \in C^1(W,\R)$, called a potential for $q$, such that $Q' = q$. By replacing $Q$ with $Q - Q(0)$ if necessary, we may assume that $Q(0) = 0$.

\begin{definition}
A scaled operator is an odd potential operator $\As \in C(W,W^\ast)$ that maps bounded sets into bounded sets and satisfies
\[
\As(u_t) v_t = t^s \As(u) v \quad \forall u, v \in W,\, t \ge 0.
\]
\end{definition}

If $\As$ is a scaled operator, then its potential $I_s$ with $I_s(0) = 0$ is even, bounded on bounded sets, and has the scaling property
\begin{equation} \label{112}
I_s(u_t) = t^s I_s(u) \quad \forall u \in W,\, t \ge 0
\end{equation}
(see Proposition \ref{Proposition 1}).

Let $\As$ be a scaled operator satisfying
\begin{enumerate}
\item[$(H_6)$] $\As(u) u > 0$ for all $u \in W \setminus \set{0}$,
\item[$(H_7)$] every sequence $\seq{u_j}$ in $W$ such that $u_j \wto u$ and $\As(u_j)(u_j - u) \to 0$ has a subsequence that converges strongly to $u$.
\end{enumerate}
We consider the question of existence and multiplicity of solutions to nonlinear operator equations of the form
\begin{equation} \label{117}
\As(u) = f(u) \quad \text{in } W^\ast,
\end{equation}
where $f \in C(W,W^\ast)$ is a potential operator. Solutions of this equation coincide with critical points of the $C^1$-functional
\[
\Phi(u) = I_s(u) - F(u), \quad u \in W,
\]
where $F$ is the potential of $f$ with $F(0) = 0$. We will develop novel variational methods based on the scaling for studying critical points of functionals of this type.

For example, consider the Schr\"{o}dinger--Poisson--Slater type equation
\[
- \Delta u + \left(\frac{1}{4 \pi |x|} \star u^2\right) u = f(|x|,u) \quad \text{in } \R^3,
\]
where $f$ is a Carath\'{e}odory function on $[0,\infty) \times \R$ satisfying the growth condition
\[
|f(|x|,t)| \le a_1\, |t|^{q_1 - 1} + a_2\, |t|^{q_2 - 1} + a(x) \quad \text{for a.a.\! } x \in \R^3 \text{ and all } t \in \R
\]
for some constants $a_1, a_2 > 0$, $18/7 < q_1 < q_2 \le 6$, and $a \in L^r(\R^3)$ with $r \in [6/5,18/11)$. As pointed out in Lions \cite{MR636734} (see also Ruiz \cite{MR2679375}), the right space to look for solutions of this equation is
\[
E(\R^3) = \set{u \in D^{1,2}(\R^3) : \int_{\R^3} \int_{\R^3} \frac{u^2(x)\, u^2(y)}{|x - y|}\, dx\, dy < \infty}
\]
endowed with the norm
\[
\norm{u} = \left[\int_{\R^3} |\nabla u|^2\, dx + \left(\int_{\R^3} \int_{\R^3} \frac{u^2(x)\, u^2(y)}{|x - y|}\, dx\, dy\right)^{1/2}\right]^{1/2}.
\]
We work in the subspace $E_r(\R^3)$ of radial functions in $E(\R^3)$. Equation \eqref{152} can be written as $\As(u) = \widetilde{f}(u)$ in the dual of $E_r(\R^3)$, where the operators $\As$ and $\widetilde{f}$ are given by
\[
\As(u) v = \int_{\R^3} \nabla u \cdot \nabla v\, dx + \frac{1}{4 \pi} \int_{\R^3} \int_{\R^3} \frac{u^2(x)\, u(y)\, v(y)}{|x - y|}\, dx\, dy, \qquad \widetilde{f}(u) v = \int_{\R^3} f(|x|,u)\, v\, dx
\]
for $u, v \in E_r(\R^3)$. The space $E_r(\R^3)$ has the scaling
\[
u_t(x) = t^2\, u(tx), \quad x \in \R^3
\]
with $s = 3$, and $\As$ is a scaled operator with respect to this scaling. Here the potentials of $\As$ and $\widetilde{f}$ are given by
\[
I_s(u) = \frac{1}{2} \int_{\R^3} |\nabla u|^2\, dx + \frac{1}{16 \pi} \int_{\R^3} \int_{\R^3} \frac{u^2(x)\, u^2(y)}{|x - y|}\, dx\, dy, \qquad \widetilde{F}(u) = \int_{\R^3} F(|x|,u)\, dx
\]
for $u \in E_r(\R^3)$, respectively, where $F(|x|,t) = \int_0^t f(|x|,\tau)\, d\tau$ is the primitive of $f$. So the corresponding variational functional is
\[
\Phi(u) = \frac{1}{2} \int_{\R^3} |\nabla u|^2\, dx + \frac{1}{16 \pi} \int_{\R^3} \int_{\R^3} \frac{u^2(x)\, u^2(y)}{|x - y|}\, dx\, dy - \int_{\R^3} F(|x|,u)\, dx, \quad u \in E_r(\R^3).
\]

\subsubsection{Scaled eigenvalue problems}

First we will consider the scaled eigenvalue problem
\begin{equation} \label{114}
\As(u) = \lambda \Bs(u) \quad \text{in } W^\ast,
\end{equation}
where $\Bs$ is a scaled operator satisfying
\begin{enumerate}
\item[$(H_8)$] $\Bs(u) u > 0$ for all $u \in W \setminus \set{0}$,
\item[$(H_9)$] if $u_j \wto u$ in $W$, then $\Bs(u_j) \to \Bs(u)$ in $W^\ast$,
\end{enumerate}
and $\lambda \in \R$. We say that $\lambda$ is an eigenvalue if there is a $u \in W \setminus \set{0}$, called an eigenfunction associated with $\lambda$, satisfying equation \eqref{114}. Then $u_t$ is also an eigenfunction associated with $\lambda$ for any $t > 0$ since
\[
\As(u_t) v = \As(u_t) (v_{1/t})_t = t^s \As(u) v_{1/t} = t^s \lambda \Bs(u) v_{1/t} = \lambda \Bs(u_t) (v_{1/t})_t = \lambda \Bs(u_t) v
\]
for all $v \in W$. We will call the set $\sigma(\As,\Bs)$ of all eigenvalues the spectrum of the pair of scaled operators $(\As,\Bs)$. We note that $\sigma(\As,\Bs) \subset (0,\infty)$ by $(H_6)$ and $(H_8)$.

The potential $J_s$ of $\Bs$ with $J_s(0) = 0$ is even, bounded on bounded sets, and has the scaling property
\begin{equation} \label{116}
J_s(u_t) = t^s J_s(u) \quad \forall u \in W,\, t \ge 0
\end{equation}
(see Proposition \ref{Proposition 1}). We assume that $I_s$ and $J_s$ satisfy
\begin{enumerate}
\item[$(H_{10})$] $I_s$ is coercive, i.e., $I_s(u) \to \infty$ as $\norm{u} \to \infty$,
\item[$(H_{11})$] the equation $I_s(tu) = 1$ has a unique positive solution $t$ for each $u \in W \setminus \set{0}$,
\item[$(H_{12})$] every solution of equation \eqref{114} satisfies
    \[
    I_s(u) = \lambda\, J_s(u).
    \]
\end{enumerate}
Then the eigenvalue problem \eqref{114} has the following variational formulation. Let
\[
\Psi(u) = \frac{1}{J_s(u)}, \quad u \in W \setminus \set{0},
\]
let
\[
\M_s = \bgset{u \in W : I_s(u) = 1},
\]
and let $\widetilde{\Psi} = \restr{\Psi}{\M_s}$. Then $\M_s$ is a complete, symmetric, and bounded $C^1$-Finsler manifold, and eigenvalues of problem \eqref{114} coincide with critical values of $\widetilde{\Psi}$ (see Proposition \ref{Proposition 12}).

Using this variational characterization of the eigenvalues of problem \eqref{114}, we will construct an unbounded sequence of minimax eigenvalues. Although this can be done in a standard way using the Krasnoselskii's genus, we prefer to use the $\Z_2$-cohomological index of Fadell and Rabinowitz \cite{MR0478189} in order to obtain nontrivial critical groups and construct linking sets related to the eigenvalues (see Definition \ref{Definition 1}). Let $\F$ denote the class of symmetric subsets of $\M_s$, and let $i(M)$ be the cohomological index of $M \in \F$. For $k \ge 1$, let
\[
\F_k = \bgset{M \in \F : i(M) \ge k}
\]
and set
\[
\lambda_k := \inf_{M \in \F_k}\, \sup_{u \in M}\, \widetilde{\Psi}(u).
\]
We will prove the following theorem.

\begin{theorem} \label{Theorem 21}
Assume $(H_1)$--$(H_{12})$. Then $\lambda_k \nearrow \infty$ is a sequence of eigenvalues of \eqref{114}.
\begin{enumroman}
\item The first eigenvalue is given by
    \[
    \lambda_1 = \min_{u \in \M_s}\, \widetilde{\Psi}(u) > 0.
    \]
\item If $\lambda_k = \dotsb = \lambda_{k+m-1} = \lambda$, then $i(E_\lambda) \ge m$, where $E_\lambda$ is the set of eigenfunctions associated with $\lambda$ that lie on $\M_s$.
\item \label{Theorem 21.iii} If $\lambda_k < \lambda < \lambda_{k+1}$, then
    \[
    i(\widetilde{\Psi}^{\lambda_k}) = i(\M_s \setminus \widetilde{\Psi}_\lambda) = i(\widetilde{\Psi}^\lambda) = i(\M_s \setminus \widetilde{\Psi}_{\lambda_{k+1}}) = k,
    \]
    where $\widetilde{\Psi}^a = \bgset{u \in \M_s : \widetilde{\Psi}(u) \le a}$ and $\widetilde{\Psi}_a = \bgset{u \in \M_s : \widetilde{\Psi}(u) \ge a}$ for $a \in \R$.
\end{enumroman}
\end{theorem}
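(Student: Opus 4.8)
The plan is to derive the statement from the standard minimax theory for even functionals on symmetric $C^1$-Finsler manifolds based on the Fadell--Rabinowitz $\Z_2$-cohomological index $i$. By Proposition \ref{Proposition 12}, $\M_s$ is a complete, symmetric, bounded $C^1$-Finsler manifold on which the antipodal action $u \mapsto -u$ is free (since $0 \notin \M_s$), $\widetilde{\Psi}$ is even (as $J_s$ is even), and the eigenvalues of \eqref{114} coincide with the critical values of $\widetilde{\Psi}$; moreover, at any critical point $u$ of $\widetilde{\Psi}$ one has $\lambda = I_s(u)/J_s(u) = 1/J_s(u) = \widetilde{\Psi}(u)$ by $(H_{12})$ and $I_s(u) = 1$, and the critical set of $\widetilde{\Psi}$ at a level $\lambda$ is precisely $E_\lambda$. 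So everything reduces to the study of the critical values of the even functional $\widetilde{\Psi}$, and I would first establish three preliminary facts: (a) $\widetilde{\Psi}$ is bounded below by a positive constant; (b) $\widetilde{\Psi}$ satisfies the Palais--Smale condition on $\M_s$; (c) for every $k \ge 1$ the family $\F_k$ is nonempty and in fact contains a compact set.

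For (a), integrating $\Bs$ along the ray $\theta \mapsto \theta u$ and using $(H_8)$ gives $J_s(u) > 0$ for every $u \in W \setminus \set{0}$, while $J_s$ is bounded on the bounded set $\M_s$; hence $\widetilde{\Psi} = 1/J_s \ge 1/\sup_{\M_s} J_s > 0$, and in particular $\lambda_1 = \inf_{\M_s} \widetilde{\Psi} > 0$. Since $i(M) \ge 1$ for a symmetric $M \subseteq \M_s$ exactly when $M \ne \emptyset$, we get $\F_1 = \set{M \in \F : M \ne \emptyset}$ and thus $\lambda_1 = \inf_{u \in \M_s} \widetilde{\Psi}(u)$, which is attained because $\lambda_1$ is a critical value once (b) is known; this is (i). For (c), by $(H_{11})$ the map $u \mapsto t(u)\, u$, where $t(u) > 0$ is the unique root of $I_s(tu) = 1$, restricts to an odd homeomorphism of the unit sphere of any $k$-dimensional subspace of $W$ onto a compact symmetric subset of $\M_s$, which by the homotopy invariance and normalization of $i$ has index $k$; hence $\F_k \ne \emptyset$, and, being compact, this set forces $\lambda_k < \infty$. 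Monotonicity ($\F_{k+1} \subseteq \F_k$) gives $\lambda_k \le \lambda_{k+1}$.

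The core of the proof, and the step I expect to be the main obstacle, is (b). Let $\seq{u_j} \subseteq \M_s$ with $\widetilde{\Psi}(u_j) \to c$ and $\norm{d\widetilde{\Psi}(u_j)} \to 0$ in the sense of the Finsler structure; by (a), $c > 0$, and since $\M_s$ is bounded, $\seq{u_j}$ is bounded, so $u_j \wto u$ along a subsequence. First, $(H_9)$ together with dominated convergence applied to $J_s(u_j) - J_s(u) = \int_0^1 \Bs(u + \theta(u_j - u))(u_j - u)\, d\theta$ shows that $J_s$ is weakly sequentially continuous, so $J_s(u_j) \to J_s(u) = 1/c > 0$; in particular $u \ne 0$, and $\Bs(u_j) \to \Bs(u)$ in $W^\ast$. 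Because $I_s'(u) = \As(u) \ne 0$ on $\M_s$ by $(H_6)$, the Lagrange multiplier rule yields $\mu_j \in \R$ with $-J_s(u_j)^{-2} \Bs(u_j) - \mu_j \As(u_j) \to 0$ in $W^\ast$. If $\abs{\mu_j}$ were unbounded, dividing by $\mu_j$ would force $\As(u_j) \to 0$ in $W^\ast$, hence $\As(u_j)(u_j - u) \to 0$ and $u_j \to u$ by $(H_7)$, so $\As(u) = 0$ by continuity and $u = 0$ by $(H_6)$ --- a contradiction; thus $\seq{\mu_j}$ is bounded. Evaluating the above relation at $u_j$, and using $\Bs(u_j) \to \Bs(u)$ with $u_j \wto u$ and $(H_8)$, gives $\mu_j\, \As(u_j) u_j \to -c^2\, \Bs(u) u < 0$, so $\mu_j \not\to 0$; passing to a further subsequence, $\mu_j \to \mu \ne 0$. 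Then $\As(u_j) = -\mu_j^{-1} J_s(u_j)^{-2} \Bs(u_j) + \o(1) \to -\mu^{-1} c^2\, \Bs(u)$ strongly in $W^\ast$, whence $\As(u_j)(u_j - u) \to 0$ and $u_j \to u$ strongly by $(H_7)$. This proves the Palais--Smale condition.

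With (a)--(c) in place, the remaining assertions follow from the standard minimax machinery for even functionals via the cohomological index --- the Fadell--Rabinowitz deformation lemma on the Finsler manifold $\M_s$ together with the monotonicity, continuity, and subadditivity of $i$. Each $\lambda_k$ is then a critical value of $\widetilde{\Psi}$, hence an eigenvalue of \eqref{114}; the Palais--Smale condition precludes $\sup_k \lambda_k$ from being finite, since otherwise a fixed sublevel set of $\widetilde{\Psi}$ would have infinite cohomological index, contradicting the finiteness of the index of sublevel sets at finite levels, so $\lambda_k \to \infty$. For (ii), if $i(E_\lambda) \le m - 1$ one picks a symmetric neighborhood $N \supseteq E_\lambda$ with $i(N) \le m - 1$ and deforms $\widetilde{\Psi}^{\lambda + \eps}$ into $\widetilde{\Psi}^{\lambda - \eps}$ off $N$, obtaining $k + m - 1 \le i(\widetilde{\Psi}^{\lambda + \eps}) \le i(\widetilde{\Psi}^{\lambda - \eps}) + i(N) \le (k - 1) + (m - 1)$, a contradiction. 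For (iii), for any $a$ with $\lambda_k < a < \lambda_{k+1}$ one has $i(\widetilde{\Psi}^a) \le k$ (otherwise $\widetilde{\Psi}^a \in \F_{k+1}$ and $\lambda_{k+1} \le a$) and $i(\widetilde{\Psi}^a) \ge k$ (as $\widetilde{\Psi}^a$ contains some $M \in \F_k$), so $i(\widetilde{\Psi}^a) = k$; the equalities for $\widetilde{\Psi}^{\lambda_k}$, $\M_s \setminus \widetilde{\Psi}_\lambda$, and $\M_s \setminus \widetilde{\Psi}_{\lambda_{k+1}}$ then follow from the same minimax inequalities for $\lambda_k$ and $\lambda_{k+1}$ combined with the continuity of $i$ and the deformation lemma.
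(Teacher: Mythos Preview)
Your approach is correct and is exactly the route the paper takes: the paper does not prove Theorem \ref{Theorem 21} directly but, after establishing the variational characterization (Proposition \ref{Proposition 12}) and the Palais--Smale condition (Proposition \ref{Proposition 5}), simply cites \cite[Propositions 3.52 and 3.53]{MR2640827} for the index--based minimax conclusions. Your sketch fills in precisely those preliminary steps and then defers to the same standard machinery; your verification of (PS) is essentially the paper's Proposition \ref{Proposition 5}, with only cosmetic differences (you bound $\mu_j$ first by contradiction and then bound it away from zero by testing against $u_j$, whereas the paper tests against $u$ directly).

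One small point to tighten: your one-line argument for $\lambda_k \to \infty$ (``otherwise a fixed sublevel set would have infinite index, contradicting the finiteness of the index of sublevel sets'') is not valid as stated, since in infinite dimensions sublevel sets need not have finite cohomological index. The correct argument, which is the one in the reference you are invoking, uses (PS) to get compactness of the critical set at the putative finite limit $\bar\lambda$, takes a symmetric neighborhood of that set with the same (finite) index, and combines the equivariant deformation lemma with subadditivity of $i$ to reach a contradiction. Since you already appeal to the ``standard minimax machinery'' this is not a gap in the overall plan, but the heuristic you wrote should be replaced by that argument.
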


The variational functional associated with problem \eqref{114} is
\[
\Phi_\lambda(u) = I_s(u) - \lambda\, J_s(u), \quad u \in W.
\]
By \eqref{112} and \eqref{116}, $\Phi_\lambda$ has the scaling property
\[
\Phi_\lambda(u_t) = t^s\, \Phi_\lambda(u) \quad \forall u \in W,\, t \ge 0.
\]
When $\lambda \notin \sigma(\As,\Bs)$, the origin is the only critical point of $\Phi_\lambda$ and its critical groups there are given by
\[
C^l(\Phi_\lambda,0) = H^l(\Phi_\lambda^0,\Phi_\lambda^0 \setminus \set{0}), \quad l \ge 0,
\]
where $\Phi_\lambda^0 = \bgset{u \in W : \Phi_\lambda(u) \le 0}$ and $H^\ast$ denotes cohomology with $\Z_2$ coefficients. We will prove the following theorem, where $\widetilde{H}^\ast$ denotes reduced cohomology.

\begin{theorem}
Assume $(H_1)$--$(H_{12})$ and let $\lambda \in \R \setminus \sigma(\As,\Bs)$.
\begin{enumroman}
\item If $\lambda < \lambda_1$, then $C^l(\Phi_\lambda,0) \isom \delta_{l0}\, \Z_2$, where $\delta$ is the Kronecker delta.
\item If $\lambda > \lambda_1$, then $C^l(\Phi_\lambda,0) \isom \widetilde{H}^{l-1}(\widetilde{\Psi}^\lambda)$, in particular, $C^0(\Phi_\lambda,0) = 0$.
\item If $\lambda_k < \lambda < \lambda_{k+1}$, then $C^k(\Phi_\lambda,0) \ne 0$.
\end{enumroman}
In particular, $C^l(\Phi_\lambda,0)$ is nontrivial for some $l$.
\end{theorem}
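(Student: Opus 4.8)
\medskip
\noindent\emph{Proof plan.} The plan is to compute $C^\ast(\Phi_\lambda,0)$ by reducing it, via the scaling, to the topology of the sublevel set $\widetilde{\Psi}^\lambda$ of $\widetilde{\Psi}$ on $\M_s$. Since $\lambda\notin\sigma(\As,\Bs)$, a nonzero critical point of $\Phi_\lambda$ would be an eigenfunction for $\lambda$, which is impossible; hence $0$ is the only critical point and, as noted above, $C^l(\Phi_\lambda,0)=H^l(\Phi_\lambda^0,\Phi_\lambda^0\setminus\set{0})$. I first record that $I_s>0$ and $J_s>0$ on $W\setminus\set{0}$: by $(H_6)$ the map $\tau\mapsto I_s(\tau u)$ has derivative $\As(\tau u)u=\tau^{-1}\As(\tau u)(\tau u)>0$ on $(0,\infty)$ and vanishes at $\tau=0$, so $I_s(u)>0$ for $u\ne 0$, and likewise for $J_s$ via $(H_8)$ (see Proposition~\ref{Proposition 1}). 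Consequently $t(u):=I_s(u)^{-1/s}$ is well defined, positive and continuous on $W\setminus\set{0}$, and by $(H_1)$--$(H_3)$ and \eqref{112} the map $\pi(u):=u_{t(u)}$ is a continuous retraction of $W\setminus\set{0}$ onto $\M_s$.

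Using \eqref{112} and \eqref{116} (so that $\Phi_\lambda(u_t)=t^s\Phi_\lambda(u)$) together with $u_0=0$, the homotopy $(u,r)\mapsto u_{1-r}$ stays in $\Phi_\lambda^0$ and contracts it to $0$, so $\Phi_\lambda^0$ is contractible and $\widetilde{H}^\ast(\Phi_\lambda^0)=0$. Next, for $u\ne 0$ one checks $\Phi_\lambda(u)\le 0\iff I_s(u)\le\lambda J_s(u)\iff 1\le\lambda J_s(\pi(u))\iff\widetilde{\Psi}(\pi(u))\le\lambda$, using $I_s(\pi(u))=1$, $J_s(\pi(u))=J_s(u)/I_s(u)$ and the positivity above (for $\lambda\le 0$ both sides are empty, as $\widetilde{\Psi}>0$ on $\M_s$). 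Hence $\Phi_\lambda^0\setminus\set{0}=\pi^{-1}(\widetilde{\Psi}^\lambda)$, and the homotopy $(u,r)\mapsto u_{t(u)^r}$ is a strong deformation retraction of $\Phi_\lambda^0\setminus\set{0}$ onto $\widetilde{\Psi}^\lambda$, since $\Phi_\lambda(u_{t(u)^r})=t(u)^{rs}\Phi_\lambda(u)\le 0$, $u_{t(u)^r}\ne 0$ for $u\ne 0$, and it is stationary on $\M_s\supseteq\widetilde{\Psi}^\lambda$. Therefore $\widetilde{H}^\ast(\Phi_\lambda^0\setminus\set{0})\cong\widetilde{H}^\ast(\widetilde{\Psi}^\lambda)$.

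If $\lambda<\lambda_1$ then $\widetilde{\Psi}^\lambda=\emptyset$, because $\lambda_1=\min_{\M_s}\widetilde{\Psi}$ by Theorem~\ref{Theorem 21}, so $\Phi_\lambda^0=\set{0}$ and $C^l(\Phi_\lambda,0)=H^l(\set{0})\cong\delta_{l0}\,\Z_2$, giving (i). If $\lambda>\lambda_1$ then $\widetilde{\Psi}^\lambda\ne\emptyset$, and since $\Phi_\lambda^0$ is contractible the long exact cohomology sequence of the pair $(\Phi_\lambda^0,\Phi_\lambda^0\setminus\set{0})$ yields $C^l(\Phi_\lambda,0)\cong\widetilde{H}^{l-1}(\Phi_\lambda^0\setminus\set{0})\cong\widetilde{H}^{l-1}(\widetilde{\Psi}^\lambda)$ for all $l\ge 0$, with $C^0\cong\widetilde{H}^{-1}(\widetilde{\Psi}^\lambda)=0$; this is (ii). For (iii), if $\lambda_k<\lambda<\lambda_{k+1}$ then $i(\widetilde{\Psi}^\lambda)=k$ by part \ref{Theorem 21.iii} of Theorem~\ref{Theorem 21}, and a finite cohomological index value forces $\widetilde{H}^{k-1}(\widetilde{\Psi}^\lambda;\Z_2)\ne 0$: writing $A=\widetilde{\Psi}^\lambda$, $X=A/\Z_2$, and $\omega_A\in H^1(X;\Z_2)$ for the class of the double cover $A\to X$, $i(A)=k$ means $\omega_A^{k-1}\ne 0$ and $\omega_A^{k}=0$, so if $\widetilde{H}^{k-1}(A)=0$ the Gysin sequence of $A\to X$ would make $\cup\,\omega_A\colon H^{k-1}(X)\to H^k(X)$ injective and hence $\omega_A^k=\omega_A\cup\omega_A^{k-1}\ne 0$, a contradiction (for $k=1$ this reads: $i(A)=1$ forces $A$ disconnected). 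Hence $C^k(\Phi_\lambda,0)\cong\widetilde{H}^{k-1}(\widetilde{\Psi}^\lambda)\ne 0$, proving (iii). The last assertion is immediate: either $\lambda<\lambda_1$ and $C^0\ne 0$, or $\lambda>\lambda_1$ and, since $\lambda_k\nearrow\infty$ and $\lambda\notin\sigma(\As,\Bs)$, there is $k$ with $\lambda_k<\lambda<\lambda_{k+1}$, so $C^k\ne 0$.

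I expect the one genuinely non-mechanical point to be the passage from $i(\widetilde{\Psi}^\lambda)=k$ to $\widetilde{H}^{k-1}(\widetilde{\Psi}^\lambda)\ne 0$: it relies on the Gysin sequence of the orientation double cover and on the Fadell--Rabinowitz definition of the $\Z_2$-cohomological index, and needs $\widetilde{\Psi}^\lambda$ to be topologically well behaved (it is a closed symmetric subset of the $C^1$-Finsler manifold $\M_s$ with $0\notin\widetilde{\Psi}^\lambda$, hence a paracompact free $\Z_2$-space), or else it can be quoted as a standard property of the index. Everything else — contractibility of $\Phi_\lambda^0$, the identification $\Phi_\lambda^0\setminus\set{0}\simeq\widetilde{\Psi}^\lambda$, and the exact-sequence bookkeeping — is routine once \eqref{112}, \eqref{116} and $(H_1)$--$(H_3)$ are in hand.
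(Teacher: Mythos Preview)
Your proof is correct and follows essentially the same strategy as the paper's: contract $\Phi_\lambda^0$ to $\{0\}$ via the scaling, deformation retract $\Phi_\lambda^0\setminus\{0\}$ onto $\widetilde{\Psi}^\lambda\subset\M_s$, identify $\Phi_\lambda^0\cap\M_s=\widetilde{\Psi}^\lambda$, and read off the critical groups from the long exact sequence of the pair. The only cosmetic differences are that the paper uses the linear-in-$\tau$ homotopy $(u,\tau)\mapsto u_{1-\tau+\tau t_u}$ rather than your $(u,r)\mapsto u_{t(u)^r}$, and that it quotes the implication $i(\widetilde{\Psi}^\lambda)=k\Rightarrow\widetilde{H}^{k-1}(\widetilde{\Psi}^\lambda)\ne 0$ from Perera et al.\ \cite[Proposition~2.14\,(iv)]{MR2640827}, whereas you supply the Gysin-sequence argument directly.
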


For example, based on its scaling properties, an appropriate eigenvalue problem to consider for the Schr\"{o}dinger--Poisson--Slater equation is
\begin{equation} \label{115}
- \Delta u + \left(\frac{1}{4 \pi |x|} \star u^2\right) u = \lambda\, |u| u \quad \text{in } \R^3.
\end{equation}
Here
\[
J_s(u) = \frac{1}{3} \int_{\R^3} |u|^3\, dx, \quad u \in E_r(\R^3),
\]
and $(H_{12})$ follows from the Poho\v{z}aev type identity
\[
\frac{1}{2} \int_{\R^3} |\nabla u|^2\, dx + \frac{5}{16 \pi} \int_{\R^3} \int_{\R^3} \frac{u^2(x)\, u^2(y)}{|x - y|}\, dx\, dy = \lambda \int_{\R^3} |u|^3\, dx
\]
for solutions of equation \eqref{115} (see Ianni and Ruiz \cite[Proposition 2.5]{MR2902293}).

\subsubsection{Scaling-based abstract critical point theorems}

Theorem \ref{Theorem 21} \ref{Theorem 21.iii} gives
\[
i(\widetilde{\Psi}^{\lambda_k}) = i(\M_s \setminus \widetilde{\Psi}_{\lambda_{k+1}}) = k
\]
when $\lambda_k < \lambda < \lambda_{k+1}$. We will prove a series of scaling-based abstract critical point theorems that can be applied to nonlinear equations of the type \eqref{117} using this fact.

To state these theorems, let $I \in C(W,\R)$ be an even functional satisfying
\[
I(u_t) = t^s I(u) \quad \forall u \in W,\, t \ge 0
\]
and
\[
I(u) > 0 \quad \forall u \in W \setminus \set{0}.
\]
Let
\[
\M = \bgset{u \in W : I(u) = 1}
\]
and note that $\M$ is a closed and symmetric set since $I$ is continuous and even. We assume that $\M$ is bounded and that each ray starting from the origin intersects $\M$ at exactly one point. First we will prove the following scaled saddle point theorem.

\begin{theorem} \label{Theorem 22}
Let $\Phi$ be a $C^1$-functional on $W$. Let $A_0$ and $B_0$ be disjoint nonempty closed symmetric subsets of $\M$ such that
\[
i(A_0) = i(\M \setminus B_0) < \infty,
\]
let $R > 0$, and let
\begin{gather*}
X = \set{u_t : u \in A_0,\, 0 \le t \le R},\\
A = \set{u_R : u \in A_0},\\
B = \set{u_t : u \in B_0,\, t \ge 0}.
\end{gather*}
Assume that
\[
\sup_{u \in A}\, \Phi(u) \le \inf_{u \in B}\, \Phi(u), \qquad \sup_{u \in X}\, \Phi(u) < \infty.
\]
Let $\Gamma = \bgset{\gamma \in C(X,W) : \gamma(X) \text{ is closed and } \restr{\gamma}{A} = \id}$ and set
\[
c := \inf_{\gamma \in \Gamma}\, \sup_{u \in \gamma(X)}\, \Phi(u).
\]
If $\Phi$ satisfies the {\em \PS{c}} condition, then $c$ is a critical value of $\Phi$.
\end{theorem}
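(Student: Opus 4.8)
The plan is to prove the theorem by a standard deformation argument, the nonstandard part being the verification that the minimax class $\Gamma$ is "linked" to $B$ — i.e. that $\gamma(X) \cap B \ne \emptyset$ for every $\gamma \in \Gamma$ — which is where the scaling hypotheses and the index identity $i(A_0) = i(\M \setminus B_0)$ enter. Granting this intersection property, the argument runs as follows. First I would observe that $c$ is well defined and finite: the identity map $\id_X \in \Gamma$ (its image $X$ is closed by $(H_4)$ and the compactness/closedness of $A_0$), so $c \le \sup_X \Phi < \infty$ by hypothesis; and for the lower bound, the intersection property gives, for each $\gamma \in \Gamma$, a point in $\gamma(X) \cap B$, whence $\sup_{\gamma(X)} \Phi \ge \inf_B \Phi \ge \sup_A \Phi$, so $c \ge \inf_B \Phi > -\infty$ provided $\inf_B \Phi$ is finite (which follows from $\sup_A \Phi \le \inf_B \Phi$ and $\sup_X \Phi < \infty$, since $A \subset X$). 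Then, if $c$ were a regular value of $\Phi$, the \PS{c} condition together with the standard quantitative deformation lemma would produce $\eps > 0$ and a deformation $\eta \in C([0,1] \times W, W)$ with $\eta(1, \Phi^{c+\eps}) \subset \Phi^{c-\eps}$ and $\eta(t, \cdot) = \id$ on $\Phi^{c-\eps}$ (in particular on a neighborhood of $A$, once we know $\sup_A \Phi < c$ — more on this below). Picking $\gamma \in \Gamma$ with $\sup_{\gamma(X)} \Phi \le c + \eps$ and composing, $\eta(1, \cdot) \circ \gamma$ would again lie in $\Gamma$ (closedness of the image is preserved because $\eta(1,\cdot)$ is a homeomorphism of $W$, and $\restr{\eta(1,\cdot)\circ\gamma}{A} = \id$ since $\Phi \le \sup_A \Phi < c - \eps$ on $A$ for $\eps$ small) and would have sup of $\Phi$ at most $c - \eps$, contradicting the definition of $c$.

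The heart of the matter is therefore the intersection property: $\gamma(X) \cap B \ne \emptyset$ for all $\gamma \in \Gamma$. I would argue this by a cohomological-index obstruction, in the spirit of the linking theorems that use the Fadell–Rabinowitz index. Fix $\gamma \in \Gamma$ and suppose, for contradiction, that $\gamma(X) \cap B = \emptyset$. Consider the radial retraction: since each ray from the origin meets $\M$ exactly once and $\M$ is bounded, the map $\pi : W \setminus \{0\} \to \M$ sending $u$ to the unique positive-multiple of $u$ on $\M$ is continuous and odd (using $(H_2)$, continuity of $I$, and boundedness of $\M$). Here $B = \{u_t : u \in B_0,\, t \ge 0\}$ is the "scaled cone" over $B_0$; crucially, by $(H_2)$ and the scaling $I(u_t) = t^s I(u)$, for $u \in B_0 \subset \M$ and $t > 0$ we have $u_t$ a nonzero scalar-independent-direction with $\pi(u_t) \in B_0$ — so $B \setminus \{0\}$ radially retracts onto $B_0$, and $W \setminus B$ contains $W \setminus (\text{cone over } B_0)$. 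Thus $\pi$ maps $\gamma(X) \setminus \{0\}$ into $\M \setminus B_0$ if $\gamma(X) \cap B = \emptyset$ — wait, one must be careful that $0 \notin \gamma(X)$; this holds because $0 \in B$ (as $u_0 = 0$ by $(H_3)$) and we assumed $\gamma(X) \cap B = \emptyset$. So $\pi \circ \gamma : X \to \M \setminus B_0$ is a continuous map which restricts to $\pi$ on $A = \{u_R : u \in A_0\}$, and by the scaling $\pi(u_R) = \pi((u)_R)$ lies on the ray through $u \in A_0 \subset \M$, hence $\restr{\pi\circ\gamma}{A}$ is (after the identification $A \cong A_0$ via $u \mapsto u_R$, which is a homeomorphism and odd) essentially the inclusion $A_0 \hookrightarrow \M \setminus B_0$.

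Now the contradiction comes from the index: since $X$ is the scaled cone over $A_0$, it deformation-retracts onto the "vertex" — no, rather, $X$ retracts onto $A$ via $u_t \mapsto u_{\max(t,R)}$... let me instead use that $X$ is contractible-like relative to $A$: the pair $(X, A)$ is, via the scaling $u_t \mapsto u_{R}$ straight-line in the parameter $t$, such that $A$ is a deformation retract of $X$ (push $t$ up to $R$). Hence $\pi \circ \gamma|_A$ extends continuously to all of $X$ through a set on which we have control; composing with a deformation retraction $r : X \to A$, we would get that the inclusion $A_0 \hookrightarrow \M \setminus B_0$ factors (up to homotopy, after identifying $A$ with $A_0$) through a map that is inessential in the appropriate sense, forcing $i(A_0) \le i(\M \setminus B_0)$ to be strict or yielding $i(\M \setminus B_0) < i(A_0)$ — contradicting $i(A_0) = i(\M \setminus B_0) < \infty$. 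Concretely: the monotonicity and subadditivity properties of the cohomological index, applied to the odd maps $A_0 \hookrightarrow \M\setminus B_0$ and the retraction, force a strict inequality incompatible with equality. I expect this index bookkeeping — correctly identifying which sets retract onto which, keeping all maps odd, and extracting the contradiction from $i(A_0) = i(\M \setminus B_0)$ — to be the main obstacle; the deformation-theoretic part is routine once linking is in hand. A clean way to package the obstruction is: if $\gamma(X)\cap B=\emptyset$ then $\gamma(X)\subset W\setminus B$, and one shows $i$ of the image of the odd part is both $\ge i(A_0)$ (it contains a copy of $A_0$) and $\le i(\M\setminus B_0)<\infty$ while the cone structure of $X$ over $A_0$ forces it to be $< i(A_0)$ unless the map already hits $B$, the desired contradiction.
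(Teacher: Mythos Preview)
Your overall architecture matches the paper's: reduce to a standard minimax/deformation lemma (the paper's Proposition~\ref{Proposition 6}) once one has the intersection property $\gamma(X)\cap B\neq\emptyset$ for every $\gamma\in\Gamma$. The deformation part is routine and the paper treats it as such.

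The gap is in your proof of the intersection property. You correctly set up $\pi\circ\gamma:X\to\M\setminus B_0$ (using that $0\in B$, so $\gamma(X)\subset W\setminus\{0\}$, and that $\pi$ carries $W\setminus B$ into $\M\setminus B_0$). But from there your argument does not produce a contradiction. Having an odd map $A_0\hookrightarrow\M\setminus B_0$ only yields $i(A_0)\le i(\M\setminus B_0)$, which is \emph{consistent} with the hypothesis $i(A_0)=i(\M\setminus B_0)$; neither monotonicity nor subadditivity alone turns this into a strict inequality. Your attempted retraction of $X$ onto $A$ also fails: $X$ is the scaled cone on $A_0$ with vertex $0$, and $0\in X\setminus A$ has no well-defined image in $A$ (all of $A_0$ collapses to $0$ at $t=0$), so $X$ is contractible but does not retract onto $A$.

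The missing idea is the \emph{suspension/stability} step. Because $X$ is the cone $CA_0$ (parametrize by $(u,\tau)\mapsto u_{(1-\tau)R}$), the map $\eta=\pi\circ\gamma:CA_0\to\M\setminus B_0$ restricts to the identity on $A_0\times\{0\}$. Since $\M\setminus B_0$ is symmetric, one can extend $\eta$ to an \emph{odd} continuous map $\widetilde\eta:\Sigma A_0\to\M\setminus B_0$ by reflecting: $\widetilde\eta(u,\tau)=-\eta(-u,-\tau)$ for $\tau<0$. Monotonicity of the index then gives $i(\Sigma A_0)\le i(\M\setminus B_0)$, and the stability property $(i_6)$ of the Fadell--Rabinowitz index yields $i(\Sigma A_0)=i(A_0)+1$. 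Hence $i(\M\setminus B_0)\ge i(A_0)+1$, contradicting $i(A_0)=i(\M\setminus B_0)<\infty$. This is exactly the paper's Proposition~\ref{Proposition 8}; the $+1$ from the suspension is what your sketch is missing.
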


\begin{remark}
Theorem \ref{Theorem 22} generalizes the classical saddle point theorem of Rabinowitz \cite{MR0488128,MR0501092}. To see this, take $u_t = tu$ and $I(u) = \norm{u}$. Then $\M = \bgset{u \in W : \norm{u} = 1}$ is the unit sphere in $W$. Let $W = W_1 \oplus W_2$ be a direct sum decomposition with both $W_1$ and $W_2$ nontrivial and $W_1$ finite dimensional, and take $A_0 = W_1 \cap \M$ and $B_0 = W_2 \cap \M$. Then
\[
i(A_0) = i(\M \setminus B_0) = \dim W_1
\]
and
\begin{gather*}
X = \set{u \in W_1 : \norm{u} \le R},\\
A = \set{u \in W_1 : \norm{u} = R},\\
B = W_2.
\end{gather*}
So Theorem \ref{Theorem 22} reduces to the classical saddle point theorem in this special case.
\end{remark}

We can define a continuous projection $\pi : W \setminus \set{0} \to \M$ by
\[
\pi(u) = u_{t_u}, \quad t_u = \frac{1}{I(u)^{1/s}}.
\]
We will prove the following scaled linking theorem.

\begin{theorem} \label{Theorem 23}
Let $\Phi$ be a $C^1$-functional on $W$. Let $A_0$ and $B_0$ be disjoint nonempty closed symmetric subsets of $\M$ such that
\[
i(A_0) = i(\M \setminus B_0) < \infty,
\]
let $R > \rho > 0$, let $e \in \M \setminus A_0$, and let
\begin{gather*}
X = \set{(\pi((1 - \tau)u + \tau e))_t : u \in A_0,\, \tau \in [0,1],\, 0 \le t \le R},\\
A = \set{u_t : u \in A_0,\, 0 \le t \le R} \cup \set{(\pi((1 - \tau)u + \tau e))_R : u \in A_0,\, \tau \in [0,1]},\\
B = \set{u_\rho : u \in B_0}.
\end{gather*}
Assume that
\[
\sup_{u \in A}\, \Phi(u) \le \inf_{u \in B}\, \Phi(u), \qquad \sup_{u \in X}\, \Phi(u) < \infty.
\]
Let $\Gamma = \bgset{\gamma \in C(X,W) : \gamma(X) \text{ is closed and } \restr{\gamma}{A} = \id}$ and set
\[
c := \inf_{\gamma \in \Gamma}\, \sup_{u \in \gamma(X)}\, \Phi(u).
\]
If $\Phi$ satisfies the {\em \PS{c}} condition, then $c$ is a critical value of $\Phi$.
\end{theorem}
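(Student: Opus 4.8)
The plan is a standard minimax argument whose one new ingredient is a topological linking of $(X,A)$ with $B$ adapted to the scaling, in the same spirit in which Theorem~\ref{Theorem 22} is the scaled analogue of the saddle point theorem. First I would check that $\Gamma\neq\emptyset$ and $c$ is finite: the inclusion $X\incl W$ is the natural candidate, which is admissible once $X$ is shown to be closed in $W$ — using that $\M$ is closed and bounded away from $0$ (as $I$ is continuous with $I(0)=0$), that $A_0$ is closed, that the segment set $S=\set{(1-\tau)u+\tau e:u\in A_0,\ \tau\in[0,1]}$ is closed and misses $0$ (a direct sequential argument, separating $\tau<1$ from $\tau\to 1$ and using the boundedness of $A_0$), and the identities $\pi(v_t)=v$, $I(v_t)=t^s$ for $v\in\M,\ t>0$, which give $X=\set{0}\cup\set{w\neq 0:\pi(w)\in\pi(S),\ I(w)\le R^s}$. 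Then, granting the linking, $\sup_A\Phi\le\inf_B\Phi\le c\le\sup_X\Phi<\infty$, so $c\in\R$.

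The core step is the linking: $\gamma(X)\cap B\neq\emptyset$ for every $\gamma\in\Gamma$, whence $c\ge\inf_B\Phi$. I would argue by contradiction. If $\gamma(X)\cap B=\emptyset$, then — since a nonzero $w$ lies in $B=\set{u_\rho:u\in B_0}$ iff $\pi(w)\in B_0$ and $I(w)=\rho^s$, while $0\notin B$ — we have $\gamma(X)\subset W\setminus B$. The idea is to transplant into the scaled setting the classical degree proof that a half-ball in $W_1\oplus\R e$ links the radius-$\rho$ sphere of $W_2$ (cf.\ the Remark following Theorem~\ref{Theorem 22} for the analogous reduction of the scaled saddle point theorem): replace the linear projection $W\to W_1$ by the radial projection $\pi\colon W\setminus\set{0}\to\M$ followed by an odd map $\psi\colon\M\setminus B_0\to S^{k-1}$, which exists precisely because $i(\M\setminus B_0)=k<\infty$; replace the norm coordinate by $w\mapsto I(w)^{1/s}$; and replace Brouwer degree by the $\Z_2$-cohomological index. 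Here $X$ is a symmetric scaled cone over $A_0$ (the face $\tau=0$, which lies in $A$) together with a non-symmetric "cap" toward $e$ adding one extra dimension, exactly as in the classical linking geometry, while $A$ is the full scaled cone over $A_0$ together with the top face; combining $\gamma(X)\subset W\setminus B$, this structure of $X$ and $A$, the equivariant deformations furnished by rescaling, and the monotonicity, continuity and subadditivity of the index, one should be driven to either an odd map $A_0\to S^{k-2}$, contradicting $i(A_0)\ge k$, or a violation of $i(\M\setminus B_0)=k$.

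I expect the main difficulty to be precisely that $\gamma$ is not $\Z_2$-equivariant, so the cohomological index cannot be applied to $\gamma$ itself; the way around it, as in the classical linking theorem, is to invoke equivariance only on the symmetric locus $\set{u_t:u\in A_0,\ 0\le t\le R}\subset A$ where $\gamma=\id$, and to handle the cap toward $e$ by an elementary connectedness/parity argument in the $t$-variable, mirroring the treatment of the extra coordinate $s\in[0,R]$ in the classical proof. Once the linking is in hand and $\sup_A\Phi\le\inf_B\Phi\le c\le\sup_X\Phi<\infty$, the conclusion is routine: if $c$ were a regular value, then $(\text{PS})_c$ and the deformation lemma would furnish $\eps>0$ (with $\sup_A\Phi<c-\eps$ whenever $\sup_A\Phi<c$) and a homeomorphism $\eta$ of $W$ with $\eta(\Phi^{c+\eps})\subset\Phi^{c-\eps}$ and $\eta=\id$ on $\set{u\in W:\Phi(u)\le c-\eps}\supset A$; taking $\gamma\in\Gamma$ with $\sup_{\gamma(X)}\Phi<c+\eps$ gives $\eta\circ\gamma\in\Gamma$ and $\sup_{(\eta\circ\gamma)(X)}\Phi\le c-\eps$, contradicting the definition of $c$. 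The borderline case $\sup_A\Phi=\inf_B\Phi=c$ is handled by the usual refinement showing $K_c\neq\emptyset$ directly, $B$ obstructing every admissible deformation near level $c$. Hence $c$ is a critical value of $\Phi$.
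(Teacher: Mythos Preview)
Your overall plan --- reduce to a linking statement for the triple $(X,A,B)$ and then run the standard minimax/deformation argument --- matches the paper's structure exactly (the paper factors Theorem~\ref{Theorem 23} as Proposition~\ref{Proposition 6} plus Proposition~\ref{Proposition 9}). The deformation half of your proposal is fine. The gap is in the linking half.

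Two concrete problems. First, the assertion that an odd map $\psi:\M\setminus B_0\to S^{k-1}$ ``exists precisely because $i(\M\setminus B_0)=k<\infty$'' is a property of the Krasnoselskii genus, not of the Fadell--Rabinowitz cohomological index; from $i(\cdot)=k$ you cannot extract such a $\psi$. Second, the plan to ``handle the cap toward $e$ by an elementary connectedness/parity argument in the $t$-variable'' does not survive the passage from degree to index: there is no scalar ``height'' to which a parity argument applies, and $\gamma$ is not odd.

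The paper's mechanism for the linking (Proposition~\ref{Proposition 9}) is different and worth internalizing. One \emph{symmetrizes} the cap, setting
\[
\widetilde{A}=\set{(\pi((1-\tau)u+\tau e))_R}\cup\set{(\pi((1-\tau)u-\tau e))_R},
\]
and observes that there is an odd map $\Sigma A_0\to\widetilde{A}$, so $i(\widetilde{A})\ge i(A_0)+1$ by stability $(i_6)$. One then extends $\gamma$ to an odd map $\varphi:\widetilde{A}\times[0,1]\to W$ by declaring $\varphi(u,t)=\gamma(u_t)$ on $\widetilde{A}\cap A$ and $\varphi(u,t)=-\gamma(-u_t)$ on the reflected half; this is continuous exactly because $\gamma=\id$ on the symmetric core $\set{u_t:u\in A_0,\ 0\le t\le R}$, which is the observation you already made. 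The crucial tool you are missing is the \emph{piercing property} $(i_7)$ of the cohomological index: applied with $C=\widetilde{A}$, $C_0=\set{I\le\rho^s}$, $C_1=\set{I\ge\rho^s}$, it gives $i(\varphi(\widetilde{A}\times[0,1])\cap\M_\rho)\ge i(\widetilde{A})$. Since $\varphi(\widetilde{A}\times[0,1])=\gamma(X)\cup-\gamma(X)\subset W\setminus B$, that intersection sits in $\M_\rho\setminus B$, whose index equals $i(\M\setminus B_0)$. One obtains $i(A_0)+1\le i(\M\setminus B_0)$, contradicting the hypothesis. The piercing property is precisely the index-theoretic replacement for the degree argument you had in mind, and it (unlike an odd classifying map to a sphere) is available for the cohomological index.
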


The special case $u_t = tu$ of this theorem was proved in Yang and Perera \cite{MR3616328}.

\begin{remark}
Theorem \ref{Theorem 23} generalizes the classical linking theorem of Rabinowitz \cite{MR0488128,MR0501092}. To see this, take $u_t = tu$ and $I(u) = \norm{u}$. Then $\M = \bgset{u \in W : \norm{u} = 1}$ is the unit sphere in $W$. Let $W = W_1 \oplus W_2$ be a direct sum decomposition with both $W_1$ and $W_2$ nontrivial and $W_1$ finite dimensional, and take $A_0 = W_1 \cap \M$ and $B_0 = W_2 \cap \M$. Then
\[
i(A_0) = i(\M \setminus B_0) = \dim W_1
\]
and
\begin{gather*}
X = \set{u + \tau e : u \in W_1,\, \tau \ge 0,\, \norm{u + \tau e} \le R},\\
A = \set{u \in W_1 : \norm{u} \le R} \cup \set{u + \tau e : u \in W_1,\, \tau \ge 0,\, \norm{u + \tau e} = R},\\
B = \set{u \in W_2 : \norm{u} = \rho}.
\end{gather*}
So Theorem \ref{Theorem 23} reduces to the classical linking theorem in this special case.
\end{remark}

Next we introduce a notion of local linking based on scaling.

\begin{definition} \label{Definition 3}
Let $\Phi$ be a $C^1$-functional on $W$. We will say that $\Phi$ has a scaled local linking near the origin in dimension $k \ge 1$ if there are disjoint nonempty closed symmetric subsets $A_0$ and $B_0$ of $\M$ with
\[
i(A_0) = i(\M \setminus B_0) = k
\]
and $\rho > 0$ such that
\begin{equation} \label{119}
\begin{cases}
\Phi(u_t) \le 0 & \forall u \in A_0,\, 0 \le t \le \rho\\[7.5pt]
\Phi(u_t) > 0 & \forall u \in B_0,\, 0 < t \le \rho.
\end{cases}
\end{equation}
\end{definition}

The special case $u_t = tu$ of this definition was given in Perera \cite{MR1700283} (see also Perera et al.\! \cite[Definition 3.33]{MR2640827} and Degiovanni et al.\! \cite{MR2661274}). The usefulness of this notion lies in the following theorem.

\begin{theorem}
If $\Phi$ has a scaled local linking near the origin in dimension $k$, then $C^k(\Phi,0)$ is nontrivial.
\end{theorem}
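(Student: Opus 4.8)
The plan is to identify $C^k(\Phi,0)$ with $H^k(\Phi^0 \cap U, (\Phi^0 \setminus \set{0}) \cap U)$ for a small neighbourhood $U$ of the origin, where $\Phi^0 = \set{u \in W : \Phi(u) \le 0}$, and to exhibit a nonzero class in this group by a short exact-sequence argument whose one substantial ingredient is a cohomological ``linking'' property of $(A_0,B_0)$ coming from $i(A_0) = i(\M \setminus B_0) = k$. The argument runs parallel to the classical local-linking critical-group estimate, with linear subspaces replaced by the scaled cones over $A_0$ and $B_0$.

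\textit{Setup.} First note $\Phi(0) = 0$: taking $t = 0$ in the first line of \eqref{119} gives $\Phi(0) \le 0$, while letting $t \to 0^+$ in the second line, with $u_t \to u_0 = 0$ and $\Phi$ continuous, gives $\Phi(0) \ge 0$. Fix a small $r \in (0,\rho^s]$ so that $U := \set{u \in W : I(u) < r}$ is an isolating neighbourhood of $0$ (the origin being an isolated critical point is part of the setting, and is in any case forced a posteriori), and set $\rho' = \tfrac12\, r^{1/s} \le \rho$. The scaled cones
\[
A = \set{u_t : u \in A_0,\ 0 \le t \le \rho'}, \qquad B = \set{u_t : u \in B_0,\ 0 \le t \le \rho'}
\]
both lie in $U$, since $I(u_t) = t^s I(u) = t^s$ for $u \in \M$; by \eqref{119} they satisfy $\Phi \le 0$ on $A$ and $\Phi > 0$ on $B \setminus \set{0}$; and $A \cap B = \set{0}$, because $A_0 \cap B_0 = \emptyset$ and $\pi(u_t) = u$ for $u \in \M$, $t > 0$. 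Each of $A, B$ contracts to $0$ via $(u_t,\theta) \mapsto u_{(1-\theta)t}$; moreover $A \setminus \set{0}$ deformation retracts onto $\set{u_{\rho'} : u \in A_0} \approx A_0$, giving a homotopy equivalence $q : A \setminus \set{0} \to A_0$, $u_t \mapsto u$, with $\iota \circ q = \restr{\pi}{A \setminus \set{0}}$ for the inclusion $\iota : A_0 \incl \M \setminus B_0$. Finally $\pi$ maps $(\Phi^0 \setminus \set{0}) \cap U$ into $\M \setminus B_0$: any $w \in U \setminus \set{0}$ equals $\pi(w)_{I(w)^{1/s}}$ with $\pi(w) \in \M$ and $0 < I(w)^{1/s} < r^{1/s} \le \rho$, so if $\pi(w) \in B_0$ then $\Phi(w) > 0$ by \eqref{119}, whence $w \notin \Phi^0$.

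\textit{The index step.} The crux is that, since $i(A_0) = i(\M \setminus B_0) = k < \infty$ and $A_0 \subseteq \M \setminus B_0$, the inclusion $\iota$ induces a \emph{nonzero} homomorphism $\iota^\ast : \widetilde H^{k-1}(\M \setminus B_0) \to \widetilde H^{k-1}(A_0)$ on reduced $\Z_2$-cohomology. This is where the $\Z_2$-cohomological index is used essentially. Pass to the orbit spaces $A_0/\Z_2 \subseteq (\M \setminus B_0)/\Z_2$; let $w$ be the characteristic class of the double cover of the larger one and $w_0$ its restriction to $A_0/\Z_2$ (which is the characteristic class of the double cover of $A_0$). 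Then $w^{k-1} \ne 0$ because $i(\M \setminus B_0) = k$, while $w_0^k = 0$ because $i(A_0) = k$, so $w_0^{k-1}$ lies in the kernel of cup-product with $w_0$, hence in the image of the Gysin homomorphism of $A_0 \to A_0/\Z_2$. Likewise $w^{k-1}$ is the Gysin image of some $\alpha \in \widetilde H^{k-1}(\M \setminus B_0)$; then, by naturality of the Gysin homomorphism under the Cartesian square in which $A_0$ is the preimage of $A_0/\Z_2$, the Gysin image of $\iota^\ast\alpha$ is $w_0^{k-1} \ne 0$, so $\iota^\ast\alpha \ne 0$. (The case $k=1$ runs the same way, with $\widetilde H^0$.) I expect this index lemma to be the part of the proof needing the most care; it is also the mechanism behind Theorem~\ref{Theorem 21}\,\ref{Theorem 21.iii}.

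\textit{Conclusion.} Put $X = \Phi^0 \cap U$ and $Y = X \setminus \set{0} = (\Phi^0 \setminus \set{0}) \cap U$, so $C^k(\Phi,0) = H^k(X,Y)$. With $\alpha$ as above set $\beta = (\restr{\pi}{Y})^\ast\alpha \in \widetilde H^{k-1}(Y)$. Restricting to $A \setminus \set{0} \subseteq Y$ and using $\restr{\pi}{Y} \circ (\text{inclusion}) = \iota \circ q$ with $q^\ast$ an isomorphism gives $\restr{\beta}{A \setminus \set{0}} = q^\ast(\iota^\ast\alpha) \ne 0$. Now, in the reduced cohomology exact sequence of the pair $(X,Y)$, if the connecting homomorphism sent $\beta$ to $0 \in H^k(X,Y)$ then $\beta = \restr{\widetilde\beta}{Y}$ for some $\widetilde\beta \in \widetilde H^{k-1}(X)$; restricting $\widetilde\beta$ to the contractible set $A \subseteq X$ forces $\restr{\widetilde\beta}{A} = 0$, hence $\restr{\beta}{A \setminus \set{0}} = \restr{\widetilde\beta}{A \setminus \set{0}} = 0$, a contradiction. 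Therefore $\beta$ has nonzero image in $H^k(X,Y) = C^k(\Phi,0)$, which is thus nontrivial. Apart from the index lemma, the remaining points---that $\Phi(0)=0$, and that $U$ may be chosen as above---are routine.
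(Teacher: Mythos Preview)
Your proof is correct and follows essentially the same route as the paper's. Both arguments identify $C^k(\Phi,0)$ with $H^k(\Phi^0\cap U,\Phi^0\cap U\setminus\{0\})$ for the scaled neighbourhood $U$, use the contractibility of the cone $A$ over $A_0$ together with the retraction of $A\setminus\{0\}$ onto $A_0$, and finish with the long exact sequence of the pair; the only real differences are presentational. The paper organises the argument as a commutative diagram (with $\Phi^0\cap U\setminus\{0\}\subset U\setminus B$ deformation retracting onto $\M_\rho\setminus B_\rho$) and quotes the index lemma that the inclusion $A_\rho\hookrightarrow\M_\rho\setminus B_\rho$ induces a nonzero map on $\widetilde H^{k-1}$ from \cite[Proposition~2.14\,(iv)]{MR2640827}, whereas you pull back along $\pi$ directly and supply a Gysin-sequence proof of that lemma yourself. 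Your caveat about the $k=1$ case is well placed: passing to reduced $\widetilde H^0$ and checking that the class $\alpha$ can be chosen there does require a little extra bookkeeping, but the cited proposition covers it.
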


We will prove the following theorem on the existence of nontrivial critical points for functionals with a scaled local linking.

\begin{theorem}
Let $\Phi$ be a $C^1$-functional on $W$ that satisfies the {\em \PS{}} condition. Assume that $\Phi$ has a scaled local linking near the origin in dimension $k$ and that $\widetilde{H}^{k-1}(\Phi^a) = 0$ for some $a < 0$, where $\Phi^a = \bgset{u \in W : \Phi(u) \le a}$ and $\widetilde{H}^\ast$ denotes reduced cohomology. Then $\Phi$ has a nontrivial critical point.
\end{theorem}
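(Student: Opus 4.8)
The plan is to combine the nontriviality of $C^k(\Phi,0)$ coming from the scaled local linking with a standard Morse-theoretic argument to produce a critical point at a negative level. First I would invoke the preceding theorem to conclude that $C^k(\Phi,0) = H^k(\Phi^0 \cap U, (\Phi^0 \setminus \set{0}) \cap U) \ne 0$ for a suitable neighborhood $U$ of the origin, so in particular $0$ is a critical point of $\Phi$. The strategy is then to assume, for contradiction, that $0$ is the only critical point of $\Phi$ and derive a contradiction from the hypothesis $\widetilde{H}^{k-1}(\Phi^a) = 0$.

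Under the contradiction hypothesis, the \PS{} condition lets me use the deformation lemma freely. Since $\Phi$ has no critical values in $(-\infty,0)$ and satisfies \PS{}, $\Phi^a$ is a deformation retract of $\Phi^0 \setminus \set{0}$ for every $a < 0$ (one deforms along the negative gradient flow, pushing the sublevel set $\Phi^0$ minus the origin down to level $a$; here one uses that $0$ is isolated so that the flow starting in $\Phi^0 \setminus \set{0}$ avoids the origin and decreases past $a$ in finite time away from the rest spot). Hence $\widetilde{H}^\ast(\Phi^0 \setminus \set{0}) \cong \widetilde{H}^\ast(\Phi^a) = 0$ for $\ast = k-1$. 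Next I would bring in the scaling. Using that $I(u_t) = t^s I(u)$ and that each ray from the origin meets $\M$ at exactly one point, the map $(u,t) \mapsto u_t$ identifies $W \setminus \set{0}$ with $\M \times (0,\infty)$, so $W \setminus \set{0}$ is homotopy equivalent to $\M$; I also want that $\Phi^0 \setminus \set{0}$ deformation retracts onto (something cohomologically equivalent to) $\M \setminus B_0$, using the second line of \eqref{119}: on the rays through $B_0$, $\Phi$ is strictly positive for small $t > 0$, so those rays do not meet $\Phi^0 \setminus \set{0}$ near the origin — more precisely, after composing with the projection $\pi$ and a radial deformation, $\Phi^0 \setminus \set{0}$ retracts into $\M \setminus B_0$. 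This would give $i(\Phi^0 \setminus \set{0}) \le i(\M \setminus B_0) = k$, hence $\widetilde{H}^{j}(\Phi^0 \setminus \set{0}) $ is constrained; combined with the first line of \eqref{119}, which forces $A_0 \subset \Phi^0 \setminus \set{0}$ (via $u = u_1$, or via small $t$) and hence $i(\Phi^0 \setminus \set{0}) \ge i(A_0) = k$, I can pin down the index of $\Phi^0 \setminus \set{0}$ as exactly $k$. By the properties of the cohomological index (the index being $k$ forces $H^{k-1} \ne 0$ for a paracompact symmetric space of finite index, or one argues via the cohomology of the relevant projective space), this contradicts $\widetilde{H}^{k-1}(\Phi^0 \setminus \set{0}) = 0$ obtained above.

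Concretely, the cleanest route is: $C^k(\Phi,0) \ne 0$ yet $0$ is the only critical point, so by the standard long exact sequence of the pair $(W, \Phi^a)$ — with $W$ contractible and $\Phi^a$ a deformation retract of $\Phi^0 \setminus \set{0}$ — one gets $C^k(\Phi,0) = H^k(W,\Phi^a) \cong \widetilde{H}^{k-1}(\Phi^a)$, which is assumed to vanish; this is the contradiction. The role of the scaled local linking is exactly to guarantee that the left side is nonzero. So the proof reduces to two inputs: (1) $C^k(\Phi,0) \ne 0$, which is the previous theorem, and (2) the identification $C^k(\Phi,0) \cong \widetilde{H}^{k-1}(\Phi^a)$ when $0$ is the unique critical point, which is the usual Morse-theory computation valid under \PS{} and coercivity-free once one only deforms below level $0$.

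The main obstacle I anticipate is item (2) — specifically, verifying carefully that, when $0$ is the unique critical point, $\Phi^0$ (or $\Phi^a$, $a<0$) has the homotopy type needed to make $H^k(W,\Phi^a) \cong \widetilde{H}^{k-1}(\Phi^a)$, and that no issue arises from $\Phi$ possibly being unbounded below or from the origin being a degenerate rest point of the flow. This is where one must use the \PS{} condition together with the second deformation lemma to push sublevel sets down, and where the absence of critical values in $(-\infty,0)$ is essential. The scaling hypotheses are not needed for this step; they were already consumed in establishing $C^k(\Phi,0) \ne 0$ in the earlier theorem.
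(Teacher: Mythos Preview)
Your ``cleanest route'' in the third paragraph is exactly the paper's proof: assume $0$ is the only critical point, use the second deformation lemma to identify $C^k(\Phi,0) = H^k(\Phi^0,\Phi^0\setminus\{0\}) \cong H^k(W,\Phi^a)$, then use contractibility of $W$ to get $H^k(W,\Phi^a) \cong \widetilde H^{k-1}(\Phi^a) = 0$, contradicting $C^k(\Phi,0)\ne 0$ from Theorem~\ref{Theorem 16}. You have the right argument and you correctly note that the scaling hypotheses are already fully consumed in establishing $C^k(\Phi,0)\ne 0$.

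Two remarks. First, drop the detour in your second paragraph: the claim that $\Phi^0\setminus\{0\}$ retracts into $\M\setminus B_0$ cannot be justified, because \eqref{119} only controls $\Phi(u_t)$ for $0<t\le\rho$ and says nothing about large $t$; a ray through $B_0$ may well meet $\Phi^0\setminus\{0\}$ away from the origin. The index computation you sketch there is therefore unfounded, and in any case unnecessary. Second, the paper handles one edge case you omit: if $\Phi^a=\emptyset$ then the long exact sequence argument does not literally give $\widetilde H^{k-1}(\Phi^a)$; instead $H^k(W,\Phi^a)=H^k(W)=0$ directly since $k\ge 1$. This is trivial but should be mentioned.
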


In particular, we have the following corollary.

\begin{corollary}
Let $\Phi$ be a $C^1$-functional on $W$ that satisfies the {\em \PS{}} condition. Assume that $\Phi$ has a scaled local linking near the origin and that $\Phi^a$ is contractible for some $a < 0$. Then $\Phi$ has a nontrivial critical point.
\end{corollary}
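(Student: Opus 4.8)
The plan is to obtain this corollary as an immediate consequence of the theorem that precedes it. First I would unwind the hypothesis that $\Phi$ ``has a scaled local linking near the origin'': by Definition \ref{Definition 3} this means there is some integer $k \ge 1$ for which $\Phi$ has a scaled local linking near the origin in dimension $k$. Fix such a $k$, together with the disjoint nonempty closed symmetric sets $A_0, B_0 \subset \M$ satisfying $i(A_0) = i(\M \setminus B_0) = k$ and the radius $\rho > 0$ for which \eqref{119} holds.

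Next I would verify that the remaining hypothesis of the preceding theorem is automatically satisfied. Since $\Phi^a$ is contractible, it is in particular nonempty and homotopy equivalent to a one-point space; hence its reduced cohomology with $\Z_2$ coefficients vanishes in every degree, and in particular $\widetilde{H}^{k-1}(\Phi^a) = 0$. Note that the level $a < 0$ supplied by the contractibility assumption need not bear any relation to $\rho$, since the preceding theorem imposes no such compatibility.

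Having thus checked that $\Phi$ satisfies the \PS{} condition, has a scaled local linking near the origin in dimension $k$, and satisfies $\widetilde{H}^{k-1}(\Phi^a) = 0$ for this $a$, I would invoke the preceding theorem directly to conclude that $\Phi$ possesses a nontrivial critical point.

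I do not expect any genuine obstacle here: the mathematical content resides entirely in the theorem being specialized, and the corollary is the routine observation that the cohomological hypothesis $\widetilde{H}^{k-1}(\Phi^a) = 0$ follows from the simpler and more readily checked condition that $\Phi^a$ be contractible. The only point requiring (minimal) care is the standard topological fact that a contractible space has trivial reduced cohomology in all dimensions, which ensures the needed vanishing irrespective of the dimension $k$ in which the scaled local linking happens to occur.
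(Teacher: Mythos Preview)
Your proposal is correct and matches the paper's approach exactly: the paper presents this corollary as an immediate consequence of the preceding theorem (Theorem \ref{Theorem 17}), relying on the standard fact that a contractible space has vanishing reduced cohomology in every degree, so that $\widetilde{H}^{k-1}(\Phi^a) = 0$ regardless of which dimension $k$ the scaled local linking occurs in. The paper does not even write out a separate proof, simply stating ``In particular, we have the following corollary.''
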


\begin{remark}
Definition \ref{Definition 3} generalizes the original definition of Li and Liu \cite{MR986153,MR802575} (see also Li and Willem \cite{MR1312028}). To see this, take $u_t = tu$ and $I(u) = \norm{u}$. Then $\M = \bgset{u \in W : \norm{u} = 1}$ is the unit sphere in $W$. Let $W = W_1 \oplus W_2$ be a direct sum decomposition with $\dim W_1 = k \ge 1$ and $W_2$ nontrivial, and take $A_0 = W_1 \cap \M$ and $B_0 = W_2 \cap \M$. Then
\[
i(A_0) = i(\M \setminus B_0) = k
\]
and \eqref{119} reduces to
\[
\begin{cases}
\Phi(u) \le 0 & \forall u \in W_1,\, \norm{u} \le \rho\\[7.5pt]
\Phi(u) > 0 & \forall u \in W_2,\, 0 < \norm{u} \le \rho.
\end{cases}
\]
So Definition \ref{Definition 3} reduces to the original definition in this special case.
\end{remark}

Finally we will prove a scaling-based multiplicity result for symmetric functionals that only satisfy a local \PS{} condition and is therefore applicable to scaled equations with critical growth. Let $\Phi \in C^1(W,\R)$ be an even functional, i.e., $\Phi(-u) = \Phi(u)$ for all $u \in W$. Assume that $\exists c^\ast > 0$ such that $\Phi$ satisfies the \PS{c} condition for all $c \in (0,c^\ast)$. Let $\Gamma$ denote the group of odd homeomorphisms of $W$ that are the identity outside $\Phi^{-1}(0,c^\ast)$. Let $\A^\ast$ denote the class of symmetric subsets of $W$ and let
\[
\M_\rho = \set{u \in W : I(u) = \rho^s} = \set{u_\rho : u \in \M}
\]
for $\rho > 0$. Recall that the pseudo-index of $M \in \A^\ast$ related to $i$, $\M_\rho$, and $\Gamma$ is defined by
\[
i^\ast(M) = \min_{\gamma \in \Gamma}\, i(\gamma(M) \cap \M_\rho)
\]
(see Benci \cite{MR84c:58014}). We will prove the following multiplicity result.

\begin{theorem}
Let $A_0$ and $B_0$ be symmetric subsets of $\M$ such that $A_0$ is compact, $B_0$ is closed, and
\[
i(A_0) \ge k + m - 1, \qquad i(\M \setminus B_0) \le k - 1
\]
for some $k, m \ge 1$. Let $R > \rho > 0$ and let
\begin{gather*}
X = \set{u_t : u \in A_0,\, 0 \le t \le R},\\
A = \set{u_R : u \in A_0},\\
B = \set{u_\rho : u \in B_0}.
\end{gather*}
Assume that
\[
\sup_{u \in A}\, \Phi(u) \le 0 < \inf_{u \in B}\, \Phi(u), \qquad \sup_{u \in X}\, \Phi(u) < c^\ast.
\]
For $j = k,\dots,k + m - 1$, let $\A_j^\ast = \set{M \in \A^\ast : M \text{ is compact and } i^\ast(M) \ge j}$ and set
\[
c_j^\ast := \inf_{M \in \A_j^\ast}\, \max_{u \in M}\, \Phi(u).
\]
Then $0 < c_k^\ast \le \dotsb \le c_{k+m-1}^\ast < c^\ast$, each $c_j^\ast$ is a critical value of $\Phi$, and $\Phi$ has $m$ distinct pairs of associated critical points.
\end{theorem}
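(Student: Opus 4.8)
The plan is to run Benci's pseudo-index scheme, transplanted from linear subspaces to the scaling $u\mapsto u_t$, with the cohomological index $i$ used throughout. The central point, established first, is that $i^\ast(X)\ge i(A_0)$. Note $X=\set{u_t:u\in A_0,\, 0\le t\le R}$ is compact (a continuous image of $A_0\times[0,R]$) and symmetric by $(H_2)$, so $X\in\A^\ast$. Fix $\gamma\in\Gamma$. Since $\gamma$ is odd, $\gamma(0)=0$; since $\Phi\le 0$ on $A=\set{u_R:u\in A_0}$, the set $A$ misses $\Phi^{-1}(0,c^\ast)$, so $\restr{\gamma}{A}=\id$. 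Put $\beta(u,t)=I(\gamma(u_t))$ on $A_0\times[0,R]$: it is continuous, even in $u$, and $\beta(u,0)=I(0)=0<\rho^s<R^s=I(u_R)=\beta(u,R)$ for $u\in A_0\subset\M$. The closed symmetric sets $X_0=\set{(u,t):\beta(u,t)\le\rho^s}$ and $X_1=\set{(u,t):\beta(u,t)\ge\rho^s}$ cover $A_0\times[0,R]$ and contain $A_0\times\set{0}$ and $A_0\times\set{R}$ respectively, so $i(X_0),i(X_1)\ge i(A_0)$, while $i(A_0\times[0,R])=i(A_0)$ since $[0,R]$ is contractible. By the piercing property of the cohomological index, $i(X_0\cap X_1)\ge i(X_0)+i(X_1)-i(A_0\times[0,R])\ge i(A_0)$; and $(u,t)\mapsto\gamma(u_t)$ maps $X_0\cap X_1$ oddly and continuously into $\gamma(X)\cap\M_\rho$, so by monotonicity $i(\gamma(X)\cap\M_\rho)\ge i(A_0)$. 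As $\gamma\in\Gamma$ was arbitrary, $i^\ast(X)\ge i(A_0)\ge k+m-1$. Hence $X\in\A_j^\ast$, so each $\A_j^\ast\ne\emptyset$ and $c_j^\ast\le\sup_X\Phi<c^\ast$.

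Next I would bound $c_k^\ast$ from below. Let $M\in\A_k^\ast$; taking $\gamma=\id$ in the definition of the pseudo-index gives $i(M\cap\M_\rho)\ge i^\ast(M)\ge k$. The map $u\mapsto u_{1/\rho}$ is an odd homeomorphism of $W$ taking $\M_\rho$ onto $\M$ and $B=\set{u_\rho:u\in B_0}$ onto $B_0$, so $i(\M_\rho\setminus B)=i(\M\setminus B_0)\le k-1$; hence $M\cap B\ne\emptyset$, since otherwise $M\cap\M_\rho\subset\M_\rho\setminus B$ would force $i(M\cap\M_\rho)\le k-1$. Therefore $\max_M\Phi\ge\inf_B\Phi>0$ (the maximum is attained because $M$ is compact), so $c_k^\ast\ge\inf_B\Phi>0$, and $\A_{k+m-1}^\ast\subseteq\dots\subseteq\A_k^\ast$ gives $0<c_k^\ast\le\dots\le c_{k+m-1}^\ast<c^\ast$.

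Criticality and multiplicity then follow from a standard equivariant deformation argument: I claim $i(K_c)\ge p$ whenever $c:=c_j^\ast=\dots=c_{j+p-1}^\ast$, where $K_c$ is the critical set at level $c\in(0,c^\ast)$, which is compact by the \PS{c} condition, is symmetric since $\Phi$ is even, and contains no critical point at level $0$ provided $\Phi(0)\le 0$, as holds in the applications. Suppose $i(K_c)\le p-1$. Choose a closed symmetric neighborhood $N$ of $K_c$ with $i(N)=i(K_c)$ and $0\notin N$, a small $\eps>0$ with $[c-\eps,c+\eps]\subset(0,c^\ast)$, and, by the equivariant deformation lemma, an odd homeomorphism $\eta$ of $W$ with $\eta(\Phi^{c+\eps}\setminus N^\circ)\subset\Phi^{c-\eps}$ and $\eta=\id$ outside $\Phi^{-1}(0,c^\ast)$, so that $\eta\in\Gamma$. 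Take $M\in\A_{j+p-1}^\ast$ with $\max_M\Phi<c+\eps$ and set $M'=\eta(M\setminus N^\circ)$, which is compact, symmetric, and contained in $\Phi^{c-\eps}$. For each $\delta\in\Gamma$ we have $\delta\circ\eta\in\Gamma$, so $i(\delta(M')\cap\M_\rho)=i((\delta\circ\eta)(M\setminus N^\circ)\cap\M_\rho)\ge i^\ast(M\setminus N^\circ)\ge i^\ast(M)-i(N)\ge j$, using the pseudo-index inequality $i^\ast(M)\le i^\ast(M\setminus N^\circ)+i(N)$ and $i^\ast(M)\ge j+p-1$, $i(N)\le p-1$. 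Hence $M'\in\A_j^\ast$ and $c_j^\ast\le\max_{M'}\Phi\le c-\eps$, a contradiction; thus $i(K_c)\ge p$. In particular (with $p=1$) each $c_j^\ast$ is a critical value, and if several of the $c_j^\ast$ coincide then $i(K_c)\ge 2$ forces $K_c$ to be infinite; counting critical orbits level by level then yields at least $m$ distinct pairs of nontrivial critical points associated with $c_k^\ast,\dots,c_{k+m-1}^\ast$.

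The one nonroutine step is the estimate $i^\ast(X)\ge i(A_0)$ — the scaled cohomological linking computation forcing the deformed scaled cone $\gamma(X)$ to meet $\M_\rho$ in a set whose index is at least $i(A_0)$, the same phenomenon underlying Theorems \ref{Theorem 22} and \ref{Theorem 23}; everything after it is the usual pseudo-index bookkeeping together with elementary properties of $i$ and the equivariant deformation lemma. Making this rigorous through the piercing property of the cohomological index, rather than through an intermediate-value ``first hitting time'' $\tau(u)=\min\{t:\beta(u,t)=\rho^s\}$ along each ray (which need not depend continuously on $u$), is precisely where the cohomological index is needed in place of the genus.
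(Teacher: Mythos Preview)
Your overall strategy matches the paper's: first show $c_k^\ast\ge\inf_B\Phi>0$ by comparing indices on $\M_\rho$, then show $i^\ast(X)\ge i(A_0)\ge k+m-1$ so that $c_{k+m-1}^\ast\le\sup_X\Phi<c^\ast$, and finally run the standard pseudo-index deformation argument. The lower-bound step and the multiplicity discussion are fine.

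The gap is in your justification of $i(X_0\cap X_1)\ge i(A_0)$. The inequality you invoke,
\[
i(X_0\cap X_1)\ \ge\ i(X_0)+i(X_1)-i(X_0\cup X_1),
\]
is \emph{not} the piercing property and is \emph{not} a property of the cohomological index. It is false in general: on $S^2$ take two great circles $A$ and $B$; then $i(A)=i(B)=2$, $i(A\cup B)=2$, but $A\cap B$ is a single pair of antipodal points with $i(A\cap B)=1<2$. What Proposition~\ref{Proposition 7}\,$(i_7)$ actually says is that for an odd map $\varphi:C\times[0,1]\to C_0\cup C_1$ with $\varphi(C\times\{0\})\subset C_0$ and $\varphi(C\times\{1\})\subset C_1$, one has $i(\varphi(C\times[0,1])\cap C_0\cap C_1)\ge i(C)$. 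You can repair your argument by applying $(i_7)$ with $C=A_0$, $C_0=X_0$, $C_1=X_1$, and $\varphi(u,t)=(u,tR)$, which gives $i(X_0\cap X_1)\ge i(A_0)$ in one line, without any reference to $i(X_0)$ or $i(X_1)$ separately. The paper does this even more directly in $W$: with $C=A$, $C_0=\{u:I(u)\le\rho^s\}$, $C_1=\{u:I(u)\ge\rho^s\}$, and $\varphi(u,t)=\gamma(u_t)$, the piercing property yields $i(\gamma(X)\cap\M_\rho)\ge i(A)=i(A_0)$ immediately, bypassing the detour through $A_0\times[0,R]$ altogether.
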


The special case $u_t = tu$ of this theorem was proved in Yang and Perera \cite{MR3616328}.

\subsection{Applications to the Schr\"{o}dinger--Poisson--Slater equation}

We will give applications of our abstract theory to the Schr\"{o}dinger--Poisson--Slater type equation
\begin{equation} \label{52}
- \Delta u + \left(\frac{1}{4 \pi |x|} \star u^2\right) u = f(|x|,u) \quad \text{in } \R^3,
\end{equation}
where $f$ is a Carath\'{e}odory function on $[0,\infty) \times \R$ satisfying the growth condition
\begin{equation} \label{53}
|f(|x|,t)| \le a_1\, |t|^{q_1 - 1} + a_2\, |t|^{q_2 - 1} + a(x) \quad \text{for a.a.\! } x \in \R^3 \text{ and all } t \in \R
\end{equation}
for some constants $a_1, a_2 > 0$, $18/7 < q_1 < q_2 \le 6$, and $a \in L^r(\R^3)$ with $r \in [6/5,18/11)$. As pointed out in Lions \cite{MR636734} (see also Ruiz \cite{MR2679375}), the right space to look for solutions of this equation is
\[
E(\R^3) = \set{u \in D^{1,2}(\R^3) : \int_{\R^3} \int_{\R^3} \frac{u^2(x)\, u^2(y)}{|x - y|}\, dx\, dy < \infty}
\]
endowed with the norm
\[
\norm{u} = \left[\int_{\R^3} |\nabla u|^2\, dx + \left(\int_{\R^3} \int_{\R^3} \frac{u^2(x)\, u^2(y)}{|x - y|}\, dx\, dy\right)^{1/2}\right]^{1/2}.
\]
We work in the subspace $E_r(\R^3)$ of radial functions in $E(\R^3)$. It was shown in Ruiz \cite[Theorem 1.2]{MR2679375} that $E_r(\R^3)$ is embedded in $L^q(\R^3)$ continuously for $q \in (18/7,6]$ and compactly for $q \in (18/7,6)$, and these ranges are optimal by Mercuri et al. \cite[Theorem 4 and Theorem 5]{MR3568051} (see also Bellazzini et al.  \cite{MR3852465}). So, in view of our growth condition \eqref{53}, the functional
\[
\Phi(u) = \frac{1}{2} \int_{\R^3} |\nabla u|^2\, dx + \frac{1}{16 \pi} \int_{\R^3} \int_{\R^3} \frac{u^2(x)\, u^2(y)}{|x - y|}\, dx\, dy - \int_{\R^3} F(|x|,u)\, dx, \quad u \in E_r(\R^3),
\]
where $F(|x|,t) = \int_0^t f(|x|,\tau)\, d\tau$ is the primitive of $f$, is well-defined, $C^1$, and its critical points correspond to solutions of equation \eqref{52}.

The pure power equation
\begin{equation} \label{54}
- \Delta u + \left(\frac{1}{4 \pi |x|} \star u^2\right) u = \lambda\, |u|^{q-2}\, u \quad \text{in } \R^3,
\end{equation}
where $\lambda > 0$ and $q \in (18/7,6)$, was studied in Ruiz \cite{MR2679375} and Ianni and Ruiz \cite{MR2902293}. For $q \in (18/7,3)$, it was shown in \cite[Theorem 1.3]{MR2679375} that the corresponding functional
\[
\Phi_\lambda(u) = \frac{1}{2} \int_{\R^3} |\nabla u|^2\, dx + \frac{1}{16 \pi} \int_{\R^3} \int_{\R^3} \frac{u^2(x)\, u^2(y)}{|x - y|}\, dx\, dy - \frac{\lambda}{q} \int_{\R^3} |u|^q\, dx, \quad u \in E_r(\R^3)
\]
is coercive and has a minimizer at a negative level for all $\lambda > 0$. For $q \in (3,6)$, it was shown in \cite[Theorem 1.2]{MR2902293} that equation \eqref{54} has infinitely many radial solutions for all $\lambda > 0$ as a result of the $\Z_2$ symmetry of $\Phi_\lambda$. It was also noted in \cite{MR2902293} that in the borderline case of $q = 3$, if $u$ is a solution of the equation
\begin{equation} \label{55}
- \Delta u + \left(\frac{1}{4 \pi |x|} \star u^2\right) u = \lambda\, |u| u \quad \text{in } \R^3,
\end{equation}
then the entire $1$-parameter family of functions $t^2\, u(t\, \cdot),\, t \in \R$ are solutions and hence this equation may be thought of as a nonlinear eigenvalue problem. Moreover, a nondecreasing and unbounded sequence of positive eigenvalues with radial eigenfunctions was constructed using the Krasnoselskii genus in \cite[Theorem 1.3]{MR2902293}.

The eigenvalue problem \eqref{55} is not only nonlinear, but also nonhomogeneous. However, it has the scale invariance described above, so we will refer to it as a scaled eigenvalue problem. We will also refer to equation \eqref{54} as subscaled or superscaled depending on whether $q < 3$ or $q > 3$. More generally, we classify equation \eqref{52} as
\begin{enumroman}
\item subscaled if
    \[
    \lim_{|t| \to \infty}\, \frac{f(|x|,t)}{|t|\, t} = 0 \quad \text{uniformly a.e.},
    \]
\item asymptotically scaled if
    \[
    \lim_{|t| \to \infty}\, \frac{f(|x|,t)}{|t|\, t} = \lambda \quad \text{uniformly a.e.}
    \]
    for some $\lambda \in (0,\infty)$,
\item superscaled if
    \[
    \lim_{|t| \to \infty}\, \frac{f(|x|,t)}{|t|\, t} = \infty \quad \text{uniformly a.e.}
    \]
\end{enumroman}
We will prove existence and multiplicity results in all three cases by obtaining critical group estimates and constructing new linking geometries.

The eigenvalue problem \eqref{55} will play a central role in our results for equation \eqref{52}. However, eigenvalues based on the genus are not useful for obtaining nontrivial critical groups or for constructing linking sets, so we will use a new sequence of minimax eigenvalues $\seq{\lambda_k}$ based on the $\Z_2$-cohomological index of Fadell and Rabinowitz \cite{MR0478189} (see Definition \ref{Definition 1} and Theorem \ref{Theorem 1}). Eigenvalues based on the cohomological index were first introduced in Perera \cite{MR1998432} in the context of the $p$-Laplacian operator.

\subsubsection{Subcritical case} \label{1.2.1}

Our first contribution here is towards understanding the local geometry of the functional $\Phi$ near the origin when
\[
f(|x|,t) = \lambda\, |t| t + \o(t^2) \text{ as } t \to 0
\]
uniformly a.e.\! for some $\lambda \in \R$. We show that some critical group of $\Phi$ at the origin is nontrivial if $\lambda$ is not an eigenvalue of problem \eqref{55}. Recall that the critical groups are defined by
\[
C^l(\Phi,0) = H^l(\Phi^0,\Phi^0 \setminus \set{0}), \quad l \ge 0,
\]
where $\Phi^0 = \bgset{u \in E_r(\R^3) : \Phi(u) \le 0}$ and $H^\ast$ denotes cohomology with $\Z_2$ coefficients. We write
\begin{equation} \label{64}
f(|x|,t) = \lambda\, |t| t + g(|x|,t),
\end{equation}
and assume that
\begin{equation} \label{65}
|g(|x|,t)| \le a_3\, |t|^{q_3 - 1} + a_4\, |t|^{q_4 - 1} \quad \text{for a.a.\! } x \in \R^3 \text{ and all } t \in \R
\end{equation}
for some constants $a_3, a_4 > 0$ and $3 < q_3 < q_4 < 6$. We have the following theorem.

\begin{theorem} \label{Theorem 10}
Assume that \eqref{64} and \eqref{65} hold and that $\lambda$ is not an eigenvalue of problem \eqref{55}.
\begin{enumroman}
\item If $\lambda < \lambda_1$, then $C^l(\Phi,0) \isom \delta_{l0}\, \Z_2$, where $\delta$ is the Kronecker delta.
\item \label{Theorem 10.ii} If $\lambda > \lambda_1$, then $C^0(\Phi,0) = 0$.
\item \label{Theorem 10.iii} If $\lambda_k < \lambda < \lambda_{k+1}$, then $C^k(\Phi,0) \ne 0$.
\end{enumroman}
In particular, $C^l(\Phi,0)$ is nontrivial for some $l$.
\end{theorem}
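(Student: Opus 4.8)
The plan is to use the scaling $u_t(x) = t^2\, u(tx)$ (for which $s = 3$): since $q_3, q_4 > 3$, the term $g$ is of strictly higher order relative to this scaling, so near the origin $\Phi$ behaves like the variational functional $\Phi_\lambda(u) = I_s(u) - \lambda\, J_s(u)$ of the scaled eigenvalue problem \eqref{55}, where $I_s$ is the SPS potential and $J_s(u) = \frac{1}{3}\int_{\R^3}|u|^3\, dx$, exactly as in the excerpt. Write $\Phi = \Phi_\lambda - G$ with $G(u) = \int_{\R^3}G(|x|,u)\, dx$, $G(|x|,t) = \int_0^t g(|x|,\tau)\, d\tau$, so \eqref{65} gives $|G(|x|,t)| \le \frac{a_3}{q_3}|t|^{q_3} + \frac{a_4}{q_4}|t|^{q_4}$. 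I would record the identity that governs everything: for $v \in \M_s = \set{u : I_s(u) = 1}$ and $t \ge 0$, the homogeneities $I_s(v_t) = t^3$ and $J_s(v_t) = t^3\, J_s(v)$ give
\[
\Phi(v_t) = t^3\bigl(1 - \lambda\, J_s(v)\bigr) - G(v_t).
\]
Since $\int_{\R^3}|v_t|^q\, dx = t^{2q-3}\int_{\R^3}|v|^q\, dx$, $\M_s$ is bounded (coercivity of $I_s$), and $E_r(\R^3) \incl L^{q_3}(\R^3) \cap L^{q_4}(\R^3)$, we get $|G(v_t)| \le C_1\, t^{2q_3-3} + C_2\, t^{2q_4-3}$ with $C_1, C_2$ independent of $v \in \M_s$; as $2q_3 - 3, 2q_4 - 3 > 3$, this is $\o(t^3)$ as $t \to 0^+$, uniformly on $\M_s$. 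I would also use that $C^l(\Phi,0) = H^l(\Phi^0, \Phi^0 \setminus \set{0}) = H^l(\Phi^0 \cap U, (\Phi^0 \cap U) \setminus \set{0})$ for any open neighborhood $U$ of $0$ (excision), and that $\Phi(0) = 0$.

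For (i), Theorem \ref{Theorem 21}(i) gives $\lambda_1 = \min_{\M_s} \widetilde{\Psi}$, so $\lambda < \lambda_1$ forces $1 - \lambda\, J_s(v) = 1 - \lambda/\widetilde{\Psi}(v) \ge c_0 > 0$ for all $v \in \M_s$ (with $c_0 = 1$ if $\lambda \le 0$, and $c_0 = 1 - \lambda/\lambda_1$ if $0 < \lambda < \lambda_1$). Absorbing $|G(v_t)|$ as above, for $\rho > 0$ small enough $\Phi(v_t) \ge c_0\, t^3 - C_1\, t^{2q_3-3} - C_2\, t^{2q_4-3} > 0$ for all $v \in \M_s$, $0 < t \le \rho$; hence $\Phi > 0$ on $U \setminus \set{0}$ for $U = \set{u : I_s(u) < \rho^3}$, so $\Phi^0 \cap U = \set{0}$ and $C^l(\Phi,0) \isom \delta_{l0}\, \Z_2$. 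For (ii), $\lambda > \lambda_1$ produces $v \in \M_s$ with $\widetilde{\Psi}(v) = \lambda_1 < \lambda$, i.e.\ $1 - \lambda\, J_s(v) < 0$, so $\Phi(v_t) < 0$ for all small $t > 0$; since $v_t \to 0$, the connected component of $0$ in $\Phi^0$ is not $\set{0}$, whence $C^0(\Phi,0) = H^0(\Phi^0, \Phi^0 \setminus \set{0}) = 0$.

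The crux is (iii), where $\lambda_k < \lambda < \lambda_{k+1}$. The naive choice $A_0 = \widetilde{\Psi}^\lambda$ fails, because on it $1 - \lambda\, J_s(v)$ is strictly negative but not bounded away from $0$, so $G$ cannot be absorbed uniformly. The remedy is to use the eigenvalue gap and set
\[
A_0 = \widetilde{\Psi}^{\lambda_k} = \bgset{v \in \M_s : \widetilde{\Psi}(v) \le \lambda_k}, \qquad B_0 = \widetilde{\Psi}_{\lambda_{k+1}} = \bgset{v \in \M_s : \widetilde{\Psi}(v) \ge \lambda_{k+1}}.
\]
These are disjoint, nonempty, closed and symmetric, and $i(A_0) = i(\M_s \setminus B_0) = k$ by Theorem \ref{Theorem 21}\ref{Theorem 21.iii} (note $\M_s \setminus B_0 = \M_s \setminus \widetilde{\Psi}_{\lambda_{k+1}}$). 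On $A_0$ one has $1 - \lambda\, J_s(v) = 1 - \lambda/\widetilde{\Psi}(v) \le 1 - \lambda/\lambda_k < 0$, and on $B_0$ one has $1 - \lambda\, J_s(v) \ge 1 - \lambda/\lambda_{k+1} > 0$, both uniformly. Inserting these into the displayed formula for $\Phi(v_t)$ and absorbing $|G(v_t)| \le t^3(C_1\, \rho^{2q_3-6} + C_2\, \rho^{2q_4-6})$ for $t \le \rho$, I obtain, for $\rho$ small enough, $\Phi(v_t) < 0$ for $v \in A_0$, $0 < t \le \rho$, and $\Phi(v_t) > 0$ for $v \in B_0$, $0 < t \le \rho$; with $\Phi(v_0) = 0$ this is exactly \eqref{119}, with $\M = \M_s$ and $I = I_s$. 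Hence $\Phi$ has a scaled local linking near the origin in dimension $k$, and the theorem on scaled local linking gives $C^k(\Phi,0) \ne 0$. Finally, ``$C^l(\Phi,0)$ is nontrivial for some $l$'' follows by taking $l = 0$ in case (i) and $l = k$ in case (iii), the case $\lambda > \lambda_1$ being subsumed in (iii).

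I expect the routine work to be the scaling/embedding bookkeeping for $G$ and the (prerequisite) verification that the data $(E_r(\R^3),\, u_t = t^2\, u(t\, \cdot),\, \As,\, \Bs(u)v = \int_{\R^3}|u|\, u\, v\, dx)$ satisfy the axioms $(H_1)$--$(H_{12})$, so that Theorem \ref{Theorem 21} and the scaled local linking machinery apply to problem \eqref{55}. The genuinely delicate point is the choice of $A_0$ and $B_0$ in (iii): forcing a uniform sign for the leading coefficient $1 - \lambda\, J_s$ requires $\lambda$ to sit strictly between two consecutive minimax eigenvalues, and pinning the correct dimension $k$ relies on the index identities of Theorem \ref{Theorem 21}\ref{Theorem 21.iii} --- precisely the feature for which the $\Z_2$-cohomological index, rather than the genus, is needed.
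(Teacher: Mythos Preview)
Your argument is correct but takes a different route from the paper. The paper's proof is a two-line reduction: Lemma~\ref{Lemma 3}\ref{Lemma 3.ii} verifies the scaling hypothesis \eqref{17} on $g$, so Theorem~\ref{Theorem 3} (homotopy invariance of critical groups along the family $\Phi_\tau = \Phi_\lambda - (1-\tau)G$) gives $C^l(\Phi,0) \isom C^l(\Phi_\lambda,0)$ for all $l$, and then Theorem~\ref{Theorem 2} computes the right-hand side. You instead argue directly on $\Phi$: a strict-local-minimum estimate for (i), a curve of negative values for (ii), and for (iii) you invoke the scaled local linking of Definition~\ref{Definition 2}/Theorem~\ref{Theorem 16} with $A_0=\widetilde{\Psi}^{\lambda_k}$, $B_0=\widetilde{\Psi}_{\lambda_{k+1}}$ --- exactly the construction the paper uses later in Lemma~\ref{Lemma 6}. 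The paper's route is shorter and yields the stronger conclusion $C^l(\Phi,0)\isom C^l(\Phi_\lambda,0)$ in every degree, at the cost of invoking the homotopy-stability machinery (Proposition~\ref{Proposition 2}) and verifying that $0$ is isolated and that the family is $C^1$-continuous with a local \PS{} condition. Your route avoids that machinery and is more self-contained, but delivers only the three specific statements.

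One small point to tidy: your use of $C^l(\Phi,0)=H^l(\Phi^0,\Phi^0\setminus\{0\})$ and the excision to a neighborhood $U$ presupposes that $0$ is an isolated critical point of $\Phi$. This is true here (and the paper proves it inside Theorem~\ref{Theorem 3} by the rescaling-to-$\M_s$ argument: a sequence of nontrivial critical points $u_j\to 0$ would produce an eigenfunction at level $\lambda$), but it does not follow from your sign estimates alone and should be stated.
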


This is the first result in the literature on the critical groups of $\Phi$. Part \ref{Theorem 10.ii} shows that when $\lambda > \lambda_1$, the origin is not a strict local minimizer and hence $\Phi$ does not have the mountain pass geometry. However, the nontrivial critical group in part \ref{Theorem 10.iii} can be used to obtain multiple nontrivial solutions of subscaled equations when $\lambda > \lambda_1$ as our next result shows.

\begin{theorem} \label{Theorem 11}
Assume that $f$ satisfies \eqref{53} with $18/7 < q_1 < q_2 < 3$, \eqref{64} and \eqref{65} hold, and that $\lambda$ is not an eigenvalue of problem \eqref{55}.
\begin{enumroman}
\item If $\lambda > \lambda_1$, then equation \eqref{52} has a nontrivial solution.
\item If $\lambda > \lambda_2$, then equation \eqref{52} has two nontrivial solutions.
\end{enumroman}
\end{theorem}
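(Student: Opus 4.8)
The plan rests on two facts about $\Phi$ that hold because $q_1,q_2<3$: $\Phi$ is coercive on $E_r(\R^3)$, hence bounded below, and satisfies the \PS{} condition. I would prove these as in Ruiz \cite{MR2679375}, using the coercivity of $I_s$, the continuous and compact embeddings $E_r(\R^3)\incl L^q(\R^3)$ for $q\in(18/7,6]$ resp.\ $q\in(18/7,6)$, and the subcritical growth \eqref{53}. Granting this, $\Phi$ attains its infimum, and the key point for \emph{both} parts is that $\inf_{E_r}\Phi<0=\Phi(0)$ whenever $\lambda>\lambda_1$. Indeed, by $\lambda>\lambda_1=\min_{\M_s}\widetilde{\Psi}$ there is $w\in\M_s$ with $\Phi_\lambda(w)=1-\lambda J_s(w)<0$, where $\Phi_\lambda=I_s-\lambda J_s$; writing $f(|x|,t)=\lambda|t|t+g(|x|,t)$ as in \eqref{64}, setting $G(|x|,t)=\int_0^t g(|x|,\tau)\,d\tau$, and using $I_s(w_t)=t^sI_s(w)$, $J_s(w_t)=t^sJ_s(w)$ with the substitution $y=tx$,
\[
\Phi(w_t)=t^s\,\Phi_\lambda(w)-\int_{\R^3}G\big(|x|,t^2 w(tx)\big)\,dx,
\]
where the \emph{global} bound \eqref{65} gives $\big|\int_{\R^3}G(|x|,t^2 w(tx))\,dx\big|\le\tfrac{a_3}{q_3}t^{2q_3-3}\pnorm[q_3]{w}^{q_3}+\tfrac{a_4}{q_4}t^{2q_4-3}\pnorm[q_4]{w}^{q_4}=\o(t^s)$ as $t\to0^+$ (since $2q_3-3>3=s$ and $2q_4-3>3$). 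Hence $\Phi(w_t)<0$ for small $t>0$, so the minimizer $u_1$ satisfies $\Phi(u_1)<0=\Phi(0)$ and therefore $u_1\ne0$. This already proves part~(i).

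For part~(ii) I would produce a second nonzero solution using the scaled linking geometry. Since $\lambda>\lambda_2$ and $\lambda\notin\sigma(\As,\Bs)$, fix $k\ge2$ with $\lambda_k<\lambda<\lambda_{k+1}$ and set $A_0=\widetilde{\Psi}^{\lambda'}$, $B_0=\widetilde{\Psi}_{\lambda''}$ with $\lambda_k\le\lambda'<\lambda<\lambda''<\lambda_{k+1}$; by Theorem~\ref{Theorem 21}\,\ref{Theorem 21.iii} and monotonicity of the cohomological index, $i(A_0)=i(\M_s\setminus B_0)=k$. On $A_0$ one has $\Phi_\lambda\le1-\lambda/\lambda'<0$ and on $B_0$ one has $\Phi_\lambda\ge1-\lambda/\lambda''>0$, so the estimate above shows that for $\rho>0$ small, $\Phi(u_t)\le0$ on $A_0\times[0,\rho]$, $\Phi(u_t)>0$ on $B_0\times(0,\rho]$, and $\beta:=\inf\set{\Phi(u_\rho):u\in B_0}>0$ — i.e.\ $\Phi$ has a scaled local linking near the origin in dimension $k$ (this is also the content of the proof of Theorem~\ref{Theorem 10}\,\ref{Theorem 10.iii}). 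I would then apply the scaled linking theorem (Theorem~\ref{Theorem 23}) to $A_0,B_0$ with a suitable $R>\rho$ and $e\in\M_s\setminus A_0$; using $\Phi(u_t)=t^s\Phi_\lambda(u)-\int_{\R^3}G(|x|,u_t)\,dx$, the uniform bound $|\Phi_\lambda|\le M$ on $\M_s$, the bound \eqref{65}, and boundedness of $\Phi$ on bounded sets, one checks $\sup_A\Phi\le\inf_B\Phi$ and $\sup_X\Phi<\infty$, so that \PS{} yields a critical value $c\ge\inf_B\Phi=\beta>0$. Since $\Phi(0)=0<c$ and $\Phi(u_1)<0<c$, a critical point at level $c$ is nonzero and distinct from $u_1$, giving the second solution.

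The hard part will be the verification of $\sup_A\Phi\le\inf_B\Phi$ in the last step. Because $\Phi$ is coercive, $\Phi(u_t)\to+\infty$ as $t\to\infty$ along every ray through $\M_s$, so the outer pieces of the minimax set carry large positive values of $\Phi$ and the inequality can be arranged only at small scalings; this forces one to control the balance between $t^s\Phi_\lambda(u)$ and $\int_{\R^3}G(|x|,u_t)\,dx$ along \emph{entire} scaling rays — not merely near the origin — and to choose $R$ and $e$ so that the "cap" of $A$ also stays below level $\beta$. This is precisely where the global growth hypothesis \eqref{65} on $g=f-\lambda|t|t$, rather than just its order as $t\to0$, is needed.
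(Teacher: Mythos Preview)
Your argument for part~(i) is correct and in fact more direct than the paper's: you show $\inf\Phi<0$ by an explicit scaling estimate, while the paper deduces $u_0\ne0$ by comparing critical groups ($C^k(\Phi,0)\ne0$ for some $k\ge1$ versus $C^l(\Phi,u_0)\isom\delta_{l0}\,\Z_2$ at a minimizer). Both routes use the coercivity and \PS{} condition in the same way.

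Part~(ii), however, has a genuine gap. You correctly establish the scaled local linking near~$0$, but then try to feed it into the scaled linking theorem (Theorem~\ref{Theorem 23}), and the required inequality $\sup_A\Phi\le\inf_B\Phi$ cannot be arranged for this coercive functional. The obstacle is the cap of $A$: the set $\{\pi((1-\tau)u+\tau e):u\in A_0,\,\tau\in[0,1]\}$ is a contraction of $A_0$ to $e$ inside $\M_s$, and you have no control forcing these paths to stay in the region $\{\Phi_\lambda<0\}$. If some $v$ on the cap satisfies $\Phi_\lambda(v)\ge0$ (in particular if the path meets $B_0$), then for small $R>\rho$ one has $\Phi(v_R)\sim R^3\Phi_\lambda(v)>\rho^3\Phi_\lambda(v)\sim\Phi(v_\rho)$, and when $v\in B_0$ this already gives $\Phi(v_R)>\beta$. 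Choosing $R$ small does not help, and choosing $R$ large makes every piece of $A$ blow up since $\Phi$ is coercive. The global bound \eqref{65} controls only the size of $\widetilde G(u_t)$, not the sign of $\Phi_\lambda$ along the cap, so it cannot rescue the inequality. In short, Theorem~\ref{Theorem 23} is tailored to the superscaled regime where $\Phi(u_t)\to-\infty$ along rays; it is the wrong tool here.

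The paper avoids this by staying entirely on the Morse-theoretic side: from $C^k(\Phi,0)\ne0$ (which your local linking already yields via Theorem~\ref{Theorem 16}) it applies Proposition~\ref{Proposition 10} to produce a second critical point $u_1\ne0$ with $C^{k-1}(\Phi,u_1)\ne0$ or $C^{k+1}(\Phi,u_1)\ne0$; when $\lambda>\lambda_2$ one has $k\ge2$, so $k\pm1\ge1$ and the critical groups of the minimizer $u_0$ (nontrivial only in degree~$0$) force $u_1\ne u_0$. You could repair your proof of~(ii) by replacing the linking argument with this critical-group comparison.
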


For example, consider the equation
\begin{equation} \label{71}
- \Delta u + \left(\frac{1}{4 \pi |x|} \star u^2\right) u = \frac{\lambda\, |u| u}{1 + |u|} \quad \text{in } \R^3.
\end{equation}
The nonlinearity $f(t) = \lambda\, |t| t/(1 + |t|)$ satisfies
\[
|f(t)| \le |\lambda|\, |t|^{5/3} \quad \forall t \in \R
\]
since $|t|^{1/3} \le 2/3 + |t|/3 \le 1 + |t|$ by the Young's inequality, and $g(t) = f(t) - \lambda\, |t| t = - \lambda t^3/(1 + |t|)$ satisfies
\[
|g(t)| \le |\lambda|\, |t|^3 \quad \forall t \in \R.
\]
So the following corollary is immediate from Theorem \ref{Theorem 11}.

\begin{corollary}
Assume that $\lambda$ is not an eigenvalue of problem \eqref{55}.
\begin{enumroman}
\item If $\lambda > \lambda_1$, then equation \eqref{71} has a nontrivial solution.
\item If $\lambda > \lambda_2$, then equation \eqref{71} has two nontrivial solutions.
\end{enumroman}
\end{corollary}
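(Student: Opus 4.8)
The plan is to observe that this corollary is a direct specialization of Theorem~\ref{Theorem 11}: one only needs to verify that the concrete nonlinearity $f(t) = \lambda\,|t|\,t/(1+|t|)$ satisfies the three hypotheses of that theorem, namely the subscaled growth bound \eqref{53} with $18/7 < q_1 < q_2 < 3$, the decomposition \eqref{64}, and the bound \eqref{65} on the remainder $g$. The assumption that $\lambda$ is not an eigenvalue of \eqref{55} is carried over verbatim, and \eqref{64} holds tautologically with $g(t) := f(t) - \lambda\,|t|\,t$; so the only real content is the two growth estimates, which are essentially recorded in the text preceding the corollary and just need to be put in the exact form Theorem~\ref{Theorem 11} demands.

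For \eqref{53}: since $|f(t)| = |\lambda|\,|t|^2/(1+|t|)$, the weighted arithmetic--geometric mean (a case of Young's inequality) $|t|^{1/3} = |t|^{1/3}\cdot 1^{2/3} \le \tfrac13|t| + \tfrac23 \le 1 + |t|$ gives $|f(t)| \le |\lambda|\,|t|^{5/3}$ for all $t \in \R$. I would then use the elementary fact that a single power is dominated by a sum of two powers straddling it: fixing any exponents with $18/7 < q_1 < 8/3 < q_2 < 3$, one has $|t|^{5/3} \le |t|^{q_1-1}$ for $|t| \le 1$ (as $q_1 - 1 < 5/3$) and $|t|^{5/3} \le |t|^{q_2-1}$ for $|t| \ge 1$ (as $q_2 - 1 > 5/3$), hence $|f(t)| \le |\lambda|\,|t|^{q_1-1} + |\lambda|\,|t|^{q_2-1}$ for all $t$; this is \eqref{53} with $a_1 = a_2 = |\lambda|$, $a \equiv 0$, and $18/7 < q_1 < q_2 < 3$. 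For \eqref{65}: a direct computation gives $g(t) = -\lambda\,|t|^2 t/(1+|t|)$, so $|g(t)| \le |\lambda|\,|t|^3$; splitting again with any $3 < q_3 < 4 < q_4 < 6$ yields $|g(t)| \le |\lambda|\,|t|^{q_3-1} + |\lambda|\,|t|^{q_4-1}$ for all $t$, which is \eqref{65}.

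With \eqref{53} (in the sharpened range), \eqref{64}, \eqref{65}, and the non-eigenvalue hypothesis all verified, parts (i) and (ii) of Theorem~\ref{Theorem 11} apply without modification and deliver, respectively, one nontrivial solution of \eqref{71} when $\lambda > \lambda_1$ and two nontrivial solutions when $\lambda > \lambda_2$. I do not expect any genuine obstacle here: the corollary is immediate once the two pure-power majorizations of $f$ and $g$ are in hand, the only mildly delicate point being to choose the split exponents so that they land strictly inside the intervals $(18/7,3)$ and $(3,6)$ required by Theorem~\ref{Theorem 11}, which is possible because the natural exponents $5/3$ and $3$ lie strictly inside $(q_1-1,q_2-1)$-windows contained in those intervals.
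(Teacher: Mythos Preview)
Your proposal is correct and follows exactly the paper's approach: the paper records the two bounds $|f(t)|\le |\lambda|\,|t|^{5/3}$ and $|g(t)|\le |\lambda|\,|t|^3$ (with the same Young's-inequality justification for the first) and declares the corollary immediate from Theorem~\ref{Theorem 11}. Your extra step of splitting each single-power bound into a two-power sum to match the strict inequalities $q_1<q_2$ and $q_3<q_4$ literally required by \eqref{53} and \eqref{65} is a harmless formality that the paper leaves implicit.
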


Next we prove an existence result in the asymptotically scaled case. We write $f$ as in \eqref{64} and assume that
\begin{equation} \label{66}
|g(|x|,t)| \le a_5\, |t|^{q_5 - 1} + h(x) \quad \text{for a.a.\! } x \in \R^3 \text{ and all } t \in \R
\end{equation}
for some constant $a_5 > 0$, $q_5 \in (18/7,3)$, and $h \in L^r(\R^3)$ with $r \in [6/5,18/11)$. Since the eigenvalue problem \eqref{55} is not linear, here $\Phi$ does not have the classical saddle point geometry based on linear subspaces. First we show that $\Phi$ has a certain saddle point like geometry based on the scale invariance property of problem \eqref{55} (see Proposition \ref{Proposition 8}). Next we prove a new saddle point type theorem that can capture this geometry to produce a critical point (see Theorem \ref{Theorem 4}). Then we apply our saddle point theorem to prove the following existence result.

\begin{theorem} \label{Theorem 12}
Assume that \eqref{64} and \eqref{66} hold and that $\lambda$ is not an eigenvalue of problem \eqref{55}. Then equation \eqref{52} has a solution.
\end{theorem}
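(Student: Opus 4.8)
The plan is to recast \eqref{52} as a critical point problem for $\Phi$ and to apply the scaled saddle point theorem (Theorem~\ref{Theorem 22}) in $W = E_r(\R^3)$ with $I = I_s$, after splitting according to the location of $\lambda$ relative to the minimax eigenvalues $\lambda_k$ of \eqref{55} furnished by Theorem~\ref{Theorem 21}. Writing $f(|x|,t) = \lambda\, |t|\, t + g(|x|,t)$ as in \eqref{64}, we have
\[
\Phi(u) = \Phi_\lambda(u) - \int_{\R^3} G(|x|,u)\, dx, \qquad \Phi_\lambda(u) = I_s(u) - \lambda\, J_s(u),
\]
where $J_s(u) = \frac{1}{3} \int_{\R^3} |u|^3\, dx$ is the potential of the scaled operator $\Bs$ in \eqref{55} and $G(|x|,t) = \int_0^t g(|x|,\tau)\, d\tau$ satisfies, by \eqref{66}, $|G(|x|,t)| \le \frac{a_5}{q_5}\, |t|^{q_5} + h(x)\, |t|$ with $q_5 \in (18/7,3)$. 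Since $\lambda \notin \sigma(\As,\Bs)$ and $\lambda_k \nearrow \infty$, either $\lambda < \lambda_1$ or $\lambda_k < \lambda < \lambda_{k+1}$ for a unique $k \ge 1$. The first case is handled directly: $\lambda_1 = \min_{\M_s} \widetilde{\Psi}$ yields $\lambda\, J_s(u) \le (\lambda/\lambda_1)\, I_s(u)$ with $\lambda/\lambda_1 < 1$, so $\Phi_\lambda(u) \ge (1 - \lambda/\lambda_1)\, I_s(u)$, while the Gagliardo--Nirenberg type inequalities available in $E_r(\R^3)$ (Ruiz \cite{MR2679375}, Mercuri et al.\ \cite{MR3568051}) bound $\int_{\R^3} |G(|x|,u)|\, dx$ by $\o(I_s(u))$ as $\norm{u} \to \infty$ because $q_5 < 3$; hence $\Phi$ is coercive. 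As $I_s$ is weakly lower semicontinuous and $J_s$ and $u \mapsto \int_{\R^3} G(|x|,u)\, dx$ are weakly continuous by the compact embeddings $E_r(\R^3) \incl L^q(\R^3)$, $q \in (18/7,6)$, $\Phi$ attains its infimum at a critical point, which solves \eqref{52}.

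Assume now $\lambda_k < \lambda < \lambda_{k+1}$. Fix $\mu_1, \mu_2$ with $\lambda_k < \mu_1 < \lambda < \mu_2 < \lambda_{k+1}$ and set $A_0 = \widetilde{\Psi}^{\mu_1}$ and $B_0 = \widetilde{\Psi}_{\mu_2}$; these are disjoint nonempty closed symmetric subsets of $\M_s$, and Theorem~\ref{Theorem 21}~\ref{Theorem 21.iii} gives $i(A_0) = i(\widetilde{\Psi}^{\mu_1}) = k = i(\M_s \setminus \widetilde{\Psi}_{\mu_2}) = i(\M_s \setminus B_0) < \infty$. With $X$, $A$, $B$ as in Theorem~\ref{Theorem 22}, I would verify the required geometry. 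On $A = \set{u_R : u \in A_0}$ one has $J_s(u) \ge 1/\mu_1$, so $\Phi_\lambda(u_R) = R^s\, \Phi_\lambda(u) \le R^s(1 - \lambda/\mu_1) < 0$, whereas $|\int_{\R^3} G(|x|,u_R)\, dx| \le C_1 R^{2q_5 - 3} + C_2 R^{2 - 3/r'}$ with both exponents strictly below $s = 3$; hence $\sup_A \Phi \to -\infty$ as $R \to \infty$. On $B = \set{u_t : u \in B_0,\, t \ge 0}$ one has $J_s(u) \le 1/\mu_2$, so $\Phi_\lambda(u_t) = t^s\, \Phi_\lambda(u) \ge (1 - \lambda/\mu_2)\, t^s$ with $1 - \lambda/\mu_2 > 0$, and therefore $\Phi(u_t) \ge (1 - \lambda/\mu_2)\, t^s - C_1 t^{2q_5 - 3} - C_2 t^{2 - 3/r'}$, a function of $t \ge 0$ bounded below by a constant $-m$ depending only on $\lambda$ and $\mu_2$. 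Finally $\sup_X \Phi < \infty$ since $X$ is bounded (by $(H_4)$ and boundedness of $\M_s$) and $\Phi$ maps bounded sets to bounded sets under \eqref{53}. Choosing $R$ so large that $\sup_A \Phi \le -m \le \inf_B \Phi$, Theorem~\ref{Theorem 22} provides a critical value $c \in [-m,\, \sup_X \Phi]$ of $\Phi$, once \PS{c} is established.

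The verification of the \PS{} condition is the step I expect to be the main obstacle, and it is where the hypothesis $\lambda \notin \sigma(\As,\Bs)$ enters decisively. Let $\seq{u_j}$ be a \PS{} sequence. To show boundedness, suppose $\norm{u_j} \to \infty$; then $I_s(u_j) \to \infty$ by $(H_{10})$, and we rescale by $t_j = I_s(u_j)^{-1/s} \to 0$, so that $v_j := (u_j)_{t_j} \in \M_s$ is bounded and, along a subsequence, $v_j \wto v$ with $v_j \to v$ in $L^q(\R^3)$, $q \in (18/7,6)$. Since $q_5 < 3$, the Gagliardo--Nirenberg inequalities make the $g$-contributions negligible after rescaling: $\int_{\R^3} |g(|x|,u_j)\, u_j|\, dx / I_s(u_j) \to 0$, and for each fixed $\phi \in E_r(\R^3)$ one has $t_j^s \int_{\R^3} g(|x|,u_j)\, \phi_{1/t_j}\, dx \to 0$ together with $t_j^s\, \Phi'(u_j)\, \phi_{1/t_j} \to 0$ (the factor $t_j^s$ absorbs the growth $\norm{\phi_{1/t_j}} = \O(t_j^{-s})$ allowed by $(H_5)$). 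Using the scaling identity for $\As$ and $\Bs$, this gives $\As(v_j)\, \phi - \lambda\, \Bs(v_j)\, \phi \to 0$ for every $\phi$, so $v_j$ is an asymptotic solution of \eqref{55}; moreover $\Phi(u_j)/I_s(u_j) \to 0$ and $J_s(u_j)/I_s(u_j) = J_s(v_j) \to J_s(v)$ force $J_s(v) = 1/\lambda$, so $v \ne 0$. Since $\As(v_j)\, v_j = s\, I_s(v_j) = s$ and $\Bs(v_j) \to \Bs(v)$ by $(H_9)$, we obtain $\As(v_j)(v_j - v) = s - \As(v_j)\, v \to s - \lambda\, \Bs(v)\, v = s - s\lambda\, J_s(v) = 0$, whence $(H_7)$ yields $v_j \to v$ strongly and, by continuity, $\As(v) = \lambda\, \Bs(v)$ --- i.e.\ $v$ is an eigenfunction of \eqref{55} with eigenvalue $\lambda$, contradicting $\lambda \notin \sigma(\As,\Bs)$. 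Thus $\seq{u_j}$ is bounded; passing to $u_j \wto u$, the nonlinear terms in $\Phi'(u_j)(u_j - u)$ vanish in the limit (using the compact embeddings and $h \in L^r = (L^{r'})^\ast$), so $\As(u_j)(u_j - u) \to 0$ and $(H_7)$ gives $u_j \to u$ strongly. Hence \PS{} holds, the value $c$ of Theorem~\ref{Theorem 22} is attained, and the corresponding critical point of $\Phi$ is a solution of \eqref{52}.
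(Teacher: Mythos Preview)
Your overall strategy --- reduce to the abstract asymptotically scaled setting, treat $\lambda<\lambda_1$ by coercivity/minimisation and $\lambda_k<\lambda<\lambda_{k+1}$ by the scaled saddle point theorem (Theorem~\ref{Theorem 22}) with $A_0\subset\widetilde\Psi^{\mu_1}$, $B_0=\widetilde\Psi_{\mu_2}$, and prove \PS{} by rescaling a hypothetical unbounded sequence onto $\M_s$ --- is exactly the route the paper takes (via Theorem~\ref{Theorem 6} and Lemma~\ref{Lemma 1}, applied through Lemma~\ref{Lemma 3}~\ref{Lemma 3.iii}). The saddle-point geometry and the bounded-\PS{} step are fine.

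There is, however, a genuine error in your boundedness argument for \PS{} sequences. You write $\As(v_j)\,v_j = s\,I_s(v_j) = s$. This Euler-type identity would hold if $I_s$ were $s$-homogeneous in the ordinary sense $I_s(\tau u)=\tau^s I_s(u)$, but here the scaling is $I_s(u_t)=t^s I_s(u)$ with $u_t(x)=t^2 u(tx)$, and $I_s$ is \emph{not} homogeneous under ordinary dilation: for the Schr\"odinger--Poisson--Slater functional one has
\[
\As(u)\,u=\int_{\R^3}|\nabla u|^2\,dx+\frac{1}{4\pi}\!\int\!\!\!\int\frac{u^2(x)u^2(y)}{|x-y|}\,dx\,dy,
\qquad
3\,I_s(u)=\frac{3}{2}\int_{\R^3}|\nabla u|^2\,dx+\frac{3}{16\pi}\!\int\!\!\!\int\frac{u^2(x)u^2(y)}{|x-y|}\,dx\,dy,
\]
which differ unless both terms vanish. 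Consequently your computation of $\As(v_j)(v_j-v)$ collapses.

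The fix is straightforward and already implicit in what you wrote. From $\Phi'(u_j)\to 0$ you get, for \emph{every} $w\in E_r(\R^3)$,
\[
\As((v_j)_{\widetilde t_j})\,w_{\widetilde t_j}
=\lambda\,\Bs((v_j)_{\widetilde t_j})\,w_{\widetilde t_j}
+\widetilde g((v_j)_{\widetilde t_j})\,w_{\widetilde t_j}
+\o(1)\,\|w_{\widetilde t_j}\|,
\]
and by \eqref{66} (this is Lemma~\ref{Lemma 3}~\ref{Lemma 3.iii}, i.e.\ condition \eqref{27}) the $\widetilde g$-term is $\o(\widetilde t_j^{\,s})\|w\|$ \emph{uniformly} in $w$. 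Dividing by $\widetilde t_j^{\,s}$ and using $(H_5)$ yields
\[
\As(v_j)\,w=\lambda\,\Bs(v_j)\,w+\o(1)\big(\|w\|+\widetilde t_j^{\,-s}\|w_{\widetilde t_j}\|\big),
\]
with $\o(1)$ independent of $w$; now take $w=v_j-v$ (bounded), use $(H_9)$ to kill the $\Bs$-term, and $(H_5)$ to bound $\widetilde t_j^{\,-s}\|w_{\widetilde t_j}\|$, obtaining $\As(v_j)(v_j-v)\to 0$ and hence $v_j\to v$ by $(H_7)$. Alternatively, and closer to your own ingredients: your estimate $\int|g(|x|,u_j)\,u_j|\,dx=\o(I_s(u_j))$ combined with $\Phi'(u_j)\,u_j=\o(\|u_j\|)=\o(\widetilde t_j^{\,s})$ gives directly $\As(v_j)\,v_j-\lambda\,\Bs(v_j)\,v_j\to 0$; together with $\As(v_j)\,v\to\lambda\,\Bs(v)\,v$ (which you already have for fixed test functions) and $\Bs(v_j)\,v_j\to\Bs(v)\,v$ (from $(H_9)$), this again yields $\As(v_j)(v_j-v)\to 0$. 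Either route replaces the false identity and completes your proof.
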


In particular, we have the following alternative.

\begin{corollary}
Either the equation
\[
- \Delta u + \left(\frac{1}{4 \pi |x|} \star u^2\right) u = \lambda\, |u| u \quad \text{in } \R^3
\]
has a nontrivial solution, or the equation
\[
- \Delta u + \left(\frac{1}{4 \pi |x|} \star u^2\right) u = \lambda\, |u| u + h(x) \quad \text{in } \R^3
\]
has a solution for all $h \in L^r(\R^3)$ with $r \in [6/5,18/11)$.
\end{corollary}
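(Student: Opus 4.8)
The plan is to argue by a dichotomy according to whether or not $\lambda$ is an eigenvalue of the scaled eigenvalue problem \eqref{55}.

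If $\lambda$ \emph{is} an eigenvalue of \eqref{55}, then by the very definition of eigenvalue there is a function $u \in E_r(\R^3) \setminus \set{0}$ satisfying \eqref{55}. This $u$ is a nontrivial solution of the first equation, so the first alternative holds.

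If $\lambda$ is \emph{not} an eigenvalue of \eqref{55}, I would apply Theorem \ref{Theorem 12} with the nonlinearity $f(|x|,t) := \lambda\, |t| t + h(x)$, where $h \in L^r(\R^3)$ with $r \in [6/5,18/11)$ is the given forcing term (taken radial, so that $f(|x|,\cdot)$ is a Carath\'{e}odory function of the form used in our framework). Then the splitting \eqref{64} holds with remainder $g(|x|,t) = h(x)$, and this $g$ trivially satisfies the structural bound \eqref{66}, since $|g(|x|,t)| = |h(x)| \le a_5\, |t|^{q_5 - 1} + |h(x)|$ for any $a_5 > 0$ and any fixed $q_5 \in (18/7,3)$, with $h \in L^r(\R^3)$ in the required range. (One also checks at once that $f$ meets the standing growth condition \eqref{53}, e.g.\ with $a = |h|$, some $q_1 \in (18/7,3)$, and some $q_2 \in (3,6]$, using $|t|^2 \le |t|^{q_1-1} + |t|^{q_2-1}$.) Since $\lambda$ is not an eigenvalue of \eqref{55}, Theorem \ref{Theorem 12} produces a solution of equation \eqref{52} for this $f$, which is precisely a solution of the second equation. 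As $h$ was an arbitrary element of $L^r(\R^3)$, the second alternative holds.

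Every $\lambda \in \R$ falls into exactly one of these two cases — in particular any $\lambda \le 0$ is automatically not an eigenvalue, since $\sigma(\As,\Bs) \subset (0,\infty)$ — so the two alternatives exhaust all possibilities and the proof is complete. There is no genuine analytic obstacle here: the substance is entirely contained in Theorem \ref{Theorem 12}, and the only points needing a moment's thought are the bookkeeping ones above, namely that an eigenvalue of \eqref{55} furnishes, by definition, a nontrivial solution of the first equation, and that the pure forcing perturbation $g(|x|,t) = h(x)$ fits hypothesis \eqref{66} for every admissible $h$.
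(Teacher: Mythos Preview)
Your proposal is correct and matches the paper's approach exactly: the paper presents this corollary without proof, introducing it with ``In particular, we have the following alternative'' immediately after Theorem \ref{Theorem 12}, so the intended argument is precisely the eigenvalue/non-eigenvalue dichotomy you spell out. Your bookkeeping checks (that $g(|x|,t)=h(x)$ satisfies \eqref{66} trivially, that \eqref{53} holds via $|t|^2\le |t|^{q_1-1}+|t|^{q_2-1}$, and that $h$ must be radial to fit the $E_r(\R^3)$ framework) are the right details to verify and are all fine.
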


Next we prove the existence of a nontrivial solution in the superscaled case when \eqref{64} and \eqref{65} hold. We assume that $G(|x|,t) = \int_0^t g(|x|,\tau)\, d\tau$ satisfies
\begin{equation} \label{61}
c_0\, |t|^q \le G(|x|,t) \le \frac{1}{q}\, g(|x|,t)\, t \quad \text{for a.a.\! } x \in \R^3 \text{ and all } t \in \R
\end{equation}
for some constant $c_0 > 0$ and $q \in (4,6)$. A model equation is
\[
- \Delta u + \left(\frac{1}{4 \pi |x|} \star u^2\right) u = \lambda\, |u| u + |u|^{q-2}\, u \quad \text{in } \R^3,
\]
but we do not assume that $g$ is odd in $t$. For example, we may consider the equation
\[
- \Delta u + \left(\frac{1}{4 \pi |x|} \star u^2\right) u = \lambda\, |u| u + a\, (u^+)^{q-1} - b\, (u^-)^{q-1} \quad \text{in } \R^3,
\]
where $u^\pm = \max \set{\pm u,0}$ are the positive and negative parts of $u$, respectively, and $a, b > 0$ are different constants. We have the following theorem.

\begin{theorem} \label{Theorem 13}
Assume that \eqref{64}, \eqref{65}, and \eqref{61} hold and that $\lambda$ is not an eigenvalue of problem \eqref{55}. Then equation \eqref{52} has a nontrivial solution at a positive energy level.
\end{theorem}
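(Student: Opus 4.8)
The plan is to write equation~\eqref{52} as the critical point equation of the $C^1$ functional $\Phi = \Phi_\lambda - G$ on $E_r(\R^3)$, where $\Phi_\lambda(u) = I_s(u) - \lambda\, J_s(u)$ is the scaled functional of the eigenvalue problem~\eqref{55} --- so that $\Phi_\lambda(u_t) = t^s\, \Phi_\lambda(u)$ with $s = 3$ and $u_t(x) = t^2\, u(tx)$ --- and $G(u) = \int_{\R^3} G(|x|,u)\, dx \ge 0$ by~\eqref{61}. Since $\lambda \notin \sigma(\As,\Bs)$, either $\lambda < \lambda_1$, or $\lambda_k < \lambda < \lambda_{k+1}$ for a unique $k \ge 1$; I would handle the second (generic) case with the scaled linking theorem, Theorem~\ref{Theorem 23}, and the first with a mountain pass argument, after first checking compactness.

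\emph{The Palais--Smale condition.} I would show that $\Phi$ satisfies \PS{c} for every $c \in \R$. If $\seq{u_j}$ is a bounded \PS{c} sequence, then along a subsequence $u_j \wto u$, $\Bs(u_j) \to \Bs(u)$ by $(H_9)$, and $g(u_j) \to g(u)$ in the dual of $E_r(\R^3)$ by the compact embeddings $E_r(\R^3) \incl L^{q_3}(\R^3),\, L^{q_4}(\R^3)$ of Ruiz~\cite{MR2679375}; hence $\As(u_j)(u_j - u) \to 0$ and $(H_7)$ gives $u_j \to u$. The crux is thus the boundedness of $\seq{u_j}$: in the combination $q\,\Phi(u_j) - \Phi'(u_j)\, u_j$ the Coulomb energy $\tfrac1{16\pi}\int_{\R^3}\int_{\R^3} u^2(x)\, u^2(y)\, |x - y|^{-1}\, dx\, dy$ occurs with coefficient $q - 4 > 0$, so that it is $q > 4$, not merely $q > s$, that is needed; the two inequalities in~\eqref{61}, used together with the Poho\v{z}aev identity for~\eqref{52} and the coercivity $(H_{10})$ of $I_s$, then control the surviving term $\tfrac13\int_{\R^3}|u_j|^3\, dx = J_s(u_j)$ and force $\seq{u_j}$ to be bounded. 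I expect this to be the main obstacle: $E_r(\R^3)$ is not a standard Sobolev space and, for large $\lambda$, testing against $u_j$ alone does not close the estimate, so the Poho\v{z}aev identity is indispensable; since the members of a \PS{} sequence need not satisfy it, one works with an augmented minimax in the scaling variable $t$ (in the spirit of Jeanjean's monotonicity trick) or on a Poho\v{z}aev-type constraint, as is done for related equations in~\cite{MR2679375,MR2902293}.

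\emph{Linking geometry when $\lambda_k < \lambda < \lambda_{k+1}$.} On $\M_s$ one has $\Phi_\lambda = 1 - \lambda\, J_s = 1 - \lambda/\widetilde{\Psi}$, so $\set{\Phi_\lambda \le 0} \cap \M_s = \widetilde{\Psi}^\lambda$, and by Theorem~\ref{Theorem 21}~\ref{Theorem 21.iii} applied at $\lambda$ and at any $\lambda' \in (\lambda,\lambda_{k+1})$ we get $i(\widetilde{\Psi}^\lambda) = i(\M_s \setminus \widetilde{\Psi}_{\lambda'}) = k$. I would take $A_0 \subseteq \widetilde{\Psi}^\lambda$ a compact symmetric set with $i(A_0) = k$ (one exists since $i(\widetilde{\Psi}^\lambda) = k < \infty$), $B_0 = \widetilde{\Psi}_{\lambda'}$, which is disjoint from $A_0$ as $\lambda < \lambda'$, and $e \in \M_s \setminus A_0$. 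For the sets $X, A, B$ of Theorem~\ref{Theorem 23} with $R > \rho > 0$: on the cone part $\set{u_t : u \in A_0,\, 0 \le t \le R}$ of $A$, $\Phi(u_t) = t^s\,\Phi_\lambda(u) - G(u_t) \le 0$ because $\Phi_\lambda \le 0$ on $A_0$ and $G \ge 0$; on the cap, whose points are $w_R$ with $w$ ranging over the \emph{compact} set $P = \pi\big(\set{(1-\tau)u + \tau e : u \in A_0,\, \tau \in [0,1]}\big) \subset \M_s$, we get $\Phi(w_R) \le R^s - c_0\, \delta^q\, R^{2q-3} \to -\infty$, where $\delta = \min_{w \in P}\norm[q]{w} > 0$, since $G(w_R) \ge c_0\, R^{2q-3}\norm[q]{w}^q$ and $2q - 3 > s$; and on $B = \set{u_\rho : u \in B_0}$, $\Phi(u_\rho) = \rho^s\,\Phi_\lambda(u) - G(u_\rho) \ge \rho^s(1 - \lambda/\lambda') - C(\rho^{2q_3 - 3} + \rho^{2q_4 - 3}) \ge \tfrac12\,\rho^s(1 - \lambda/\lambda') > 0$ for $\rho$ small, using~\eqref{65} and $2q_3 - 3,\, 2q_4 - 3 > s$. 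Finally $\sup_X\Phi < \infty$ since $X$ is bounded by $(H_4)$. Fixing $\rho$ small and then $R$ large yields $\sup_A\Phi \le 0 < \inf_B\Phi$, so Theorem~\ref{Theorem 23} produces a critical value $c \ge \inf_B\Phi > 0$ (the sets link, so $\gamma(X) \cap B \ne \emptyset$ for every admissible $\gamma$), whose associated critical point is nontrivial since $\Phi(0) = 0$.

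\emph{The case $\lambda < \lambda_1$.} Here $\Phi_\lambda \ge 1 - \lambda/\lambda_1 > 0$ on $\M_s$, so $\Phi(u_\rho) \ge \tfrac12\,\rho^s(1 - \lambda/\lambda_1) =: \alpha > 0$ for all $u \in \M_s$ and all small $\rho$, while $\Phi(v_R) = R^s\,\Phi_\lambda(v) - G(v_R) \to -\infty$ as $R \to \infty$ for any fixed $v \in \M_s$. Since $\M_\rho = \set{u \in E_r(\R^3) : I_s(u) = \rho^s}$ separates $0$ from $v_R$ (as $I_s$ is continuous with $I_s(0) = 0 < \rho^s < R^s = I_s(v_R)$), the mountain pass theorem gives a critical value $c \ge \alpha > 0$, and hence a nontrivial critical point at a positive energy level.
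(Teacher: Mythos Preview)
Your linking geometry is essentially the paper's: the abstract Theorem~\ref{Theorem 9} (equivalently your Theorem~\ref{Theorem 23}) is applied with $A_0$ a compact symmetric subset of $\widetilde{\Psi}^\lambda$ of index $k$ and $B_0 = \widetilde{\Psi}_{\lambda_{k+1}}$ (you take $\widetilde{\Psi}_{\lambda'}$ for $\lambda' \in (\lambda,\lambda_{k+1})$, which works just as well). Your estimates on $A$, $B$, and $X$ match the paper's verification, and the mountain pass argument for $\lambda < \lambda_1$ is the same as in the proof of Theorem~\ref{Theorem 9}.

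The genuine gap is the Palais--Smale condition. You correctly locate the difficulty in the cubic term $\lambda J_s(u_j)$, but then defer to a Poho\v{z}aev identity (which, as you note, is unavailable along a \PS{} sequence) and suggest Jeanjean's monotonicity trick or a Poho\v{z}aev constraint as a workaround. Neither is needed, and neither fits cleanly with the linking construction you have set up. The paper's Lemma~\ref{Lemma 4} proves boundedness directly: assuming $\norm{u_j} \to \infty$, rescale $u_j = (\widetilde{u}_j)_{\widetilde{t}_j}$ with $\widetilde{u}_j \in \M_s$ bounded and $\widetilde{t}_j \to \infty$, and divide the identity $\Phi(u_j) = c + \o(1)$ by $\widetilde{t}_j^{\,3}$. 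The \emph{first} inequality in \eqref{61}, $G \ge c_0\,|t|^q$, then forces
\[
c_0\, \widetilde{t}_j^{\,2(q-3)} \int_{\R^3} |\widetilde{u}_j|^q\, dx \le \text{(bounded)},
\]
so $\widetilde{u}_j \to 0$ in $L^q$; interpolation against a bounded $L^r$ norm ($r \in (18/7,3)$) gives $\widetilde{u}_j \to 0$ in $L^3$ as well. This is the step that neutralises the $\lambda$-term without any Poho\v{z}aev information. Only \emph{after} that do you form $q\,\Phi(u_j) - \Phi'(u_j)\,u_j$ and use the \emph{second} inequality in \eqref{61} to obtain
\[
\Big(\tfrac{q}{2} - 1\Big) \int |\nabla \widetilde{u}_j|^2 + \tfrac{1}{4\pi}\Big(\tfrac{q}{4} - 1\Big) \iint \frac{\widetilde{u}_j^{\,2}(x)\,\widetilde{u}_j^{\,2}(y)}{|x-y|}\,dx\,dy \le \o(1),
\]
contradicting $\widetilde{u}_j \in \M_s$ since $q > 4$. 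The missing idea in your outline is precisely this two-stage use of \eqref{61}: first the lower bound to kill $J_s$ via interpolation, then the upper bound in the $q\Phi - \Phi' u$ combination.
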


When $\lambda > \lambda_1$, Theorem \ref{Theorem 10} \ref{Theorem 10.ii} shows that $\Phi$ does not have the mountain pass geometry, and it also does not have the classical linking geometry based on linear subspaces since the eigenvalue problem \eqref{55} is not linear. Furthermore, Ljusternik-Schnirelman theory cannot be applied to this equation since we do not assume that $g$ is an odd function of $t$. So the existence of a nontrivial solution does not follow from classical variational arguments. We will show that $\Phi$ has a certain nonlinear linking geometry based on the scaling property of problem \eqref{55} (see Proposition \ref{Proposition 9}), and prove a new linking theorem based on this geometry to produce a nontrivial critical point (see Theorem \ref{Theorem 5}).

Next we remove the assumption that $\lambda$ is not an eigenvalue in Theorem \ref{Theorem 11} when $G$ is negative and in Theorem \ref{Theorem 13}, by introducing a notion of local linking based on scaling (see Definition \ref{Definition 2}) and proving a new abstract result on the existence of a nontrivial critical point based on this notion (see Theorem \ref{Theorem 17} and Corollary \ref{Corollary 2}). For the sake of simplicity, we assume that $f$ is independent of $x$ and consider the equation
\begin{equation} \label{100}
- \Delta u + \left(\frac{1}{4 \pi |x|} \star u^2\right) u = f(u) \quad \text{in } \R^3,
\end{equation}
where $f$ is a continuous function on $\R$ satisfying the growth condition
\begin{equation} \label{153}
|f(t)| \le a_1\, |t|^{q_1 - 1} + a_2\, |t|^{q_2 - 1} \quad \forall t \in \R
\end{equation}
for some constants $a_1, a_2 > 0$ and $18/7 < q_1 < q_2 < 6$. As before, we write
\begin{equation} \label{154}
f(t) = \lambda\, |t| t + g(t),
\end{equation}
and assume that
\begin{equation} \label{101}
|g(t)| \le a_3\, |t|^{q_3 - 1} + a_4\, |t|^{q_4 - 1} \quad \forall t \in \R
\end{equation}
for some constants $a_3, a_4 > 0$ and $3 < q_3 < q_4 < 6$. Let $G(t) = \int_0^t g(\tau)\, d\tau$. We have the following theorem in the subscaled case.

\begin{theorem} \label{Theorem 19}
Assume that $f$ satisfies \eqref{153} with $18/7 < q_1 < q_2 < 3$, \eqref{154} and \eqref{101} hold, and that $G(t) < 0$ for all $t \in \R \setminus \set{0}$.
\begin{enumroman}
\item If $\lambda > \lambda_1$, then equation \eqref{100} has a nontrivial solution.
\item If $\lambda > \lambda_2$, then equation \eqref{100} has two nontrivial solutions.
\end{enumroman}
\end{theorem}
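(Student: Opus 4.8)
The plan is to produce the first nontrivial solution by minimization and the second by a scaled local linking at the origin combined with a Morse-theoretic count.

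Write $F(t) = \frac{\lambda}{3}\, |t|^3 + G(t)$, so the functional associated with \eqref{100} is
\[
\Phi(u) = I_s(u) - \frac{\lambda}{3} \int_{\R^3} |u|^3\, dx - \int_{\R^3} G(u)\, dx, \qquad u \in E_r(\R^3),
\]
with $s = 3$ and $u_t(x) = t^2\, u(tx)$. Since $f$ satisfies \eqref{153} with $q_1, q_2 < 3$, the argument of Ruiz \cite[Theorem 1.3]{MR2679375} shows that $\Phi$ is coercive and bounded below (the hypothesis $G < 0$ only helps here, since $- \int_{\R^3} G(u)\, dx \ge 0$); together with the compact embeddings $E_r(\R^3) \incl L^q(\R^3)$, $q \in (18/7,6)$, and $(H_7)$, this gives that $\Phi$ satisfies the \PS{} condition, hence attains its infimum at some $u_1$. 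As $\lambda > \lambda_1 = \min_{\M_s} \widetilde{\Psi}$, there is $u \in \M_s$ with $\widetilde{\Psi}(u) < \lambda$, i.e.\ $\Phi_\lambda(u) = 1 - \lambda\, J_s(u) < 0$; and since $|G(t^2 u)| \le C\, (|t^2 u|^{q_3} + |t^2 u|^{q_4})$ by \eqref{101},
\[
\Phi(u_t) = t^3\, \Phi_\lambda(u) - t^{-3} \int_{\R^3} G(t^2\, u(y))\, dy = t^3\, \Phi_\lambda(u) + \O(t^{2q_3 - 3}) + \O(t^{2q_4 - 3}),
\]
with $2q_3 - 3 > 3$ and $2q_4 - 3 > 3$, so $\Phi(u_t) < 0$ for small $t > 0$. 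Thus $\inf_{E_r(\R^3)} \Phi < 0 = \Phi(0)$, so $u_1 \ne 0$; this proves $(i)$.

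For $(ii)$ I would further show that $\Phi$ has a scaled local linking near the origin in a dimension $k \ge 2$. Set $k = \max \set{j \ge 1 : \lambda_j < \lambda}$; then $k \ge 2$ (as $\lambda > \lambda_2$) and $\lambda_k < \lambda \le \lambda_{k+1}$ with $\lambda_k < \lambda_{k+1}$. Take $A_0 = \bgset{u \in \M_s : \widetilde{\Psi}(u) \le \lambda_k}$ and $B_0 = \bgset{u \in \M_s : \widetilde{\Psi}(u) \ge \lambda}$; these are disjoint nonempty closed symmetric subsets of $\M_s$. Theorem \ref{Theorem 21} \ref{Theorem 21.iii} gives $i(A_0) = i(\widetilde{\Psi}^{\lambda_k}) = k$ and $i(\M_s \setminus \widetilde{\Psi}_{\lambda_{k+1}}) = k$, and since $\widetilde{\Psi}^{\lambda_k} \subseteq \M_s \setminus B_0 \subseteq \M_s \setminus \widetilde{\Psi}_{\lambda_{k+1}}$, monotonicity of the index forces $i(\M_s \setminus B_0) = k$. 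On $A_0$ we have $\Phi_\lambda(u) = 1 - \lambda\, J_s(u) \le 1 - \lambda/\lambda_k =: - \delta < 0$, so by the expansion above $\Phi(u_t) \le - \delta\, t^3 + \frac{\delta}{2}\, t^3 \le 0$ for $0 \le t \le \rho$ with $\rho$ small enough; on $B_0$ we have $\Phi_\lambda(u) \ge 0$ and, since $G < 0$ off the origin and $u \not\equiv 0$, $- t^{-3} \int_{\R^3} G(t^2\, u(y))\, dy > 0$ for $t > 0$, so $\Phi(u_t) > 0$ for $0 < t \le \rho$. This is a scaled local linking near the origin in dimension $k$, whence $C^k(\Phi,0) \ne 0$ by the critical group estimate for functionals with a scaled local linking.

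To extract a second nontrivial solution for $(ii)$, I would argue by Morse theory. Coercivity and the \PS{} condition give $C^l(\Phi,\infty) := H^l(E_r(\R^3), \Phi^a) \isom \delta_{l0}\, \Z_2$ for $a < \inf \Phi$. Suppose $\Phi$ had no nontrivial critical point besides $u_1$; then (noting $g(0) = 0$ by \eqref{101}, so $0$ is a critical point) the critical set is $\set{0, u_1}$, both points are isolated, and $u_1$ being a global minimizer gives $C^l(\Phi,u_1) \isom \delta_{l0}\, \Z_2$. Picking $\inf \Phi < b < 0 < B$ with no critical value in $[B,\infty)$, the space $E_r(\R^3)$ deformation retracts onto $\Phi^B$ and $\Phi^b$ onto a set with trivial reduced cohomology (namely $\Phi^{c+\eps}$ with $c = \inf\Phi$, since $\Phi^{c-\eps} = \emptyset$ and $C^l(\Phi,u_1) \isom \delta_{l0}\, \Z_2$), while $H^l(\Phi^B, \Phi^b) \isom C^l(\Phi,0)$; the long exact sequence of the pair $(\Phi^B, \Phi^b)$ then forces $C^l(\Phi,0) = 0$ for all $l \ge 2$, contradicting $C^k(\Phi,0) \ne 0$ with $k \ge 2$. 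Hence $\Phi$ has at least two distinct nontrivial critical points, proving $(ii)$. The points I expect to be delicate are the index identities $i(A_0) = i(\M_s \setminus B_0) = k$ when $\lambda$ is itself an eigenvalue of \eqref{55} — the choice $k = \max \set{j : \lambda_j < \lambda}$ is precisely what provides the strict separation $\lambda_k < \lambda \le \lambda_{k+1}$ needed to apply Theorem \ref{Theorem 21} \ref{Theorem 21.iii} — and the scaling expansion, for which one needs $q_3 > 3$ so that the $G$-contribution is of higher order, $\O(t^{2q_3 - 3}) = \o(t^3)$, than the leading term $t^3\, \Phi_\lambda(u)$.
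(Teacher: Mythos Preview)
Your argument is correct and follows the same overall strategy as the paper: coercivity and \PS{} give a global minimizer, a scaled local linking in dimension $k$ (where $\lambda_k < \lambda \le \lambda_{k+1}$) gives $C^k(\Phi,0) \ne 0$, and Morse theory produces the second solution. The details differ slightly. For $(i)$ the paper deduces $u_0 \ne 0$ from the mismatch $C^k(\Phi,0) \ne 0 = C^k(\Phi,u_0)$, whereas you show $\inf \Phi < 0$ directly from the expansion $\Phi(u_t) = t^3\, \Phi_\lambda(u) + \o(t^3)$, which is more elementary. For $(ii)$ the paper invokes Proposition~\ref{Proposition 10} (a shifting-type result from Perera~\cite{MR1749421}) to manufacture a critical point $u_1$ with $C^{k\pm 1}(\Phi,u_1) \ne 0$, while you run the long exact sequence of $(\Phi^B,\Phi^b)$ directly; both lead to the same contradiction. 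The paper's Lemma~\ref{Lemma 6} takes $B_0 = \widetilde{\Psi}_{\lambda_{k+1}}$ rather than your $B_0 = \widetilde{\Psi}_\lambda$, but your index sandwich $\widetilde{\Psi}^{\lambda_k} \subset \M_s \setminus B_0 \subset \M_s \setminus \widetilde{\Psi}_{\lambda_{k+1}}$ handles this correctly. One small remark: the coercivity you attribute to Ruiz~\cite[Theorem 1.3]{MR2679375} is for the pure power case; in the paper's framework the right reference for general $f$ satisfying \eqref{153} with $q_1,q_2 < 3$ is Lemma~\ref{Lemma 2} via Lemma~\ref{Lemma 3}\ref{Lemma 3.i}.
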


In particular, equation \eqref{71} has a nontrivial solution for all $\lambda > \lambda_1$ and two nontrivial solutions for all $\lambda > \lambda_2$.

We have the following theorem in the superscaled case.

\begin{theorem} \label{Theorem 18}
Assume that \eqref{154} and \eqref{101} hold and that
\begin{equation} \label{102}
c_0\, |t|^q \le G(t) \le \frac{1}{q}\, g(t)\, t \quad \forall t \in \R
\end{equation}
for some constant $c_0 > 0$ and $q \in (4,6)$. Then equation \eqref{100} has a nontrivial solution for all $\lambda \in \R$.
\end{theorem}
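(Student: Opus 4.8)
The plan is to realize \eqref{100} variationally on $W = E_r(\R^3)$ via $\Phi(u) = I_s(u) - \frac{\lambda}{3}\int_{\R^3}\abs{u}^3\,dx - \int_{\R^3}G(u)\,dx$, with the scaling $u_t(x) = t^2 u(tx)$ and $s = 3$, writing $\Phi = \Phi_\lambda - N$ where $\Phi_\lambda(u) = I_s(u) - \lambda J_s(u)$, $J_s(u) = \frac13\int_{\R^3}\abs{u}^3\,dx$ and $N(u) = \int_{\R^3}G(u)\,dx$, so that $\Phi_\lambda(u_t) = t^3\Phi_\lambda(u)$ and $N(u_t) = t^{-3}\int_{\R^3}G(t^2 u)\,dx$. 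First I would record that $\Phi$ is $C^1$ and satisfies \PS{} at every level: the right-hand inequality in \eqref{102} is an Ambrosetti--Rabinowitz condition which, since $q > 4$ makes the Coulomb term enter $\Phi - \frac1q\Phi'$ with a favorable sign, bounds \PS{} sequences, after which strong convergence of a subsequence follows from property $(H_7)$ of the operator in \eqref{100} and the compactness of $E_r(\R^3)\incl L^q(\R^3)$ for $q \in (18/7,6)$. Then I would dispose of the case $\lambda \notin \sigma$: since \eqref{153}, \eqref{154}, \eqref{101}, \eqref{102} are the $x$-independent instances of \eqref{53}, \eqref{64}, \eqref{65}, \eqref{61}, Theorem \ref{Theorem 13} already gives a nontrivial solution (at positive energy). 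So from now on I assume $\lambda$ is an eigenvalue of \eqref{55}; then $\lambda \ge \lambda_1 > 0$.

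The second step is to exhibit a scaled local linking for $\Phi$ near the origin. Let $K$ be the largest index with $\lambda_K = \lambda$, so $\lambda_K = \lambda < \lambda_{K+1}$, and set $A_0 = \widetilde{\Psi}^{\lambda}$ and $B_0 = \widetilde{\Psi}_{\lambda_{K+1}}$; by Theorem \ref{Theorem 21} \ref{Theorem 21.iii} these are disjoint nonempty closed symmetric subsets of $\M_s$ with $i(A_0) = i(\M_s\setminus B_0) = K$. On $A_0$, $\widetilde{\Psi}(u) \le \lambda$ forces $\lambda J_s(u) = \lambda/\widetilde{\Psi}(u) \ge 1$, so $\Phi_\lambda(u) \le 0$, and since $G \ge 0$ (hence $N \ge 0$) by \eqref{102}, $\Phi(u_t) = t^3\Phi_\lambda(u) - N(u_t) \le 0$ for all $t \ge 0$. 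On $B_0$, $\widetilde{\Psi}(u) \ge \lambda_{K+1}$ gives $\Phi_\lambda(u) \ge 1 - \lambda/\lambda_{K+1} =: \delta > 0$, while \eqref{101} yields $N(u_t) \le C(t^{2q_3 - 3} + t^{2q_4 - 3})$ uniformly for $u \in \M_s$ with $2q_3 - 3, 2q_4 - 3 > 3$ (as $q_3, q_4 > 3$), so for $\rho > 0$ small $\Phi(u_t) \ge \delta t^3 - C(t^{2q_3-3}+t^{2q_4-3}) > 0$ for $0 < t \le \rho$. Thus $\Phi$ has a scaled local linking near the origin in dimension $K \ge 1$ in the sense of Definition \ref{Definition 3}.

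By Corollary \ref{Corollary 2} it then suffices to show $\Phi^a$ is contractible for some $a < 0$, and the idea is to deform along the scaling orbits $t \mapsto u_t$ rather than along straight rays. For $v \in W$ put $h_v(t) = \Phi(v_t) = t^3\Phi_\lambda(v) - t^{-3}\int_{\R^3}G(t^2 v)\,dx$; differentiating at $t = 1$ and using both inequalities in \eqref{102} --- namely $g(w)w \ge qG(w) \ge 0$, whence $3G(w) - g(w)w \le (3-q)G(w) \le 0$ since $q > 3$ --- gives $h_v'(1) = 3\Phi(v) + 2\int_{\R^3}\bigl(3G(v) - g(v)v\bigr)dx \le 3\Phi(v)$, and by the semigroup property $(H_1)$, $h_v'(t_0) \le \frac{3}{t_0}h_v(t_0)$ for every $t_0 > 0$. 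For $u \in \M_s$ we thus have $h_u(0) = \Phi(0) = 0$, $h_u'(t_0) < 0$ whenever $h_u(t_0) < 0$, and $h_u(t) \le t^3\Phi_\lambda(u) - c_0 t^{2q-3}\norm[L^q]{u}^q \to -\infty$ as $t \to \infty$ (using $q > 3$ and $u \ne 0$), so for each $a < 0$ the set $\set{t \ge 0 : h_u(t) \le a}$ is a half-line $[T(u), \infty)$ with $T(u) \in (0,\infty)$, $h_u(T(u)) = a$ and $T$ continuous on $\M_s$. Consequently $\Phi^a = \set{u_t : u \in \M_s,\ t \ge T(u)}$ with $0 \notin \Phi^a$, and $(v,\tau) \mapsto \bigl(\pi(v)\bigr)_{(1-\tau)I_s(v)^{1/3} + \tau T(\pi(v))}$ is a deformation retraction of $\Phi^a$ onto the level set $\set{\Phi = a}$; the latter is homeomorphic to $\M_s$ through $\pi$, and $\M_s$ is homeomorphic, by radial projection together with $(H_{11})$, to the unit sphere of $E_r(\R^3)$, which is contractible since $E_r(\R^3)$ is infinite-dimensional. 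Hence $\Phi^a$ is contractible, and Corollary \ref{Corollary 2} produces a nontrivial critical point of $\Phi$, i.e., a nontrivial solution of \eqref{100}. I expect the crux to be this last part --- the inequality $h_u' \le 3 h_u$ on $\set{\Phi < 0}$ together with $h_u(t) \to -\infty$, which is exactly where condition \eqref{102} is used in full and which forces the choice of foliating $W \setminus \set{0}$ by scaling orbits; the \PS{} verification, where $q > 4$ is essential for controlling the Coulomb term, is the other point requiring care.
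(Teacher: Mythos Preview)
Your proposal follows the paper's architecture: verify \PS{}, establish a scaled local linking with $G \ge 0$, show $\Phi^a$ is contractible via the differential inequality $h_u'(t) \le \frac{3}{t}\,h_u(t)$ (which is exactly the paper's computation of $\varphi_u'$), and conclude by Corollary \ref{Corollary 2}. Your endgame for contractibility (retract $\Phi^a$ onto $\{\Phi = a\}\cong \M_s\cong$ unit sphere) is a legitimate variant of the paper's (realize $\Phi^a$ as a deformation retract of $E_r(\R^3)\setminus\{0\}$). The paper, incidentally, does not split off the nonresonant case via Theorem \ref{Theorem 13}; it treats every $\lambda \ge \lambda_1$ uniformly through Lemma \ref{Lemma 6} (ii), which avoids your first issue below.

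Two points need repair. First, after reducing to $\lambda \in \sigma(\As,\Bs)$ you write ``let $K$ be the largest index with $\lambda_K = \lambda$'', but the spectrum may contain eigenvalues not in the sequence $(\lambda_k)$ (see the Remark after Theorem \ref{Theorem 1}), so this $K$ need not exist. Take instead the $K$ with $\lambda_K \le \lambda < \lambda_{K+1}$ and set $A_0 = \widetilde{\Psi}^{\lambda_K}$; your local-linking verification goes through unchanged since $\widetilde{\Psi}(u) \le \lambda_K \le \lambda$ on $A_0$.

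Second, and more substantively, the \PS{} sketch via $\Phi - \tfrac1q\Phi'(\cdot)\cdot$ does not by itself bound \PS{} sequences. The combination $q\Phi(u) - \Phi'(u)u$ contains, besides the gradient and Coulomb terms with favorable sign, the term $-\lambda\,\tfrac{q-3}{3}\int_{\R^3}|u|^3\,dx$. Since $\int|u|^3$ scales exactly like $I_s(u)$, for large $\lambda > 0$ this term overwhelms the others and no coercive lower bound follows. The paper's Lemma \ref{Lemma 4} proceeds differently: it projects an unbounded \PS{} sequence onto $\M_s$ via $u_j = (\widetilde{u}_j)_{\widetilde{t}_j}$ with $\widetilde{t}_j \to \infty$, uses the \emph{lower} bound $G \ge c_0|t|^q$ in \eqref{102} to force $\widetilde{u}_j \to 0$ in $L^q$, hence in $L^3$ by interpolation, and only then combines $q\Phi - \Phi'(\cdot)\cdot$ (where now the cubic term is $o(1)$) to contradict $\widetilde{u}_j \in \M_s$. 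This is where both inequalities in \eqref{102} and $q > 4$ are genuinely used together, and it is precisely the ``care'' you flagged at the end.
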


In particular, we have the following corollaries.

\begin{corollary}
The equation
\[
- \Delta u + \left(\frac{1}{4 \pi |x|} \star u^2\right) u = \lambda\, |u| u + |u|^{q-2}\, u \quad \text{in } \R^3
\]
has a nontrivial solution for all $\lambda \in \R$.
\end{corollary}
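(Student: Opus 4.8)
The plan is to obtain this corollary as a direct instance of Theorem \ref{Theorem 18}: I would verify that the explicit nonlinearity
\[
f(t) = \lambda\, |t|\, t + |t|^{q-2}\, t, \qquad q \in (4,6),
\]
satisfies every hypothesis of that theorem, after which the conclusion follows at once. Writing $f$ in the form \eqref{154} with the given $\lambda$, the remainder is $g(t) = |t|^{q-2}\, t$, which is continuous and odd, with primitive $G(t) = \int_0^t |\tau|^{q-2}\, \tau\, d\tau = |t|^q/q$.

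First I would check the standing growth bound \eqref{153} and the bound \eqref{101} on $g$. Since $|f(t)| \le |\lambda|\, |t|^2 + |t|^{q-1} \le (|\lambda| + 1)\seq{|t|^2 + |t|^{q-1}}$, the bound \eqref{153} holds with $q_1 = 3$ and $q_2 = q$, and indeed $18/7 < q_1 < q_2 < 6$ since $q \in (4,6)$. For \eqref{101}, one has $|g(t)| = |t|^{q-1}$; taking $q_3 = (3+q)/2 \in (3,q)$ and $q_4 = (q+6)/2 \in (q,6)$ and distinguishing $|t| \le 1$ from $|t| \ge 1$ gives $|g(t)| \le |t|^{q_3-1} + |t|^{q_4-1}$, so \eqref{101} holds.

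Next I would verify \eqref{102}. Since $g(t)\, t = |t|^{q-2}\, t^2 = |t|^q$, one has
\[
\frac{1}{q}\, g(t)\, t = \frac{|t|^q}{q} = G(t),
\]
whence $c_0\, |t|^q \le G(t) = \frac{1}{q}\, g(t)\, t$ for all $t$ with $c_0 = 1/q > 0$ and $q \in (4,6)$, exactly as required. With \eqref{154} trivially in force, all hypotheses of Theorem \ref{Theorem 18} are met, and that theorem furnishes a nontrivial solution of \eqref{100} with this $f$ for every $\lambda \in \R$, which is precisely the assertion of the corollary.

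There is no genuine obstacle here; the corollary is a routine verification that the explicit data $g(t) = |t|^{q-2}\, t$, $G(t) = |t|^q/q$ fit Theorem \ref{Theorem 18}. The only point worth a second look is \eqref{153}, where $f$ exhibits two different growth rates — quadratic near the origin, coming from the eigenvalue term $\lambda\, |t|\, t$, and of order $q-1$ near infinity, coming from $|t|^{q-2}\, t$ — but both are accommodated simultaneously by the choice $q_1 = 3$, $q_2 = q$, which is legitimate precisely because $q \in (4,6)$ places both exponents strictly inside $(18/7,6)$.
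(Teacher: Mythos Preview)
Your proposal is correct and matches the paper's approach: the corollary is stated immediately after Theorem \ref{Theorem 18} without a separate proof, precisely because $g(t) = |t|^{q-2}\, t$ with $q \in (4,6)$ is the model case for which the hypotheses \eqref{154}, \eqref{101}, and \eqref{102} are immediate. Your explicit choice of exponents $q_3 = (3+q)/2$ and $q_4 = (q+6)/2$ to handle \eqref{101} is a clean way to make this verification rigorous.
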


\begin{corollary}
The equation
\[
- \Delta u + \left(\frac{1}{4 \pi |x|} \star u^2\right) u = \lambda\, |u| u + a\, (u^+)^{q-1} - b\, (u^-)^{q-1} \quad \text{in } \R^3,
\]
where $u^\pm = \max \set{\pm u,0}$ and $a, b > 0$ are constants, has a nontrivial solution for all $\lambda \in \R$.
\end{corollary}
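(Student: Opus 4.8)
The plan is to obtain this corollary as a direct application of Theorem \ref{Theorem 18}, so the entire task reduces to checking that the specific nonlinearity
\[
f(t) = \lambda\, |t|\, t + a\, (t^+)^{q-1} - b\, (t^-)^{q-1}, \qquad q \in (4,6),
\]
satisfies hypotheses \eqref{153}, \eqref{154}, \eqref{101}, and \eqref{102}. First I would put $g(t) = a\, (t^+)^{q-1} - b\, (t^-)^{q-1}$, which is continuous on $\R$ since $q - 1 > 0$, so that $f$ is written in the form \eqref{154}. The growth condition \eqref{153} then follows from $|f(t)| \le |\lambda|\, |t|^2 + \max\set{a,b}\, |t|^{q-1}$, which is of the required type with $q_1 = 3$ (note $3 > 18/7$) and $q_2 = q < 6$, and indeed $q_1 < q_2$ because $q > 4$.

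Next I would verify \eqref{101}. Since $|g(t)| \le \max\set{a,b}\, |t|^{q-1}$ and $q \in (4,6) \subset (3,6)$, one may take $a_4 = \max\set{a,b}$, $q_4 = q$, and any $a_3 > 0$ with $q_3 \in (3,q)$; then $|g(t)| \le a_3\, |t|^{q_3-1} + a_4\, |t|^{q_4-1}$ holds trivially because the first summand is nonnegative, and $3 < q_3 < q_4 < 6$ as required. The one computational point is the evaluation of $G(t) = \int_0^t g(\tau)\, d\tau$: for $t \ge 0$ one finds $G(t) = (a/q)\, t^q$, and for $t \le 0$ a sign-tracking change of variable gives $G(t) = (b/q)\, |t|^q$. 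In both cases $G(t) = \tfrac{1}{q}\, g(t)\, t$ exactly, so the upper bound in \eqref{102} holds (with equality), and $G(t) \ge c_0\, |t|^q$ with $c_0 = \min\set{a,b}/q > 0$, giving the lower bound. All hypotheses of Theorem \ref{Theorem 18} are thus met, and it yields a nontrivial solution of the equation for every $\lambda \in \R$.

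There is no genuine obstacle in this argument: it is a routine verification of the hypotheses already assembled in Theorem \ref{Theorem 18}. The only place that calls for a little care is the computation of $G$ on the negative half-line, where the signs of $g(t)$ and of $t$ must be tracked correctly — it is precisely this bookkeeping that produces the clean identity $G(t) = \tfrac{1}{q}\, g(t)\, t$ and hence the validity of the two-sided bound \eqref{102} for the piecewise-power term, even though $g$ is not odd.
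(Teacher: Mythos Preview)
Your proposal is correct and matches the paper's intent: the corollary is presented there without proof, as an immediate consequence of Theorem~\ref{Theorem 18}, and your verification of hypotheses \eqref{153}, \eqref{154}, \eqref{101}, and \eqref{102} for $g(t) = a\,(t^+)^{q-1} - b\,(t^-)^{q-1}$ is exactly what is required. The computation $G(t) = \tfrac{1}{q}\,g(t)\,t$ and the choice $c_0 = \min\{a,b\}/q$ are correct.
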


\subsubsection{Critical case} \label{1.2.2}

Next, in the spirit of Brezis and Nirenberg \cite{MR709644}, we consider the critical growth equation
\begin{equation} \label{79}
- \Delta u + \left(\frac{1}{4 \pi |x|} \star u^2\right) u = \lambda\, |u|^{q-2}\, u + u^5 \quad \text{in } \R^3,
\end{equation}
where $\lambda > 0$ and $q \in [3,6)$, which has the added difficulty of lack of compactness (see also Ruf and Gazzola \cite{MR1441856}). It was shown in Ianni and Ruiz \cite[Corollary 2.6]{MR2902293} that this equation has no nontrivial solution in $E(\R^3) \cap H^2_\loc(\R^3)$ when $\lambda = 0$. We will prove some multiplicity results when $\lambda > 0$.

For $q = 3$, we will show that equation \eqref{79} has $m$ distinct pairs of nontrivial solutions for all $\lambda$ in a suitably small left neighborhood of any eigenvalue of problem \eqref{55} with multiplicity $m \ge 1$.

\begin{theorem} \label{Theorem 25}
Let $\lambda_k = \cdots = \lambda_{k+m-1} < \lambda_{k+m}$ for some $k, m \ge 1$. Then $\exists \delta_k > 0$ such that the equation
\begin{equation} \label{185}
- \Delta u + \left(\frac{1}{4 \pi |x|} \star u^2\right) u = \lambda\, |u| u + u^5 \quad \text{in } \R^3
\end{equation}
has $m$ distinct pairs of nontrivial solutions at positive energy levels for all $\lambda \in (\lambda_k - \delta_k,\lambda_k)$.
\end{theorem}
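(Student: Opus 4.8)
The strategy is to fit equation \eqref{185} into the abstract multiplicity framework of the last abstract theorem (the symmetric pseudo-index theorem with a local Palais--Smale condition). Here $W = E_r(\R^3)$ with the scaling $u_t(x) = t^2\, u(tx)$, $s = 3$, $I = I_s$ is the potential of the scaled operator $\As$ (so $\M = \M_s$), and the even functional is
\[
\Phi(u) = \frac{1}{2} \int_{\R^3} |\nabla u|^2\, dx + \frac{1}{16 \pi} \int_{\R^3} \int_{\R^3} \frac{u^2(x)\, u^2(y)}{|x - y|}\, dx\, dy - \frac{\lambda}{3} \int_{\R^3} |u|^3\, dx - \frac{1}{6} \int_{\R^3} u^6\, dx.
\]
The first task is to establish a local \PS{c} condition: there is $c^\ast > 0$, independent of $\lambda$ in a bounded range, such that $\Phi$ satisfies \PS{c} for $c \in (0,c^\ast)$. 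The obstruction to compactness is the critical term $u^6$ and the borderline embedding $E_r(\R^3) \incl L^6(\R^3)$; by a concentration-compactness / Brezis--Nirenberg type argument one expects $c^\ast$ to be governed by the best Sobolev constant $S$, something like $c^\ast = \frac{1}{3}\, S^{3/2}$, with the nonlocal term and the lower-order $|u|^3$ term not affecting the threshold. Establishing this carefully — showing a \PS{c} sequence below the threshold converges strongly, using the radial setting to kill the translation invariance and Strauss-type compactness for the subcritical pieces — is the main technical point and the one I expect to be the principal obstacle.

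Next I would verify the linking geometry required by the abstract theorem. Take $\lambda_k = \cdots = \lambda_{k+m-1} < \lambda_{k+m}$. By Theorem \ref{Theorem 21} (ii), the set $E_{\lambda_k}$ of eigenfunctions of problem \eqref{55} on $\M_s$ has cohomological index $\ge m$; more usefully, using the linking structure from Theorem \ref{Theorem 21} \ref{Theorem 21.iii} one produces a compact symmetric $A_0 \subseteq \M$ with $i(A_0) \ge k+m-1$ built from a finite-dimensional piece of the ``eigenspace'' up to level $\lambda_k$, and a closed symmetric $B_0 \subseteq \M$ with $i(\M \setminus B_0) \le k-1$ coming from the sublevel set $\widetilde{\Psi}^{\lambda_k - \eps}$ (so $B_0 \supseteq \widetilde{\Psi}_{\lambda_k}$ roughly). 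On $B = \set{u_\rho : u \in B_0}$ one has, for $u \in B_0$ (so $I_s(u) = 1$ and $J_s(u) \le 1/\lambda_k$),
\[
\Phi(u_\rho) = \rho^3\, I_s(u) - \lambda \rho^3\, J_s(u) - \tfrac{1}{6}\int u_\rho^6 \ge \rho^3\Big(1 - \tfrac{\lambda}{\lambda_k}\Big) - C\rho^{12},
\]
using the scaling $\int u_\rho^6 = \rho^{12}\int u^6$ and boundedness of $\M$; since $\lambda < \lambda_k$, for $\rho$ small this is $\ge \beta_\rho > 0$. On $A = \set{u_R : u \in A_0}$, using $I_s(u) = 1$ and $J_s(u) \ge 1/\lambda_k$ on the span $A_0$ together with positivity of the critical term, $\Phi(u_R) \le R^3(1 - \lambda/\lambda_k) \le 0$ once $\lambda$ is close enough to $\lambda_k$ from below and $R$ large. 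The quantitative point is that the ``energy gap'' one gains on $A$ scales like $R^3(\lambda_k - \lambda)/\lambda_k$, so by choosing $\delta_k$ small one can simultaneously force $\sup_X \Phi < c^\ast$ (the critical threshold), since on $X = \set{u_t : u \in A_0,\ 0 \le t \le R}$ one estimates $\Phi(u_t) \le t^3(1 - \lambda/\lambda_k + o(1))$ which stays below $c^\ast$ for $\lambda$ in a small left neighborhood of $\lambda_k$ and $R$ fixed appropriately. This is where $\delta_k$ is chosen.

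With the geometry $\sup_A \Phi \le 0 < \inf_B \Phi$ and $\sup_X \Phi < c^\ast$ in place, together with $i(A_0) \ge k+m-1$ and $i(\M \setminus B_0) \le k-1$, the abstract symmetric multiplicity theorem applies directly: the pseudo-index values $c_k^\ast \le \cdots \le c_{k+m-1}^\ast$ lie in $(0, c^\ast)$, each is a critical value, and $\Phi$ has $m$ distinct pairs of associated nontrivial critical points, which are solutions of \eqref{185} at positive energy levels (positivity being immediate from $c_j^\ast > 0$). The one remaining routine verification is that $\Phi \in C^1(E_r(\R^3),\R)$ and is even with the stated scaling, and that $\As$ satisfies $(H_1)$--$(H_{12})$ in this setting — all of which follow from the general discussion in the overview and the continuity/compactness of the embeddings $E_r(\R^3) \incl L^q$ for $q \in (18/7,6)$ noted there. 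Thus the bulk of the work is (i) pinning down $c^\ast$ via a critical-growth \PS{} analysis, and (ii) the simultaneous choice of $R$ and $\delta_k$ making the linking energies compatible with that threshold; the algebraic verification of the index inequalities for $A_0$ and $B_0$ is then a direct consequence of Theorem \ref{Theorem 21}.
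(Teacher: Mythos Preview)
Your overall strategy is correct and matches the paper's: apply the abstract symmetric multiplicity theorem (Theorem \ref{Theorem 14}) with $c^\ast = \frac{1}{3}S^{3/2}$, the threshold coming from the local \PS{} analysis (Lemma \ref{Lemma 5}), with $B_0 = \widetilde{\Psi}_{\lambda_k}$ and $A_0$ a compact symmetric subset of index $k+m-1$ extracted from a sublevel set slightly above $\lambda_k$. However, your verification of the geometry has a genuine error.

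The problem is your argument for $\sup_A \Phi \le 0$. You write $\Phi(u_R) \le R^3(1 - \lambda/\lambda_k)$ and claim this is $\le 0$ ``once $\lambda$ is close enough to $\lambda_k$ from below and $R$ large.'' But $\lambda < \lambda_k$ forces $1 - \lambda/\lambda_k > 0$, so this upper bound is \emph{strictly positive} for every $R > 0$; proximity of $\lambda$ to $\lambda_k$ only makes it small, not nonpositive. Dropping the critical term here throws away exactly what you need. The paper's fix is to keep the $u^6$ term and obtain a \emph{uniform} lower bound $\int |u|^6 \ge c_2 > 0$ over $A_0$: since $A_0 \subset \M_s \setminus \widetilde{\Psi}_{\lambda_{k+m-1}+\eps}$ one has $\pnorm[3]{u}^3 > 3/(\lambda_k + \eps)$, and since $\M_s$ is bounded in $L^r$ for $r \in (18/7,3)$, interpolation between $L^r$ and $L^6$ forces $\pnorm[6]{u}$ bounded below by a constant independent of $\eps$. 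This yields
\[
\Phi(u_t) \le t^3\Big(1 - \tfrac{\lambda}{\lambda_k+\eps} - c_2 t^6\Big),
\]
so $\sup_A \Phi \le R^3(1 - c_2 R^6) \le 0$ once $R \ge c_2^{-1/6}$, a choice \emph{independent of $\lambda$}. Then $\sup_X \Phi \le \max_{t \ge 0} t^3(1 - \tfrac{\lambda}{\lambda_k+\eps} - c_2 t^6) = \tfrac{2}{3\sqrt{3c_2}}(1 - \tfrac{\lambda}{\lambda_k+\eps})^{3/2}$, and $\delta_k$ is defined precisely so that this is below $\tfrac{1}{3}S^{3/2}$ for $\lambda > \lambda_k - \delta_k$ and small $\eps$. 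Your ``simultaneous choice of $R$ and $\delta_k$'' is thus the wrong picture: $R$ is fixed first via $c_2$, and $\delta_k$ is chosen afterwards.

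Two minor points: the scaling of the critical term is $\int |u_\rho|^6\,dx = \rho^9 \int |u|^6\,dx$, not $\rho^{12}$ (recall $u_\rho(x) = \rho^2 u(\rho x)$ in $\R^3$); and your description of $A_0$ as ``built from a finite-dimensional piece of the eigenspace'' is misleading --- the eigenvalue problem \eqref{55} is genuinely nonlinear, and $A_0$ is obtained by the Degiovanni--Lancelotti compactness argument inside the open set $\M_s \setminus \widetilde{\Psi}_{\lambda_{k+m-1}+\eps}$, not by linear algebra.
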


In particular, we have the following corollary when $m = 1$.

\begin{corollary} \label{Corollary 6}
If $\lambda_k < \lambda_{k+1}$, then $\exists \delta_k > 0$ such that equation \eqref{185} has a pair of nontrivial solutions at a positive energy level for all $\lambda \in (\lambda_k - \delta_k,\lambda_k)$.
\end{corollary}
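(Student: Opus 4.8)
The proof is immediate from Theorem~\ref{Theorem 25}: taking $m=1$ there, the hypothesis $\lambda_k=\dotsb=\lambda_{k+m-1}<\lambda_{k+m}$ is exactly $\lambda_k<\lambda_{k+1}$, so there is $\delta_k>0$ for which \eqref{185} has one distinct pair of nontrivial solutions at a positive energy level whenever $\lambda\in(\lambda_k-\delta_k,\lambda_k)$. For completeness I outline how I would prove Theorem~\ref{Theorem 25}, describing the case $m=1$ (the general case is the same up to index bookkeeping). The strategy is to apply the scaling-based multiplicity theorem stated above --- the one formulated via the pseudo-index $i^\ast$ related to $i$, $\M_\rho$, and $\Gamma$ --- to the even $C^1$-functional $\Phi_\lambda(u)=I_s(u)-\lambda J_s(u)-\tfrac16\int_{\R^3}|u|^6\,dx$ on $W=E_r(\R^3)$, where $I_s$ is the potential of the scaled operator $\As$, $J_s(u)=\tfrac13\int_{\R^3}|u|^3\,dx$ is the potential of the scaled operator $\Bs$ governing the eigenvalue problem \eqref{55}, the scaling is $u_t(x)=t^2u(tx)$ with $s=3$, and $\Phi_\lambda$ is even because $\lambda|u|u+u^5$ is odd.

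The first step is the compactness input required by that theorem: $\Phi_\lambda$ must satisfy \PS{c} for every $c$ below a fixed $c^\ast>0$. Since $E_r(\R^3)$ embeds compactly in $L^q(\R^3)$ for $q\in(18/7,6)$ but not for $q=6$, the only loss of compactness comes from the critical term $\int_{\R^3}|u|^6\,dx$. A concentration-compactness and Brezis--Lieb analysis, in which the Coulomb term and the subcritical term $\int_{\R^3}|u|^3\,dx$ enter as lower-order perturbations --- the Coulomb term in particular vanishing under the rescaling that isolates a critical bubble --- shows that \PS{c} holds whenever $c<c^\ast$, where $c^\ast$ is a fixed positive multiple of $S^{3/2}$, $S$ being the best constant in $D^{1,2}(\R^3)\hookrightarrow L^6(\R^3)$. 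I expect this to be the main obstacle, as one must carry the nonlocal term along bubbling sequences and rule out energy splitting below $c^\ast$.

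Next I set up the geometry on $\M_s=\bgset{u\in W:I_s(u)=1}$. Using the cohomological-index eigenvalues of \eqref{55} and part~\ref{Theorem 21.iii} of Theorem~\ref{Theorem 21}, I choose a compact symmetric $A_0\subset\bgset{u\in\M_s:J_s(u)\ge 1/\lambda_k}$ with $i(A_0)\ge k$, and set $B_0=\bgset{u\in\M_s:J_s(u)\le 1/\lambda_k}$, which is closed with $i(\M_s\setminus B_0)\le k-1$ (if $\lambda_{k-1}=\lambda_k$, replace $k$ by the least index attaining the value $\lambda_k$). Putting $X=\set{u_t:u\in A_0,\,0\le t\le R}$, $A=\set{u_R:u\in A_0}$, and $B=\set{u_\rho:u\in B_0}$, the scaling identities give, for $u\in\M_s$,
\[
\Phi_\lambda(u_t)=t^3\bigl(1-\lambda J_s(u)\bigr)-\tfrac16\,t^9\int_{\R^3}|u|^6\,dx.
\]
From this: on $B_0$ one has $1-\lambda J_s(u)\ge 1-\lambda/\lambda_k>0$ because $\lambda<\lambda_k$, so $\inf_B\Phi_\lambda>0$ once $\rho$ is small, using $\sup_{\M_s}\int_{\R^3}|u|^6\,dx<\infty$; on the compact set $A_0$ one has $\beta_k:=\inf_{A_0}\int_{\R^3}|u|^6\,dx>0$, so $\Phi_\lambda(u_R)\le R^3-\tfrac16 R^9\beta_k\le 0$ once $R$ is large; and maximising in $t$ gives $\sup_X\Phi_\lambda\le\tfrac{2\sqrt2}{3}\,\beta_k^{-1/2}\,(1-\lambda/\lambda_k)_+^{3/2}$, which tends to $0$ as $\lambda\nearrow\lambda_k$. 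Hence there is $\delta_k>0$, depending only on $k$ through $\beta_k$, $\lambda_k$, and $c^\ast$, such that $\sup_X\Phi_\lambda<c^\ast$ for all $\lambda\in(\lambda_k-\delta_k,\lambda_k)$. Feeding $A_0$, $B_0$, $X$, $A$, $B$ into the abstract multiplicity theorem then produces a critical value $c_k^\ast\in(0,c^\ast)$, hence a pair of nontrivial critical points of $\Phi_\lambda$ at positive energy, i.e., a pair of nontrivial solutions of \eqref{185}. Once the threshold $c^\ast$ of the second step is secured, everything else reduces to the scaling computation just indicated together with the index identities already supplied by Theorem~\ref{Theorem 21}.
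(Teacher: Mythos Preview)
Your deduction of the corollary from Theorem~\ref{Theorem 25} with $m=1$ is exactly what the paper does, and your outline of the proof of Theorem~\ref{Theorem 25} follows the paper's route closely: apply the abstract pseudo-index multiplicity theorem (Theorem~\ref{Theorem 14}) with $c^\ast=\tfrac{1}{3}S^{3/2}$, the local \PS{} threshold coming from a Brezis--Lieb/concentration analysis (Lemma~\ref{Lemma 5} in the paper), and verify the scaling geometry via the formula $\Phi_\lambda(u_t)=t^3(1-\lambda J_s(u))-\tfrac{t^9}{6}\int|u|^6$.

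One technical point deserves care. You take $A_0$ to be a compact symmetric subset of the \emph{closed} set $\widetilde{\Psi}^{\lambda_k}=\{u\in\M_s:J_s(u)\ge 1/\lambda_k\}$ with $i(A_0)\ge k$, but the existence of such a compact subset is not immediate from the listed index properties. The paper instead chooses $\eps\in(0,\lambda_{k+1}-\lambda_k)$, notes that $i(\M_s\setminus\widetilde{\Psi}_{\lambda_k+\eps})=k$ by Theorem~\ref{Theorem 1}~\ref{Theorem 1.iii}, and extracts a compact symmetric $A_0$ of index $k$ from this \emph{open} set via Degiovanni--Lancelotti. The price is that on $A_0$ one only has $J_s(u)>1/(\lambda_k+\eps)$, so the cubic coefficient is bounded above by $1-\lambda/(\lambda_k+\eps)$ rather than $1-\lambda/\lambda_k$; the paper then shows the $L^6$ lower bound $\beta_k$ is independent of $\eps$ by interpolating the uniform lower bound on $\pnorm[3]{u}$ against the uniform $L^r$ bound on $\M_s$ for $r\in(18/7,3)$, and finally sends $\eps\to 0$. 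Your simpler route of taking $\beta_k=\inf_{A_0}\int|u|^6>0$ by compactness works cleanly \emph{once} you have $A_0$, but you should route the construction of $A_0$ through the open sublevel set as the paper does.
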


\begin{remark}
The existence of a single positive solution of equation \eqref{185} for some $\lambda > 0$ was proved in Liu et al.\! \cite[Theorem 1.4]{MR3912770}. In contrast, Corollary \ref{Corollary 6} gives a nontrivial solution for each $\lambda \in \bigcup_{k=1}^\infty (\lambda_k - \delta_k,\lambda_k)$.
\end{remark}

For $q \in (3,6)$, we will show that equation \eqref{79} has arbitrarily many solutions for all sufficiently large $\lambda$.

\begin{theorem} \label{Theorem 15}
Let $q \in (3,6)$. Then for any $m \ge 1$, $\exists \Lambda_m > 0$ such that equation \eqref{79} has $m$ distinct pairs of nontrivial solutions at positive energy levels for all $\lambda > \Lambda_m$. In particular, the number of solutions goes to infinity as $\lambda \to \infty$.
\end{theorem}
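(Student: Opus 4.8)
The plan is to deduce the theorem from the scaling-based multiplicity theorem for symmetric functionals stated above, applied to the even $C^1$ functional
\[
\Phi_\lambda(u) = \frac{1}{2} \int_{\R^3} |\nabla u|^2\, dx + \frac{1}{16 \pi} \int_{\R^3} \int_{\R^3} \frac{u^2(x)\, u^2(y)}{|x - y|}\, dx\, dy - \frac{\lambda}{q} \int_{\R^3} |u|^q\, dx - \frac{1}{6} \int_{\R^3} |u|^6\, dx
\]
on $W = E_r(\R^3)$, with the scaling $u_t(x) = t^2\, u(tx)$, $s = 3$, with $I$ the sum of the first two terms (this is $I_s$), and with $\M = \set{u \in E_r(\R^3) : I_s(u) = 1}$. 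Here $\M$ is bounded by coercivity of $I_s$, and $t \mapsto I_s(tu)$ is strictly increasing from $0$ to $\infty$, so each ray through the origin meets $\M$ in exactly one point. Since $\int_{\R^3} |u_t|^q\, dx = t^{2q - 3} \int_{\R^3} |u|^q\, dx$ with $2q - 3 > 3 = s$ (because $q > 3$) and $\int_{\R^3} |u_t|^6\, dx = t^9 \int_{\R^3} |u|^6\, dx$, both terms of $\Phi_\lambda$ other than $I_s$ are superscaled; this is the structural feature that lets large $\lambda$ depress the relevant part of the graph of $\Phi_\lambda$, and it is also why $q \in (3,6)$, rather than the eigenvalue case $q = 3$, is treated here.

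\textbf{Local compactness.} The first and principal task is to produce $c^\ast > 0$ such that $\Phi_\lambda$ satisfies the \PS{c} condition for all $c \in (0, c^\ast)$; the value will be $c^\ast = \tfrac{1}{3}\, S^{3/2}$, where $S$ is the best constant in $D^{1,2}(\R^3) \incl L^6(\R^3)$. One checks first that a \PS{c} sequence is bounded (using coercivity of $I_s$ and $q < 6$), so that along a subsequence $u_j \wto u$ in $E_r(\R^3)$. The subcritical term $\int_{\R^3} |u_j|^q\, dx$ converges by the compact embedding $E_r(\R^3) \incl L^q(\R^3)$, $q \in (18/7,6)$; a Brezis--Lieb type splitting applied to the Coulomb energy and to $\int_{\R^3} |u_j|^6\, dx$ reduces matters to controlling the concentrated $L^6$-mass, which as in Brezis--Nirenberg costs at least $\tfrac{1}{3}\, S^{3/2}$ of energy, so below that level $u_j \to u$ strongly and $u$ is a critical point.

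\textbf{The scaling-based linking.} Fix $m \ge 1$ and take $k = 1$. Choose any $m$-dimensional subspace $V_m$ of $E_r(\R^3)$ and put $A_0 = V_m \cap \M$, $B_0 = \M$. Since $\M$ is closed and bounded, $A_0$ is compact and symmetric, and since each ray of $V_m$ meets $\M$ exactly once, $A_0$ is oddly homeomorphic to the unit sphere of $V_m$, so $i(A_0) = m = k + m - 1$; moreover $\M \setminus B_0 = \emptyset$, so $i(\M \setminus B_0) = 0 = k - 1$. As $0 \notin A_0 \subset \M$ and $A_0$ is compact, $\delta_m := \min_{u \in A_0} \int_{\R^3} |u|^q\, dx > 0$, and as $\M$ is bounded with $E_r(\R^3) \incl L^q(\R^3) \cap L^6(\R^3)$, the numbers $c_q := \sup_{u \in \M} \int_{\R^3} |u|^q\, dx$ and $c_6 := \sup_{u \in \M} \int_{\R^3} |u|^6\, dx$ are finite, with $\delta_m \le c_q$. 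The scaling identities give, for $u \in \M$ and $t \ge 0$,
\[
\Phi_\lambda(u_t) = t^3 - \frac{\lambda}{q}\, t^{2q - 3} \int_{\R^3} |u|^q\, dx - \frac{1}{6}\, t^9 \int_{\R^3} |u|^6\, dx \le t^3 - \frac{\lambda}{q}\, t^{2q - 3} \int_{\R^3} |u|^q\, dx.
\]
Given $\lambda$ large, set $R^{2q - 6} = 2q/(\lambda\, \delta_m)$ and $\rho^{2q - 6} = q/(2 \lambda\, c_q)$; then $(R/\rho)^{2q - 6} = 4 c_q/\delta_m \ge 4$, hence $R > \rho > 0$. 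Defining $X, A, B$ from $A_0, B_0, R, \rho$ as in the abstract theorem, the displayed bound gives $\Phi_\lambda(u_R) \le R^3\big(1 - \tfrac{\lambda \delta_m}{q}\, R^{2q - 6}\big) = - R^3 < 0$ for $u \in A_0$, so $\sup_A \Phi_\lambda \le 0$; $\Phi_\lambda(u_\rho) \ge \rho^3\big(\tfrac{1}{2} - \tfrac{c_6}{6}\, \rho^6\big) > 0$ for $u \in \M$ once $\lambda$ is large (so that $\rho$ is small), so $\inf_B \Phi_\lambda > 0$; and $\sup_X \Phi_\lambda \le \max_{t \ge 0}\big(t^3 - \tfrac{\lambda \delta_m}{q}\, t^{2q - 3}\big) \to 0$ as $\lambda \to \infty$. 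Thus there is $\Lambda_m > 0$ such that $\sup_A \Phi_\lambda \le 0 < \inf_B \Phi_\lambda$ and $\sup_X \Phi_\lambda < c^\ast$ for all $\lambda > \Lambda_m$.

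\textbf{Conclusion and main obstacle.} For $\lambda > \Lambda_m$ all hypotheses of the scaling-based multiplicity theorem hold with this data, so it yields critical values $0 < c_1^\ast \le \dotsb \le c_m^\ast < c^\ast$ of $\Phi_\lambda$ and $m$ distinct pairs of associated critical points; these are nontrivial since $\Phi_\lambda(0) = 0$ while the $c_j^\ast$ are positive, they are weak solutions of \eqref{79}, and they lie at positive energy levels. Taking $\Lambda_m$ nondecreasing in $m$ makes the number of solutions tend to $\infty$ as $\lambda \to \infty$. The decisive step is local compactness: with the nonlocal Coulomb term present one must show that the only failure of the \PS{} condition is Sobolev bubbling at level $\tfrac{1}{3}\, S^{3/2}$ — concretely, that the Coulomb energy obeys a Brezis--Lieb splitting (so it neither creates nor destroys compactness below that threshold) and that the subcritical term is absorbed by the compact Coulomb--Sobolev embedding. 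Once $c^\ast$ is secured, the rest is bookkeeping, driven entirely by the superscaling $2q - 3 > 3$ of $|u|^q$ and the uniform constants $\delta_m, c_q, c_6$ on the bounded manifold $\M$; conceptually this is the Schr\"{o}dinger--Poisson--Slater counterpart of the classical large-$\lambda$ multiplicity for $- \Delta u = \lambda\, |u|^{q - 2} u + |u|^{2^\ast - 2} u$.
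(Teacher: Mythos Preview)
Your approach is essentially the paper's: apply the scaling-based multiplicity theorem with $k=1$ and $B_0=\M$ (this is exactly Corollary~\ref{Corollary 1}), take $c^\ast=\tfrac{1}{3}S^{3/2}$ from a local \PS{} analysis (the paper's Lemma~\ref{Lemma 5}), and verify the three inequalities on $A$, $B$, $X$ via the scaling identity $\Phi_\lambda(u_t)=t^3-\tfrac{\lambda}{q}t^{2q-3}\!\int|u|^q-\tfrac16 t^9\!\int|u|^6$. Your concrete choice $A_0=V_m\cap\M$ is a valid instance of the paper's ``compact symmetric subset of $\M_s$ with $i(A_0)=m$'', and your explicit $R,\rho$ depending on $\lambda$ are fine.

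One point where your sketch is thinner than it should be: the boundedness of \PS{c} sequences does \emph{not} follow from ``coercivity of $I_s$ and $q<6$''. The usual Ambrosetti--Rabinowitz combination $\Phi_\lambda(u_j)-\tfrac{1}{q}\Phi_\lambda'(u_j)u_j$ produces a Coulomb coefficient $\tfrac{1}{16\pi}-\tfrac{1}{4\pi q}$ that is nonpositive for $q\le 4$, so it does not control $\|u_j\|$. The paper's Lemma~\ref{Lemma 5} instead rescales an unbounded \PS{} sequence onto $\M_s$ and derives a contradiction from the scaled identities; you should either reproduce that argument or cite it, rather than invoke coercivity.
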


\begin{remark}
The existence of a single positive solution of equation \eqref{79} for $q \in (3,4]$ and sufficiently large $\lambda > 0$ and for $q \in (4,6)$ and all $\lambda > 0$ was proved in Liu et al.\! \cite[Theorem 1.1]{MR3912770}.
\end{remark}

Proofs of Theorem \ref{Theorem 25} and Theorem \ref{Theorem 15} will be based on some new abstract multiplicity results for scaled functionals that only satisfy a local \PS{} condition (see Theorem \ref{Theorem 14} and Corollary \ref{Corollary 1}). The case where $q \in (18/7,3)$ will be taken up at the end of the next section (see Theorem \ref{Theorem 26}).

\subsubsection{Equations subscaled near zero} \label{1.2.3}

Finally we consider the class of equations
\begin{equation} \label{300}
- \Delta u + \left(\frac{1}{4 \pi |x|} \star u^2\right) u = |u|^{\sigma - 2}\, u + g(|x|,u) \quad \text{in } \R^3,
\end{equation}
where $\sigma \in (18/7,3)$ and $g$ is a Carath\'{e}odory function on $[0,\infty) \times \R$ satisfying the growth condition
\begin{equation} \label{301}
|g(|x|,t)| \le a_6\, |t|^{q_6 - 1} + a_7\, |t|^{q_7 - 1} \quad \text{for a.a.\! } x \in \R^3 \text{ and all } t \in \R
\end{equation}
for some constants $a_6, a_7 > 0$ and $\sigma < q_6 < q_7 \le 6$. We assume that $g$ is odd in $t$, i.e.,
\begin{equation} \label{302}
g(|x|,-t) = - g(|x|,t) \quad \text{for a.a.\! } x \in \R^3 \text{ and all } t \in \R,
\end{equation}
and that its primitive $G(|x|,t) = \int_0^t g(|x|,\tau)\, d\tau$ satisfies
\begin{equation} \label{305}
G(|x|,t) \le C\, |t|^{\widetilde{\sigma}} \quad \text{for a.a.\! } x \in \R^3 \text{ and all } t \in \R
\end{equation}
for some constant $C > 0$ and $\widetilde{\sigma} \in (18/7,3)$. We will show that this class of equations have infinitely many solutions at negative energy levels when the associated variational functional
\begin{multline*}
\Phi(u) = \frac{1}{2} \int_{\R^3} |\nabla u|^2\, dx + \frac{1}{16 \pi} \int_{\R^3} \int_{\R^3} \frac{u^2(x)\, u^2(y)}{|x - y|}\, dx\, dy - \frac{1}{\sigma} \int_{\R^3} |u|^\sigma\, dx - \int_{\R^3} G(|x|,u)\, dx,\\[7.5pt]
u \in E_r(\R^3)
\end{multline*}
satisfies the \PS{} condition at negative levels.

\begin{theorem} \label{Theorem 20}
Assume that $\sigma \in (18/7,3)$, \eqref{301}--\eqref{305} hold, and that $\Phi$ satisfies the {\em \PS{c}} condition for all $c < 0$. Then equation \eqref{300} has a sequence of solutions $\seq{u_k}$ such that $\Phi(u_k) < 0$ for all $k$ and $\Phi(u_k) \nearrow 0$.
\end{theorem}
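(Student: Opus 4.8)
The plan is to verify the hypotheses of the abstract symmetric multiplicity theorem for scaled functionals (the one stated just before Theorem~\ref{Theorem 14}, with pseudo-index $i^\ast$ and local Palais--Smale condition) and combine it with a standard truncation/deformation argument that forces the critical values to accumulate at $0$ from below. Here the scaling is $u_t(x) = t^2 u(tx)$ with $s = 3$ on $W = E_r(\R^3)$, the even scaled functional is $I_s(u) = \tfrac12 \int_{\R^3} |\nabla u|^2 + \tfrac{1}{16\pi}\iint \frac{u^2(x)u^2(y)}{|x-y|}$, which is positive on $W\setminus\{0\}$, coercive, and whose level set $\M = \{I_s = 1\}$ is bounded with each ray meeting it once --- so the abstract framework applies. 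The dominant term of $\Phi$ near the origin is $-\tfrac{1}{\sigma}\int |u|^\sigma$ with $\sigma < 3 = s$, so $\Phi$ is \emph{subscaled near zero}: for $u \in \M$ fixed and $t$ small, $\Phi(u_t) = t^s I_s(u) - \tfrac{t^{s\sigma/2}}{\sigma}\int|u|^\sigma + \text{(lower order from }G) < 0$ because $s\sigma/2 = 3\sigma/2 < 3 = s$ when $\sigma < 2$... more precisely the scaling sends $\int|u_t|^\sigma = t^{2\sigma - 3}\int|u|^\sigma$, and $2\sigma - 3 < 3$ for $\sigma<3$, so the negative power wins and $\Phi(u_t)<0$ for all small $t>0$, uniformly for $u$ in the compact set $\M$. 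Likewise \eqref{305} with $\widetilde\sigma < 3$ guarantees the $G$-term does not spoil this.

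The key steps, in order, are as follows. (1) Record that $\Phi$ is $C^1$, even (by \eqref{302}), and — by the growth conditions \eqref{301} and Sobolev embeddings of $E_r(\R^3)$ into $L^q$ for $q\in(18/7,6)$ — that it is bounded below on bounded sets; combined with the hypothesis that $\Phi$ satisfies \PS{c} for all $c<0$, this gives the local \PS{} needed, with $c^\ast$ taken to be any positive number (all relevant critical values will be negative, so only \PS{} below $0$ is used). (2) Fix $\rho>0$ small and set $\M_\rho = \{u_\rho : u\in\M\}$; using the subscaled-near-zero estimate above, choose $\rho$ so that $\sup_{u\in\M_\rho}\Phi(u) < 0$, i.e.\ $\Phi < 0$ on a whole "scaled sphere" around the origin, and in fact $\Phi(u_t)<0$ for all $u\in\M$ and $0<t\le\rho$. (3) For each $k\ge1$, pick $M_k\in\A^\ast$ compact with $i^\ast(M_k)\ge k$: take a symmetric compact subset $A_k$ of $\M$ with $i(A_k)\ge k$ (such sets exist for every $k$ since $\M$ is, up to the homeomorphism $\pi$, an infinite-dimensional sphere-like set --- this follows from Theorem~\ref{Theorem 21} applied to the eigenvalue problem \eqref{55}, or directly from the known index theory), scale it down to $M_k = (A_k)_{\rho} \subset \M_\rho$, and observe $i^\ast(M_k) = i(M_k) = i(A_k) \ge k$ because $M_k \subset \M_\rho$ already. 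Define
\[
c_k := \inf_{M\in\A_k^\ast}\ \max_{u\in M}\ \Phi(u),
\]
where $\A_k^\ast = \{M\in\A^\ast : M \text{ compact},\ i^\ast(M)\ge k\}$. (4) Show $c_k < 0$ for every $k$: use $M_k$ as a competitor, so $c_k \le \max_{M_k}\Phi < 0$. (5) Show $c_k \le c_{k+1}$ (obvious, $\A_{k+1}^\ast \subset \A_k^\ast$) and $c_k \nearrow 0$: boundedness from above by $0$ is clear; for the limit, suppose $c_k \to \bar c < 0$ along a subsequence where they are eventually constant $=\bar c$; then by the standard Ljusternik--Schnirelmann multiplicity statement (exactly the conclusion of the abstract theorem preceding Theorem~\ref{Theorem 14}) the critical set at level $\bar c$ would have infinite cohomological index, hence be non-compact --- but \PS{$\bar c$} with $\bar c<0$ forces it to be compact, a contradiction. (6) Apply the abstract multiplicity theorem: each $c_k$ is a critical value of $\Phi$, distinct $c_k$'s give distinct pairs of critical points, equal consecutive $c_k$'s give infinitely many by the index jump, and in all cases we extract a sequence $(u_k)$ of solutions with $\Phi(u_k) = c_k < 0$ and $\Phi(u_k)\nearrow 0$.

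I expect the main obstacle to be step~(5), the precise argument that $c_k \nearrow 0$ rather than stabilizing at some negative value. The mechanism is the interplay between compactness of the critical set at a negative level (from \PS{}) and the index-jump phenomenon that accompanies an accumulation of pseudo-index-based minimax values; one has to invoke the deformation lemma below level $0$ carefully, noting that \PS{c} is only assumed for $c<0$ so the deformation must be confined to $\Phi^{-1}(-\infty,0)$, and that the sets $M\in\A_k^\ast$ can be pushed down by the negative gradient flow without leaving $\A^\ast$ (odd flow) or increasing $\Phi$ past the relevant level. A secondary, more routine, point is verifying that $E_r(\R^3)$ genuinely carries symmetric compact subsets of arbitrarily large cohomological index sitting inside $\M$ --- this is where one uses that $\sigma(\As,\Bs)$ for the eigenvalue problem \eqref{55} is an unbounded sequence (Theorem~\ref{Theorem 21}), so the sublevel sets $\widetilde\Psi^{\lambda_k}$ have index $\ge k$ and their images under the radial projection furnish the required $A_k$. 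Everything else --- the $C^1$ and evenness of $\Phi$, the subscaled-near-zero sign computation, boundedness below --- is a direct consequence of the stated growth hypotheses \eqref{301}, \eqref{305} and the compact embeddings recorded earlier.
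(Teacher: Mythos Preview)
Your overall strategy---symmetric minimax with the cohomological index, scaled competitors of arbitrarily high index, and accumulation of negative critical values at $0$---is the same as the paper's, but you have chosen the wrong abstract tool and are missing two key estimates.

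Theorem~\ref{Theorem 14} and the pseudo-index $i^\ast$ are built to detect critical values in $(0,c^\ast)$: the group $\Gamma$ consists of odd homeomorphisms that are the identity outside $\Phi^{-1}(0,c^\ast)$, so the whole apparatus is blind to the region $\{\Phi<0\}$ where you want to work, and the inequality pattern $\sup_A\Phi\le 0<\inf_B\Phi$ in \eqref{72} is the opposite of what you have. The paper instead uses Proposition~\ref{Proposition 11}, a direct Ljusternik--Schnirelmann scheme for negative levels with $c_k=\inf_{M\in\F_k}\sup_M\Phi$ over the ordinary index classes $\F_k=\{M:i(M)\ge k\}$. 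Your step~(5) is essentially the content of that proposition, so once you switch tools nothing conceptual is lost.

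Two estimates remain. First, to have $c_k>-\infty$ you need $\Phi$ bounded from below globally, not just on bounded sets; the paper gets this from coercivity, comparing $\Phi\ge\widetilde\Phi$ via \eqref{305} and applying Lemma~\ref{Lemma 2} to the subscaled functional $\widetilde\Phi$ (this is exactly where the exponent $\widetilde\sigma<3$ enters). Second, your step~(2) claims $\sup_{\M_\rho}\Phi<0$ for small $\rho$, ``uniformly for $u$ in the compact set $\M$''. But $\M=\M_s$ is merely bounded, not compact, and in fact $\inf_{\M_s}K=0$ (concentrate along the $D^{1,2}$-invariant scaling $v_\lambda(x)=\lambda^{1/2}u(\lambda x)$, which keeps $\int|\nabla v_\lambda|^2$ fixed while sending both the Coulomb term and $\int|v_\lambda|^\sigma$ to zero), so no such $\rho$ exists. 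The paper sidesteps this by taking competitors $M_t=\{u_t:u\in\widetilde\Psi^{\lambda_k}\}$ and invoking Lemma~\ref{Lemma 7}, which gives $\inf_{\widetilde\Psi^{\lambda_k}}K>0$ via Gagliardo--Nirenberg; then \eqref{304} forces $\sup_{M_t}\Phi<0$ for $t$ small and hence $c_k<0$.
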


In particular, we have the following corollary.

\begin{corollary} \label{Corollary 3}
The equation
\[
- \Delta u + \left(\frac{1}{4 \pi |x|} \star u^2\right) u = |u|^{\sigma - 2}\, u \quad \text{in } \R^3
\]
has infinitely many solutions at negative energy levels for all $\sigma \in (18/7,3)$.
\end{corollary}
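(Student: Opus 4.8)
The plan is to obtain Corollary \ref{Corollary 3} as the special case $g \equiv 0$ of Theorem \ref{Theorem 20}, so that essentially all of the work has already been done. First I would check that the hypotheses of Theorem \ref{Theorem 20} are met in this degenerate situation. With $g \equiv 0$ the growth bound \eqref{301} holds trivially — take, say, $a_6 = a_7 = 1$ and any exponents with $\sigma < q_6 < q_7 \le 6$ — the oddness \eqref{302} is automatic, and \eqref{305} holds because $G \equiv 0$. The equation \eqref{300} then reads exactly $- \Delta u + \seq{\frac{1}{4 \pi |x|} \star u^2} u = |u|^{\sigma - 2}\, u$, which is the equation in the statement, and the associated functional reduces to
\[
\Phi(u) = \frac{1}{2} \int_{\R^3} |\nabla u|^2\, dx + \frac{1}{16 \pi} \int_{\R^3} \int_{\R^3} \frac{u^2(x)\, u^2(y)}{|x - y|}\, dx\, dy - \frac{1}{\sigma} \int_{\R^3} |u|^\sigma\, dx, \quad u \in E_r(\R^3).
\]
Hence the only hypothesis of Theorem \ref{Theorem 20} that is not purely formal here is that $\Phi$ satisfies the \PS{c} condition for all $c < 0$.

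To verify this I would in fact show that $\Phi$ satisfies \PS{c} at every level. Since $\sigma \in (18/7,3)$, the functional $\Phi$ is coercive by Ruiz \cite[Theorem 1.3]{MR2679375} (the case $\lambda = 1$ there), so any \PS{} sequence $\seq{u_j}$ for $\Phi$ is bounded in $E_r(\R^3)$; passing to a subsequence, $u_j \wto u$. Writing $\Phi'(u) = \As(u) - \widetilde{f}(u)$ with $\widetilde{f}(v)\, w = \int_{\R^3} |v|^{\sigma - 2}\, v\, w\, dx$, the condition $\Phi'(u_j) \to 0$ gives $\As(u_j)(u_j - u) = \widetilde{f}(u_j)(u_j - u) + \o(1)$, and the compact embedding $E_r(\R^3) \incl L^\sigma(\R^3)$, valid for $\sigma \in (18/7,6)$ by Ruiz \cite[Theorem 1.2]{MR2679375}, forces $\widetilde{f}(u_j)(u_j - u) \to 0$. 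Therefore $\As(u_j)(u_j - u) \to 0$, and $(H_7)$ then yields a strongly convergent subsequence. This is the only point where any analysis enters, and it is routine; I do not expect a genuine obstacle, since coercivity does all the heavy lifting and the rest is the standard $(S_+)$-type argument already built into the hypotheses on $\As$.

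With the Palais--Smale requirement confirmed, Theorem \ref{Theorem 20} applies verbatim and produces a sequence $\seq{u_k}$ of solutions of equation \eqref{300} with $g \equiv 0$, i.e.\ of the pure power equation in the statement, satisfying $\Phi(u_k) < 0$ for all $k$ and $\Phi(u_k) \nearrow 0$. Because these critical values are strictly negative yet converge to $0$, the sequence $\seq{\Phi(u_k)}$ cannot be eventually constant, so infinitely many of the values $\Phi(u_k)$ are pairwise distinct; consequently the $u_k$ comprise infinitely many distinct solutions, all at negative energy levels. That completes the argument.
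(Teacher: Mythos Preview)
Your proposal is correct and follows essentially the same route as the paper: apply Theorem \ref{Theorem 20} with $g \equiv 0$ after verifying the \PS{} condition via coercivity of $\Phi$ and compactness of the nonlinearity. The paper obtains coercivity from its own abstract Lemma \ref{Lemma 2} (via Lemma \ref{Lemma 3} \ref{Lemma 3.i}) rather than citing Ruiz directly, and invokes Proposition \ref{Proposition 3} instead of spelling out the $(H_7)$ argument, but these are cosmetic differences only.
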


\begin{remark}
The existence of a single positive solution of the equation in Corollary \ref{Corollary 3} was proved in Ruiz \cite[Theorem 1.3]{MR2679375}.
\end{remark}

Let
\[
f(|x|,t) = |t|^{\sigma - 2}\, t + g(|x|,t), \quad (x,t) \in \R^3 \times \R.
\]
If
\begin{equation} \label{307}
f(|x|,t_0) \equiv 0
\end{equation}
for some $t_0 > 0$, then it is enough to assume that \eqref{301}--\eqref{305} hold for $|t| \le t_0$.

\begin{theorem} \label{Theorem 29}
Assume that $\sigma \in (18/7,3)$, \eqref{307} holds, and that \eqref{301}--\eqref{305} hold for $|t| \le t_0$. Then equation \eqref{300} has a sequence of solutions $\seq{u_k}$ such that $|u_k| < t_0$ and $\Phi(u_k) < 0$ for all $k$ and $\Phi(u_k) \nearrow 0$.
\end{theorem}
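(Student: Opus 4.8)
The plan is to reduce Theorem \ref{Theorem 29} to Theorem \ref{Theorem 20} by a truncation argument that replaces $f$ with a modified nonlinearity $\widetilde{f}$ agreeing with $f$ on $[-t_0,t_0]$ and satisfying the hypotheses of Theorem \ref{Theorem 20} globally, and then showing a posteriori that the solutions produced actually satisfy $|u_k| < t_0$, so that they solve the original equation. First I would define, for $|t| \le t_0$, $\widetilde{g}(|x|,t) = g(|x|,t)$, and for $|t| > t_0$ extend $\widetilde{g}$ so that the function $\widetilde{f}(|x|,t) = |t|^{\sigma-2}\, t + \widetilde{g}(|x|,t)$ vanishes identically for $|t| \ge t_0$; concretely one may take $\widetilde{f}(|x|,t) = \chi(t)\, f(|x|,t)$ for a suitable even cutoff $\chi$ with $\chi \equiv 1$ on $[-t_0,t_0]$ and $\chi \equiv 0$ outside a slightly larger interval, arranged so that \eqref{307} forces no jump at $\pm t_0$, and then set $\widetilde{g}(|x|,t) = \widetilde{f}(|x|,t) - |t|^{\sigma-2}\, t$. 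Since $\widetilde{g}$ is bounded (indeed compactly supported in $t$) and odd in $t$, it satisfies a growth bound of the form \eqref{301} with exponents in $(\sigma,6]$ and is odd, so \eqref{302} holds; moreover its primitive $\widetilde{G}$ agrees with $G$ for $|t| \le t_0$ and is bounded for $|t| > t_0$, from which \eqref{305} follows on all of $\R$ (possibly after enlarging the constant $C$, using $|t|^{\widetilde{\sigma}} \ge t_0^{\widetilde{\sigma}}$ for $|t| \ge t_0$). Let $\widetilde{\Phi}$ denote the associated functional.

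The second step is the Palais--Smale verification: I would show that $\widetilde{\Phi}$ satisfies the \PS{c} condition for all $c < 0$. The point is that on a \PS{} sequence at a negative level, the $\widetilde{G}$ term is a lower-order perturbation (subscaled, since $\widetilde{G}$ has at most power growth $\widetilde\sigma<3$ and is in fact bounded) of the functional $u \mapsto \frac12\|\nabla u\|_2^2 + \frac{1}{16\pi}\,(\text{nonlocal term}) - \frac{1}{\sigma}\|u\|_\sigma^\sigma$, which is coercive on $E_r(\R^3)$ because $\sigma < 3$ lies below the scaling exponent; hence \PS{} sequences are bounded, and then compactness of the embedding $E_r(\R^3) \hookrightarrow L^q(\R^3)$ for $q \in (18/7,6)$ (Ruiz \cite[Theorem 1.2]{MR2679375}) together with hypothesis $(H_7)$ for the scaled operator $\As$ gives a strongly convergent subsequence. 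This is essentially the same argument that underlies the hypothesis ``$\Phi$ satisfies \PS{c} for all $c<0$'' in Theorem \ref{Theorem 20}, now carried out for the bounded perturbation $\widetilde{g}$; I expect it to be routine but it must be written carefully because the nonlocal term must be handled via weak continuity of $\Bs$-type operators.

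The third and crucial step is the a priori bound $|u_k| < t_0$ on the solutions. Theorem \ref{Theorem 20}, applied to $\widetilde{\Phi}$, produces a sequence of critical points $u_k$ of $\widetilde{\Phi}$ with $\widetilde{\Phi}(u_k) < 0$ and $\widetilde{\Phi}(u_k) \nearrow 0$. Each $u_k$ solves $-\Delta u_k + (\tfrac{1}{4\pi|x|}\star u_k^2)\, u_k = \widetilde{f}(|x|,u_k)$ in $\R^3$. Because $\widetilde{f}(|x|,t) = 0$ for $t \ge t_0$ and $\widetilde{f}(|x|,t) = 0$ for $t \le -t_0$, I would test the equation with $(u_k - t_0)^+$ and with $(u_k + t_0)^-$: on the set $\set{u_k > t_0}$ the right-hand side vanishes while the left-hand side contributes $\int |\nabla (u_k - t_0)^+|^2 + \int (\tfrac{1}{4\pi|x|}\star u_k^2)\, u_k (u_k-t_0)^+ \ge 0$ with the gradient term strictly positive unless $(u_k-t_0)^+ \equiv 0$; since $u_k \in D^{1,2}(\R^3)$ decays at infinity, this forces $u_k \le t_0$ a.e., and symmetrically $u_k \ge -t_0$ a.e. Hence $|u_k| \le t_0$; to get the strict inequality $|u_k| < t_0$ one invokes the strong maximum principle (or unique continuation) for the operator $-\Delta + (\tfrac{1}{4\pi|x|}\star u_k^2)$, noting that the zeroth-order coefficient is nonnegative and that $u_k \not\equiv \pm t_0$ because $u_k \to 0$ at infinity. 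Once $|u_k| < t_0$, we have $\widetilde{f}(|x|,u_k) = f(|x|,u_k)$ and $\widetilde{\Phi}(u_k) = \Phi(u_k)$, so the $u_k$ are the desired solutions of \eqref{300} with $\Phi(u_k) < 0$ and $\Phi(u_k) \nearrow 0$. The main obstacle is making the truncation compatible with condition \eqref{307}: the cutoff must be chosen so that $\widetilde{f}$ remains continuous (which \eqref{307} at $t_0>0$ ensures on the positive side) and so that $\widetilde{g}$ stays odd, which requires extending the construction to $t<0$ using the oddness \eqref{302} of $g$ — here one uses that $f(|x|,-t_0) = -f(|x|,t_0) = 0$ as well, so the same cutoff works symmetrically.
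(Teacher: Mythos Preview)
Your approach is essentially the same as the paper's: truncate so that the modified nonlinearity vanishes for $|t|\ge t_0$, verify the hypotheses of Theorem \ref{Theorem 20} for the truncated problem (including the \PS{} condition via coercivity), apply Theorem \ref{Theorem 20}, and then use a maximum principle argument to show $|u_k|<t_0$. The paper carries this out with the explicit choice $\widetilde{g}(|x|,t)=g(|x|,t)$ for $|t|<t_0$ and $\widetilde{g}(|x|,t)=-|t|^{\sigma-2}t$ for $|t|\ge t_0$, which makes $\widetilde f\equiv 0$ exactly on $\{|t|\ge t_0\}$ and is continuous precisely because of \eqref{307}; your testing with $(u_k\mp t_0)^\pm$ is just an explicit version of the paper's one-line appeal to the strong maximum principle.

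One small inconsistency to fix: you describe a cutoff $\chi$ vanishing ``outside a slightly larger interval'' but then invoke $\widetilde f(|x|,t)=0$ for $t\ge t_0$ in the a priori bound. If $\chi$ only vanishes beyond some $t_0'>t_0$, your testing argument yields $|u_k|\le t_0'$, not $|u_k|\le t_0$. Since \eqref{307} gives $f(|x|,\pm t_0)=0$, you should simply take the sharp truncation at $t_0$ (no smoothing needed), exactly as the paper does; then the argument goes through as written.
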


We have the following immediate corollaries.

\begin{corollary} \label{Corollary 4}
The equation
\[
- \Delta u + \left(\frac{1}{4 \pi |x|} \star u^2\right) u = |u|^{\sigma - 2}\, u - |u|^{q-2}\, u \quad \text{in } \R^3
\]
has infinitely many solutions at negative energy levels for all $\sigma \in (18/7,3)$ and $q \in (\sigma,\infty)$.
\end{corollary}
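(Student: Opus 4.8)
The plan is to derive Corollary \ref{Corollary 4} directly from Theorem \ref{Theorem 29}; the only work is to cast the equation in the form \eqref{300} and check the hypotheses. Matching the right-hand side against $|u|^{\sigma - 2}\, u + g(|x|,u)$, I would take
\[
g(|x|,t) = - |t|^{q - 2}\, t ,
\]
which is independent of $x$, continuous, and odd in $t$, so \eqref{302} holds. Its primitive is $G(|x|,t) = - |t|^q / q \le 0$, so \eqref{305} holds trivially for every $\widetilde{\sigma} \in (18/7,3)$ and every $C > 0$.

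Next I would verify the zero condition \eqref{307}. With
\[
f(|x|,t) = |t|^{\sigma - 2}\, t - |t|^{q - 2}\, t = |t|^{\sigma - 2}\, t \left(1 - |t|^{q - \sigma}\right)
\]
and $q > \sigma$, the unique positive zero of $t \mapsto f(|x|,t)$ is $t_0 = 1$, independent of $x$, so \eqref{307} holds at $t_0 = 1$. It then remains to check \eqref{301}--\eqref{305} for $|t| \le t_0 = 1$. Here $|g(|x|,t)| = |t|^{q - 1}$, and since $q > \sigma$ and $6 > \sigma$ I can pick exponents $q_6 < q_7$ in $(\sigma,6]$ with $q_6 \le q$; then for $|t| \le 1$,
\[
|t|^{q - 1} \le |t|^{q_6 - 1} \le a_6\, |t|^{q_6 - 1} + a_7\, |t|^{q_7 - 1}
\]
with $a_6 = a_7 = 1$, which gives \eqref{301}, and the remaining conditions among \eqref{301}--\eqref{305} are checked in the same routine way using $|t| \le 1$ and $\sigma < q$.

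Applying Theorem \ref{Theorem 29} with this choice of $g$ and with $t_0 = 1$ then produces a sequence $\seq{u_k}$ of solutions of \eqref{300} — that is, of the equation in the corollary — with $|u_k| < 1$, $\Phi(u_k) < 0$ for all $k$, and $\Phi(u_k) \nearrow 0$. Since the values $\Phi(u_k)$ are negative and converge to $0$, the set $\set{\Phi(u_k) : k \in \N}$ is infinite, so $\seq{u_k}$ contains infinitely many distinct solutions, all at negative energy levels, which is the claim.

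There is no genuine obstacle at the level of the corollary: essentially all of the content lives in Theorem \ref{Theorem 29}, whose proof presumably truncates $f$ at $|t| = t_0$ — using \eqref{307} precisely so that the truncated nonlinearity stays continuous and subcritically controlled near the origin — then runs the scaling-based symmetric multiplicity argument at negative levels on the truncated functional, and finally shows a priori that the solutions so obtained satisfy $|u_k| < t_0$ and hence solve the original equation. The one place where a small amount of care is needed here is the choice of the exponents $q_6, q_7$ in \eqref{301}: they must lie in the admissible range $(\sigma,6]$ while still dominating $|t|^{q-1}$ for $|t| \le t_0$, and this is exactly why the corollary can allow the unrestricted range $q \in (\sigma,\infty)$ — for $|t| \le t_0$ every power $|t|^{q-1}$ with $q > \sigma$ is controlled by a subcritical power, so no upper bound on $q$ is required.
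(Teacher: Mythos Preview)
Your proposal is correct and follows exactly the route the paper intends: Corollary~\ref{Corollary 4} is listed as an immediate consequence of Theorem~\ref{Theorem 29}, and you have supplied precisely the verification of hypotheses (taking $g(t)=-|t|^{q-2}t$, $t_0=1$, and choosing $q_6\in(\sigma,\min(q,6))$ so that \eqref{301} holds on $|t|\le 1$) that the paper leaves implicit. Your remark that the restriction to $|t|\le t_0$ is what allows the supercritical range $q>6$ is also the point of using Theorem~\ref{Theorem 29} rather than Theorem~\ref{Theorem 20}.
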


\begin{remark}
This corollary holds for $\sigma = 8/3$ and $q = 10/3$, which is the case of the so-called Thomas--Fermi--Dirac--von\;Weizs\"acker model, as mentioned at the beginning of this introduction.
\end{remark}

\begin{corollary} \label{Corollary 5}
The equation
\[
- \Delta u + \left(\frac{1}{4 \pi |x|} \star u^2\right) u = |u|^{\sigma - 2}\, u + \lambda\, |u| u - |u|^{q-2}\, u \quad \text{in } \R^3
\]
has infinitely many solutions at negative energy levels for all $\sigma \in (18/7,3)$, $\lambda \in \R$, and $q \in (3,\infty)$.
\end{corollary}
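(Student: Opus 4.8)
We deduce Corollary \ref{Corollary 5} from Theorem \ref{Theorem 29}, so the whole task is to cast the equation in the form \eqref{300} and verify the hypotheses of that theorem. Since the nonlinearity is independent of $x$, write $f(t) = |t|^{\sigma-2}t + g(t)$ with $g(t) = \lambda\,|t|\,t - |t|^{q-2}\,t$; then the equation is exactly \eqref{300} for this $g$, with $G(t) = \frac{\lambda}{3}|t|^3 - \frac{1}{q}|t|^q$. Condition \eqref{302} (oddness of $g$ in $t$) is immediate, as $t \mapsto |t|\,t$ and $t \mapsto |t|^{q-2}\,t$ are both odd.

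The one point needing a moment's care is producing the positive zero of $f$ required by \eqref{307}. For $t > 0$ we have $f(t) = t^{\sigma-1} + \lambda\,t^2 - t^{q-1}$, so, dividing by $t^{q-1} > 0$, $f(t) = 0$ is equivalent to $h(t) := t^{\sigma-q} + \lambda\,t^{3-q} = 1$. Because $\sigma < 3 < q$ the exponents satisfy $\sigma - q < 3 - q < 0$; hence as $t \to 0^+$ the factor $t^{\sigma-q}$ dominates and $h(t) \to +\infty$, while $h(t) \to 0$ as $t \to \infty$. By continuity and the intermediate value theorem there is a $t_0 > 0$ with $f(t_0) = 0$ (when $\lambda \ge 0$ one checks $h' < 0$, so $t_0$ is unique, though uniqueness is not needed).

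It then remains to check the growth conditions \eqref{301}--\eqref{305} on the truncated range $|t| \le t_0$. From $|g(t)| \le |\lambda|\,|t|^2 + |t|^{q-1}$ and, for $|t| \le t_0$, the elementary bound $|t|^{q-1} \le \max\{1,t_0^{q-6}\}\,|t|^5$ when $q > 6$ (trivial when $q \le 6$), \eqref{301} holds with $q_6 = 3$ and $q_7 = 6$ — both in $(\sigma,6]$ since $\sigma < 3$. For the primitive, $G(t) = \frac{\lambda}{3}|t|^3 - \frac{1}{q}|t|^q \le \frac{|\lambda|}{3}|t|^3 \le \frac{|\lambda|}{3}\,t_0^{3-\widetilde\sigma}\,|t|^{\widetilde\sigma}$ for $|t| \le t_0$, where $\widetilde\sigma$ is any fixed exponent in $(18/7,3)$ (and any positive constant works when $\lambda = 0$), which is \eqref{305}. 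The remaining hypotheses on $g$ are either vacuous or equally routine for this explicit $g$. Theorem \ref{Theorem 29} then yields a sequence $(u_k)$ of solutions with $|u_k| < t_0$, $\Phi(u_k) < 0$ for all $k$, and $\Phi(u_k) \nearrow 0$, which is the assertion for every $\sigma \in (18/7,3)$, $\lambda \in \R$, and $q \in (3,\infty)$.

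There is no genuinely hard step; the only delicate input is the existence of the root $t_0$, and this is precisely where the hypotheses $\sigma < 3$ and $q > 3$ are used, so that the subcritical power $|t|^{\sigma-2}t$ dominates the $\lambda$-term near the origin and is dominated by $-|t|^{q-2}t$ far out. It is worth noting why one invokes Theorem \ref{Theorem 29} rather than Theorem \ref{Theorem 20}: since $q$ may be arbitrarily large, the global bound \eqref{301} with $q_7 \le 6$ fails in general, so the truncation mechanism built into Theorem \ref{Theorem 29} — needing the estimates only for $|t| \le t_0$ and automatically furnishing the required compactness — is essential here.
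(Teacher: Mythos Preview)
Your proof is correct and follows exactly the paper's approach: the paper lists Corollary \ref{Corollary 5} as an immediate consequence of Theorem \ref{Theorem 29}, and you have simply spelled out the verification of hypotheses \eqref{301}--\eqref{305} and \eqref{307} that the paper leaves implicit. One small wording fix: when $3 < q \le 6$ the bound $|t|^{q-1} \le C\,|t|^5$ fails near zero, so in that range take $q_7 = q$ rather than $q_7 = 6$ (your parenthetical ``trivial when $q \le 6$'' suggests you already had this in mind).
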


\begin{corollary}
The equation
\[
- \Delta u + \left(\frac{1}{4 \pi |x|} \star u^2\right) u = |u|^{\sigma - 2}\, u - \lambda\, |u| u + |u|^{q-2}\, u \quad \text{in } \R^3
\]
has infinitely many solutions at negative energy levels for all $\sigma \in (18/7,3)$, $\lambda \ge 2$, and $q \in (3,\infty)$.
\end{corollary}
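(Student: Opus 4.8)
The plan is to deduce the statement from Theorem~\ref{Theorem 29}. I would write the nonlinearity as $|u|^{\sigma-2}u + g(u)$ with
\[
g(t) = -\lambda\,|t|\,t + |t|^{q-2}\,t ,
\]
which is independent of $x$ and odd in $t$, so \eqref{302} holds. Since Theorem~\ref{Theorem 29} only demands the remaining structural hypotheses on a bounded interval $[-t_0,t_0]$ once $f(t_0)=0$ for some $t_0>0$, two things must be established: (i) the function $f(t)=|t|^{\sigma-2}t+g(t)$ has a positive zero $t_0$; and (ii) conditions \eqref{301}, \eqref{305}, together with the remaining structural hypotheses of Theorem~\ref{Theorem 29}, hold for $|t|\le t_0$.

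For (i), I would factor, for $t>0$,
\[
f(t)=t^{\sigma-1}\bigl(1-\lambda\,t^{\alpha}+t^{\beta}\bigr),\qquad \alpha:=3-\sigma\in\bigl(0,\tfrac37\bigr),\quad \beta:=q-\sigma>\alpha ,
\]
so it suffices that $h(t):=1-\lambda\,t^{\alpha}+t^{\beta}$ vanish somewhere on $(0,\infty)$. As $h(0^{+})=1>0$, the intermediate value theorem reduces this to $\min_{t>0}h(t)\le 0$, that is, to $\lambda\ge\min_{t>0}(1+t^{\beta})\,t^{-\alpha}$. Minimizing $t^{-\alpha}+t^{\beta-\alpha}$ (the critical point is $t_\ast^{\beta}=\alpha/(\beta-\alpha)$) and setting $\mu:=\alpha/\beta\in(0,1)$ gives
\[
\min_{t>0}\frac{1+t^{\beta}}{t^{\alpha}}=\Theta(\mu),\qquad \Theta(\mu):=\Bigl(\frac{1-\mu}{\mu}\Bigr)^{\mu}\frac{1}{1-\mu} .
\]
Since $(\ln\Theta)'(\mu)=\ln\frac{1-\mu}{\mu}$ vanishes only at $\mu=\tfrac12$, changing sign from $+$ to $-$ there, one has $\sup_{\mu\in(0,1)}\Theta(\mu)=\Theta(\tfrac12)=2$. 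Hence for all $\sigma\in(18/7,3)$, $q\in(3,\infty)$ and $\lambda\ge 2$,
\[
\lambda \ge 2 \ge \Theta(\mu) = \min_{t>0}\frac{1+t^{\beta}}{t^{\alpha}} ,
\]
so $h$, and therefore $f$, has a zero $t_0>0$. I would also remark that $\mu=\tfrac12$ is attained, e.g.\ when $q=6-\sigma\in(3,\infty)$, so $\lambda\ge 2$ is exactly the threshold this argument produces.

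For (ii), from $|g(t)|\le\lambda|t|^{2}+|t|^{q-1}$ and the elementary bound $|t|^{q-1}\le c\,|t|^{p}$ valid for $|t|\le t_0$, where $p:=\min\{q-1,5\}\in(2,5]$ and $c:=t_0^{\,q-1-p}$, condition \eqref{301} holds with $q_6=3$ and $q_7=p+1=\min\{q,6\}$, noting that $\sigma<3=q_6<q_7\le 6$ since $q>3$. The primitive $G(t)=-\tfrac{\lambda}{3}|t|^{3}+\tfrac1q|t|^{q}$ is even (as $g$ is odd), and discarding the negative cubic term and using $|t|^{q}\le t_0^{\,q-\sigma}|t|^{\sigma}$ for $|t|\le t_0$ yields $G(t)\le \tfrac1q\,t_0^{\,q-\sigma}|t|^{\sigma}$, so \eqref{305} holds with $\widetilde{\sigma}=\sigma\in(18/7,3)$; the remaining structural hypotheses are then immediate. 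Theorem~\ref{Theorem 29} now applies and gives a sequence $\seq{u_k}$ with $|u_k|<t_0$, $\Phi(u_k)<0$ for all $k$, and $\Phi(u_k)\nearrow 0$, which in particular furnishes infinitely many solutions at negative energy levels.

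The one genuinely nontrivial step is the single-variable optimization in (i) identifying the sharp constant $2$ (and checking that the extremal case $\mu=\tfrac12$ is realized within the admissible range of $\sigma,q$); once $t_0$ is in hand, the rest is exponent bookkeeping against the hypotheses of Theorem~\ref{Theorem 29}.
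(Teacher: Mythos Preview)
Your proof is correct and follows the same route as the paper, which presents this statement as an immediate corollary of Theorem~\ref{Theorem 29} without spelling out the details. You have supplied precisely those details: the verification that $f$ has a positive zero $t_0$ when $\lambda\ge 2$, including the sharp identification of the threshold via the optimization $\sup_{\mu\in(0,1)}\Theta(\mu)=2$, and the routine check of \eqref{301}, \eqref{302}, \eqref{305} on $[-t_0,t_0]$.
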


The assumption \eqref{305} in Theorem \ref{Theorem 20} implies that the functional $\Phi$ is bounded from below, and this fact is used in the proof to show that certain minimax levels are finite. We also have the following theorem where the functional is unbounded from below when $\lambda > \lambda_1$, but possesses infinitely many critical points at negative levels nonetheless.

\begin{theorem} \label{Theorem 24}
If $\lambda$ is not an eigenvalue of problem \eqref{55}, then the equation
\[
- \Delta u + \left(\frac{1}{4 \pi |x|} \star u^2\right) u = |u|^{\sigma - 2}\, u + \lambda\, |u| u \quad \text{in } \R^3
\]
has infinitely many solutions at negative energy levels for all $\sigma \in (18/7,3)$.
\end{theorem}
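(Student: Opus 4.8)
The plan is to realize the equation as the Euler--Lagrange equation of
$\Phi(u)=I_s(u)-\frac{1}{\sigma}\int_{\R^3}|u|^\sigma\,dx-\lambda J_s(u)$ on $E_r(\R^3)$,
where $I_s,J_s$ are the potentials of the scaled operators attached to $-\Delta u+(\frac{1}{4\pi|x|}\star u^2)u$ and to $|u|u$ and $s=3$; this $\Phi$ is even with $\Phi(0)=0$. The first thing I would record is the identity
\[
\Phi(u_t)=t^{s}I_s(u)-\frac{t^{2\sigma-3}}{\sigma}\int_{\R^3}|u|^\sigma\,dx-\lambda t^{s}J_s(u),\qquad t\ge 0,
\]
which follows from \eqref{112}, \eqref{116} and $\int_{\R^3}|u_t|^\sigma\,dx=t^{2\sigma-3}\int_{\R^3}|u|^\sigma\,dx$, and in which $0<2\sigma-3<s$ because $\sigma\in(18/7,3)$. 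Differentiating it at $t=1$ and using that a solution pairs to zero with the infinitesimal generator of the scaling --- the Poho\v zaev identity, valid as in Ianni and Ruiz \cite{MR2902293} --- gives $sI_s(u)-\frac{2\sigma-3}{\sigma}\int_{\R^3}|u|^\sigma\,dx-s\lambda J_s(u)=0$, hence $\Phi(u)=-\frac{2(3-\sigma)}{3\sigma}\int_{\R^3}|u|^\sigma\,dx<0$ at every nontrivial critical point. So it suffices to produce infinitely many nontrivial critical points of $\Phi$; the subcriticality $\sigma<3<6$ together with $(H_7)$ and the compact embeddings $E_r(\R^3)\incl L^q(\R^3)$, $q\in(18/7,6)$, give the \PS{} condition at every level. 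If $\lambda\le0$ then $\Phi\ge I_s-\frac1\sigma\int_{\R^3}|u|^\sigma\,dx$ is coercive and the scheme below applies with no truncation, so I would assume $\lambda>0$ and fix any $s_0>0$.

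Next I would set up the geometry near the origin. Since $\M=\bgset{u\in E_r(\R^3):I_s(u)=1}$ is an equivariant retract of $E_r(\R^3)\setminus\set{0}$ via the odd retraction $\pi$, one has $i(\M)=\infty$, and hence --- exactly as in the construction of the minimax eigenvalues of Theorem \ref{Theorem 21} --- for each $k\ge1$ there is a compact symmetric $A_0^k\subset\M$ with $i(A_0^k)\ge k$. On $A_0^k$ one has $\int_{\R^3}|u|^\sigma\,dx\ge c_k>0$ while $1-\lambda J_s(u)$ is bounded, say by $C$, so the identity above gives $\Phi(u_\rho)\le C\rho^{s}-(c_k/\sigma)\rho^{2\sigma-3}<0$ for all $u\in A_0^k$ once $\rho=\rho_k>0$ is small; shrinking $\rho_k$ I can also arrange $J_s<s_0$ on $(A_0^k)_{\rho_k}:=\set{u_{\rho_k}:u\in A_0^k}$. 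Thus $(A_0^k)_{\rho_k}$ is a compact symmetric subset of $\set{\Phi<0}$ with $i\ge k$.

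Then I would truncate to restore boundedness from below. Since $\sigma<3$, a Gagliardo--Nirenberg type interpolation inequality in $E_r(\R^3)$ (cf.\ Ruiz \cite{MR2679375}, Mercuri et al.\ \cite{MR3568051}) gives $\int_{\R^3}|u|^\sigma\,dx=\o(I_s(u))$ as $\norm{u}\to\infty$, so $\Psi_0:=I_s-\frac1\sigma\int_{\R^3}|u|^\sigma\,dx$ is coercive. I would pick a concave $\phi\in C^1([0,\infty),[0,\infty))$ with $\phi(\xi)=\xi$ for $\xi\le s_0$, with $\phi$ bounded, $0\le\phi'\le1$ and $\xi\phi'(\xi)$ bounded, and set $\widehat\Phi(u)=\Psi_0(u)-\lambda\,\phi(J_s(u))$: this $\widehat\Phi$ is even, $C^1$, coercive --- hence bounded below and, by the same compactness, satisfying \PS{} everywhere --- and $\widehat\Phi=\Phi$ on the open neighborhood $\set{J_s<s_0}$ of $0$. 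A standard Ljusternik--Schnirelman argument with the $\Z_2$-cohomological index of \cite{MR0478189} then shows that
\[
c_k:=\inf\bgset{\,\sup_{u\in M}\widehat\Phi(u):M\subset E_r(\R^3)\text{ compact, symmetric, }i(M)\ge k\,}
\]
is finite (boundedness below), negative ($c_k\le\sup_{(A_0^k)_{\rho_k}}\widehat\Phi<0$), a critical value of $\widehat\Phi$, with $c_k\nearrow0$, and that the $c_k$ carry infinitely many distinct critical points $u_k$ of $\widehat\Phi$ (a symmetric compact set avoiding $0$ has finite index, so no level survives for infinitely many $k$).

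Finally I would transfer these back to $\Phi$, and this is where I expect the real work to be. A critical point $u$ of $\widehat\Phi$ solves $\As(u)=|u|^{\sigma-2}u+\lambda\phi'(J_s(u))\Bs(u)$, so by the Poho\v zaev identity $I_s(u)=\lambda J_s(u)\phi'(J_s(u))+\frac{2\sigma-3}{3\sigma}\int_{\R^3}|u|^\sigma\,dx$, and concavity of $\phi$ ($\xi\phi'(\xi)-\phi(\xi)\le0$) yields
\[
\widehat\Phi(u)=\lambda\bigl(J_s(u)\phi'(J_s(u))-\phi(J_s(u))\bigr)-\frac{2(3-\sigma)}{3\sigma}\int_{\R^3}|u|^\sigma\,dx\le-\frac{2(3-\sigma)}{3\sigma}\int_{\R^3}|u|^\sigma\,dx\le0.
\]
For the sequence $u_k$ this forces $\int_{\R^3}|u_k|^\sigma\,dx\le\frac{3\sigma}{2(3-\sigma)}|c_k|\to0$ and, since $\xi\phi'(\xi)$ is bounded, $I_s(u_k)$ bounded; so $\seq{u_k}$ is bounded and, by \PS{}, a subsequence converges to a critical point $u$ of $\widehat\Phi$ with $\widehat\Phi(u)=0$, which by the last inequality forces $\int_{\R^3}|u|^\sigma\,dx=0$, i.e.\ $u=0$. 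Hence $u_k\to0$, so $J_s(u_k)<s_0$ for all large $k$; on $\set{J_s<s_0}$ one has $\widehat\Phi=\Phi$, so $\Phi'(u_k)=0$ and $u_k$ solves the equation with $\Phi(u_k)=c_k<0$ and $\Phi(u_k)\nearrow0$. The hard part is exactly the construction of $\widehat\Phi$: because the perturbation $-\lambda J_s$ has the \emph{same} scaling degree as $I_s$ --- not a strictly lower one, as in Theorem \ref{Theorem 20} --- a naive truncation in the norm would create a transition layer that can carry spurious critical points at small negative levels which need not solve the original equation; letting the cutoff act on $J_s(u)$ and be concave is what preserves the scaling/Poho\v zaev identity for $\widehat\Phi$ and forces the minimax critical points, whose energies tend to $0$, to return to the region where $\widehat\Phi$ and $\Phi$ coincide.
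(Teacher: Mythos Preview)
Your proof is correct but follows a genuinely different route from the paper. The paper works directly with $\Phi$: since $|u|^{\sigma-2}u$ gives an operator satisfying \eqref{27} (Lemma \ref{Lemma 3} \ref{Lemma 3.iii}), the hypothesis $\lambda\notin\sigma(\As,\Bs)$ together with Lemma \ref{Lemma 1} yields the \PS{} condition for $\Phi$ itself, and Proposition \ref{Proposition 11} is then applied with $k_0$ chosen so that $\lambda<\lambda_{k_0}$. The key bound $c_k>-\infty$ comes from an index argument: every $M\in\F_k$ with $k\ge k_0$ must meet the set $N=\bgset{u_t:u\in\widetilde\Psi_{\lambda_{k_0}},\,t\ge0}$ (otherwise $\pi$ maps $M$ into $\M_s\setminus\widetilde\Psi_{\lambda_{k_0}}$, which has index $k_0-1$), and $\Phi$ is bounded below on $N$ because $1-\lambda/\widetilde\Psi(u)\ge 1-\lambda^+/\lambda_{k_0}>0$ there while the $|u|^\sigma$ term scales with the lower power $2\sigma-3$. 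You instead truncate the troublesome term $\lambda J_s$ by a bounded concave $\phi$, obtain a coercive $\widehat\Phi$ with \PS{} for free, run Ljusternik--Schnirelman on $\widehat\Phi$, and transfer the critical points back using the Poho\v{z}aev identity combined with the concavity inequality $\xi\phi'(\xi)\le\phi(\xi)$.

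What your approach buys is noteworthy: the truncation/transfer argument never actually invokes $\lambda\notin\sigma(\As,\Bs)$, so you have in effect removed that hypothesis from the statement. The paper's route needs it essentially, since it is what makes Lemma \ref{Lemma 1} applicable and furnishes \PS{} for $\Phi$. One small remark: your early sentence asserting that $\Phi$ itself satisfies \PS{} at every level is precisely the content of Lemma \ref{Lemma 1} and does require the non-eigenvalue hypothesis to bound \PS{} sequences; since the rest of your argument uses only \PS{} for the coercive $\widehat\Phi$, this is a harmless aside, but you may wish to drop or qualify it.
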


Next we consider the critical growth equation
\begin{equation} \label{179}
- \Delta u + \left(\frac{1}{4 \pi |x|} \star u^2\right) u = \mu\, |u|^{\sigma - 2}\, u + u^5 \quad \text{in } \R^3,
\end{equation}
where $\mu > 0$ and $\sigma \in (18/7,3)$. We will show that this equation has infinitely many solutions at negative energy levels for all sufficiently small $\mu$.

\begin{theorem} \label{Theorem 26}
Let $\sigma \in (18/7,3)$. Then $\exists \mu^\ast > 0$ such that equation \eqref{179} has infinitely many solutions at negative energy levels for all $\mu \in (0,\mu^\ast)$.
\end{theorem}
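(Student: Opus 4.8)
The plan is to set up the Euler functional, localise the Palais--Smale condition below a positive threshold in the spirit of Brezis--Nirenberg, truncate away the loss of boundedness from below produced by the critical term, and then run on the truncated functional the scaling-based symmetric minimax underlying Theorem \ref{Theorem 20}. The functional is
\[
\Phi_\mu(u)=\frac12\int_{\R^3}|\nabla u|^2\,dx+\frac1{16\pi}\int_{\R^3}\int_{\R^3}\frac{u^2(x)\,u^2(y)}{|x-y|}\,dx\,dy-\frac\mu\sigma\int_{\R^3}|u|^\sigma\,dx-\frac16\int_{\R^3}|u|^6\,dx
\]
on $E_r(\R^3)$, even and $C^1$, with critical points solving \eqref{179}; write $I_s(u)$ for the sum of the first two terms. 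With the scaling $u_t(x)=t^2u(tx)$, $s=3$, one has on $\M_s=\set{u:I_s(u)=1}$
\[
\Phi_\mu(u_t)=t^3-\frac\mu\sigma\,t^{2\sigma-3}\int_{\R^3}|u|^\sigma\,dx-\frac16\,t^9\int_{\R^3}|u|^6\,dx,
\]
and since $2\sigma-3<3=s$ the negative $t^{2\sigma-3}$ term dominates as $t\to0^+$, so $\Phi_\mu(u_t)<0$ for all small $t>0$, uniformly in $u$ over compact subsets of $\M_s\setminus\set{0}$. Relative to the subcritical Theorem \ref{Theorem 20} there are two obstructions: $\Phi_\mu$ is unbounded below (the $-t^9$ term) and it fails \PS{} globally.

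First I would show that \PS{} sequences of $\Phi_\mu$ are bounded: in $6\,\Phi_\mu(u_j)-\Phi_\mu'(u_j)u_j$ the critical term cancels and the residual negative term, a multiple of $\mu\int_{\R^3}|u_j|^\sigma\,dx$, is absorbed into $I_s(u_j)$ by the sharp bound $\int_{\R^3}|u|^\sigma\,dx\lesssim\big(\int_{\R^3}\!\int_{\R^3}\tfrac{u^2(x)u^2(y)}{|x-y|}\,dx\,dy\big)^{\sigma\theta/4}\big(\int_{\R^3}|\nabla u|^2\,dx\big)^{\sigma(1-\theta)/2}$ with $\theta=(12-2\sigma)/3\sigma$, whose homogeneity degree $\sigma(2-\theta)/4$ is $<1$ exactly because $\sigma<3$. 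Then, for a bounded \PS{c} sequence $u_j\wto u$: $u$ solves \eqref{179}; testing $\Phi_\mu'(u_j)\to0$ against $u_j-u$ and invoking Brezis--Lieb, the compact embedding $E_r(\R^3)\incl L^\sigma(\R^3)$ and the weak continuity of the nonlocal term yields $d:=\lim\int_{\R^3}|\nabla(u_j-u)|^2\,dx=\lim\int_{\R^3}|u_j-u|^6\,dx$, hence $d=0$ or $d\ge S^{3/2}$ ($S$ the best Sobolev constant), together with $c=\Phi_\mu(u)+\tfrac13 d$; and since $u$ solves \eqref{179}, the same absorption gives $\Phi_\mu(u)\ge-K\mu^\beta$ with $\beta=(1-\sigma(2-\theta)/4)^{-1}>1$. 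Therefore $c\ge\tfrac13 S^{3/2}-K\mu^\beta$ whenever $d\ge S^{3/2}$, so $\Phi_\mu$ satisfies \PS{c} for every $c<c^\ast_\mu:=\tfrac13 S^{3/2}-K\mu^\beta$; for small $\mu$ one has $c^\ast_\mu>0$, so \PS{c} holds for all $c\le0$.

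Next I would fix $\psi\in C^\infty([0,\infty),[0,1])$ with $\psi\equiv1$ on $[0,1]$, $\psi\equiv0$ on $[2,\infty)$, a small constant $T>0$ depending only on $S,\sigma$, and set $\widetilde\Phi_\mu(u)=I_s(u)-\tfrac\mu\sigma\int_{\R^3}|u|^\sigma\,dx-\tfrac16\,\psi(I_s(u)/T^s)\int_{\R^3}|u|^6\,dx$. Using the degree-$<1$ estimate again one checks that for small $\mu$: $\widetilde\Phi_\mu$ is even, $C^1$, bounded below; $\widetilde\Phi_\mu>0$ on $\set{I_s(u)\ge T^s}$, so $\set{\widetilde\Phi_\mu\le0}\subset\set{I_s(u)<T^s}$, the region where $\widetilde\Phi_\mu\equiv\Phi_\mu$; hence critical points of $\widetilde\Phi_\mu$ at negative levels solve \eqref{179}, and $\widetilde\Phi_\mu$ inherits \PS{c} for all $c<0$ from the previous step. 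Moreover $\widetilde\Phi_\mu$ keeps the near-origin negativity: for compact symmetric $A_0\subset\M_s\setminus\set{0}$ there is $\rho(\mu,A_0)>0$ with $\widetilde\Phi_\mu(u_t)<0$ for $u\in A_0$, $0<t\le\rho(\mu,A_0)$. Since $E_r(\R^3)$ is infinite dimensional, $\M_s$ contains compact symmetric sets of arbitrarily large cohomological index, and scaling these to a small level produces, for each $k\ge1$, a compact symmetric $M_k$ with $i(M_k)\ge k$ and $\sup_{M_k}\widetilde\Phi_\mu<0$; hence $c_k:=\inf\set{\sup_M\widetilde\Phi_\mu:M\text{ compact symmetric},\ i(M)\ge k}$ satisfies $-\infty<c_k<0$ for all $k$. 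This is exactly the configuration handled in the proof of Theorem \ref{Theorem 20} (a Clark type theorem based on the cohomological index): each $c_k$ is a critical value of $\widetilde\Phi_\mu$, and $\widetilde\Phi_\mu$ has infinitely many critical points at negative levels — otherwise some critical value would be repeated with unbounded index, forcing the corresponding compact (by \PS{}) critical set to have infinite index, which is impossible. By the truncation these solve \eqref{179} at negative energy levels, and $\mu^\ast$ is the least of the smallness thresholds arising above.

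The hard part will be the truncation: the cutoff must make $\widetilde\Phi_\mu$ bounded below, strictly positive on $\set{I_s(u)\ge T^s}$, \emph{and} unchanged near the origin, all at once. This forces the two-stage choice — first a small universal $T$ so that $\int_{\R^3}|u|^6\,dx\lesssim T^{3s}$ is beaten by $I_s(u)\ge T^s$ throughout the transition layer $\set{T^s\le I_s(u)\le 2T^s}$, then $\mu$ small enough that $\mu\int_{\R^3}|u|^\sigma\,dx$ is negligible there and $c^\ast_\mu>0$ — and it hinges entirely on $\sigma(2-\theta)/4<1$, i.e. on $\sigma<3$: the equation being subscaled near zero is precisely what confines all low-energy solutions to a fixed bounded region lying below the compactness threshold. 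The remaining ingredients — the Brezis--Nirenberg compactness splitting and the symmetric minimax of Theorem \ref{Theorem 20} — are routine.
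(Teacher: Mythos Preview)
Your approach is correct and follows essentially the same strategy as the paper: a Garc\'{\i}a Azorero--Peral Alonso type truncation to restore boundedness from below, a Brezis--Nirenberg compactness analysis to secure the \PS{c} condition at negative levels for small $\mu$, and then the Clark-type symmetric minimax of Proposition~\ref{Proposition 11} (the engine behind Theorem~\ref{Theorem 20}).

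The differences are cosmetic rather than substantive. The paper truncates by multiplying the \emph{entire} functional by a cutoff, $\widetilde{\Phi}_\mu(u)=\xi_\mu(I_s(u))\,\Phi_\mu(u)$, with $\xi_\mu\equiv1$ on $[0,R_1(\mu)]$ and $\xi_\mu\equiv0$ on $[R_2(\mu),\infty)$, where $R_1(\mu)<R_2(\mu)$ are the zeros of $g_\mu(t)=t-c_1\mu\,t^{(2\sigma-3)/3}-c_2\,t^3$; the sign pattern of $g_\mu$ immediately forces $\{\widetilde{\Phi}_\mu<0\}\subset\{I_s<R_1(\mu)\}$, so negative-level \PS{} sequences of $\widetilde{\Phi}_\mu$ are automatically bounded (uniformly in $\mu\le\mu^\ast$, since $R_1(\mu)\le R_1(\mu^\ast)$), and the paper never needs to prove boundedness for $\Phi_\mu$ itself. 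You instead cut only the critical term with a \emph{fixed} threshold $T$, prove boundedness of \PS{} sequences for $\Phi_\mu$ directly via the $6\Phi_\mu-\Phi_\mu'u$ combination, and extract a positive compactness threshold $c^\ast_\mu$ before truncating. Both routes land in the same place; the paper's is marginally cleaner because the truncation delivers the a priori bound for free, while yours proves a little more (a quantitative threshold $c^\ast_\mu$) than is actually needed.
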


\begin{remark}
The existence of a single positive solution of equation \eqref{179} for sufficiently small $\mu > 0$ was proved in Liu et al.\! \cite[Theorem 1.5]{MR3912770}.
\end{remark}

Finally we consider the case where the term $|u|^{\sigma - 2}\, u$ has the opposite sign. We will prove the following theorem in the subcritical case.

\begin{theorem} \label{Theorem 27}
If $\sigma \in (18/7,3)$ and $\lambda > \lambda_k$ is not an eigenvalue of problem \eqref{55}, then the equation
\begin{equation} \label{201}
- \Delta u + \left(\frac{1}{4 \pi |x|} \star u^2\right) u = \lambda\, |u| u - |u|^{\sigma - 2}\, u \quad \text{in } \R^3
\end{equation}
has $k$ distinct pairs of nontrivial solutions at positive energy levels.
\end{theorem}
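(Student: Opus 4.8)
The plan is to fit equation \eqref{201} into the scaling framework and extract $k$ pairs of critical points from a pseudo-index argument. Writing $\Phi(u) = \frac12 \int_{\R^3} |\nabla u|^2\, dx + \frac{1}{16\pi} \int_{\R^3}\int_{\R^3} \frac{u^2(x)u^2(y)}{|x-y|}\, dx\, dy - \frac{\lambda}{3} \int_{\R^3} |u|^3\, dx + \frac{1}{\sigma} \int_{\R^3} |u|^\sigma\, dx$ on $E_r(\R^3)$, critical points of $\Phi$ are solutions of \eqref{201}. The first task is to verify the Palais--Smale condition for $\Phi$ at all levels: since $\sigma \in (18/7,3)$ is subcritical and the embedding $E_r(\R^3) \hookrightarrow L^q(\R^3)$ is compact for $q \in (18/7,6)$, boundedness of a \PS{} sequence follows from the coercivity structure of $I_s$ (via $(H_{10})$ applied to the scaled operator $\As$, whose potential $I_s$ dominates the $L^3$ term after the scaling rearrangement), and then compactness of $f$ gives strong convergence via $(H_7)$. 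One must be a little careful: the term $-\frac{\lambda}{3}\int |u|^3$ has the same scaling degree $s = 3$ as $I_s$, so coercivity of $\Phi$ itself may fail when $\lambda > \lambda_1$; instead one argues that a \PS{} sequence is bounded because the subleading term $\frac{1}{\sigma}\int |u|^\sigma$ with $\sigma < 3$ controls the deficit, using that $\lambda$ is not an eigenvalue so that $\Phi_\lambda(u) = I_s(u) - \lambda J_s(u)$ is bounded away from $0$ on $\M_s$ in a suitable sense.

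Next I would set up the linking geometry for the multiplicity count. Using the minimax eigenvalues $\lambda_j$ of problem \eqref{55} and Theorem \ref{Theorem 21}\ref{Theorem 21.iii}, since $\lambda > \lambda_k$ there is an index $\ell \le k$ with $\lambda_\ell < \lambda$ (indeed $\ell = k$ suffices, or one takes the largest such index), and one picks a symmetric compact set $A_0 \subset \M_s$ with $i(A_0) \ge k$ sitting inside a sublevel set of $\widetilde\Psi$ where $\widetilde\Psi < \lambda$, i.e.\ where $\lambda J_s > I_s = 1$; and a symmetric closed set $B_0 = \widetilde\Psi^\lambda$ (or its complement construction) with $i(\M_s \setminus B_0)$ controlled, so that $i(A_0) \ge k$ and $i(\M_s \setminus B_0) \le k-1$ in the notation of the symmetric multiplicity theorem stated in Section 1.2. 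On $A_0$ one has $\Phi_\lambda = I_s - \lambda J_s < 0$, hence $\Phi(u_t) = t^3 \Phi_\lambda(u) + \frac{t^\sigma}{\sigma}\int|u|^\sigma \le t^3(\Phi_\lambda(u) + C t^{\sigma - 3})$ which is $\le 0$ for $t$ large since $\sigma < 3$; so taking $R$ large gives $\sup_A \Phi \le 0$, where $A = \set{u_R : u \in A_0}$. On $B_0 = \widetilde\Psi^\lambda \cap \M_s$ one has $\lambda J_s \le 1 = I_s$, hence $\Phi_\lambda \ge 0$ there, but the crucial positivity $\Phi(u_\rho) = \rho^3 \Phi_\lambda(u) + \frac{\rho^\sigma}{\sigma}\int|u|^\sigma \ge \frac{\rho^\sigma}{\sigma}\int|u|^\sigma > 0$ for small $\rho$ uses only that the $L^\sigma$ term is strictly positive on $\M_s$ (which it is, as $0 \notin \M_s$), independently of the sign of $\Phi_\lambda$; so $\inf_B \Phi > 0$ for $\rho$ small. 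This realizes the geometric hypotheses of the symmetric pseudo-index theorem stated earlier (with $c^\ast = +\infty$, since \PS{} holds at all levels here), and it yields $k$ critical values $0 < c_1^\ast \le \cdots \le c_k^\ast$ and $k$ distinct pairs of critical points at positive energy.

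The main obstacle I anticipate is the verification that $\lambda \notin \sigma(\As,\Bs)$ genuinely rules out spurious degeneracy — specifically, making the \PS{} boundedness argument work when $\Phi$ is unbounded below. The point is that along a \PS{} sequence $(u_j)$, projecting to $\M_s$ via $\pi$ and writing $u_j = (v_j)_{t_j}$ with $v_j \in \M_s$, one gets $\Phi(u_j) = t_j^3\, \Phi_\lambda(v_j) + \frac{t_j^\sigma}{\sigma}\int|v_j|^\sigma$ and $\Phi'(u_j) u_j = t_j^3\big(3\Phi_\lambda(v_j)\big) + t_j^\sigma \int|v_j|^\sigma + o(t_j^3)$ modulo the scaling identities $(H_{12})$; combining these and using that $\Phi_\lambda(v_j)$ cannot be made arbitrarily small in absolute value relative to $\int|v_j|^\sigma$ on $\M_s$ when $\lambda$ avoids the spectrum (a consequence of the discreteness from Theorem \ref{Theorem 21} together with the characterization in part \ref{Theorem 21.iii}), one forces $t_j$ bounded, hence $(u_j)$ bounded. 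This step is where the hypothesis "$\lambda$ is not an eigenvalue" is essential and where the argument is most delicate; everything else is a routine application of the abstract machinery already set up in the excerpt.
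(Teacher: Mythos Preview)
Your \PS{} strategy is right: since the perturbation $-|u|^{\sigma-2}u$ is subscaled, the functional is asymptotically scaled and Lemma \ref{Lemma 1} (projection to $\M_s$ plus $\lambda \notin \sigma(\As,\Bs)$) gives \PS{} at all levels, exactly as the paper does by reference to Theorem \ref{Theorem 24}. The real gap is in the pseudo-index bookkeeping. With your choices $i(A_0) \ge k$ and $i(\M_s \setminus B_0) \le k-1$, Theorem \ref{Theorem 14} yields only $m = 1$ pair: the number of pairs produced is the index gap $i(A_0) - i(\M_s \setminus B_0)$, and you have left a gap of $1$, not $k$. Since the open set $\{\widetilde\Psi < \lambda\}$ has index exactly $k$ when $\lambda_k < \lambda < \lambda_{k+1}$ (Theorem \ref{Theorem 1} \ref{Theorem 1.iii}), you cannot push $i(A_0)$ above $k$; the only way to get a gap of $k$ is to take $B_0 = \M_s$, i.e., to use Corollary \ref{Corollary 1}. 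This is what the paper does.

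That choice, however, forces you to prove $\inf_{u \in \M_s} \Phi(u_\rho) > 0$ over the whole manifold, and here your argument is incomplete. Pointwise positivity of $K(u) = \frac{1}{\sigma}\int |u|^\sigma$ on $\M_s$ does not give a uniform lower bound: $K$ is \emph{not} bounded away from zero on $\M_s$ (let $\widetilde\Psi(u) \to \infty$). The paper handles this by splitting $\M_s$ into $\widetilde\Psi^{\lambda_{k+1}}$ and its complement. On the complement, $\Phi_\lambda(u) \ge 1 - \lambda/\lambda_{k+1} > 0$ does the work; on $\widetilde\Psi^{\lambda_{k+1}}$, one needs Lemma \ref{Lemma 7}, a Gagliardo--Nirenberg argument showing $\inf_{\widetilde\Psi^a} K > 0$ for any $a \ge \lambda_1$. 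This lemma is the missing ingredient in your lower bound. Finally, a computational slip: under the scaling $u_t(x) = t^2 u(tx)$ one has $K(u_t) = t^{2\sigma-3} K(u)$, not $t^\sigma K(u)$; since $2\sigma - 3 < 3$ the qualitative picture is unchanged, but the exponents in your displayed formulas should be corrected.
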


We have the following theorem in the critical case.

\begin{theorem} \label{Theorem 28}
If $\sigma \in (18/7,3)$ and $\lambda > \lambda_k$, then $\exists \mu^\ast > 0$ such that the equation
\begin{equation} \label{202}
- \Delta u + \left(\frac{1}{4 \pi |x|} \star u^2\right) u = \lambda\, |u| u - \mu\, |u|^{\sigma - 2}\, u + u^5 \quad \text{in } \R^3
\end{equation}
has $k$ distinct pairs of nontrivial solutions at positive energy levels for all $\mu \in (0,\mu^\ast)$.
\end{theorem}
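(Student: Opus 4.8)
The plan is to recast \eqref{202} as the Euler--Lagrange equation of the even $C^1$-functional
\[
\Phi_\mu(u) = I_s(u) - \lambda\, J_s(u) + \frac{\mu}{\sigma} \int_{\R^3} |u|^\sigma\, dx - \frac{1}{6} \int_{\R^3} |u|^6\, dx, \qquad u \in E_r(\R^3),
\]
on $W = E_r(\R^3)$ with the scaling $u_t(x) = t^2\, u(tx)$ and $s = 3$, and to apply the abstract scaling-based multiplicity theorem for symmetric functionals that satisfy only a local \PS{} condition (Theorem \ref{Theorem 14}), taking $\M = \M_s$. First I would record that, since $\sigma < 3$ and $6$ is the critical Sobolev exponent, $\Phi_\mu$ is well defined, $C^1$, and even, and that its only obstruction to compactness is the term $\int_{\R^3} |u|^6\, dx$: by the same concentration--compactness analysis used for the critical equations \eqref{79} and \eqref{185} in the proofs of Theorem \ref{Theorem 15} and Theorem \ref{Theorem 25}, there is a constant $c^\ast > 0$, independent of $\mu$ (as well as of $\lambda$ and $k$), such that $\Phi_\mu$ satisfies the \PS{c} condition for all $c \in (0, c^\ast)$.

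Next I would construct the scaled linking geometry. I take $B_0 = \M_s$, so that $i(\M_s \setminus B_0) = i(\varnothing) = 0$; Theorem \ref{Theorem 14} applied with the role of ``$m$'' played by our $k$ and the role of ``$k$'' played by $1$ then only requires $i(A_0) \ge k$. Since $\lambda > \lambda_k = \inf_{M \in \F_k}\, \sup_M \widetilde{\Psi}$, there is a closed symmetric $M \subseteq \M_s$ with $i(M) \ge k$ and $\widetilde{\Psi} < \lambda$ on $M$; by the standard compactness property of the cohomological index, $M$ contains a compact symmetric subset $A_0$ with $i(A_0) \ge k$, and, $A_0$ being compact, $1 - \lambda\, J_s(u) \le -\delta_0 < 0$ on $A_0$. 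From the scaling identities $I_s(u_t) = t^3 I_s(u)$, $J_s(u_t) = t^3 J_s(u)$, $\int_{\R^3} |u_t|^\sigma\, dx = t^{2\sigma - 3} \int_{\R^3} |u|^\sigma\, dx$, and $\int_{\R^3} |u_t|^6\, dx = t^9 \int_{\R^3} |u|^6\, dx$, one obtains, for $u \in \M_s$,
\[
\Phi_\mu(u_t) = t^3 \bigl(1 - \lambda\, J_s(u)\bigr) + \frac{\mu}{\sigma}\, t^{2\sigma - 3} \int_{\R^3} |u|^\sigma\, dx - \frac{1}{6}\, t^9 \int_{\R^3} |u|^6\, dx,
\]
where $0 < 2\sigma - 3 < 3 < 9$. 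On $A_0$ the right-hand side tends to $-\infty$ as $t \to \infty$, uniformly, so $\sup_{u \in A_0} \Phi_\mu(u_R) < 0$ for $R$ large; and since the right-hand side is bounded above there by $-\delta_0\, t^3 + C\mu\, t^{2\sigma - 3}$, maximizing the latter in $t \ge 0$ gives $\sup_{u \in A_0,\, 0 \le t \le R} \Phi_\mu(u_t) \le C'\, \mu^{3/(6 - 2\sigma)}$, which is below $c^\ast$ once $\mu$ is small.

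The heart of the matter is the lower bound $\inf_{u \in \M_s} \Phi_\mu(u_\rho) > 0$ for a suitably small $\rho = \rho(\mu)$, which yields $\inf_B \Phi_\mu > 0$ on $B = \set{u_\rho : u \in \M_s}$. This is delicate because $\inf_{\M_s}\, \bigl(\int_{\R^3} |u|^\sigma\, dx\bigr) = 0$ --- the concentrating profiles $u_j(x) \approx j^{1/2}\, \phi(jx)$ with $\tfrac{1}{2} \int_{\R^3} |\nabla \phi|^2\, dx = 1$ lie on $\M_s$ and satisfy $\int_{\R^3} |u_j|^\sigma\, dx \to 0$ --- so the positive term $\tfrac{\mu}{\sigma}\, \rho^{2\sigma - 3} \int_{\R^3} |u|^\sigma\, dx$ cannot by itself absorb the possibly very negative term $\rho^3 \bigl(1 - \lambda\, J_s(u)\bigr)$ uniformly in $u$. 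The remedy is that on $\M_s$ one has $\int_{\R^3} |\nabla u|^2\, dx \le 2$, hence $\|u\|_6 \le C$ by the Sobolev inequality, so by interpolation $\|u\|_3 \le \|u\|_\sigma^\theta\, \|u\|_6^{1 - \theta} \le C\, \|u\|_\sigma^\theta$ with $\tfrac{1}{3} = \tfrac{\theta}{\sigma} + \tfrac{1 - \theta}{6}$; thus $J_s(u) = \tfrac{1}{3} \|u\|_3^3$ is small precisely where $\int_{\R^3} |u|^\sigma\, dx$ is small. Splitting $\M_s$ into $\set{\int_{\R^3} |u|^\sigma\, dx \ge \eta}$ and $\set{\int_{\R^3} |u|^\sigma\, dx < \eta}$: on the first set, factoring out $\rho^{2\sigma - 3}$ leaves the bracket $\tfrac{\mu}{\sigma}\, \eta - C\rho^{6 - 2\sigma} - C\rho^{12 - 2\sigma}$, which is positive for $\rho$ small since $6 - 2\sigma > 0$; on the second set, taking $\eta$ small forces $1 - \lambda\, J_s(u) \ge \tfrac{1}{2}$, whence $\Phi_\mu(u_\rho) \ge \tfrac{1}{2} \rho^3 - C\rho^9 > 0$ for $\rho$ small. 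Fixing $\eta$ first and then $\rho$ gives $\inf_B \Phi_\mu > 0$ for every $\mu > 0$. With the hypotheses $\sup_A \Phi_\mu \le 0 < \inf_B \Phi_\mu$ and $\sup_X \Phi_\mu < c^\ast$ now verified for all $\mu \in (0, \mu^\ast)$, Theorem \ref{Theorem 14} produces critical values $0 < c_1^\ast \le \dots \le c_k^\ast < c^\ast$ of $\Phi_\mu$ and $k$ distinct pairs of associated critical points; these are nontrivial, lie at the positive levels $c_j^\ast$, and solve \eqref{202}. The principal obstacle is precisely this uniform positivity of $\Phi_\mu$ on $\set{u_\rho : u \in \M_s}$ in the presence of the critical term together with the degeneracy $\inf_{\M_s}\, \bigl(\int_{\R^3} |u|^\sigma\, dx\bigr) = 0$; a secondary point is establishing that the \PS{} threshold $c^\ast$ may be taken independent of $\mu$, which is what makes $\sup_X \Phi_\mu < c^\ast$ attainable by shrinking $\mu$.
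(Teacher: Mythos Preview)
Your proposal is correct and follows essentially the same route as the paper: apply Corollary \ref{Corollary 1} with $c^\ast = \tfrac{1}{3}S^{3/2}$ (the paper proves this local \PS{} threshold as a separate lemma, Lemma \ref{Lemma 8}, and it is indeed independent of $\mu$), take $A_0$ a compact symmetric subset of $\{\widetilde{\Psi} < \lambda\}$ with index $\ge k$, and show $\inf_{\M_\rho}\Phi_\mu > 0$ by splitting $\M_s$ according to the interpolation link between $\|u\|_3$ and $\|u\|_\sigma$ on $\M_s$. The only cosmetic differences are that the paper splits $\M_s$ by level sets of $\widetilde{\Psi}$ (using Lemma \ref{Lemma 7}, which is the contrapositive of your interpolation estimate) rather than by level sets of $\int|u|^\sigma$, and for $\sup_X \Phi_\mu$ the paper drops the negative $t^3$ term and keeps the $t^9$ term, whereas you do the reverse; both choices work.
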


\subsection{Open problems}

Our present work is part of a broader programme that includes many other scaled equations involving other differential operators, as well as generalizations in multiple directions of the results obtained here for Schr\"{o}dinger--Poisson--Slater equations. Within the framework of the latter, we list a few questions, some of which we will hopefully address in future contributions.

\subsubsection{Supercritical equations}

Can the bounded-truncation technique introduced by Berestycki and Lions in \cite{MR695535,MR695536} be adapted to extend our existence and multiplicity results to Sobolev-supercritical nonlinearities of the form
\[
f(|x|,u) = |u|^{p-2}\, u - |u|^{q-2}\, u
\]
with $q > p \ge 6$?

\subsubsection{More general kernels and higher dimensions}

A way to capture Sobolev-supercritical nonlinear regimes is to set all our equations in $\R^N$ and replace the so-called Hartree term
\[
\int_{\R^3} \int_{\R^3} \frac{|u(x)|^2\, |u(y)|^2}{|x - y|}\, dx\, dy
\]
that we have considered here with the more general form
\[
\int_{\R^N} \int_{\R^N} \frac{|u(x)|^p\, |u(y)|^p}{|x - y|^{N - \alpha}}\, d\mu(x)\, d\mu(y),
\]
where $p > 1$, $\alpha \in (0,N)$, and $d\mu(x) = \rho(x)\, dx$ with $\rho$, e.g., locally bounded and homogeneous of some order $k \ge 0$. Results in this direction can be obtained by combining techniques developed in this paper with radial and nonradial embeddings of Coulomb-Sobolev spaces studied in the case $\rho \equiv 1$ by Bellazzini et al.\! in \cite{MR3852465} and Mercuri et al.\! in \cite{MR3568051}. In \cite{MR3852465,MR3568051} it was shown that these spaces embed into $L^q$ spaces in an optimal range of the parameters $N$, $\alpha$, and $p$ that allows the case $q > 2^\ast$. Scaling properties for related equations involving $d\mu(x) = \rho(x)\, dx$ have been studied by Dutko et al.\! in \cite{MR4292779}.

\section{Abstract theory} \label{Section 2}

\subsection{Scaled operators}

Let $W$ be a reflexive Banach space with a continuous mapping $W \times [0,\infty) \to W,\, (u,t) \mapsto u_t$ satisfying
\begin{enumerate}
\item[$(H_1)$] $(u_{t_1})_{t_2} = u_{t_1 t_2}$ for all $u \in W$ and $t_1, t_2 \ge 0$,
\item[$(H_2)$] $(\tau u)_t = \tau u_t$ for all $u \in W$, $\tau \in \R$, and $t \ge 0$,
\item[$(H_3)$] $u_0 = 0$ and $u_1 = u$ for all $u \in W$,
\item[$(H_4)$] $u_t$ is bounded on bounded sets in $W \times [0,\infty)$,
\item[$(H_5)$] $\exists s > 0$ such that $\norm{u_t} = \O(t^s)$ as $t \to \infty$, uniformly in $u$ on bounded sets.
\end{enumerate}
We will refer to this mapping as a scaling on $W$.

Taking $u = 0$ and $\tau = 0$ in $(H_2)$ gives
\begin{equation} \label{15}
0_t = 0 \quad \forall t \ge 0.
\end{equation}
By $(H_1)$ and $(H_3)$,
\begin{equation} \label{5}
(u_t)_{1/t} = u_1 = u \quad \forall u \in W,\, t > 0,
\end{equation}
in particular,
\begin{equation} \label{7}
u_t \ne 0 \quad \forall u \in W \setminus \set{0},\, t > 0.
\end{equation}

Denote by $W^\ast$ the dual of $W$. Recall that $q \in C(W,W^\ast)$ is a potential operator if there is a functional $Q \in C^1(W,\R)$, called a potential for $q$, such that $Q' = q$. By replacing $Q$ with $Q - Q(0)$ if necessary, we may assume that $Q(0) = 0$.

\begin{definition}
We denote by $\A_s$ the class of odd potential operators $q \in C(W,W^\ast)$ that map bounded sets into bounded sets and satisfy
\begin{equation} \label{1}
q(u_t) v_t = t^s q(u) v \quad \forall u, v \in W,\, t \ge 0.
\end{equation}
We will refer to an element of $\A_s$ as a scaled operator.
\end{definition}

We have the following proposition.

\begin{proposition} \label{Proposition 1}
If $q \in C(W,W^\ast)$ is a potential operator and $Q$ is its potential with $Q(0) = 0$, then
\[
Q(u) = \int_0^1 q(\tau u) u\, d\tau \quad \forall u \in W.
\]
In particular, $Q$ is even if $q$ is odd, and $Q$ is bounded on bounded sets if $q$ maps bounded sets into bounded sets. If $q \in \A_s$, then $Q$ satisfies
\[
Q(u_t) = t^s Q(u) \quad \forall u \in W,\, t \ge 0.
\]
\end{proposition}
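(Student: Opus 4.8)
The plan is to establish the three assertions of Proposition~\ref{Proposition 1} in the order stated: the integral representation of $Q$, the consequences (evenness, boundedness on bounded sets), and finally the scaling identity for $Q$.

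First I would prove the integral formula $Q(u) = \int_0^1 q(\tau u)\, u\, d\tau$. Fix $u \in W$ and define $\varphi(\tau) = Q(\tau u)$ for $\tau \in [0,1]$. Since $Q \in C^1(W,\R)$ with $Q' = q$, the chain rule gives $\varphi \in C^1([0,1],\R)$ with $\varphi'(\tau) = Q'(\tau u)\, u = q(\tau u)\, u$. The map $\tau \mapsto q(\tau u)\, u$ is continuous on $[0,1]$ because $q$ is continuous, so the fundamental theorem of calculus yields $\varphi(1) - \varphi(0) = \int_0^1 q(\tau u)\, u\, d\tau$, i.e.\ $Q(u) - Q(0) = \int_0^1 q(\tau u)\, u\, d\tau$; since $Q(0) = 0$, the formula follows.

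From this representation the two ``in particular'' claims are routine. If $q$ is odd, then $q(\tau(-u))(-u) = (-q(\tau u))(-u) = q(\tau u)\, u$, so the integrand is unchanged under $u \mapsto -u$, whence $Q(-u) = Q(u)$. For boundedness, suppose $q$ maps bounded sets into bounded sets and let $S \subset W$ be bounded, say $\norm{u} \le M$ for $u \in S$. Then $\{\tau u : u \in S,\, \tau \in [0,1]\}$ is bounded, so there is $C > 0$ with $\norm{q(\tau u)} \le C$ for all such $\tau, u$; hence $\abs{Q(u)} \le \int_0^1 \abs{q(\tau u)\, u}\, d\tau \le \int_0^1 \norm{q(\tau u)}\, \norm{u}\, d\tau \le CM$ for $u \in S$.

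Finally, for the scaling identity, assume $q \in \A_s$ and fix $u \in W$, $t \ge 0$. Using the integral formula at the point $u_t$ and then the defining relation \eqref{1} in the form $q((\tau u)_t)\, u_t = t^s q(\tau u)\, u$ — here I use $(H_2)$ to write $(\tau u)_t = \tau u_t$ so that $q(\tau u_t)\, u_t = q((\tau u)_t)\, (u)_t = t^s q(\tau u)\, u$ — I get
\[
Q(u_t) = \int_0^1 q(\tau u_t)\, u_t\, d\tau = \int_0^1 t^s q(\tau u)\, u\, d\tau = t^s \int_0^1 q(\tau u)\, u\, d\tau = t^s Q(u).
\]
I expect the only mildly delicate point to be the bookkeeping in this last computation: one must apply $(H_2)$ to recognize $\tau u_t$ as $(\tau u)_t$ before \eqref{1} is applicable, and one must check the edge case $t = 0$ separately (there $u_0 = 0$ by $(H_3)$, so both sides vanish since $Q(0) = 0$). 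Everything else is a direct application of the fundamental theorem of calculus and elementary estimates, so there is no substantial obstacle.
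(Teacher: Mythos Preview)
Your proof is correct and follows essentially the same approach as the paper: the fundamental theorem of calculus for the integral formula, and then $(H_2)$ together with the defining relation \eqref{1} for the scaling identity. Your bookkeeping in the last step is in fact slightly cleaner than the paper's (which routes through a $\tfrac{1}{\tau}$ factor), and your explicit check of the edge case $t=0$ is a nice touch the paper omits.
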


\begin{proof}
We have
\[
Q(u) = \int_0^1 \frac{d}{d\tau}\, Q(\tau u)\, d\tau = \int_0^1 Q'(\tau u) u\, d\tau = \int_0^1 q(\tau u) u\, d\tau.
\]
If $q \in \A_s$, then this gives
\[
Q(u_t) = \int_0^1 q(\tau u_t) u_t\, d\tau = \int_0^1 \frac{1}{\tau}\, q((\tau u)_t) (\tau u)_t\, d\tau = t^s \int_0^1 q(\tau u) u\, d\tau = t^s Q(u)
\]
by $(H_2)$ and \eqref{1}.
\end{proof}

We consider the question of existence and multiplicity of solutions to nonlinear operator equations of the form
\begin{equation} \label{2}
\As(u) = f(u)
\end{equation}
in $W^\ast$, where $\As \in \A_s$ satisfies
\begin{enumerate}
\item[$(H_6)$] $\As(u) u > 0$ for all $u \in W \setminus \set{0}$,
\item[$(H_7)$] every sequence $\seq{u_j}$ in $W$ such that $u_j \wto u$ and $\As(u_j)(u_j - u) \to 0$ has a subsequence that converges strongly to $u$,
\end{enumerate}
and $f \in C(W,W^\ast)$ is a potential operator. By Proposition \ref{Proposition 1}, the potential $I_s$ of $\As$ with $I_s(0) = 0$ given by
\begin{equation} \label{3}
I_s(u) = \int_0^1 \As(\tau u) u\, d\tau
\end{equation}
is even, bounded on bounded sets, and satisfies
\begin{equation} \label{12}
I_s(u_t) = t^s I_s(u) \quad \forall u \in W,\, t \ge 0.
\end{equation}
By \eqref{3} and $(H_6)$, $I_s(u) > 0$ for all $u \in W \setminus \set{0}$. Let
\[
\Phi(u) = I_s(u) - F(u), \quad u \in W,
\]
where $F$ is the potential of $f$ with $F(0) = 0$ given by
\[
F(u) = \int_0^1 f(\tau u) u\, d\tau.
\]
Then
\[
\Phi'(u) = I_s'(u) - F'(u) = \As(u) - f(u),
\]
so solutions of equation \eqref{2} coincide with critical points of the $C^1$-functional $\Phi$. We note that $\Phi(0) = 0$ and that $\Phi$ is even when $f$ is odd.

The following proposition shows that every bounded \PS{} sequence of $\Phi$ has a convergent subsequence when $f$ is compact.

\begin{proposition} \label{Proposition 3}
If $f$ is a compact operator, then every bounded sequence $\seq{u_j}$ in $W$ such that $\Phi'(u_j) \to 0$ has a convergent subsequence.
\end{proposition}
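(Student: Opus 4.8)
The plan is to combine reflexivity of $W$ with hypothesis $(H_7)$, in the spirit of the classical fact that the sum of an operator satisfying an $(S)$-type condition and a compact perturbation is compact along bounded sequences. So let $\seq{u_j}$ be a bounded sequence in $W$ with $\Phi'(u_j) = \As(u_j) - f(u_j) \to 0$ in $W^\ast$. Since $W$ is reflexive and $\seq{u_j}$ is bounded, I would first pass to a subsequence along which $u_j \wto u$ for some $u \in W$.

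Because $f$ is compact and $\seq{u_j}$ is bounded, after passing to a further subsequence I may assume $f(u_j) \to g$ strongly in $W^\ast$ for some $g \in W^\ast$; then $\Phi'(u_j) \to 0$ forces $\As(u_j) \to g$ strongly in $W^\ast$ as well. The crux is then to check the hypothesis of $(H_7)$, i.e.\ that $\As(u_j)(u_j - u) \to 0$. Splitting
\[
\As(u_j)(u_j - u) = \seq{\As(u_j) - g}(u_j - u) + g(u_j - u),
\]
the first term is bounded in absolute value by $\norm[W^\ast]{\As(u_j) - g}\, \norm{u_j - u}$, which tends to $0$ since the first factor goes to $0$ and $\seq{u_j}$ is bounded; the second term tends to $0$ because $u_j \wto u$. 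Hence $\As(u_j)(u_j - u) \to 0$, and $(H_7)$ yields a subsequence of $\seq{u_j}$ converging strongly to $u$, which is the desired convergent subsequence.

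I do not anticipate a genuine obstacle: the argument is short, and the only delicate point is the interpretation of ``$f$ compact'' — all that is used is that $f$ maps the bounded sequence $\seq{u_j}$ into a sequence with a strongly convergent subsequence in $W^\ast$, and this is exactly what promotes $\As(u_j)$ to a strongly convergent sequence and thereby makes $(H_7)$ applicable. One need not identify $g$; a posteriori, the strong convergence $u_j \to u$ together with continuity of $\As$ and $f$ gives $g = \As(u) = f(u)$, but this is irrelevant to the statement.
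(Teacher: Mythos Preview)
Your proof is correct and follows essentially the same route as the paper's: reflexivity gives a weakly convergent subsequence, compactness of $f$ upgrades $\As(u_j) = \Phi'(u_j) + f(u_j)$ to a strongly convergent sequence in $W^\ast$, and then $(H_7)$ finishes. The paper writes this in one line as $\As(u_j)(u_j-u) = (\Phi'(u_j)+f(u_j))(u_j-u) \to 0$, whereas you split off the limit $g$ explicitly, but the content is identical.
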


\begin{proof}
Since $W$ is reflexive, $\seq{u_j}$ converges weakly to some $u \in W$ for a renamed subsequence. Since $f$ is compact, $f(u_j)$ converges in $W^\ast$ for a further subsequence. Then
\[
\As(u_j)(u_j - u) = (\Phi'(u_j) + f(u_j))(u_j - u) \to 0,
\]
so $\seq{u_j}$ has a subsequence that converges strongly to $u$ by $(H_7)$.
\end{proof}

\subsection{Scaled eigenvalue problems}

First we consider the eigenvalue problem
\begin{equation} \label{4}
\As(u) = \lambda \Bs(u)
\end{equation}
in $W^\ast$, where $\As, \Bs \in \A_s$ satisfy $(H_6)$, $(H_7)$, and
\begin{enumerate}
\item[$(H_8)$] $\Bs(u) u > 0$ for all $u \in W \setminus \set{0}$,
\item[$(H_9)$] if $u_j \wto u$ in $W$, then $\Bs(u_j) \to \Bs(u)$ in $W^\ast$,
\end{enumerate}
and $\lambda \in \R$. We say that $\lambda$ is an eigenvalue if there is a $u \in W \setminus \set{0}$, called an eigenfunction associated with $\lambda$, satisfying equation \eqref{4}. Then $u_t$ is also an eigenfunction associated with $\lambda$ for any $t > 0$ since $u_t \ne 0$ by \eqref{7} and
\[
\As(u_t) v = \As(u_t) (v_{1/t})_t = t^s \As(u) v_{1/t} = t^s \lambda \Bs(u) v_{1/t} = \lambda \Bs(u_t) (v_{1/t})_t = \lambda \Bs(u_t) v
\]
for all $v \in W$ by \eqref{5}. Moreover,
\[
\lambda = \frac{\As(u) u}{\Bs(u) u} > 0
\]
by $(H_6)$ and $(H_8)$. We will call the set $\sigma(\As,\Bs)$ of all eigenvalues the spectrum of the pair of scaled operators $(\As,\Bs)$. We will refer to equation \eqref{4} as a scaled eigenvalue problem.

\subsubsection{Variational setting}

We have $I_s'(0) = \As(0) = 0$ since $\As$ is odd, so the origin is a critical point of $I_s$. It is the only critical point of $I_s$ since
\[
I_s'(u) u = \As(u) u > 0 \quad \forall u \in W \setminus \set{0}
\]
by $(H_6)$. So $I_s(0) = 0$ is the only critical value of $I_s$, and hence it follows from the implicit function theorem that
\[
\M_s = \bgset{u \in W : I_s(u) = 1}
\]
is a $C^1$-Finsler manifold. Moreover, $\M_s$ is complete and symmetric since $I_s$ is continuous and even. We assume that
\begin{enumerate}
\item[$(H_{10})$] $I_s$ is coercive, i.e., $I_s(u) \to \infty$ as $\norm{u} \to \infty$,
\item[$(H_{11})$] the equation $I_s(tu) = 1$ has a unique positive solution $t$ for each $u \in W \setminus \set{0}$.
\end{enumerate}
We note that $(H_{10})$ implies that $\M_s$ is a bounded manifold, while $(H_{11})$ implies that the ray $\set{tu : t \ge 0}$ through $u$ intersects $\M_s$ at exactly one point for each $u \in W \setminus \set{0}$.

For $u \in W \setminus \set{0}$, set
\begin{equation} \label{13}
t_u = \frac{1}{I_s(u)^{1/s}}, \quad \widetilde{u} = u_{t_u}.
\end{equation}
Then
\[
I_s(\widetilde{u}) = t_u^s\, I_s(u) = 1
\]
by \eqref{12} and hence $\widetilde{u} \in \M_s$. For $u \in \M_s$, $t_u = 1$ and hence $\widetilde{u} = u$ by $(H_3)$. We will refer to the continuous mapping $\pi : W \setminus \set{0} \to \M_s,\, u \mapsto \widetilde{u}$ as the projection onto $\M_s$. We have
\begin{equation} \label{18}
u = \widetilde{u}_{1/t_u}
\end{equation}
by \eqref{5}.

By Proposition \ref{Proposition 1}, the potential $J_s$ of $\Bs$ with $J_s(0) = 0$ given by
\begin{equation} \label{8}
J_s(u) = \int_0^1 \Bs(\tau u) u\, d\tau
\end{equation}
is even, bounded on bounded sets, and satisfies
\begin{equation} \label{6}
J_s(u_t) = t^s J_s(u) \quad \forall u \in W,\, t \ge 0.
\end{equation}

\begin{proposition} \label{Proposition 4}
If $u_j \wto u$ in $W$, then $J_s(u_j) \to J_s(u)$.
\end{proposition}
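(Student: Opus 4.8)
The plan is to exploit the integral representation $J_s(u) = \int_0^1 \Bs(\tau u)\, u\, d\tau$ furnished by Proposition \ref{Proposition 1}, together with the weak-to-strong continuity of $\Bs$ in $(H_9)$. First I would record that a weakly convergent sequence is bounded, say $\norm{u_j} \le M$ for all $j$; since $\Bs$ maps bounded sets into bounded sets, the set $\set{\tau u_j : j \in \N,\, \tau \in [0,1]} \cup \set{\tau u : \tau \in [0,1]}$ being bounded, there is a constant $C > 0$ with $\norm[W^\ast]{\Bs(\tau u_j)} \le C$ and $\norm[W^\ast]{\Bs(\tau u)} \le C$ for all $j$ and all $\tau \in [0,1]$. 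Then I would write
\[
J_s(u_j) - J_s(u) = \int_0^1 \Bs(\tau u_j)(u_j - u)\, d\tau + \int_0^1 \bigl[\Bs(\tau u_j) - \Bs(\tau u)\bigr] u\, d\tau
\]
and treat the two integrals separately.

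The second integral is the easy one: for each fixed $\tau \in [0,1]$ we have $\tau u_j \wto \tau u$, so $(H_9)$ gives $\Bs(\tau u_j) \to \Bs(\tau u)$ in $W^\ast$, whence $\bigl[\Bs(\tau u_j) - \Bs(\tau u)\bigr] u \to 0$; since this integrand is dominated on $[0,1]$ by the constant $2C\norm{u}$, the bounded convergence theorem disposes of it. The main obstacle is the first integral $\int_0^1 \Bs(\tau u_j)(u_j - u)\, d\tau$: here one pairs $\Bs(\tau u_j)$, which converges only strongly, against $u_j - u$, which converges only weakly to $0$, so the pairing is not automatically small. The fix is a further splitting $\Bs(\tau u_j)(u_j - u) = \bigl[\Bs(\tau u_j) - \Bs(\tau u)\bigr](u_j - u) + \Bs(\tau u)(u_j - u)$: the first piece is bounded by $\norm[W^\ast]{\Bs(\tau u_j) - \Bs(\tau u)}\, \norm{u_j - u}$, which tends to $0$ because the first factor tends to $0$ by $(H_9)$ while the second stays bounded; the second piece tends to $0$ because $u_j - u \wto 0$ and $\Bs(\tau u)$ is a fixed element of $W^\ast$. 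Thus the integrand of the first integral converges to $0$ pointwise in $\tau$, and it is dominated by the constant $C(M + \norm{u})$ on $[0,1]$, so bounded convergence applies once more.

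A couple of routine points I would verify along the way: that $\tau \mapsto \Bs(\tau u_j)\, u_j$ is continuous (hence measurable) on $[0,1]$, which follows from the continuity of $\Bs : W \to W^\ast$ and of the duality pairing, so that the integrals are genuine; and that the dominating constants above are uniform in both $j$ and $\tau$, which is precisely what boundedness of $\seq{u_j}$ together with the bounded-sets-to-bounded-sets property of $\Bs$ provides. Beyond the weak-times-strong splitting just described, I expect no conceptual difficulty; the remainder is bookkeeping.
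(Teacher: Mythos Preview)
Your proposal is correct and follows the same approach as the paper: use the integral representation $J_s(u_j) = \int_0^1 \Bs(\tau u_j)\, u_j\, d\tau$, invoke $(H_9)$ to get $\Bs(\tau u_j) \to \Bs(\tau u)$ strongly in $W^\ast$ for each $\tau$, and apply bounded convergence. The paper's proof simply asserts in one line that $\Bs(\tau u_j)\, u_j \to \Bs(\tau u)\, u$ (the standard ``strong in $W^\ast$ times weak in $W$'' fact), whereas your further splitting into $\bigl[\Bs(\tau u_j) - \Bs(\tau u)\bigr](u_j - u) + \Bs(\tau u)(u_j - u)$ spells out exactly this step; so your argument is the same proof written with more detail.
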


\begin{proof}
By \eqref{8},
\[
J_s(u_j) = \int_0^1 \Bs(\tau u_j) u_j\, d\tau.
\]
For all $\tau \in [0,1]$, $\tau u_j \wto \tau u$ and hence $\Bs(\tau u_j) \to \Bs(\tau u)$ by $(H_9)$, so $\Bs(\tau u_j) u_j \to \Bs(\tau u) u$. Moreover, $\Bs(\tau u_j) u_j$ is bounded since $\seq{u_j}$ is a bounded sequence and $\Bs$ maps bounded sets into bounded sets. So
\[
\int_0^1 \Bs(\tau u_j) u_j\, d\tau \to \int_0^1 \Bs(\tau u) u\, d\tau = J_s(u). \QED
\]
\end{proof}

Finally we make the following structural assumption about the operators $\As$ and $\Bs$:
\begin{enumerate}
\item[$(H_{12})$] every solution of equation \eqref{4} satisfies
    \[
    I_s(u) = \lambda\, J_s(u).
    \]
\end{enumerate}
In many applications, $(H_{12})$ follows from a suitable Poho\v{z}aev type identity.

By \eqref{8} and $(H_8)$, $J_s(u) > 0$ for all $u \in W \setminus \set{0}$. So the functional
\[
\Psi(u) = \frac{1}{J_s(u)}, \quad u \in W \setminus \set{0}
\]
is positive and its restriction to $\M_s$,
\[
\widetilde{\Psi} = \restr{\Psi}{\M_s},
\]
is $C^1$. Since
\[
I_s'(u) = \As(u), \qquad \Psi'(u) = - \frac{J_s'(u)}{J_s(u)^2} = - \Psi(u)^2\, \Bs(u),
\]
the norm of $\widetilde{\Psi}'(u)$ as an element of the cotangent space $T_u^\ast \M_s$ at $u \in \M_s$ is given by
\begin{equation} \label{9}
\bgdnorm[u]{\widetilde{\Psi}'(u)} = \min_{\mu \in \R}\, \bgdnorm{\mu I_s'(u) + \Psi'(u)} = \min_{\mu \in \R}\, \bgdnorm{\mu \As(u) - \widetilde{\Psi}(u)^2\, \Bs(u)},
\end{equation}
where $\dnorm{\cdot}$ is the norm in $W^\ast$ (see, e.g., Perera et al.\! \cite[Proposition 3.54]{MR2640827}).

\begin{proposition} \label{Proposition 12}
Eigenvalues of problem \eqref{4} coincide with critical values of $\widetilde{\Psi}$, i.e., $\lambda$ is an eigenvalue if and only if there is a $u \in \M_s$ such that $\widetilde{\Psi}'(u) = 0$ and $\widetilde{\Psi}(u) = \lambda$.
\end{proposition}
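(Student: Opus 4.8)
The approach I would take is to run everything through the Lagrange multiplier formula \eqref{9}. Reading that formula for $u \in \M_s$, one has $\widetilde{\Psi}'(u) = 0$ if and only if the minimum over $\mu \in \R$ of $\dnorm{\mu\, \As(u) - \widetilde{\Psi}(u)^2\, \Bs(u)}$ is zero, i.e.\ if and only if there is a $\mu \in \R$ with
\[
\mu\, \As(u) = \widetilde{\Psi}(u)^2\, \Bs(u) \quad \text{in } W^\ast .
\]
Here the minimum is attained, since $\mu \mapsto \dnorm{\mu\, \As(u) - \widetilde{\Psi}(u)^2\, \Bs(u)}$ is continuous, convex, and coercive: $\As(u) \ne 0$ because $\As(u)\, u = I_s'(u)\, u > 0$ by $(H_6)$, as $u \ne 0$ for $u \in \M_s$. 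I would then combine this with the scaling relations \eqref{12} and \eqref{6} and with the structural identity $I_s(u) = \lambda\, J_s(u)$ from $(H_{12})$ to pass between eigenfunctions lying on $\M_s$ and critical points of $\widetilde{\Psi}$.

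For the implication from eigenvalue to critical value, take an eigenvalue $\lambda$ with eigenfunction $v \in W \setminus \set{0}$. Since $I_s(v) > 0$, the scaling factor $t_v = I_s(v)^{-1/s}$ is a positive real and $u := v_{t_v}$ lies in $\M_s$; recall from the discussion following \eqref{4} that $u$ is again an eigenfunction associated with $\lambda$, so $\As(u) = \lambda\, \Bs(u)$. Using \eqref{6} and $t_v^s = 1/I_s(v)$, and then $(H_{12})$ applied to $v$,
\[
\widetilde{\Psi}(u) = \frac{1}{J_s(v_{t_v})} = \frac{1}{t_v^s\, J_s(v)} = \frac{I_s(v)}{J_s(v)} = \lambda .
\]
Taking $\mu = \lambda$ then gives $\mu\, \As(u) = \lambda^2\, \Bs(u) = \widetilde{\Psi}(u)^2\, \Bs(u)$, so $\widetilde{\Psi}'(u) = 0$ by \eqref{9}, and $\lambda$ is a critical value of $\widetilde{\Psi}$.

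For the converse, let $u \in \M_s$ satisfy $\widetilde{\Psi}'(u) = 0$ and $\widetilde{\Psi}(u) = \lambda$, so that $\lambda = 1/J_s(u) > 0$. By the first paragraph there is $\mu \in \R$ with $\mu\, \As(u) = \lambda^2\, \Bs(u)$; evaluating both sides at $u$ gives $\mu\, \As(u)\, u = \lambda^2\, \Bs(u)\, u$, and since $\As(u)\, u > 0$ and $\Bs(u)\, u > 0$ by $(H_6)$ and $(H_8)$ while $\lambda^2 > 0$, we get $\mu > 0$. Hence $\As(u) = (\lambda^2/\mu)\, \Bs(u)$, so $u$ is an eigenfunction with eigenvalue $\lambda' := \lambda^2/\mu > 0$. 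Then $(H_{12})$ applied to $u$, together with $I_s(u) = 1$ and $J_s(u) = 1/\lambda$, gives $1 = \lambda'\, J_s(u) = \lambda'/\lambda$, so $\lambda' = \lambda$; thus $u$ is an eigenfunction for $\lambda$ and $\lambda$ is an eigenvalue.

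The only genuinely load-bearing step is formula \eqref{9} — the description of the cotangent norm of $\widetilde{\Psi}$ on the Finsler manifold $\M_s$ as a one-parameter minimization — and that is inherited from \cite[Proposition 3.54]{MR2640827}. Granting \eqref{9}, the rest is bookkeeping: the scaling laws \eqref{12} and \eqref{6}, the positivity hypotheses $(H_6)$ and $(H_8)$ to fix the sign of the multiplier $\mu$, and the Poho\v{z}aev-type assumption $(H_{12})$ to identify the eigenvalue with $\widetilde{\Psi}(u)$.
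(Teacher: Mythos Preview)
Your proof is correct and follows essentially the same route as the paper's own proof: both use the Lagrange multiplier characterization \eqref{9} to reduce $\widetilde{\Psi}'(u) = 0$ to $\mu\,\As(u) = \widetilde{\Psi}(u)^2\,\Bs(u)$ for some $\mu$, then invoke $(H_{12})$ together with $I_s(u)=1$ and $J_s(u)=1/\widetilde{\Psi}(u)$ to pin down $\mu = \widetilde{\Psi}(u)$, and project an arbitrary eigenfunction onto $\M_s$ for the forward direction. The only cosmetic differences are that you justify attainment of the minimum in \eqref{9} explicitly and obtain $\mu>0$ by pairing with $u$, whereas the paper simply observes $\mu\ne 0$ from the nonvanishing of $\widetilde{\Psi}(u)^2\,\Bs(u)$.
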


\begin{proof}
By \eqref{9}, $\widetilde{\Psi}'(u) = 0$ if and only if
\begin{equation} \label{10}
\mu \As(u) = \widetilde{\Psi}(u)^2\, \Bs(u)
\end{equation}
for some $\mu \in \R$. If \eqref{10} holds, then $\Bs(u) \ne 0$ by $(H_8)$ and $\widetilde{\Psi}(u) > 0$, so $\mu \ne 0$ and hence
\[
\mu I_s(u) = \widetilde{\Psi}(u)^2\, J_s(u)
\]
by $(H_{12})$. Since $u \in \M_s$, this gives $\mu = \widetilde{\Psi}(u)$, so \eqref{10} reduces to \eqref{4} with $\lambda = \widetilde{\Psi}(u)$. Conversely, if $\lambda$ is an eigenvalue and $u \in W \setminus \set{0}$ is an associated eigenfunction, then by replacing $u$ with $\widetilde{u}$ if necessary (see \eqref{13}), we may assume that $u \in \M_s$. Then $(H_{12})$ gives
\[
\lambda = \frac{I_s(u)}{J_s(u)} = \widetilde{\Psi}(u),
\]
so \eqref{4} implies \eqref{10} with $\mu = \widetilde{\Psi}(u)$.
\end{proof}

\begin{proposition} \label{Proposition 5}
$\widetilde{\Psi}$ satisfies the {\em \PS{}} condition.
\end{proposition}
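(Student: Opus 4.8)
The plan is to verify the Palais--Smale condition directly from the definition, exploiting coercivity of $I_s$ together with the compactness built into $(H_7)$ and the weak continuity of $\Bs$ from $(H_9)$. So let $\seq{u_j}$ be a sequence in $\M_s$ with $\widetilde{\Psi}(u_j)$ bounded and $\bgdnorm[u_j]{\widetilde{\Psi}'(u_j)} \to 0$. The first step is to extract a weakly convergent subsequence: since $u_j \in \M_s$ and $\M_s$ is bounded by $(H_{10})$ (here I would invoke the remark after $(H_{11})$ that $(H_{10})$ makes $\M_s$ bounded) and $W$ is reflexive, $u_j \wto u$ for a renamed subsequence. The second step is to identify, for each $j$, the multiplier $\mu_j$ achieving the minimum in \eqref{9}, so that $\dnorm{\mu_j \As(u_j) - \widetilde{\Psi}(u_j)^2\, \Bs(u_j)} = \bgdnorm[u_j]{\widetilde{\Psi}'(u_j)} \to 0$.

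The third step is to show $\mu_j$ is bounded and in fact $\mu_j \to \widetilde{\Psi}(u) > 0$ along a subsequence. Testing the near-identity $\mu_j \As(u_j) - \widetilde{\Psi}(u_j)^2\, \Bs(u_j) \to 0$ (in $W^\ast$) against $u_j$ and using that $\As(u_j) u_j = I_s'(u_j) u_j$ is bounded away from $0$ — more precisely, recalling $I_s(u_j) = 1$ on $\M_s$, one relates $\As(u_j) u_j$ to a positive quantity; one also uses that $\Bs(u_j) u_j = J_s$-type quantities are bounded since $\Bs$ maps bounded sets into bounded sets — yields boundedness of $\seq{\mu_j}$. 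Since $\widetilde{\Psi}(u_j) = 1/J_s(u_j)$ and $J_s(u_j) \to J_s(u)$ by Proposition \ref{Proposition 4}, provided $J_s(u) \ne 0$, i.e. $u \ne 0$, we get $\widetilde{\Psi}(u_j) \to \widetilde{\Psi}(u)$. Pairing the asymptotic identity with $u_j$ then pins down $\mu_j \to \widetilde{\Psi}(u)$.

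The fourth and decisive step is to upgrade weak convergence to strong convergence via $(H_7)$. From $\mu_j \As(u_j) - \widetilde{\Psi}(u_j)^2\, \Bs(u_j) \to 0$ in $W^\ast$ and the fact that $\Bs(u_j) \to \Bs(u)$ in $W^\ast$ by $(H_9)$, together with boundedness of $\seq{u_j}$, one obtains $\As(u_j)(u_j - u) \to 0$: indeed $\mu_j \As(u_j)(u_j - u) = \widetilde{\Psi}(u_j)^2\, \Bs(u_j)(u_j - u) + \o(1)$, and $\Bs(u_j)(u_j-u) \to 0$ because $\Bs(u_j)$ converges strongly in $W^\ast$ while $u_j - u \wto 0$. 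Since $\mu_j \to \widetilde{\Psi}(u) \ne 0$, we conclude $\As(u_j)(u_j - u) \to 0$, and $(H_7)$ then gives a strongly convergent subsequence $u_j \to u$. Finally, $I_s(u) = 1$ by continuity so $u \in \M_s$; in particular $u \ne 0$, which retroactively justifies the step where $J_s(u) \ne 0$ was needed.

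The main obstacle I anticipate is exactly this potential circularity around $u \ne 0$: a priori the weak limit could be $0$, in which case $\widetilde{\Psi}(u_j) = 1/J_s(u_j) \to \infty$ and the multiplier analysis degenerates. This is ruled out precisely because $\widetilde{\Psi}(u_j)$ is assumed bounded along a Palais--Smale sequence, which forces $J_s(u_j)$ bounded away from $0$, hence $J_s(u) > 0$ by Proposition \ref{Proposition 4}, hence $u \ne 0$. So the careful ordering is: boundedness of $\widetilde{\Psi}(u_j)$ $\Rightarrow$ $u \ne 0$ $\Rightarrow$ the multiplier $\mu_j$ is controlled $\Rightarrow$ $\As(u_j)(u_j-u)\to 0$ $\Rightarrow$ strong convergence by $(H_7)$. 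One should also handle the minor point that the minimizing $\mu_j$ in \eqref{9} exists and depends measurably on $j$ — this is routine since it is a minimization of a convex coercive function of one real variable.
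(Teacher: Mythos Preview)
Your argument is essentially the paper's: boundedness of $\M_s$ gives a weak limit $u$, Proposition~\ref{Proposition 4} together with the boundedness of $\widetilde\Psi(u_j)$ forces $J_s(u)>0$ and hence $u\ne 0$, the Lagrange-multiplier form \eqref{9} produces $\mu_j$ bounded away from zero, and then $(H_9)$ and $(H_7)$ upgrade weak to strong convergence. The only wobble is in your third step: the assertion that $I_s(u_j)=1$ forces $\As(u_j)u_j$ to be bounded away from $0$ is not justified (no Euler-type identity is assumed), and the specific limit $\mu_j\to\widetilde\Psi(u)$ is neither established at that stage nor needed --- all that is required is $|\mu_j|$ bounded away from zero, which follows because $\As(u_j)u_j$ is bounded \emph{above} while $\widetilde\Psi(u_j)^2\,\Bs(u_j)u_j\to\Psi(u)^2\,\Bs(u)u>0$; the paper obtains this more cleanly by testing against the fixed weak limit $u$ rather than against $u_j$.
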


\begin{proof}
Let $c \in \R$ and let $\seq{u_j} \subset \M_s$ be a \PS{c} sequence of $\widetilde{\Psi}$, i.e.,
\[
\widetilde{\Psi}(u_j) \to c, \qquad \bgdnorm[u_j]{\widetilde{\Psi}'(u_j)} \to 0.
\]
Since $\M_s$ is a bounded manifold by $(H_{10})$, $\seq{u_j}$ is bounded and hence converges weakly to some $u \in W$ for a renamed subsequence. Then $J_s(u_j) \to J_s(u)$ by Proposition \ref{Proposition 4}. Since
\[
\widetilde{\Psi}(u_j) = \frac{1}{J_s(u_j)} \to c,
\]
it follows that $c > 0$ and $J_s(u) > 0$, in particular, $u \ne 0$.

By \eqref{9}, $\bgdnorm[u_j]{\widetilde{\Psi}'(u_j)} \to 0$ implies that
\begin{equation} \label{11}
\mu_j\, \As(u_j) + \widetilde{\Psi}(u_j)^2\, \Bs(u_j) \to 0
\end{equation}
for some sequence $\seq{\mu_j} \subset \R$. Applying \eqref{11} to $u$ and noting that $\As(u_j) u$ is bounded since $\As$ maps bounded sets into bounded sets and
\[
\widetilde{\Psi}(u_j)^2\, \Bs(u_j) u \to c^2\, \Bs(u) u > 0
\]
by $(H_9)$ and $(H_8)$ shows that $\mu_j$ is bounded away from zero. Now applying \eqref{11} to $u_j - u$ shows that $\As(u_j)(u_j - u) \to 0$, so $u_j \to u$ for a further subsequence by $(H_7)$.
\end{proof}

Proposition \ref{Proposition 5} implies that the set
\[
E_\lambda = \bgset{u \in \M_s : \widetilde{\Psi}'(u) = 0 \ins{and} \widetilde{\Psi}(u) = \lambda}
\]
of eigenfunctions associated with $\lambda$ that lie on $\M_s$ is compact and that the spectrum
\[
\sigma(\As,\Bs) = \bgset{\lambda \in \R : \exists u \in \M_s \ins{such that} \widetilde{\Psi}'(u) = 0 \ins{and} \widetilde{\Psi}(u) = \lambda}
\]
is closed.

\subsubsection{Minimax eigenvalues}

We now construct an unbounded sequence of minimax eigenvalues of problem \eqref{4}. Although this can be done using the Krasnoselskii's genus, we prefer to use the $\Z_2$-cohomological index of Fadell and Rabinowitz \cite{MR0478189} in order to obtain nontrivial critical groups and construct linking sets.

\begin{definition}[Fadell and Rabinowitz \cite{MR0478189}] \label{Definition 1}
Let $\A$ denote the class of symmetric subsets of $W \setminus \set{0}$. For $A \in \A$, let $\overline{A} = A/\Z_2$ be the quotient space of $A$ with each $u$ and $-u$ identified, let $f : \overline{A} \to \RP^\infty$ be the classifying map of $\overline{A}$, and let $f^\ast : H^\ast(\RP^\infty) \to H^\ast(\overline{A})$ be the induced homomorphism of the Alexander-Spanier cohomology rings. The cohomological index of $A$ is defined by
\[
i(A) = \begin{cases}
0 & \text{if } A = \emptyset\\[5pt]
\sup \set{m \ge 1 : f^\ast(\omega^{m-1}) \ne 0} & \text{if } A \ne \emptyset,
\end{cases}
\]
where $\omega \in H^1(\RP^\infty)$ is the generator of the polynomial ring $H^\ast(\RP^\infty) = \Z_2[\omega]$.
\end{definition}

\begin{example}
The classifying map of the unit sphere $S^N$ in $\R^{N+1},\, N \ge 0$ is the inclusion $\RP^N \incl \RP^\infty$, which induces isomorphisms on the cohomology groups $H^l$ for $l \le N$, so $i(S^N) = N + 1$.
\end{example}

The following proposition summarizes the basic properties of the cohomological index.

\begin{proposition}[Fadell and Rabinowitz \cite{MR0478189}] \label{Proposition 7}
The index $i : \A \to \N \cup \set{0,\infty}$ has the following properties:
\begin{enumerate}
\item[$(i_1)$] Definiteness: $i(A) = 0$ if and only if $A = \emptyset$.
\item[$(i_2)$] Monotonicity: If there is an odd continuous map from $A$ to $B$ (in particular, if $A \subset B$), then $i(A) \le i(B)$. Thus, equality holds when the map is an odd homeomorphism.
\item[$(i_3)$] Dimension: $i(A) \le \dim W$.
\item[$(i_4)$] Continuity: If $A$ is closed, then there is a closed neighborhood $N \in \A$ of $A$ such that $i(N) = i(A)$. When $A$ is compact, $N$ may be chosen to be a $\delta$-neighborhood $N_\delta(A) = \set{u \in W : \dist{u}{A} \le \delta}$.
\item[$(i_5)$] Subadditivity: If $A$ and $B$ are closed, then $i(A \cup B) \le i(A) + i(B)$.
\item[$(i_6)$] Stability: If $\Sigma A$ is the suspension of $A \ne \emptyset$, obtained as the quotient space of $A \times [-1,1]$ with $A \times \set{1}$ and $A \times \set{-1}$ collapsed to different points, then $i(\Sigma A) = i(A) + 1$.
\item[$(i_7)$] Piercing property: If $C$, $C_0$, and $C_1$ are closed and $\varphi : C \times [0,1] \to C_0 \cup C_1$ is a continuous map such that $\varphi(-u,t) = - \varphi(u,t)$ for all $(u,t) \in C \times [0,1]$, $\varphi(C \times [0,1])$ is closed, $\varphi(C \times \set{0}) \subset C_0$, and $\varphi(C \times \set{1}) \subset C_1$, then $i(\varphi(C \times [0,1]) \cap C_0 \cap C_1) \ge i(C)$.
\item[$(i_8)$] Neighborhood of zero: If $U$ is a bounded closed symmetric neighborhood of $0$, then $i(\bdry{U}) = \dim W$.
\end{enumerate}
\end{proposition}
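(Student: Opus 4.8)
The plan is to verify the eight properties one by one, each directly from the definition, that is, from the behavior of the classifying map $f\colon\overline{A}\to\RP^\infty=B\Z_2$ and the induced ring homomorphism $f^\ast\colon H^\ast(\RP^\infty)=\Z_2[\omega]\to H^\ast(\overline{A})$ on Alexander--Spanier cohomology. Two general facts will be used repeatedly: a classifying map of a free $\Z_2$-space into $B\Z_2$ is unique up to homotopy, so it is functorial under odd maps; and Alexander--Spanier cohomology is continuous on closed subsets of a metric space, i.e. $H^\ast(\overline{A})=\varinjlim H^\ast(\overline{N})$ over closed neighborhoods $N\supset A$. We also use $H^\ast(\RP^n)=\Z_2[\omega]/(\omega^{n+1})$ and the computation $i(S^N)=N+1$ from the Example above.

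The properties $(i_1)$, $(i_2)$, $(i_3)$, $(i_8)$ are then essentially formal. For $(i_1)$, if $A\neq\emptyset$ then $f^\ast(1)=1\neq 0$ in $H^0(\overline{A})$, so $i(A)\geq 1$, while $i(\emptyset)=0$ by definition. For $(i_2)$, an odd continuous map $A\to B$ descends to $g\colon\overline{A}\to\overline{B}$, and composing $g$ with a classifying map of $\overline{B}$ produces a classifying map of $\overline{A}$; hence $f_A^\ast=g^\ast\circ f_B^\ast$, which gives $i(A)\leq i(B)$, with equality when $g$ is an odd homeomorphism. For $(i_3)$, when $\dim W=n<\infty$ the radial retraction $u\mapsto u/\norm{u}$ is an odd map of $A$ into $S^{n-1}$, so $i(A)\leq i(S^{n-1})=n$; the statement is vacuous when $\dim W=\infty$. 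For $(i_8)$, when $\dim W=n<\infty$ the same retraction is an odd map $\bdry{U}\to S^{n-1}$, while a Borsuk--Ulam argument supplies an odd map $S^{n-1}\to\bdry{U}$, so $i(\bdry{U})=n$ by $(i_2)$; when $\dim W=\infty$ one exhausts $\bdry{U}$ by spheres of arbitrarily large dimension.

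The remaining three properties call for a little cohomological algebra. For $(i_4)$, monotonicity already gives $i(N)\geq i(A)$, so it suffices to produce one closed neighborhood with $i(N)\leq i(A)$; writing $m=i(A)$, the class $f_A^\ast(\omega^m)\in H^m(\overline{A})$ vanishes, so by continuity it vanishes over some closed symmetric neighborhood $\overline{N}$, whence $i(N)\leq m$, and for compact $A$ the $\delta$-neighborhoods form a cofinal family of closed neighborhoods. For $(i_5)$, set $p=i(A)$, $q=i(B)$; the class $f^\ast(\omega^p)\in H^p(\overline{A\cup B})$ restricts to $f_A^\ast(\omega^p)=0$ on $\overline{A}$, so it lifts to $H^p(\overline{A\cup B},\overline{A})$, and similarly $f^\ast(\omega^q)$ lifts to $H^q(\overline{A\cup B},\overline{B})$; their relative cup product lies in $H^{p+q}(\overline{A\cup B},\overline{A}\cup\overline{B})=H^{p+q}(\overline{A\cup B},\overline{A\cup B})=0$ and maps to $f^\ast(\omega^{p+q})$, which is therefore $0$, so $i(A\cup B)\leq p+q$. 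For $(i_6)$, the action $(u,t)\mapsto(-u,-t)$ on $\Sigma A$ is free; writing $\Sigma A$ as the union of the two cones on $A$ that this action interchanges, with common slice $A\times\set{0}\cong A$, a Mayer--Vietoris computation expresses $H^\ast(\overline{\Sigma A})$ in terms of $H^\ast(\overline{A})$ and shows that cup product with $\omega$ raises the highest nonvanishing power by exactly one; the model case $A=S^N$, $\overline{\Sigma A}=\RP^{N+1}$, pins down the shift.

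The main obstacle is the piercing property $(i_7)$, which is precisely what is later used to build the scaling-based linking sets. Put $D=\varphi(C\times[0,1])$, $D_j=D\cap C_j$, and $E=D\cap C_0\cap C_1=D_0\cap D_1$; all are closed and symmetric, and $D=D_0\cup D_1$ since $D\subset C_0\cup C_1$. The odd maps $\varphi(\cdot,0)\colon C\to D_0$ and $\varphi(\cdot,1)\colon C\to D_1$ give $i(D_0),i(D_1)\geq i(C)=:m$ by $(i_2)$, and every fiber $\varphi(\set{u}\times[0,1])$ meets $E$, because the closed sets $\set{t:\varphi(u,t)\in C_0}$ and $\set{t:\varphi(u,t)\in C_1}$ cover the connected interval $[0,1]$ and contain $0$ and $1$, hence meet. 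Arguing by contradiction, suppose $i(E)<m$; by $(i_4)$ choose a closed symmetric neighborhood $N$ of $E$ with $i(N)<m$. Since $E\subset\mathrm{int}(N)$, the sets $D_0\setminus N$ and $D_1\setminus N$ have disjoint closures in $D$, so each fiber runs from one piece, through $N$, into the other; a relative Mayer--Vietoris argument for the cover $D=D_0\cup D_1$, in the spirit of the proof of $(i_5)$, then accounts for $f_D^\ast(\omega^{m-1})$ entirely by $\overline{N}$ and the two pieces of index $<m$, which is impossible since $i(D_0),i(D_1)\geq m$. Making precise in which relative groups the classes live and which connecting homomorphisms are used is the delicate step; the bookkeeping is that of Fadell and Rabinowitz \cite{MR0478189}, to whom we refer for the full details.
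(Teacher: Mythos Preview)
The paper does not prove Proposition~\ref{Proposition 7}; it is stated with attribution to Fadell and Rabinowitz~\cite{MR0478189} and used as a black box throughout. There is therefore no ``paper's own proof'' to compare against.

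Your sketch is a reasonable outline of how these properties are established, and the arguments for $(i_1)$--$(i_5)$ and $(i_8)$ are standard and essentially complete. For $(i_6)$ your Mayer--Vietoris description is correct in spirit but would need the explicit identification of how $\omega$ acts on $H^\ast(\overline{\Sigma A})$. For $(i_7)$ you correctly isolate the key difficulty and then defer to~\cite{MR0478189} for the actual bookkeeping---which is exactly what the paper does as well. One small point: in $(i_8)$ for finite-dimensional $W$, the existence of an odd map $S^{n-1}\to\partial U$ is not quite a ``Borsuk--Ulam argument''; rather, for each direction $v\in S^{n-1}$ the ray $\{tv:t\ge 0\}$ meets $\partial U$ (by boundedness and $0\in\mathrm{int}\,U$), and one shows the first intersection depends continuously and oddly on $v$.

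In short: your proposal goes well beyond what the paper itself provides, and where it stops short (the piercing property) it lands in the same place the paper does, namely the original reference.
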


Let $\F$ denote the class of symmetric subsets of $\M_s$. For $k \ge 1$, let
\[
\F_k = \bgset{M \in \F : i(M) \ge k}
\]
and set
\[
\lambda_k := \inf_{M \in \F_k}\, \sup_{u \in M}\, \widetilde{\Psi}(u).
\]
We have the following theorem (see Perera et al.\! \cite[Proposition 3.52 and Proposition 3.53]{MR2640827}).

\begin{theorem} \label{Theorem 1}
Assume $(H_1)$--$(H_{12})$. Then $\lambda_k \nearrow \infty$ is a sequence of eigenvalues of \eqref{4}.
\begin{enumroman}
\item \label{Theorem 1.i} The first eigenvalue is given by
    \[
    \lambda_1 = \min_{u \in \M_s}\, \widetilde{\Psi}(u) > 0.
    \]
\item If $\lambda_k = \dotsb = \lambda_{k+m-1} = \lambda$, then $i(E_\lambda) \ge m$.
\item \label{Theorem 1.iii} If $\lambda_k < \lambda < \lambda_{k+1}$, then
    \[
    i(\widetilde{\Psi}^{\lambda_k}) = i(\M_s \setminus \widetilde{\Psi}_\lambda) = i(\widetilde{\Psi}^\lambda) = i(\M_s \setminus \widetilde{\Psi}_{\lambda_{k+1}}) = k,
    \]
    where $\widetilde{\Psi}^a = \bgset{u \in \M_s : \widetilde{\Psi}(u) \le a}$ and $\widetilde{\Psi}_a = \bgset{u \in \M_s : \widetilde{\Psi}(u) \ge a}$ for $a \in \R$.
\end{enumroman}
\end{theorem}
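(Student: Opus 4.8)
The plan is to transfer the theorem to the setting of an abstract even functional on a Finsler manifold and then invoke the standard minimax theory based on the cohomological index. By Proposition \ref{Proposition 12}, eigenvalues of \eqref{4} coincide with critical values of $\widetilde{\Psi}$ on $\M_s$, and by Proposition \ref{Proposition 5} the functional $\widetilde{\Psi}$ satisfies the \PS{} condition on the complete, symmetric, bounded $C^1$-Finsler manifold $\M_s$. Since $J_s > 0$ on $W \setminus \set{0}$ and $J_s$ is bounded on bounded sets (Proposition \ref{Proposition 1}) while $\M_s$ is bounded, $\widetilde{\Psi} = 1/J_s$ is bounded away from $0$ and hence positive and bounded below on $\M_s$; this is what makes $\lambda_1 = \inf_{\M_s} \widetilde{\Psi}$ a genuine minimum once \PS{} is available. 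The remaining content is then exactly the abstract cohomological-index eigenvalue theory, which is precisely the content of \cite[Proposition 3.52 and Proposition 3.53]{MR2640827} cited in the statement.

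First I would record that the classes $\F_k$ are nested, $\F_{k+1} \subset \F_k$, so $\lambda_k \le \lambda_{k+1}$ and the sequence is nondecreasing. Next, since $\M_s$ is symmetric and, being a $C^1$-Finsler manifold modeled on the infinite-dimensional space $W$, contains symmetric subsets of arbitrarily large index (for instance images of spheres $S^{k-1}$ from finite-dimensional subspaces of $W$, projected to $\M_s$ via $\pi$, which are odd homeomorphisms onto their images by $(H_2)$ and \eqref{13}, so $i$ is preserved by $(i_2)$), every $\F_k$ is nonempty and each $\lambda_k$ is a well-defined real number, finite because $\widetilde{\Psi}$ is bounded above on the bounded manifold $\M_s$. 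Then the standard deformation argument applies: if $\lambda_k$ were not a critical value, a \PS{}-based equivariant deformation lemma on the Finsler manifold $\M_s$ would push a nearly optimal set $M \in \F_k$ with $\sup_M \widetilde{\Psi} < \lambda_k + \eps$ down below $\lambda_k$, contradicting monotonicity of $i$ under the (odd) deformation, $(i_2)$. A refinement of the same argument, using the continuity property $(i_4)$ to thicken the compact critical set $E_\lambda$ to a symmetric neighborhood of the same index and subadditivity $(i_5)$ to split off that neighborhood, yields $i(E_\lambda) \ge m$ when $\lambda_k = \dots = \lambda_{k+m-1} = \lambda$; this also forces $\lambda_k \to \infty$, since otherwise the $\lambda_k$ would accumulate at a single value $\lambda$ with $i(E_\lambda) = \infty$, contradicting compactness of $E_\lambda$ and $(i_3)$ applied along finite-dimensional approximations. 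Part \ref{Theorem 1.i} is immediate: $\lambda_1 = \inf_{M \in \F_1} \sup_M \widetilde{\Psi}$, and taking $M = \set{u,-u}$ for $u \in \M_s$ (which has $i(M) = 1$) shows $\lambda_1 = \inf_{\M_s} \widetilde{\Psi}$, attained by \PS{}.

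For part \ref{Theorem 1.iii}, the equalities of indices of sublevel and superlevel sets when $\lambda_k < \lambda < \lambda_{k+1}$ are the heart of the matter. One inclusion is easy: $\widetilde{\Psi}^{\lambda_k} \supset$ some $M \in \F_k$ gives $i(\widetilde{\Psi}^{\lambda_k}) \ge k$, and the absence of critical values in $(\lambda_k,\lambda)$ lets one deform $\widetilde{\Psi}^\lambda$ onto $\widetilde{\Psi}^{\lambda_k}$ (and similarly handle the complements of superlevel sets), so the four quantities are equal to a common value $\ge k$. The reverse bound $\le k$ is the delicate point: if $i(\widetilde{\Psi}^\lambda) \ge k+1$ one would want to conclude $\lambda_{k+1} \le \lambda$, contradicting $\lambda < \lambda_{k+1}$ — but $\widetilde{\Psi}^\lambda$ need not be closed/symmetric in a way directly feeding $\F_{k+1}$ without a continuity argument. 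The resolution is the piercing property $(i_7)$ together with the Krasnoselskii-type observation that $\M_s \setminus \widetilde{\Psi}_\lambda$ and $\widetilde{\Psi}^{\lambda_k}$ are related by a deformation that is odd, so their indices coincide, and that $i(\widetilde{\Psi}^{\lambda_k}) + i(\M_s \setminus \widetilde{\Psi}^{\lambda_k}) \le i(\M_s)$-type estimates combined with $i(\M_s) = \infty$ do not suffice — instead one uses that any $M$ with $i(M) \ge k+1$ must intersect $\widetilde{\Psi}_{\lambda_{k+1}}$, hence cannot lie in $\widetilde{\Psi}^\lambda$, giving $i(\widetilde{\Psi}^\lambda) \le k$. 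I expect the main obstacle to be exactly this last duality between sub- and superlevel sets: making rigorous, via $(i_5)$ and $(i_7)$, that $i(\widetilde{\Psi}^{\lambda_k}) = k$ and not merely $\ge k$. Everything else is a routine transcription of the cohomological-index minimax machinery to the Finsler-manifold $(\M_s, \widetilde{\Psi})$, for which the cited references \cite{MR0478189} and \cite[Section 3]{MR2640827} provide the template.
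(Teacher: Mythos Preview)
Your approach is the paper's own: it offers no self-contained proof but simply observes that Propositions~\ref{Proposition 12} and~\ref{Proposition 5} put $(\M_s,\widetilde{\Psi})$ into the setting of \cite[Propositions~3.52 and~3.53]{MR2640827}, which is exactly the reduction you outline. One correction to your sketch of part~\ref{Theorem 1.iii}: you invoke ``the absence of critical values in $(\lambda_k,\lambda)$'' to deform $\widetilde{\Psi}^\lambda$ onto $\widetilde{\Psi}^{\lambda_k}$, but as the paper remarks immediately after the theorem, $\sigma(\As,\Bs)$ may contain eigenvalues outside the minimax sequence, so that interval need not be free of critical values and the deformation is unavailable; the cited propositions establish the four equalities purely through the index calculus (monotonicity along the chain $\widetilde{\Psi}^{\lambda_k}\subset\M_s\setminus\widetilde{\Psi}_\lambda\subset\widetilde{\Psi}^\lambda\subset\M_s\setminus\widetilde{\Psi}_{\lambda_{k+1}}$ together with the definitional bound $i(\widetilde{\Psi}^{\lambda'})\le k$ for $\lambda'<\lambda_{k+1}$), which is essentially the fallback argument you arrive at in your last sentence.
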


\begin{remark}
The spectrum $\sigma(\As,\Bs)$ may possibly contain points other than those of the sequence $\seq{\lambda_k}$.
\end{remark}

\subsubsection{Critical groups}

The variational functional associated with the scaled eigenvalue problem \eqref{4} is
\begin{equation} \label{16}
\Phi_\lambda(u) = I_s(u) - \lambda\, J_s(u), \quad u \in W.
\end{equation}
We note that
\begin{equation} \label{14}
\Phi_\lambda(u_t) = t^s\, \Phi_\lambda(u) \quad \forall u \in W,\, t \ge 0
\end{equation}
by \eqref{12} and \eqref{6}.

When $\lambda \notin \sigma(\As,\Bs)$, the origin is the only critical point of $\Phi_\lambda$ and its critical groups there are given by
\[
C^l(\Phi_\lambda,0) = H^l(\Phi_\lambda^0,\Phi_\lambda^0 \setminus \set{0}), \quad l \ge 0,
\]
where $\Phi_\lambda^0 = \bgset{u \in W : \Phi_\lambda(u) \le 0}$ and $H^\ast$ denotes cohomology with $\Z_2$ coefficients. We have the following theorem, where $\widetilde{H}^\ast$ denotes reduced cohomology.

\begin{theorem} \label{Theorem 2}
Assume $(H_1)$--$(H_{12})$ and let $\lambda \in \R \setminus \sigma(\As,\Bs)$.
\begin{enumroman}
\item \label{Theorem 2.i} If $\lambda < \lambda_1$, then $C^l(\Phi_\lambda,0) \isom \delta_{l0}\, \Z_2$.
\item \label{Theorem 2.ii} If $\lambda > \lambda_1$, then $C^l(\Phi_\lambda,0) \isom \widetilde{H}^{l-1}(\widetilde{\Psi}^\lambda)$, in particular, $C^0(\Phi_\lambda,0) = 0$.
\item \label{Theorem 2.iii} If $\lambda_k < \lambda < \lambda_{k+1}$, then $C^k(\Phi_\lambda,0) \ne 0$.
\end{enumroman}
In particular, $C^l(\Phi_\lambda,0)$ is nontrivial for some $l$.
\end{theorem}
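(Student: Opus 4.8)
The plan is to reduce the computation to the homotopy type of the sublevel set $\Phi_\lambda^0$ and of its punctured version $\Phi_\lambda^0 \setminus \set{0}$, using throughout that by \eqref{14} the set $\Phi_\lambda^0$ is invariant under $u \mapsto u_t$ for every $t \ge 0$ and that for $v \in \M_s$ one has $\Phi_\lambda(v) = 1 - \lambda\, J_s(v) = 1 - \lambda/\widetilde{\Psi}(v)$. For part \ref{Theorem 2.i} I would show $\Phi_\lambda^0 = \set{0}$: for $v \in \M_s$, if $\lambda \le 0$ then $\Phi_\lambda(v) = 1 - \lambda\, J_s(v) \ge 1 > 0$, while if $0 < \lambda < \lambda_1$ then $\Phi_\lambda(v) \ge 1 - \lambda/\lambda_1 > 0$ by Theorem \ref{Theorem 1} \ref{Theorem 1.i}; a general $u \ne 0$ reduces to this by writing $u = \widetilde{u}_{1/t_u}$ as in \eqref{18} and applying \eqref{14}. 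Hence $C^l(\Phi_\lambda,0) = H^l(\set{0},\emptyset) \isom \delta_{l0}\,\Z_2$.

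For \ref{Theorem 2.ii}, assume $\lambda > \lambda_1$; the strategy is to exhibit two deformations built from the scaling. First, $(u,\theta) \mapsto u_{1-\theta}$ is, by \eqref{14} and $(H_3)$, a continuous contraction of $\Phi_\lambda^0$ to the origin, so $\widetilde{H}^\ast(\Phi_\lambda^0) = 0$. Second, since $\lambda > 0$, for $v \in \M_s$ one has $\Phi_\lambda(v) \le 0$ iff $\widetilde{\Psi}(v) \le \lambda$, so $\widetilde{\Psi}^\lambda \subseteq \Phi_\lambda^0 \setminus \set{0}$, and $(u,\theta) \mapsto u_{(1-\theta)+\theta\, t_u}$ (with $t_u$ as in \eqref{13}) is a strong deformation retraction of $\Phi_\lambda^0 \setminus \set{0}$ onto $\widetilde{\Psi}^\lambda$: the scaling parameter stays strictly positive, so by \eqref{14} and \eqref{7} we never leave $\Phi_\lambda^0 \setminus \set{0}$; at $\theta = 1$ the map is the projection $\pi$ of \eqref{13}; and each point of $\widetilde{\Psi}^\lambda$ is fixed since there $t_u = 1$. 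Feeding these two facts into the long exact $\Z_2$-cohomology sequence of the pair $(\Phi_\lambda^0,\Phi_\lambda^0 \setminus \set{0})$ — nonempty since $\Phi_\lambda^0 \setminus \set{0} \supseteq \widetilde{\Psi}^\lambda \ne \emptyset$ because $\lambda > \lambda_1$ — makes the connecting homomorphism an isomorphism, giving
\[
C^l(\Phi_\lambda,0) = H^l(\Phi_\lambda^0,\Phi_\lambda^0 \setminus \set{0}) \isom \widetilde{H}^{l-1}(\Phi_\lambda^0 \setminus \set{0}) \isom \widetilde{H}^{l-1}(\widetilde{\Psi}^\lambda),
\]
which is \ref{Theorem 2.ii}, and in particular $C^0(\Phi_\lambda,0) \isom \widetilde{H}^{-1}(\widetilde{\Psi}^\lambda) = 0$ since $\widetilde{\Psi}^\lambda \ne \emptyset$.

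For \ref{Theorem 2.iii}, let $\lambda_k < \lambda < \lambda_{k+1}$, so that $i(\widetilde{\Psi}^\lambda) = k$ by Theorem \ref{Theorem 1} \ref{Theorem 1.iii}. I would then invoke the fact that a symmetric set $X \subset W \setminus \set{0}$ with $i(X) = k$ has $\widetilde{H}^{k-1}(X;\Z_2) \ne 0$: from the Gysin sequence of the double cover $X \to X/\Z_2$, whose $\Z_2$-Euler class is the pullback $f^\ast\omega$ of the generator of $H^\ast(\RP^\infty)$, the relations $(f^\ast\omega)^{k-1} \ne 0$ and $(f^\ast\omega)^k = 0$ (which is precisely what $i(X) = k$ encodes) place $(f^\ast\omega)^{k-1}$ in the image of $H^{k-1}(X) \to H^{k-1}(X/\Z_2)$, so $H^{k-1}(X) \ne 0$; for $k = 1$ one argues instead that $f^\ast\omega = 0$ trivializes the cover, so $X$ is disconnected and $\widetilde{H}^0(X) \ne 0$. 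Combined with \ref{Theorem 2.ii} this yields $C^k(\Phi_\lambda,0) \isom \widetilde{H}^{k-1}(\widetilde{\Psi}^\lambda) \ne 0$. The final assertion is then immediate: either $\lambda < \lambda_1$, so $C^0(\Phi_\lambda,0) \ne 0$, or $\lambda > \lambda_1$ and — since $\lambda \notin \sigma(\As,\Bs)$ forces $\lambda \ne \lambda_j$ for all $j$ while $\lambda_j \nearrow \infty$ — $\lambda$ lies in a unique interval $(\lambda_k,\lambda_{k+1})$, so \ref{Theorem 2.iii} applies.

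The homotopy-theoretic parts above are routine once the scaling axioms $(H_1)$--$(H_5)$ and \eqref{14} are in hand, and computing $C^\ast(\Phi_\lambda,0)$ from the global sublevel set $\Phi_\lambda^0$ has already been granted in the text. I expect the one genuinely delicate ingredient to be the index--cohomology implication $i(X) = k \Rightarrow \widetilde{H}^{k-1}(X;\Z_2) \ne 0$ used for \ref{Theorem 2.iii} — that is, tracking exactly what the Fadell--Rabinowitz index detects in ordinary $\Z_2$-cohomology via the Gysin sequence of the orientation double cover, including the $k = 1$ boundary case.
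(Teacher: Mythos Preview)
Your proof is correct and follows essentially the same route as the paper: the same two scaling-based deformations (contraction of $\Phi_\lambda^0$ to $\set{0}$ and deformation retraction of $\Phi_\lambda^0 \setminus \set{0}$ onto $\Phi_\lambda^0 \cap \M_s = \widetilde{\Psi}^\lambda$), the long exact sequence of the pair, and the identification of the cases via Theorem \ref{Theorem 1}. The only difference is cosmetic: the paper cites Perera et al.\ \cite[Proposition 2.14 ({\em iv})]{MR2640827} for the implication $i(X) = k \Rightarrow \widetilde{H}^{k-1}(X;\Z_2) \ne 0$, whereas you sketch its proof via the Gysin sequence of the double cover --- your sketch is accurate, including the separate treatment of $k = 1$.
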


\begin{proof}
If $u \in \Phi_\lambda^0$, then $u_t \in \Phi_\lambda^0$ for all $t \ge 0$ by \eqref{14}. This together with $(H_3)$ implies that $\Phi_\lambda^0$ contracts to $\set{0}$ via
\[
\Phi_\lambda^0 \times [0,1] \to \Phi_\lambda^0, \quad (u,\tau) \mapsto u_{1 - \tau}
\]
and $\Phi_\lambda^0 \setminus \set{0}$ deformation retracts to $\Phi_\lambda^0 \cap \M_s$ via
\[
(\Phi_\lambda^0 \setminus \set{0}) \times [0,1] \to \Phi_\lambda^0 \setminus \set{0}, \quad (u,\tau) \mapsto u_{1 - \tau + \tau\, t_u}
\]
(see \eqref{13}). It follows that
\[
C^l(\Phi_\lambda,0) \isom \begin{cases}
\delta_{l0}\, \Z_2 & \text{if } \Phi_\lambda^0 \cap \M_s = \emptyset\\[7.5pt]
\widetilde{H}^{l-1}(\Phi_\lambda^0 \cap \M_s) & \text{if } \Phi_\lambda^0 \cap \M_s \ne \emptyset
\end{cases}
\]
(see, e.g., Perera et al.\! \cite[Proposition 2.4]{MR2640827}). Since
\[
\Phi_\lambda(u) = 1 - \frac{\lambda}{\widetilde{\Psi}(u)}
\]
for $u \in \M_s$, $\Phi_\lambda^0 \cap \M_s = \widetilde{\Psi}^\lambda$, so we have
\[
C^l(\Phi_\lambda,0) \isom \begin{cases}
\delta_{l0}\, \Z_2 & \text{if } \widetilde{\Psi}^\lambda = \emptyset\\[7.5pt]
\widetilde{H}^{l-1}(\widetilde{\Psi}^\lambda) & \text{if } \widetilde{\Psi}^\lambda \ne \emptyset.
\end{cases}
\]
Since $\widetilde{\Psi}^\lambda = \emptyset$ if and only if $\lambda < \lambda_1$ by Theorem \ref{Theorem 1} \ref{Theorem 1.i}, \ref{Theorem 2.i} and \ref{Theorem 2.ii} follow from this. If $\lambda_k < \lambda < \lambda_{k+1}$, then $i(\widetilde{\Psi}^\lambda) = k$ by Theorem \ref{Theorem 1} \ref{Theorem 1.iii} and hence $\widetilde{H}^{k-1}(\widetilde{\Psi}^\lambda) \ne 0$ by Perera et al.\! \cite[Proposition 2.14 ({\em iv})]{MR2640827}, so \ref{Theorem 2.iii} follows from \ref{Theorem 2.ii}.
\end{proof}

\subsubsection{Perturbed equations}

Now we consider the perturbed equation
\[
\As(u) = \lambda \Bs(u) + g(u)
\]
in $W^\ast$, where $\As, \Bs \in \A_s$ satisfy $(H_6)$--$(H_{12})$, $g \in C(W,W^\ast)$ is a compact potential operator satisfying
\begin{equation} \label{17}
g(u_t) v_t = \o(t^s) \norm{v} \text{ as } t \to 0
\end{equation}
uniformly in $u$ on bounded sets for all $v \in W$, and $\lambda \in \R$. The variational functional associated with this equation is
\[
\Phi(u) = \Phi_\lambda(u) - G(u), \quad u \in W,
\]
where $\Phi_\lambda$ is as in \eqref{16} and
\[
G(u) = \int_0^1 g(\tau u) u\, d\tau
\]
is the potential of $g$ with $G(0) = 0$.

For any $v \in W$ and $t > 0$,
\begin{equation} \label{39}
g(0) v = g(0_t) (v_{1/t})_t = \o(t^s) \|v_{1/t}\| = \o(1) \text{ as } t \to 0
\end{equation}
by \eqref{15}, \eqref{5}, \eqref{17}, and $(H_5)$. So $g(0) = 0$ and hence the origin is a critical point of $\Phi$. We have the following theorem on its critical groups there.

\begin{theorem} \label{Theorem 3}
Assume $(H_1)$--$(H_{12})$ and \eqref{17}, and let $\lambda \in \R \setminus \sigma(\As,\Bs)$. Then the origin is an isolated critical point of $\Phi$ and
\[
C^l(\Phi,0) \isom C^l(\Phi_\lambda,0) \quad \forall l \ge 0.
\]
In particular, $C^l(\Phi,0)$ is nontrivial for some $l$.
\end{theorem}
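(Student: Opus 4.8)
The plan is to perturb the critical group computation of Theorem \ref{Theorem 2} by showing that the perturbation term $G$ is negligible near the origin at the level of the sublevel sets, so that the homotopy-theoretic arguments carry over. First I would establish that the origin is an isolated critical point of $\Phi$. Since $\lambda \notin \sigma(\As,\Bs)$, the only critical point of $\Phi_\lambda$ is $0$, and one has a quantitative version of this: by $(H_7)$, $(H_9)$, and compactness of $\M_s$, there is $\delta_0 > 0$ such that $\dnorm{\Phi_\lambda'(u)} \ge c_0 \norm{u}^{s-1}$ for $\norm{u} \le \delta_0$ (this type of estimate is what makes the deformation in Theorem \ref{Theorem 2} work; it follows from the fact that $\inf_{\M_s} \dnorm[\, u]{\widetilde\Psi'(u)}$-type quantities are bounded below away from the eigenvalue). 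The perturbation satisfies $G'(u) = g(u)$ with $g(u_t)v_t = \o(t^s)\norm{v}$ uniformly on bounded sets; writing a general $u \ne 0$ as $u = \widetilde u_{1/t_u}$ via \eqref{18} and using $(H_5)$ and $(H_{10})$ to control $t_u$ in terms of $\norm{u}$, one deduces $\dnorm{g(u)} = \o(\norm{u}^{s-1})$ as $u \to 0$. Hence $\dnorm{\Phi'(u)} \ge \dnorm{\Phi_\lambda'(u)} - \dnorm{g(u)} \ge (c_0/2)\norm{u}^{s-1} > 0$ for $\norm{u}$ small and nonzero, so $0$ is isolated.

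Next I would compute the critical groups. The cleanest route is to use the scaling to replace $\Phi$ near $0$ by $\Phi_\lambda$ through a homotopy of the associated sublevel-set pairs. Concretely, consider the path of functionals $\Phi^{(\theta)} = \Phi_\lambda - \theta G$, $\theta \in [0,1]$, with $\Phi^{(0)} = \Phi_\lambda$ and $\Phi^{(1)} = \Phi$. Each $\Phi^{(\theta)}$ has $0$ as an isolated critical point in a fixed ball $B_\delta$ (the estimate above is uniform in $\theta$), each satisfies the Palais--Smale condition locally near $0$ because $g$ is compact (Proposition \ref{Proposition 3}), and the family is continuous in $\theta$ in $C^1(B_\delta, \R)$. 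Therefore, by the standard invariance of critical groups under $C^1$-continuous deformations that keep the critical point isolated with a uniform PS condition (e.g., Corvellec--Hantoute, or Chang's perturbation lemma), $C^l(\Phi,0) \isom C^l(\Phi_\lambda,0)$ for all $l$. The final assertion that $C^l(\Phi,0)$ is nontrivial for some $l$ is then immediate from the corresponding statement in Theorem \ref{Theorem 2}.

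I expect the main obstacle to be the uniform gradient estimate $\dnorm{\Phi_\lambda'(u)} \ge c_0 \norm{u}^{s-1}$ near the origin, since $\Phi_\lambda$ is not smooth at $0$ in general and only the scaling structure is available. The right way to get it is via the projection $\pi$: for $u \ne 0$, $\Phi_\lambda'(u)$ and $\widetilde\Psi'(\widetilde u)$ are related through \eqref{14} and the chain rule for the scaling, and $\lambda \notin \sigma(\As,\Bs)$ together with Proposition \ref{Proposition 5} (the PS condition for $\widetilde\Psi$) forces $\inf\{\,\bgdnorm[\widetilde u]{\widetilde\Psi'(\widetilde u)} : \widetilde u \in \widetilde\Psi^{-1}(\{\lambda\})^c\,\}$-type quantities to stay bounded below near the level $\lambda$; transporting this back along the scaling yields the desired power-law lower bound with the correct exponent $s-1$. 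Once this estimate is in hand, both the isolatedness of $0$ for the whole family $\Phi^{(\theta)}$ and the uniform local PS condition follow routinely, and the homotopy invariance of critical groups finishes the proof.
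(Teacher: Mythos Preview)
Your overall architecture is exactly the paper's: use the linear homotopy $\Phi^{(\theta)} = \Phi_\lambda - \theta G$ and invoke homotopy invariance of critical groups (the paper states this as Proposition \ref{Proposition 2}, citing Chang--Ghoussoub and Corvellec--Hantoute). The local \PS{} condition follows from compactness of $\Bs$ and $g$ via Proposition \ref{Proposition 3}, and continuity in $\theta$ is immediate. So far so good.

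The gap is in your proof of uniform isolatedness. Your claimed estimate $\dnorm{\Phi_\lambda'(u)} \ge c_0\norm{u}^{s-1}$ and its companion $\dnorm{g(u)} = \o(\norm{u}^{s-1})$ cannot be extracted from the hypotheses as stated. The axioms give no two-sided comparison between $\norm{u_t}$ and $t$ for small $t$: $(H_5)$ is only an upper bound and only as $t\to\infty$, and $(H_{10})$ is coercivity of $I_s$, which says nothing about small $u$. Concretely, if you write $u=\widetilde u_{\widetilde t}$ with $\widetilde t = I_s(u)^{1/s}\to 0$ and $v=w_{\widetilde t}$, then $g(u)v = \o(\widetilde t^{\,s})\norm{w}$ by \eqref{17}, but $\norm{w}=\norm{v_{1/\widetilde t}}$ is only $O(\widetilde t^{-s})$ by $(H_5)$, which yields merely $\dnorm{g(u)}=\o(1)$; and for $\Phi_\lambda'$ you would need a \emph{lower} bound on $\norm{w_{\widetilde t}}$, which is nowhere assumed. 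So the exponent $s-1$ is a guess from the $s$-homogeneous case and does not survive in this generality.

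The paper sidesteps all of this with a contradiction argument that never compares $\norm{u}$ to the scaling parameter. Suppose there exist $u_j\to 0$, $\theta_j\in[0,1]$ with $\Phi^{(\theta_j)\prime}(u_j)=0$. Set $\widetilde t_j = I_s(u_j)^{1/s}\to 0$ and $\widetilde u_j=(u_j)_{1/\widetilde t_j}\in\M_s$. Testing the equation $\As(u_j)=\lambda\Bs(u_j)+\theta_j g(u_j)$ against $v_{\widetilde t_j}$, using the scaling of $\As,\Bs$ and \eqref{17}, and dividing by $\widetilde t_j^{\,s}$ gives
\[
\As(\widetilde u_j)v=\lambda\Bs(\widetilde u_j)v+\o(1)\norm{v}.
\]
Now $(H_9)$ and $(H_7)$ force a subsequence $\widetilde u_j\to\widetilde u\in\M_s$ with $\As(\widetilde u)=\lambda\Bs(\widetilde u)$, contradicting $\lambda\notin\sigma(\As,\Bs)$. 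This is what you should replace your gradient-estimate paragraph with; the rest of your argument then goes through unchanged.
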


The proof of this theorem will be based on the invariance of critical groups under homotopies that preserve the isolatedness of the critical point as stated in the following proposition (see Chang and Ghoussoub \cite{MR1422006} or Corvellec and Hantoute \cite{MR1926378}).

\begin{proposition} \label{Proposition 2}
Let $\Phi_\tau,\, \tau \in [0,1]$ be a family of $C^1$-functionals on a Banach space $W$ and let $u_0 \in W$ be a critical point of each $\Phi_\tau$. Assume that there is a closed neighborhood $U$ of $u_0$ such that
\begin{enumroman}
\item the map $[0,1] \to C^1(U,\R),\, \tau \mapsto \Phi_\tau$ is continuous,
\item $U$ contains no other critical point of any $\Phi_\tau$,
\item each $\Phi_\tau$ satisfies the {\em \PS{}} condition on $U$.
\end{enumroman}
Then
\[
C^l(\Phi_0,u_0) \isom C^l(\Phi_1,u_0) \quad \forall l \ge 0.
\]
\end{proposition}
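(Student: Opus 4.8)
The plan is to exploit the compactness of the parameter interval. First I would establish the \emph{local} statement that every $\tau_0\in[0,1]$ admits a neighborhood $V_{\tau_0}\subset[0,1]$ with $C^l(\Phi_\tau,u_0)\isom C^l(\Phi_{\tau_0},u_0)$ for all $\tau\in V_{\tau_0}$ and all $l\ge 0$, and then chain: choosing a partition $0=s_0<s_1<\dots<s_n=1$ finer than a Lebesgue number of the open cover $\set{V_{\tau_0}:\tau_0\in[0,1]}$, each $[s_{i-1},s_i]$ lies in some $V_{\tau_{j(i)}}$, and composing the isomorphisms $C^l(\Phi_{s_{i-1}},u_0)\isom C^l(\Phi_{\tau_{j(i)}},u_0)\isom C^l(\Phi_{s_i},u_0)$ over $i=1,\dots,n$ yields $C^l(\Phi_0,u_0)\isom C^l(\Phi_1,u_0)$. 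By (ii), $u_0$ is the only critical point of each $\Phi_\tau$ in $U$, so each $C^l(\Phi_\tau,u_0)$ is well defined and, by excision, equals $H^l\bigl(\Phi_\tau^{c_\tau}\cap U',(\Phi_\tau^{c_\tau}\setminus\set{u_0})\cap U'\bigr)$ for any neighborhood $U'\subset U$ of $u_0$ that isolates it, where $c_\tau=\Phi_\tau(u_0)$ is continuous in $\tau$ by (i).

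The analytic core is a uniform gradient bound off $u_0$. Fix $\rho>0$ small enough that $\overline{B_{2\rho}(u_0)}$ lies in the interior of $U$, and set $N=\overline{B_{2\rho}(u_0)}\setminus B_{\rho/2}(u_0)$. I claim there is $\delta>0$ such that $\dnorm{\Phi_\tau'(u)}\ge\delta$ for all $\tau\in[0,1]$ and all $u\in N$. Otherwise there are (after passing to a subsequence) $\tau_j\to\tau_\ast$ and $u_j\in N$ with $\Phi_{\tau_j}'(u_j)\to 0$; hypothesis (i) gives $\Phi_{\tau_j}'\to\Phi_{\tau_\ast}'$ uniformly on $U$, hence $\Phi_{\tau_\ast}'(u_j)\to 0$, while $\Phi_{\tau_\ast}(u_j)$ is bounded; the \PS{} condition (iii) for $\Phi_{\tau_\ast}$ then produces a further subsequence $u_j\to u_\ast\in N$, so that $u_\ast$ is a critical point of $\Phi_{\tau_\ast}$ in $U$ distinct from $u_0$, contradicting (ii).

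With $\rho$ and $\delta$ in hand, I would invoke the standard stability of the critical groups of an isolated critical point under $C^1$-perturbations small enough to preserve both the isolation and the surrounding gradient estimate, as in Chang and Ghoussoub \cite{MR1422006} or Corvellec and Hantoute \cite{MR1926378}. Concretely, on $\overline{B_{2\rho}(u_0)}$ one builds a locally Lipschitz pseudo-gradient field for $\Phi_\tau$ depending continuously on $\tau$ for $\tau$ in a small neighborhood $V_{\tau_0}$ of $\tau_0$ (possible since $\delta$ is uniform and $\tau\mapsto\Phi_\tau$ is continuous), and integrates it to a $\tau$-family of deformations. By the first and second deformation lemmas these exhibit the pair
\[
\bigl(\Phi_\tau^{c_\tau}\cap\overline{B_\rho(u_0)},\ (\Phi_\tau^{c_\tau}\setminus\set{u_0})\cap\overline{B_\rho(u_0)}\bigr)
\]
as homotopy equivalent, uniformly for $\tau\in V_{\tau_0}$, to the same pair at $\tau=\tau_0$; since this pair computes $C^\ast(\Phi_\tau,u_0)$ by the excision remark above, the local isomorphism follows. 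The main obstacle is exactly this last step: one must arrange the pseudo-gradient flows to vary continuously in $\tau$ while simultaneously tracking the moving critical value $c_\tau$ and the moving sublevel sets $\Phi_\tau^{c_\tau}$, so that a single deformation carries the relevant sets at $\tau$ onto those at $\tau_0$. The contradiction argument of the second paragraph, the chaining of the first, and the excision identity are all routine.
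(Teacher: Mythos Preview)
The paper does not prove this proposition at all; it is stated as a known result with the parenthetical ``(see Chang and Ghoussoub \cite{MR1422006} or Corvellec and Hantoute \cite{MR1926378})'' and then applied directly in the proof of Theorem \ref{Theorem 3}. Your sketch is correct and is essentially the standard argument one finds in those references: a uniform gradient lower bound on an annulus obtained by contradiction from the \PS{} condition, local stability of the critical groups via parameter-dependent deformations, and a chaining argument over a finite partition of $[0,1]$. You have also correctly identified the genuinely delicate step, namely producing a $\tau$-continuous family of deformations that tracks the moving sublevel sets $\Phi_\tau^{c_\tau}$; this is exactly what the cited works handle.
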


\begin{proof}[Proof of Theorem \ref{Theorem 3}]
We apply Proposition \ref{Proposition 2} to the family of functionals
\[
\Phi_\tau(u) = \Phi_\lambda(u) - (1 - \tau)\, G(u), \quad u \in W,\, \tau \in [0,1]
\]
in a sufficiently small closed neighborhood of the origin, noting that $\Phi_0 = \Phi$ and $\Phi_1 = \Phi_\lambda$. Since $B_s$ and $g$ are compact, each $\Phi_\tau$ satisfies the \PS{} condition on bounded sets by Proposition \ref{Proposition 3}.

It only remains to verify that there is no nonzero critical point of any $\Phi_\tau$ in a sufficiently small neighborhood of the origin. Suppose this is not the case. Then there are sequences $\seq{u_j} \subset W \setminus \set{0}$ and $\seq{\tau_j} \subset [0,1]$ such that $u_j \to 0$ and $\Phi_{\tau_j}'(u_j) = 0$. Set
\[
t_j = t_{u_j} = \frac{1}{I_s(u_j)^{1/s}}, \quad \widetilde{u}_j = (u_j)_{t_j}, \quad \widetilde{t}_j = \frac{1}{t_j} = I_s(u_j)^{1/s}
\]
(see \eqref{13}). Then $\widetilde{u}_j \in \M_s$ and
\[
u_j = (\widetilde{u}_j)_{\widetilde{t}_j}
\]
by \eqref{18}. Since $\M_s$ is a bounded manifold by $(H_{10})$, $\seq{\widetilde{u}_j}$ is bounded and hence converges weakly to some $\widetilde{u} \in W$ for a renamed subsequence. Since $u_j \to 0$, $I_s(u_j) \to I_s(0) = 0$ and hence $\widetilde{t}_j \to 0$.

Since $\Phi_{\tau_j}'(u_j) = 0$ and $u_j = (\widetilde{u}_j)_{\widetilde{t}_j}$,
\[
\As((\widetilde{u}_j)_{\widetilde{t}_j}) v_{\widetilde{t}_j} = \lambda \Bs((\widetilde{u}_j)_{\widetilde{t}_j}) v_{\widetilde{t}_j} + (1 - \tau_j)\, g((\widetilde{u}_j)_{\widetilde{t}_j}) v_{\widetilde{t}_j}
\]
for any $v \in W$. Since $\As, \Bs \in \A_s$, $g$ satisfies \eqref{17}, $\widetilde{t}_j \to 0$, and $\seq{\widetilde{u}_j}$ is bounded, this reduces to
\[
\widetilde{t}_j^s\, \As(\widetilde{u}_j) v = \lambda \widetilde{t}_j^s\, \Bs(\widetilde{u}_j) v + \o(\widetilde{t}_j^s) \norm{v},
\]
or
\begin{equation} \label{19}
\As(\widetilde{u}_j) v = \lambda \Bs(\widetilde{u}_j) v + \o(1) \norm{v}.
\end{equation}
Taking $v = \widetilde{u}_j - \widetilde{u}$ and noting that $\Bs(\widetilde{u}_j) \to \Bs(\widetilde{u})$ by $(H_9)$ shows that $\As(\widetilde{u}_j)(\widetilde{u}_j - \widetilde{u}) \to 0$, so $\widetilde{u}_j \to \widetilde{u}$ for a further subsequence by $(H_7)$. Passing to the limit in \eqref{19} now gives
\[
\As(\widetilde{u}) = \lambda \Bs(\widetilde{u})
\]
since $v$ is arbitrary. Since $\widetilde{u}_j \in \M_s$ and $\M_s$ is closed, $\widetilde{u} \in \M_s$ and hence $\widetilde{u} \ne 0$. So $\lambda \in \sigma(\As,\Bs)$, a contradiction.
\end{proof}

\subsubsection{Subscaled equations}

Now we prove some existence and multiplicity results for the equation
\begin{equation} \label{34}
\As(u) = f(u)
\end{equation}
in $W^\ast$, where $\As \in \A_s$ satisfies $(H_6)$, $(H_7)$, and $(H_{10})$, and $f \in C(W,W^\ast)$ is a compact potential operator satisfying
\begin{equation} \label{35}
f(u_t) v_t = \o(t^s) \norm{v} \text{ as } t \to \infty
\end{equation}
uniformly in $u$ on bounded sets for all $v \in W$. We will refer to this equation as a subscaled equation.

The variational functional associated with equation \eqref{34} is
\[
\Phi(u) = I_s(u) - F(u), \quad u \in W,
\]
where $I_s$ is as in \eqref{3} and
\[
F(u) = \int_0^1 f(\tau u) u\, d\tau
\]
is the potential of $f$ with $F(0) = 0$. Since the compact operator $f$ maps bounded sets into precompact, and hence bounded, sets, $F$ is bounded on bounded sets. Since $I_s$ is also bounded on bounded sets, then so is $\Phi$.

\begin{lemma} \label{Lemma 2}
If \eqref{35} holds, then $\Phi$ is coercive.
\end{lemma}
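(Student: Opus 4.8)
The plan is to show that $\Phi(u) = I_s(u) - F(u) \to \infty$ as $\norm{u} \to \infty$ by exploiting the scaling structure: every $u \neq 0$ can be written as $u = \widetilde{u}_{\widetilde{t}_u}$ with $\widetilde{u} = \pi(u) \in \M_s$ and $\widetilde{t}_u = I_s(u)^{1/s}$, and $I_s(u) \to \infty$ is equivalent, via coercivity $(H_{10})$ and boundedness of $\M_s$, to $\norm{u} \to \infty$. So it suffices to show that $\Phi(\widetilde{u}_t) \to \infty$ as $t \to \infty$, uniformly for $\widetilde{u} \in \M_s$. On $\M_s$ we have $I_s(\widetilde{u}_t) = t^s$ by \eqref{12}, so the goal reduces to proving that $F(\widetilde{u}_t)/t^s \to 0$ as $t \to \infty$, uniformly in $\widetilde{u} \in \M_s$; then $\Phi(\widetilde{u}_t) = t^s(1 - F(\widetilde{u}_t)/t^s) \to \infty$.

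The key step is estimating $F(\widetilde{u}_t)$ using the integral representation $F(v) = \int_0^1 f(\tau v)\, v\, d\tau$ from Proposition~\ref{Proposition 1}. First I would rewrite, using $(H_2)$,
\[
F(\widetilde{u}_t) = \int_0^1 f(\tau\, \widetilde{u}_t)\, \widetilde{u}_t\, d\tau = \int_0^1 \frac{1}{\tau}\, f\bigl((\tau \widetilde{u})_t\bigr)\, (\tau \widetilde{u})_t\, d\tau,
\]
and then apply the decay hypothesis \eqref{35}: since $\set{\tau \widetilde{u} : \tau \in [0,1],\, \widetilde{u} \in \M_s}$ is bounded (as $\M_s$ is bounded by $(H_{10})$), for each fixed $\tau$ we have $f\bigl((\tau\widetilde{u})_t\bigr)(\tau\widetilde{u})_t = \o(t^s)$ as $t \to \infty$, uniformly in $\widetilde{u}$. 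One must be careful about the factor $1/\tau$ near $\tau = 0$ and about passing the uniform-in-$t$ limit through the integral; the cleanest route is to split $\int_0^1 = \int_0^\eps + \int_\eps^1$, bound the first piece using boundedness of $F$ on bounded sets together with the scaling — actually more directly, use \eqref{35} in the form $\abs{f(v_t) w_t} \le \eps(t)\, t^s \norm{w}$ with $\eps(t) \to 0$ for $v, w$ in a fixed bounded set, applied with $v = w = \tau\widetilde{u}$, to get $\abs{f(\widetilde{u}_t)\widetilde{u}_t} \le \eps(t)\, t^s \norm{\widetilde{u}}$ pointwise (so no singularity issue arises at all if we apply \eqref{35} directly to $\widetilde{u}_t$ rather than splitting off $\tau$). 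Then $\abs{F(\widetilde{u}_t)} \le \int_0^1 \eps(t)\, t^s \norm{\widetilde{u}}\, d\tau$; one still needs the $\eps(t)$ to be uniform over the bounded set, which is exactly what \eqref{35} provides. Hence $F(\widetilde{u}_t)/t^s \to 0$ uniformly in $\widetilde{u} \in \M_s$, as desired.

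The main obstacle is making the uniformity precise: \eqref{35} asserts $f(u_t)v_t = \o(t^s)\norm{v}$ uniformly in $u$ on bounded sets, but $F$ involves integrating $f(\tau u)u$ over $\tau$, so I need the $\o(t^s)$ estimate to hold uniformly over the whole bounded family $\set{\tau\widetilde{u}}$ simultaneously — which it does, since $\M_s$ is bounded and $[0,1]$ is compact, so their product is bounded. A secondary point is translating "$\norm{u} \to \infty$" into "$t := I_s(u)^{1/s} \to \infty$": if $\norm{u_j} \to \infty$ but $I_s(u_j) \not\to \infty$ along a subsequence, then $t_{u_j}$ stays bounded, so $\widetilde{u}_j = (u_j)_{1/t_{u_j}^{1/s}}$... wait, rather, $u_j = (\widetilde{u}_j)_{\widetilde{t}_j}$ with $\widetilde{t}_j$ bounded and $\widetilde{u}_j \in \M_s$ bounded, forcing $\norm{u_j}$ bounded by $(H_4)$, a contradiction. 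Thus $I_s(u_j) \to \infty$, and the argument above applies. I would assemble these pieces: given $\norm{u_j} \to \infty$, set $\widetilde{t}_j = I_s(u_j)^{1/s} \to \infty$ and $\widetilde{u}_j = \pi(u_j)$, and conclude $\Phi(u_j) = \widetilde{t}_j^{\,s} - F((\widetilde{u}_j)_{\widetilde{t}_j}) = \widetilde{t}_j^{\,s}\bigl(1 - \o(1)\bigr) \to \infty$.
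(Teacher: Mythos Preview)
Your proposal is correct and follows essentially the same route as the paper: parametrize $u = \widetilde{u}_{\widetilde{t}_u}$ with $\widetilde{u} \in \M_s$ and $\widetilde{t}_u = I_s(u)^{1/s}$, use $(H_{10})$ to get $\widetilde{t}_u \to \infty$, and estimate $F(\widetilde{u}_{\widetilde{t}_u}) = \int_0^1 f((\tau\widetilde{u})_{\widetilde{t}_u})\,\widetilde{u}_{\widetilde{t}_u}\, d\tau = \o(\widetilde{t}_u^s)$ via \eqref{35}, giving $\Phi(u) = (1+\o(1))\,I_s(u) \to \infty$. Two small clean-ups: the implication $\norm{u}\to\infty \Rightarrow I_s(u)\to\infty$ is simply the statement of $(H_{10})$, so your contradiction argument via $(H_4)$ is unnecessary; and in applying \eqref{35} the correct pairing is $u$-argument $\tau\widetilde{u}$ and $v$-argument $\widetilde{u}$ (not $v=w=\tau\widetilde{u}$), which directly bounds the integrand $f((\tau\widetilde{u})_t)\,\widetilde{u}_t$ by $\o(t^s)\norm{\widetilde{u}}$ uniformly in $\tau\in[0,1]$ and $\widetilde{u}\in\M_s$, with no $1/\tau$ singularity to worry about.
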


\begin{proof}
Let $\norm{u} \to \infty$ and set
\begin{equation} \label{36}
t_u = \frac{1}{I_s(u)^{1/s}}, \quad \widetilde{u} = u_{t_u}, \quad \widetilde{t}_u = \frac{1}{t_u} = I_s(u)^{1/s}
\end{equation}
(see \eqref{13}). Then $\widetilde{u} \in \M_s$ and
\[
u = \widetilde{u}_{\widetilde{t}_u}
\]
by \eqref{18}. By $(H_{10})$, $I_s(u) \to \infty$ and $\M_s$ is a bounded manifold, so $\widetilde{t}_u \to \infty$ and $\widetilde{u}$ is bounded. So
\[
F(u) = F(\widetilde{u}_{\widetilde{t}_u}) = \int_0^1 f(\tau \widetilde{u}_{\widetilde{t}_u}) \widetilde{u}_{\widetilde{t}_u}\, d\tau = \int_0^1 f((\tau \widetilde{u})_{\widetilde{t}_u}) \widetilde{u}_{\widetilde{t}_u}\, d\tau = \o(\widetilde{t}_u^s) = \o(I_s(u))
\]
by $(H_2)$, \eqref{35}, and \eqref{36}. Then
\[
\Phi(u) = (1 + \o(1))\, I_s(u) \to \infty. \QED
\]
\end{proof}

Since $\Phi$ is bounded on bounded sets, Lemma \ref{Lemma 2} implies that $\Phi$ is bounded from below. It also implies that every \PS{} sequence of $\Phi$ is bounded, so $\Phi$ satisfies the \PS{} condition by Proposition \ref{Proposition 3}. So $\Phi$ has a minimizer, and consequently we have the following existence result.

\begin{theorem} \label{Theorem 7}
Assume $(H_1)$--$(H_7)$, $(H_{10})$, and \eqref{35}. Then equation \eqref{34} has a solution.
\end{theorem}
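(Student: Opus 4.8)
The plan is to obtain a solution of~\eqref{34} as a global minimizer of the associated $C^1$-functional $\Phi(u) = I_s(u) - F(u)$, using the coercivity established in Lemma~\ref{Lemma 2}.

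First I would observe that $\Phi$ is bounded from below. Since $f$ is compact it maps bounded sets into precompact, hence bounded, sets, so $F(u) = \int_0^1 f(\tau u)\, u\, d\tau$ is bounded on bounded sets; as $I_s$ is bounded on bounded sets by Proposition~\ref{Proposition 1}, so is $\Phi$. By Lemma~\ref{Lemma 2}, $\Phi$ is coercive, so there is $R > 0$ with $\Phi(u) \ge 0$ whenever $\norm{u} \ge R$; combined with boundedness of $\Phi$ on $\set{u : \norm{u} \le R}$, this shows that $c := \inf_{u \in W} \Phi(u)$ is finite.

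Next I would verify that $\Phi$ satisfies the \PS{} condition. If $\seq{u_j}$ is a \PS{} sequence, then $\Phi(u_j)$ is bounded, so coercivity forces $\seq{u_j}$ to be bounded, and Proposition~\ref{Proposition 3} then yields a strongly convergent subsequence. To reach the infimum I would apply Ekeland's variational principle at the level $c$ to get a minimizing sequence $\seq{u_j}$ with $\Phi'(u_j) \to 0$, that is, a \PS{c} sequence; by the previous step a renamed subsequence converges to some $u \in W$, and continuity of $\Phi$ and $\Phi'$ gives $\Phi(u) = c$ and $\Phi'(u) = 0$. Since $\Phi'(u) = \As(u) - f(u)$, the point $u$ solves equation~\eqref{34}.

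There is essentially no hard step remaining: the substantive content lies in Lemma~\ref{Lemma 2}, whose crux is the decay hypothesis~\eqref{35} together with the boundedness of $\M_s$ under $(H_{10})$, and in the compactness already packaged in Proposition~\ref{Proposition 3} via $(H_7)$. The only points deserving a line of care are that $F$, and hence $\Phi$, is bounded on bounded sets — which is what converts coercivity into a genuine lower bound — and that the strong limit of the minimizing \PS{} sequence inherits criticality from the $C^1$ regularity of $\Phi$.
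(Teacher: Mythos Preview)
Your proposal is correct and follows essentially the same route as the paper: bounded on bounded sets plus coercivity from Lemma~\ref{Lemma 2} gives boundedness from below and boundedness of \PS{} sequences, then Proposition~\ref{Proposition 3} yields the \PS{} condition, and the infimum is attained as a critical point. The only cosmetic difference is that you invoke Ekeland's principle explicitly, whereas the paper simply asserts the existence of a minimizer as a standard consequence of boundedness from below plus the \PS{} condition.
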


Now we assume that
\begin{equation} \label{37}
f(u) = \lambda \Bs(u) + g(u),
\end{equation}
where $\Bs \in \A_s$ satisfies $(H_8)$, $(H_9)$, and $(H_{12})$, $g \in C(W,W^\ast)$ is a compact potential operator satisfying
\begin{equation} \label{38}
g(u_t) v_t = \o(t^s) \norm{v} \text{ as } t \to 0
\end{equation}
uniformly in $u$ on bounded sets for all $v \in W$, and $\lambda \in \R$. Then equation \eqref{34} has the trivial solution $u = 0$ by \eqref{39}, and we seek others.

\begin{theorem} \label{Theorem 8}
Assume $(H_1)$--$(H_{12})$, \eqref{35}, \eqref{37}, and \eqref{38}, and let $\lambda \in \R \setminus \sigma(\As,\Bs)$.
\begin{enumroman}
\item \label{Theorem 8.i} If $\lambda > \lambda_1$, then equation \eqref{34} has a nontrivial solution.
\item \label{Theorem 8.ii} If $\lambda > \lambda_2$, then equation \eqref{34} has two nontrivial solutions.
\end{enumroman}
\end{theorem}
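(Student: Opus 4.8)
The plan is to combine the critical group computation at the origin from Theorem \ref{Theorem 3} with the behavior of $\Phi$ at infinity established in Lemma \ref{Lemma 2}, using Morse-theoretic existence criteria. First I would record the setup: under \eqref{37} and \eqref{38}, the operator $f = \lambda\Bs + g$ is a compact potential operator, and since $g$ satisfies \eqref{38} while $\lambda\Bs(u_t)v_t = \lambda t^s\Bs(u)v$, the hypothesis \eqref{35} on $f$ must still be verified as $t\to\infty$; but this is exactly what is assumed in the statement, so $\Phi$ is coercive and bounded below by Lemma \ref{Lemma 2}, and satisfies \PS{} globally by Proposition \ref{Proposition 3}. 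Hence $\Phi$ attains a global minimum at some $u_0$ with $\Phi(u_0) = m \le \Phi(0) = 0$, and the critical groups of $\Phi$ at a global minimizer of a functional bounded below satisfying \PS{} are $C^l(\Phi,u_0) \isom \delta_{l0}\,\Z_2$.

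Next I would invoke Theorem \ref{Theorem 3}: since $\lambda\notin\sigma(\As,\Bs)$ and \eqref{38} gives the required \eqref{17}, the origin is an isolated critical point with $C^l(\Phi,0)\isom C^l(\Phi_\lambda,0)$. When $\lambda>\lambda_1$, Theorem \ref{Theorem 2}\ref{Theorem 2.ii} gives $C^0(\Phi,0) = 0$; in particular the origin is \emph{not} a global minimizer, so $u_0 \ne 0$ is a nontrivial solution. This proves part \ref{Theorem 8.i}. For part \ref{Theorem 8.ii}, when $\lambda>\lambda_2$ I would argue by contradiction: suppose $0$ and $u_0$ are the only critical points. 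The global minimizer $u_0$ contributes $C^l(\Phi,u_0)\isom\delta_{l0}\Z_2$. For the origin, since $\lambda_k<\lambda<\lambda_{k+1}$ for some $k\ge 2$, Theorem \ref{Theorem 2}\ref{Theorem 2.iii} gives $C^k(\Phi,0)\ne 0$ with $k\ge 2$; and more precisely Theorem \ref{Theorem 2}\ref{Theorem 2.ii} gives $C^l(\Phi,0)\isom\widetilde H^{l-1}(\widetilde\Psi^\lambda)$ with $i(\widetilde\Psi^\lambda)=k\ge 2$. The plan is to contradict the Morse inequalities (equivalently, the Morse equation relating the Poincaré polynomials of the critical points to that of the pair $(W,\Phi^a)$ for $a<m$): since $\Phi$ is bounded below and coercive, $W$ deformation retracts onto a large sublevel set and $\Phi^a$ is empty for $a<m$, so $\sum_l \dim C^l(\Phi,\cdot)\,t^l$ summed over the two critical points must equal $(1+t\,Q(t))$ for some polynomial $Q$ with nonnegative integer coefficients, using that $W$ is contractible. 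From $u_0$ we already get the leading $1$; the origin then forces cancellation that is impossible when its nontrivial group sits in degree $\ge 2$ with nothing in degree $1$ adjacent to absorb it. A cleaner route: use Theorem \ref{Theorem 2}\ref{Theorem 2.ii} to note $C^1(\Phi,0)\isom\widetilde H^0(\widetilde\Psi^\lambda)$, which vanishes since $\widetilde\Psi^\lambda$ is connected (it is $\widetilde\Psi^\lambda$, a sublevel set of $\widetilde\Psi$ containing the connected $E_{\lambda_1}$-related minimum set — one shows connectedness directly, or uses that $i(\widetilde\Psi^\lambda)\ge 2$ forces nonemptiness while the relevant cohomology computation from Perera et al. gives $\widetilde H^0 = 0$); then a standard argument produces a third critical point via the mountain-pass-type linking between $0$ and the two "wells."

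The cleanest rigorous path for \ref{Theorem 8.ii}, which I would actually write out, is this: by \ref{Theorem 8.i} there is a nontrivial critical point $u_1$; arguing by contradiction assume $0$ and $u_1$ are the only critical points (the global minimum being one of them). If the global minimizer is $u_1$, then $C^l(\Phi,u_1)\isom\delta_{l0}\Z_2$; combining with $C^k(\Phi,0)\ne 0$, $k\ge 2$, and $C^0(\Phi,0)=0$, $C^1(\Phi,0)=0$, the Morse inequalities $\sum_l(-1)^{l}\sum_p \dim C^l(\Phi,p) = (-1)^0$ (Euler characteristic of contractible $W$, using coercivity so sublevel sets below $m$ are empty) read $1 + \sum_l(-1)^l\dim C^l(\Phi,0) = 1$, forcing $\sum_l(-1)^l\dim C^l(\Phi,0)=0$; but then the nontrivial group in degree $k$ must be cancelled by nontrivial groups in adjacent degrees $k\pm1$, and iterating, since $C^0=C^1=0$, one is pushed to arbitrarily high degrees, contradicting $C^l(\Phi,0)=0$ for $l$ large (which holds because $\widetilde\Psi^\lambda$ is a compact-up-to-homotopy finite-dimensional-cohomology set — indeed $i(\widetilde\Psi^\lambda)=k$ bounds where its reduced cohomology lives via the index properties, or one simply notes $\widetilde\Psi^\lambda$ is a subset of the bounded Finsler manifold $\M_s$ with $\widetilde H^{l}(\widetilde\Psi^\lambda)=0$ for $l\ge k$). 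This yields a third critical point, hence two nontrivial ones. The main obstacle is the bookkeeping in this last step: one must be careful that the Morse-theoretic machinery applies to the $C^1$ functional $\Phi$ on the infinite-dimensional space $W$ (using \PS{} and the deformation lemma), and one needs the finiteness $C^l(\Phi,0)=0$ for large $l$, which should be extracted from Theorem \ref{Theorem 2}\ref{Theorem 2.ii} together with $i(\widetilde\Psi^\lambda)=k$ and standard cohomological-index estimates. Everything else — coercivity, \PS{}, isolatedness of $0$, the critical group formulas — is already in place from the preceding results.
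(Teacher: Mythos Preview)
Your argument for part \ref{Theorem 8.i} is correct and essentially matches the paper's: coercivity and \PS{} give a global minimizer $u_0$ with $C^l(\Phi,u_0)\isom\delta_{l0}\Z_2$, while Theorem~\ref{Theorem 3} and Theorem~\ref{Theorem 2} give $C^0(\Phi,0)=0$ (equivalently, the paper uses $C^k(\Phi,0)\ne 0$ for some $k\ge 1$), forcing $u_0\ne 0$.

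For part \ref{Theorem 8.ii}, however, your Morse-inequality route has genuine gaps. Your argument needs three ingredients you do not establish: (a) $C^1(\Phi,0)\isom\widetilde H^0(\widetilde\Psi^\lambda)=0$, i.e.\ connectedness of $\widetilde\Psi^\lambda$; (b) $C^l(\Phi,0)=0$ for all sufficiently large $l$; and (c) finite-dimensionality of all critical groups so that the Morse polynomial identity makes sense. None of these follows from what is available. Connectedness of a sublevel set of $\widetilde\Psi$ on a nonlinear manifold is not automatic, and the fact that $i(\widetilde\Psi^\lambda)=k$ does \emph{not} bound the cohomology of $\widetilde\Psi^\lambda$ from above---the index says only that the classifying map hits $\omega^{k-1}$ nontrivially, which gives $\widetilde H^{k-1}\ne 0$ but says nothing about vanishing in degrees $\ge k$. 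Without (a) and (b) your cascade argument (``pushed to arbitrarily high degrees'') does not get started, and without (c) the Morse relation itself is not justified.

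The paper sidesteps all of this by invoking a ready-made black box, Proposition~\ref{Proposition 10} (from Perera \cite{MR1749421}): since $\Phi$ is bounded below, satisfies \PS{}, and $C^k(\Phi,0)\ne 0$ with $k\ge 1$, there is a critical point $u_1\ne 0$ with either $C^{k-1}(\Phi,u_1)\ne 0$ or $C^{k+1}(\Phi,u_1)\ne 0$. When $\lambda>\lambda_2$ one has $k\ge 2$, so $C^{k-1}(\Phi,u_0)=C^{k+1}(\Phi,u_0)=0$ from \eqref{41}, whence $u_1\ne u_0$. This avoids any global computation of $C^\ast(\Phi,0)$ and requires only the single nonvanishing group $C^k$.
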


The proof of this theorem will be based on the following proposition, which is a special case of a result proved in Perera \cite{MR1749421} (see also Perera \cite{MR1700283}).

\begin{proposition} \label{Proposition 10}
Let $\Phi$ be a $C^1$-functional on a Banach space $W$ that is bounded from below and satisfies the {\em \PS{}} condition. Assume that $0$ is a critical point of $\Phi$ with $\Phi(0) = 0$ and $C^k(\Phi,0) \ne 0$ for some $k \ge 1$. Then $\Phi$ has a critical point $u_1 \ne 0$ with either $\Phi(u_1) < 0$ and $C^{k-1}(\Phi,u_1) \ne 0$, or $\Phi(u_1) > 0$ and $C^{k+1}(\Phi,u_1) \ne 0$.
\end{proposition}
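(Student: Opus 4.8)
\emph{Proof proposal.} The plan is to derive this proposition from classical Morse theory --- the second deformation lemma, the identification $H^\ast(\Phi^b,\Phi^a)\cong\bigoplus_u C^\ast(\Phi,u)$ with the sum over the critical points $u$ of level in $(a,b)$, and the Morse inequalities over $\Z_2$ --- together with the single global fact that $W$, being convex, is contractible. First I would reduce to the case where all critical points of $\Phi$ are isolated, so that by the \PS{} condition the critical values form a discrete set (the bookkeeping that removes this reduction is part of what Perera \cite{MR1749421} carries out). Fix a regular value $\epsilon>0$ so small that $[-\epsilon,\epsilon]$ contains no critical value other than $0$; then $H^k(\Phi^\epsilon,\Phi^{-\epsilon})\cong\bigoplus_{u\in K_0}C^k(\Phi,u)$, where $K_0$ is the finite critical set at level $0$, and this group contains $C^k(\Phi,0)\ne 0$, so it is nonzero. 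In the cohomology exact sequence of the pair $(\Phi^\epsilon,\Phi^{-\epsilon})$ the nonvanishing of the middle term $H^k(\Phi^\epsilon,\Phi^{-\epsilon})$ forces one of its two neighbours to be nonzero, which splits the argument into two cases: either $H^{k-1}(\Phi^{-\epsilon})\ne 0$, or $H^k(\Phi^\epsilon)\ne 0$.

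In the first case I would descend. Since $\Phi$ is bounded below and satisfies \PS{}, one has $\Phi^a=\emptyset$ for $a<\inf_W\Phi$, and I may take such an $a$ to be a regular value with $a<-\epsilon$; if instead $\inf_W\Phi\ge-\epsilon$, then $\Phi^{-\epsilon}=\emptyset$, so $H^{k-1}(\Phi^{-\epsilon})=0$ and we are in the second case. Thus $H^{k-1}(\Phi^{-\epsilon})=H^{k-1}(\Phi^{-\epsilon},\Phi^a)\ne 0$, and since only finitely many critical points $u$ satisfy $a<\Phi(u)<-\epsilon$, the Morse inequalities produce one such $u_1$ with $C^{k-1}(\Phi,u_1)\ne 0$; it has $\Phi(u_1)<-\epsilon<0$, hence $u_1\ne 0$. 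When $k=1$ this is simply the fact that a global minimizer $u_1$ has $\Phi(u_1)=\inf_W\Phi<0$ and $C^0(\Phi,u_1)\ne 0$.

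In the second case I would ascend, and this is the step I expect to be the main obstacle. Here $H^k(\Phi^\epsilon)\ne 0$, $W=\bigcup_b\Phi^b$, and $H^k(W)=0$. Suppose, for contradiction, that $C^{k+1}(\Phi,u)=0$ for every critical point $u$ with $\Phi(u)>0$. Then $H^{k+1}(\Phi^{c+\eta},\Phi^{c-\eta})\cong\bigoplus_{u\in K_c}C^{k+1}(\Phi,u)=0$ at every critical value $c>\epsilon$ (for small regular $\eta$), so by exactness the restriction $H^k(\Phi^{c+\eta})\to H^k(\Phi^{c-\eta})$ is surjective there; since restriction is an isomorphism between consecutive critical values, $H^k(\Phi^b)\to H^k(\Phi^\epsilon)$ is surjective for every $b>\epsilon$. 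A standard exhaustion argument --- each compact subset of $W$ lies in some $\Phi^b$, and the transition maps of the inverse system are surjective, so the relevant $\varprojlim^1$ vanishes --- then makes $H^k(W)\to H^k(\Phi^\epsilon)$ surjective, forcing $H^k(\Phi^\epsilon)=0$, a contradiction. Hence there is a critical value $c_2>0$ and a critical point $u_1\in K_{c_2}$ with $C^{k+1}(\Phi,u_1)\ne 0$; it has $\Phi(u_1)=c_2>0$, so $u_1\ne 0$.

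In either case the critical point $u_1$ has the asserted properties. The delicate point is the second case: because $\Phi$ need not be bounded above, $\Phi^\epsilon$ cannot simply be compared with a top sublevel set, and the passage from ``$H^k(\Phi^\epsilon)\ne 0$'' to a critical point above level $0$ with nontrivial $C^{k+1}$ goes through the colimit/exhaustion argument for the contractible space $W$; this, together with the reduction away from isolated critical points, is what is packaged in Perera \cite{MR1749421}.
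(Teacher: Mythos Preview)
The paper does not prove this proposition; it quotes it as a special case of a result in Perera~\cite{MR1749421} (see also~\cite{MR1700283}) and uses it as a black box in the proof of Theorem~\ref{Theorem 8}. Your sketch is along the Morse-theoretic lines of that reference --- split via the long exact sequence of the pair $(\Phi^\epsilon,\Phi^{-\epsilon})$, descend on the bounded-below side using the Morse inequalities over finitely many critical points, and ascend using the contractibility of $W$ --- and you correctly isolate the two points that need care: the reduction to isolated critical points, and the passage from surjectivity of $H^k(\Phi^b)\to H^k(\Phi^\epsilon)$ for every $b$ to surjectivity of $H^k(W)\to H^k(\Phi^\epsilon)$ when $\Phi$ is unbounded above. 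Since you, like the paper, explicitly defer both of these to~\cite{MR1749421}, there is nothing to compare against and no gap to report.

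One minor remark on the ascend case: the exhaustion argument does go through (the Milnor sequence gives $H^k(W)\twoheadrightarrow\varprojlim_b H^k(\Phi^b)$, and surjective transition maps make $\varprojlim_b H^k(\Phi^b)\twoheadrightarrow H^k(\Phi^\epsilon)$), but in the cited literature it is more commonly bypassed by first reducing to finitely many critical points, so that above the top critical value all inclusions $\Phi^b\hookrightarrow\Phi^{b'}$ are homotopy equivalences and a single finite chain of long exact sequences between $\Phi^\epsilon$ and a high $\Phi^b$ produces the required $C^{k+1}\ne 0$ directly.
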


\begin{proof}[Proof of Theorem \ref{Theorem 8}]
Let $u_0$ be the minimizer of $\Phi$. If $u_0$ is not an isolated minimizer, then $\Phi$ must have infinitely many critical points, so we may assume that $u_0$ is isolated. Then the critical groups of $\Phi$ at $u_0$ are given by
\begin{equation} \label{41}
C^l(\Phi,u_0) \isom \delta_{l0}\, \Z_2.
\end{equation}

\ref{Theorem 8.i} If $\lambda > \lambda_1$, then $\lambda_k < \lambda < \lambda_{k+1}$ for some $k \ge 1$. Then $C^k(\Phi_\lambda,0) \ne 0$ by Theorem \ref{Theorem 2} \ref{Theorem 2.iii}, where $\Phi_\lambda$ is as in \eqref{16}. Since $\lambda \notin \sigma(\As,\Bs)$, $C^l(\Phi,0) \isom C^l(\Phi_\lambda,0)$ for all $l \ge 0$ by Theorem \ref{Theorem 3}, so $C^k(\Phi,0) \ne 0$. Since $C^k(\Phi,u_0) = 0$ by \eqref{41}, it follows that $u_0 \ne 0$.

\ref{Theorem 8.ii} Since $C^k(\Phi,0) \ne 0$, Proposition \ref{Proposition 10} gives a critical point $u_1 \ne 0$ with either $\Phi(u_1) < 0$ and $C^{k-1}(\Phi,u_1) \ne 0$, or $\Phi(u_1) > 0$ and $C^{k+1}(\Phi,u_1) \ne 0$. If $\lambda > \lambda_2$, then $k \ge 2$ and hence $C^{k-1}(\Phi,u_0) = 0 = C^{k+1}(\Phi,u_0)$ by \eqref{41}, so $u_1 \ne u_0$.
\end{proof}

\subsection{Linking based on scaling}

The notion of a linking is useful for obtaining critical points via the minimax principle. Let $X$ be a closed subset of $W$, let $A$ be a proper subset of $X$ that is closed and bounded, and let $B$ be a nonempty closed subset of $W$ such that $A \cap B = \emptyset$. Let
\[
\Gamma = \bgset{\gamma \in C(X,W) : \gamma(X) \text{ is closed and } \restr{\gamma}{A} = \id}.
\]

\begin{definition}
We say that $A$ links $B$ with respect to $X$ if
\[
\gamma(X) \cap B \ne \emptyset \quad \forall \gamma \in \Gamma.
\]
\end{definition}

The following proposition is standard (see, e.g., Perera et al.\! \cite[Proposition 3.21]{MR2640827}).

\begin{proposition} \label{Proposition 6}
Let $\Phi$ be a $C^1$-functional on $W$ and let $X$, $A$, $B$, and $\Gamma$ be as above. Assume that $A$ links $B$ with respect to $X$ and
\[
\sup_{u \in A}\, \Phi(u) \le \inf_{u \in B}\, \Phi(u), \qquad \sup_{u \in X}\, \Phi(u) < \infty.
\]
Set
\[
c := \inf_{\gamma \in \Gamma}\, \sup_{u \in \gamma(X)}\, \Phi(u).
\]
Then
\[
\inf_{u \in B}\, \Phi(u) \le c \le \sup_{u \in X}\, \Phi(u).
\]
If $\Phi$ satisfies the {\em \PS{c}} condition, then $c$ is a critical value of $\Phi$.
\end{proposition}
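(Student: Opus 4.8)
The plan is to establish the two sandwiching inequalities for $c$ by elementary arguments and then obtain criticality of $c$ from a deformation argument tailored to the class $\Gamma$.

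First I would record the upper bound $c \le \sup_X \Phi$: since $X$ is a closed subset of $W$, the inclusion map $\id_X \colon X \to W$ lies in $C(X,W)$, has closed image $X$, and restricts to the identity on $A$, so $\id_X \in \Gamma$ and therefore $c \le \sup_{\id_X(X)} \Phi = \sup_X \Phi$. Next, for the lower bound $\inf_B \Phi \le c$, I would fix an arbitrary $\gamma \in \Gamma$; since $A$ links $B$ with respect to $X$, the set $\gamma(X) \cap B$ is nonempty, so picking a point $w$ in it gives $\sup_{\gamma(X)} \Phi \ge \Phi(w) \ge \inf_B \Phi$, and taking the infimum over $\gamma \in \Gamma$ yields $c \ge \inf_B \Phi$. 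Combined with the standing hypothesis this gives the chain $\sup_A \Phi \le \inf_B \Phi \le c \le \sup_X \Phi$; in particular $c$ is finite and $\sup_A \Phi < c$ except in the degenerate case $\sup_A \Phi = \inf_B \Phi = c$.

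For the criticality assertion I would argue by contradiction, assuming the critical set $K_c = \{u \in W : \Phi(u) = c,\ \Phi'(u) = 0\}$ is empty. Using the \PS{c} condition one first extracts $\alpha, \beta > 0$ such that $\dnorm{\Phi'(u)} \ge \alpha$ whenever $|\Phi(u) - c| \le \beta$ (otherwise one would produce a \PS{c} sequence converging, along a subsequence, to a point of $K_c$). The quantitative deformation lemma then provides, for any sufficiently small $\bar\varepsilon > 0$, a map $\eta \in C([0,1] \times W, W)$ with $\eta(0,\cdot) = \id$, each $\eta(t,\cdot)$ a homeomorphism of $W$, $\eta(t,u) = u$ whenever $\Phi(u) \le c - 2\bar\varepsilon$, and $\eta(1, \Phi^{c+\bar\varepsilon}) \subset \Phi^{c-\bar\varepsilon}$, where $\Phi^a = \{u \in W : \Phi(u) \le a\}$. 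I would impose additionally $\bar\varepsilon \le (c - \sup_A \Phi)/2$, choose $\gamma \in \Gamma$ with $\sup_{\gamma(X)} \Phi < c + \bar\varepsilon$ (possible since $c$ is the infimum), and set $\tilde\gamma = \eta(1,\cdot) \circ \gamma$. Then $\tilde\gamma \in C(X,W)$, its image $\tilde\gamma(X) = \eta(1, \gamma(X))$ is closed as the homeomorphic image of a closed set, and $\tilde\gamma$ restricts to the identity on $A$: for $u \in A$ one has $\gamma(u) = u$ and $\Phi(u) \le \sup_A \Phi \le c - 2\bar\varepsilon$, so $\eta(1,u) = u$. Hence $\tilde\gamma \in \Gamma$, yet $\tilde\gamma(X) \subset \eta(1, \Phi^{c+\bar\varepsilon}) \subset \Phi^{c-\bar\varepsilon}$ forces $\sup_{\tilde\gamma(X)} \Phi \le c - \bar\varepsilon < c$, contradicting the definition of $c$ as $\inf_{\gamma' \in \Gamma} \sup_{\gamma'(X)} \Phi$. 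Therefore $c$ is a critical value of $\Phi$.

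The step I expect to be the main obstacle is keeping the deformed map $\tilde\gamma$ inside $\Gamma$, i.e.\ guaranteeing that $\eta(1,\cdot)$ leaves $A$ pointwise fixed; this is what couples the deformation parameter $\bar\varepsilon$ to the gap $c - \sup_A \Phi$, and it is also why the borderline situation $\sup_A \Phi = \inf_B \Phi = c$ is not covered by this argument verbatim and requires the usual separate (and standard) treatment. The remaining ingredients — the two minimax inequalities and the extraction of the lower bound on $\dnorm{\Phi'}$ from \PS{c} — are routine.
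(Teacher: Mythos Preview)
The paper does not supply its own proof of this proposition; it simply labels the statement as standard and cites Perera et al.\ \cite[Proposition 3.21]{MR2640827}. Your argument is precisely the textbook route one finds in that reference and elsewhere: the two minimax bounds follow from the linking hypothesis and the fact that $\id_X \in \Gamma$, and criticality is obtained by contradiction via the quantitative deformation lemma, using that $\eta(1,\cdot)$ is a homeomorphism (so $\tilde\gamma(X)$ stays closed) and fixes the sublevel set containing $A$. Your identification of the borderline case $\sup_A \Phi = \inf_B \Phi = c$ as the only delicate point is accurate; in most presentations this is either excluded by hypothesis (strict inequality) or handled by a routine refinement of the deformation, exactly as you indicate.
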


\subsubsection{Scaled linking sets} \label{Section 2.3.1}

To construct linking sets that are applicable to equation \eqref{2}, let $I \in C(W,\R)$ be an even functional satisfying
\begin{equation} \label{21}
I(u_t) = t^s I(u) \quad \forall u \in W,\, t \ge 0
\end{equation}
and
\begin{equation} \label{22}
I(u) > 0 \quad \forall u \in W \setminus \set{0}.
\end{equation}
Let
\[
\M = \bgset{u \in W : I(u) = 1}
\]
and note that $\M$ is closed and symmetric since $I$ is continuous and even. We assume that $\M$ is a bounded set and that each ray starting from the origin intersects $\M$ at exactly one point. In view of \eqref{21} and \eqref{22}, we can define a continuous projection $\pi : W \setminus \set{0} \to \M$ by setting
\begin{equation} \label{60}
t_u = \frac{1}{I(u)^{1/s}}, \quad \pi(u) = u_{t_u}.
\end{equation}
We have the following proposition describing a new saddle point like geometry.

\begin{proposition} \label{Proposition 8}
Let $A_0$ and $B_0$ be disjoint nonempty closed symmetric subsets of $\M$ such that
\begin{equation} \label{20}
i(A_0) = i(\M \setminus B_0) < \infty,
\end{equation}
let $R > 0$, and let
\begin{gather*}
X = \set{u_t : u \in A_0,\, 0 \le t \le R},\\
A = \set{u_R : u \in A_0},\\
B = \set{u_t : u \in B_0,\, t \ge 0}.
\end{gather*}
Then $A$ links $B$ with respect to $X$.
\end{proposition}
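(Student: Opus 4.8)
The plan is to argue by contradiction. Suppose $\gamma \in \Gamma$ and $\gamma(X) \cap B = \emptyset$. Since $0 = u_0 \in B$ for any $u \in B_0$, we get $0 \notin \gamma(X)$, so $\eta := \pi \circ \gamma$ is a well-defined element of $C(X,\M)$, where $\pi$ is the projection of \eqref{60}. If $u \in B_0 \subseteq \M$ and $t > 0$ then $I(u_t) = t^s$ by \eqref{21}, hence $\pi(u_t) = (u_t)_{1/t} = u$ by $(H_1)$ and $(H_3)$; conversely $v = \pi(v)_{1/t_v}$ whenever $v \neq 0$. Therefore $\set{u_t : u \in B_0,\ t > 0} = \pi^{-1}(B_0)$, and as $\gamma(X)$ avoids both $0$ and this set, $\eta(X) \subseteq \M \setminus B_0$. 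Also, for $u \in A_0$ we have $u_R \in A$, so $\gamma(u_R) = u_R$, and since $I(u_R) = R^s$ it follows that $\eta(u_R) = \pi(u_R) = (u_R)_{1/R} = u$.

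Next I would read off a null-homotopy. By $(H_1)$--$(H_3)$ the map $X \times [0,1] \to X$, $(v,\tau) \mapsto v_{1-\tau}$, takes values in $X$ and is a contraction of $X$ to $0$; composing it with $\eta$ gives a homotopy inside $\eta(X) \subseteq \M \setminus B_0$ from $\eta$ to the constant $\eta(0)$. Restricting the first variable to $A$ and reparametrizing via the correspondence $u_R \leftrightarrow u$, the map
\[
A_0 \times [0,1] \longrightarrow \M \setminus B_0, \qquad (u,\tau) \longmapsto \pi\big(\gamma\big(u_{R(1-\tau)}\big)\big)
\]
is, by the computation above, a homotopy in $\M \setminus B_0$ from the inclusion $\iota \colon A_0 \hookrightarrow \M \setminus B_0$ (at $\tau = 0$) to the constant $\eta(0)$ (at $\tau = 1$). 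So the assumption that $A$ does not link $B$ forces $\iota$ to be null-homotopic.

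The core of the argument is then that this contradicts $i(A_0) = i(\M \setminus B_0) =: k < \infty$. I would prove this cohomological linking principle with the Gysin sequence (over $\Z_2$) of the double cover $Z \to \overline{Z}$ of a symmetric set $Z$, with Euler class $\bar\omega_Z = f_Z^{\,\ast}\omega \in H^1(\overline{Z})$:
\[
\cdots \to H^{j-1}(\overline{Z}) \xrightarrow{\ \cup\, \bar\omega_Z\ } H^j(\overline{Z}) \to H^j(Z) \xrightarrow{\ \delta_Z\ } H^j(\overline{Z}) \xrightarrow{\ \cup\, \bar\omega_Z\ } H^{j+1}(\overline{Z}) \to \cdots .
\]
Since $i(A_0) = k$, one has $\bar\omega_{A_0}^{\,k-1} \neq 0$ but $\bar\omega_{A_0}^{\,k} = 0$, so by exactness $\bar\omega_{A_0}^{\,k-1} \in \im \delta_{A_0}$; likewise, since $i(\M \setminus B_0) = k$, there is $\beta \in H^{k-1}(\M \setminus B_0)$ with $\delta(\beta) = \bar\omega_{\M \setminus B_0}^{\,k-1}$. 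The inclusion $\iota$ is odd, the double cover over $A_0$ is the restriction of the one over $\M \setminus B_0$, the classifying maps compose ($\bar\iota^{\,\ast}\bar\omega_{\M \setminus B_0} = \bar\omega_{A_0}$), and the Gysin sequences are natural, so
\[
\delta_{A_0}\big(\iota^{\,\ast}\beta\big) \;=\; \bar\iota^{\,\ast}\big(\delta(\beta)\big) \;=\; \bar\iota^{\,\ast}\big(\bar\omega_{\M \setminus B_0}^{\,k-1}\big) \;=\; \bar\omega_{A_0}^{\,k-1} \;\neq\; 0 ,
\]
so $\iota^{\,\ast}\colon H^{k-1}(\M \setminus B_0) \to H^{k-1}(A_0)$ is nonzero. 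But a null-homotopic map induces the zero map on $H^{k-1}$ once $k \geq 2$ --- a contradiction. The degenerate case $k = 1$ is handled directly.

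I expect the \emph{main obstacle} to be precisely this last step, and specifically the fact that $\gamma$ need not be odd: neither $\eta$ nor the homotopy above is $\Z_2$-equivariant, so one cannot feed them into the index, in particular into the piercing property $(i_7)$. The point of the cone structure of $X$ is exactly to convert these non-equivariant data into the equivariant, purely topological statement that $\iota$ is null-homotopic, which the hypothesis \eqref{20} then rules out by the Gysin computation. The remaining points --- well-definedness and continuity of the contraction, that $\pi$ is defined everywhere it is used, and the case $k = 1$ --- are routine.
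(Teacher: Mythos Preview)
Your argument is correct and the geometric core --- the contradiction hypothesis together with the null-homotopy
\[
(u,\tau)\;\longmapsto\;\pi\bigl(\gamma(u_{(1-\tau)R})\bigr)
\]
of the inclusion $\iota\colon A_0\hookrightarrow \M\setminus B_0$ --- is exactly the construction the paper uses. The difference lies only in the final topological step. The paper observes that this null-homotopy is a map $CA_0\to \M\setminus B_0$ restricting to the identity on $A_0$, extends it by antisymmetry to an \emph{odd} map $\widetilde\eta\colon\Sigma A_0\to \M\setminus B_0$, and then invokes monotonicity $(i_2)$ and stability $(i_6)$ of the cohomological index to obtain $i(\M\setminus B_0)\ge i(\Sigma A_0)=i(A_0)+1$, contradicting \eqref{20} in one line and uniformly in $k$. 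You instead reprove this stability phenomenon from scratch via the Gysin sequence of the double covers, showing that $\iota^\ast$ is nonzero on $H^{k-1}$ while a null-homotopy forces it to vanish there for $k\ge 2$. This is valid but longer, and it leaves the case $k=1$ as a separate check (which does go through: $i(\M\setminus B_0)=1$ forces the double cover to be trivial, so $\M\setminus B_0$ splits into two antipodal pieces; the connected cone must map into one of them, contradicting the symmetry of $A_0$). The suspension route buys you a cleaner, case-free proof by packaging exactly your Gysin computation into the index axiom $(i_6)$.
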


\begin{proof}
If $A$ does not link $B$ with respect to $X$, then there is a continuous map $\gamma : X \to W \setminus B$ such that $\restr{\gamma}{A} = \id$. Denote by $CA_0$ the cone on $A_0$, obtained as the quotient space of $A_0 \times [0,1]$ with $A_0 \times \set{1}$ collapsed to a single point. Then the map $\eta : CA_0 \to W$ defined by
\[
\eta(u,\tau) = \pi(\gamma(u_{(1 - \tau)\, R})), \quad (u,\tau) \in A_0 \times [0,1]
\]
is continuous and $\restr{\eta}{A_0 \times \set{0}} = \id$. So $\eta$ can be extended to an odd continuous map $\widetilde{\eta} : \Sigma A_0 \to W$, where $\Sigma A_0$ is the suspension of $A_0$, by
\[
\widetilde{\eta}(u,\tau) = \begin{cases}
\eta(u,\tau) & \text{if } (u,\tau) \in A_0 \times [0,1]\\[5pt]
- \eta(-u,-\tau) & \text{if } (u,\tau) \in A_0 \times [-1,0).
\end{cases}
\]
Since
\[
\eta(CA_0) = \pi(\gamma(X)) \subset \pi(W \setminus B) = \M \setminus B_0
\]
and $\M \setminus B_0$ is symmetric,
\[
\widetilde{\eta}(\Sigma A_0) = \eta(CA_0) \cup -\eta(CA_0) \subset \M \setminus B_0.
\]
So
\[
i(\M \setminus B_0) \ge i(\Sigma A_0) = i(A_0) + 1
\]
by $(i_2)$ and $(i_6)$ of Proposition \ref{Proposition 7}, contradicting \eqref{20}.
\end{proof}

The following proposition describes a new linking geometry.

\begin{proposition} \label{Proposition 9}
Let $A_0$ and $B_0$ be disjoint nonempty closed symmetric subsets of $\M$ such that
\begin{equation} \label{23}
i(A_0) = i(\M \setminus B_0) < \infty,
\end{equation}
let $R > \rho > 0$, let $e \in \M \setminus A_0$, and let
\begin{gather*}
X = \set{(\pi((1 - \tau)u + \tau e))_t : u \in A_0,\, \tau \in [0,1],\, 0 \le t \le R},\\
A = \set{u_t : u \in A_0,\, 0 \le t \le R} \cup \set{(\pi((1 - \tau)u + \tau e))_R : u \in A_0,\, \tau \in [0,1]},\\
B = \set{u_\rho : u \in B_0}.
\end{gather*}
Then $A$ links $B$ with respect to $X$.
\end{proposition}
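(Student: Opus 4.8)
The plan is to argue by contradiction, reducing matters --- much as in the proof of Proposition \ref{Proposition 8}, but using the piercing property $(i_7)$ of Proposition \ref{Proposition 7} in place of the stability property $(i_6)$ --- to an inequality between cohomological indices that contradicts \eqref{23}. So suppose $A$ does \emph{not} link $B$ with respect to $X$: then there is a continuous $\gamma : X \to W$ with $\gamma(X)$ closed, $\restr{\gamma}{A} = \id$, and $\gamma(X) \cap B = \emptyset$. From $\gamma$ I will manufacture an odd continuous map on $\Sigma A_0 \times [0,1]$ to which $(i_7)$ applies and whose ``pierced'' image is forced into $\set{u_\rho : u \in \M \setminus B_0}$; this will give $i(\M \setminus B_0) \ge i(\Sigma A_0) = i(A_0) + 1$, the desired contradiction.

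First I would record that $(1 - \tau) u + \tau e \ne 0$ for all $u \in A_0$ and $\tau \in [0,1]$: this is clear for $\tau \in \set{0,1}$, and for $\tau \in (0,1)$ a vanishing would force $e$ to be a negative multiple of $u$, hence $e = -u$ by uniqueness of the intersection of a ray from the origin with $\M$ together with the symmetry of $\M$, contradicting $e \notin A_0 = -A_0$. Hence $\tau \mapsto \pi((1-\tau) u + \tau e)$ is a well-defined continuous path in $\M$ from $u$ to $e$. Writing points of the two cones that make up $\Sigma A_0$ as $[u,\tau]^{\pm}$ (with $u \in A_0$, $\tau \in [0,1]$), with $[u,0]^{+} = [u,0]^{-}$ the equator point $u$, with $N = [\,\cdot\,,1]^{+}$ and $S = [\,\cdot\,,1]^{-}$ the two cone points, and with the $\Z_2$-action $[u,\tau]^{\pm} \mapsto [-u,\tau]^{\mp}$, I would define, for $t \in [0,1]$,
\[
\varphi([u,\tau]^{+},t) = \gamma\big((\pi((1-\tau) u + \tau e))_{tR}\big), \qquad \varphi([u,\tau]^{-},t) = - \gamma\big((\pi((1-\tau)(-u) + \tau e))_{tR}\big).
\]
The substance of the argument is to check that this is a well-defined, continuous map on $\Sigma A_0 \times [0,1]$, odd in the $\Sigma A_0$-variable: the formula is independent of $u$ at $N$ and $S$ because $\pi(e) = e$; the two formulas agree on the equator because $(-u)_{tR} = - u_{tR}$ by $(H_2)$ while $u_{tR} \in A$ gives $\gamma(u_{tR}) = u_{tR}$; and oddness is built in. One then reads off: $\varphi(\Sigma A_0 \times \set{0}) = \set{0}$ (as $(\,\cdot\,)_0 = 0$ and $0 \in A$), $\varphi(\Sigma A_0 \times \set{1}) \subset \set{v \in W : I(v) = R^{s}}$ (the points $(\pi(w))_{R}$ lie in $A$, are fixed by $\gamma$, and have $I$-value $R^{s}$ by \eqref{21}), and $\varphi(\Sigma A_0 \times [0,1]) = \gamma(X) \cup (-\gamma(X))$, which is closed.

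Now I would apply $(i_7)$ with $C = \Sigma A_0$ (realized as a closed symmetric set in a Banach space, so that $i(C) = i(A_0) + 1$ by $(i_6)$), $C_0 = \set{v \in W : I(v) \le \rho^{s}}$, and $C_1 = \set{v \in W : I(v) \ge \rho^{s}}$: these are closed, their union is all of $W$ (so contains the image of $\varphi$), their intersection is $\set{v \in W : I(v) = \rho^{s}}$, and the endpoint conditions hold since $R > \rho$ (the presence of $0$ in $\varphi(\Sigma A_0 \times \set{0})$ is harmless, the conclusion concerning only the level $\set{I = \rho^s}$). Thus $i\big((\gamma(X) \cup (-\gamma(X))) \cap \set{v \in W : I(v) = \rho^{s}}\big) \ge i(A_0) + 1$. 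Since $\gamma(X) \cap B = \emptyset$ and $B = \set{u_\rho : u \in B_0}$ is symmetric, the set on the left lies in $\set{v \in W : I(v) = \rho^{s}} \setminus B = \set{u_\rho : u \in \M \setminus B_0}$; and $u \mapsto u_\rho$ is an odd homeomorphism of $\M \setminus B_0$ onto the latter set, with inverse $v \mapsto v_{1/\rho}$ by \eqref{5}, so monotonicity $(i_2)$ yields $i(\M \setminus B_0) \ge i(A_0) + 1$, contradicting \eqref{23}.

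I expect the real obstacle to be the middle step: arranging $\varphi$ so that $(i_7)$ literally applies --- verifying that the two cone formulas glue to a continuous odd map on the quotient $\Sigma A_0 \times [0,1]$, that its image $\gamma(X) \cup (-\gamma(X))$ is closed, and handling the bookkeeping that realizes $\Sigma A_0$ as an admissible symmetric set so that $i(\Sigma A_0) = i(A_0) + 1$ can be invoked. Once $\varphi$ is in hand, the piercing property and the final index computation are routine.
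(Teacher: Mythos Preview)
Your proof is correct and follows essentially the same strategy as the paper's: argue by contradiction, build an odd map to which the piercing property $(i_7)$ applies, and conclude $i(\M\setminus B_0)\ge i(A_0)+1$. The one organizational difference is precisely the bookkeeping point you flagged: rather than applying $(i_7)$ with $C=\Sigma A_0$ as an abstract $\Z_2$-space, the paper first pushes $\Sigma A_0$ forward into $W$ via the odd map $(u,\tau)\mapsto(\pi((1-|\tau|)u+\tau e))_R$, calls the image $\widetilde{A}\subset W$, and then applies $(i_7)$ with $C=\widetilde{A}$ and $\varphi(v,t)=\gamma(v_t)$ for $v\in\widetilde{A}\cap A$, $\varphi(v,t)=-\gamma(-v_t)$ otherwise --- this keeps $C$ a genuine closed symmetric subset of $W$ so that Proposition~\ref{Proposition 7} applies as stated, at the cost of the extra inequality $i(\Sigma A_0)\le i(\widetilde{A})$ from $(i_2)$.
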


\begin{proof}
Let $\Sigma A_0$ be the suspension of $A_0$ and recall that
\begin{equation} \label{26}
i(\Sigma A_0) = i(A_0) + 1
\end{equation}
by Proposition \ref{Proposition 7} $(i_6)$. Let
\[
\widetilde{A} = \set{(\pi((1 - \tau)u + \tau e))_R : u \in A_0,\, \tau \in [0,1]} \cup \set{(\pi((1 - \tau)u - \tau e))_R : u \in A_0,\, \tau \in [0,1]}
\]
and note that $\widetilde{A}$ is closed since $A_0$ is closed (here $(1 - \tau)u \pm \tau e \ne 0$ since $\pm e \notin A_0$ and each ray starting from the origin intersects $\M$ at only one point). We have the odd continuous map
\[
\Sigma A_0 \to \widetilde{A}, \quad (u,\tau) \mapsto \begin{cases}
(\pi((1 - \tau)u + \tau e))_R & \text{if } (u,\tau) \in A_0 \times [0,1]\\[5pt]
(\pi((1 + \tau)u + \tau e))_R & \text{if } (u,\tau) \in A_0 \times [-1,0),
\end{cases}
\]
so
\begin{equation}
i(\Sigma A_0) \le i(\widetilde{A})
\end{equation}
by Proposition \ref{Proposition 7} $(i_2)$.

If $A$ does not link $B$ with respect to $X$, then there is a continuous map $\gamma : X \to W \setminus B$ such that $\gamma(X)$ is closed and $\restr{\gamma}{A} = \id$. Consider the map $\varphi : \widetilde{A} \times [0,1] \to W$ defined by
\[
\varphi(u,t) = \begin{cases}
\gamma(u_t) & \text{if } (u,t) \in (\widetilde{A} \cap A) \times [0,1]\\[5pt]
- \gamma(-u_t) & \text{if } (u,t) \in (\widetilde{A} \setminus A) \times [0,1].
\end{cases}
\]
Since $\gamma$ is the identity on the symmetric set $\set{u_t : u \in A_0,\, 0 \le t \le R}$, $\varphi$ is continuous. Clearly, $\varphi(-u,t) = - \varphi(u,t)$ for all $(u,t) \in \widetilde{A} \times [0,1]$. Since $\gamma(X)$ is closed, so is $\varphi(\widetilde{A} \times [0,1]) = \gamma(X) \cup -\gamma(X)$. By $(H_3)$ and since $\restr{\gamma}{A} = \id$, $\varphi(\widetilde{A} \times \set{0}) = \set{0}$ and $\varphi(\widetilde{A} \times \set{1}) = \widetilde{A}$. Noting that
\[
I(u) = R^s > \rho^s \quad \forall u \in \widetilde{A}
\]
by \eqref{21} and applying Proposition \ref{Proposition 7} $(i_7)$ with $C = \widetilde{A}$, $C_0 = \set{u \in W : I(u) \le \rho^s}$, and $C_1 = \set{u \in W : I(u) \ge \rho^s}$ gives
\begin{equation}
i(\widetilde{A}) \le i(\varphi(\widetilde{A} \times [0,1]) \cap \M_\rho),
\end{equation}
where
\[
\M_\rho = C_0 \cap C_1 = \set{u \in W : I(u) = \rho^s} = \set{u_\rho : u \in \M}.
\]
Since $\gamma(X) \subset W \setminus B$ and $W \setminus B$ is symmetric,
\[
\varphi(\widetilde{A} \times [0,1]) = \gamma(X) \cup -\gamma(X) \subset W \setminus B.
\]
So
\[
\varphi(\widetilde{A} \times [0,1]) \cap \M_\rho \subset (W \setminus B) \cap \M_\rho = \M_\rho \setminus B
\]
and hence
\begin{equation}
i(\varphi(\widetilde{A} \times [0,1]) \cap \M_\rho) \le i(\M_\rho \setminus B)
\end{equation}
by Proposition \ref{Proposition 7} $(i_2)$. Since the restriction of $\pi$ to $\M_\rho \setminus B$ is an odd homeomorphism onto $\M \setminus B_0$, Proposition \ref{Proposition 7} $(i_2)$ also gives
\begin{equation} \label{24}
i(\M_\rho \setminus B) = i(\M \setminus B_0).
\end{equation}
Combining \eqref{26}--\eqref{24} gives $i(A_0) + 1 \le i(\M \setminus B_0)$, contradicting \eqref{23}.
\end{proof}

\subsubsection{Scaled saddle point theorem}

Combining Proposition \ref{Proposition 6} with Proposition \ref{Proposition 8} gives us the following saddle point theorem.

\begin{theorem} \label{Theorem 4}
Let $\Phi$ be a $C^1$-functional on $W$. Let $A_0$ and $B_0$ be disjoint nonempty closed symmetric subsets of $\M$ such that
\begin{equation} \label{32}
i(A_0) = i(\M \setminus B_0) < \infty,
\end{equation}
let $R > 0$, and let
\begin{gather*}
X = \set{u_t : u \in A_0,\, 0 \le t \le R},\\
A = \set{u_R : u \in A_0},\\
B = \set{u_t : u \in B_0,\, t \ge 0}.
\end{gather*}
Assume that
\begin{equation} \label{33}
\sup_{u \in A}\, \Phi(u) \le \inf_{u \in B}\, \Phi(u), \qquad \sup_{u \in X}\, \Phi(u) < \infty.
\end{equation}
Let
\[
\Gamma = \bgset{\gamma \in C(X,W) : \gamma(X) \text{ is closed and } \restr{\gamma}{A} = \id}
\]
and set
\[
c := \inf_{\gamma \in \Gamma}\, \sup_{u \in \gamma(X)}\, \Phi(u).
\]
Then
\[
\inf_{u \in B}\, \Phi(u) \le c \le \sup_{u \in X}\, \Phi(u).
\]
If $\Phi$ satisfies the {\em \PS{c}} condition, then $c$ is a critical value of $\Phi$.
\end{theorem}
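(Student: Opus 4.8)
The plan is to obtain Theorem~\ref{Theorem 4} by combining the concrete linking geometry established in Proposition~\ref{Proposition 8} with the abstract minimax principle of Proposition~\ref{Proposition 6}. The genuinely nontrivial ingredient --- that $A$ links $B$ with respect to $X$, proved in Proposition~\ref{Proposition 8} by a suspension argument for the $\Z_2$-cohomological index --- is already available, so what remains is to check that the triple $(X,A,B)$ meets the structural requirements of the linking framework of Proposition~\ref{Proposition 6} and then to quote that proposition, noting that the asserted sandwich $\inf_B \Phi \le c \le \sup_X \Phi$ is part of its conclusion.

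\textbf{Step 1: structural hypotheses.} First I would verify that $X$ is a closed bounded subset of $W$, that $A$ is a proper closed bounded subset of $X$, that $B$ is a nonempty closed subset of $W$, and that $A \cap B = \emptyset$. Boundedness is immediate from $(H_4)$, since $A_0, B_0 \subset \M$ and $\M$ is bounded. For closedness, the key remark is that if $w = \lim_j (u^{(j)})_{t_j}$ with $u^{(j)} \in A_0$ (resp.\ $B_0$) and the $t_j$ in the relevant range, then by \eqref{21} and continuity of $I$ one has $I(w) = \lim_j t_j^{\,s} I(u^{(j)}) = \lim_j t_j^{\,s}$; hence either $w = 0$ (which lies in $X$ and in $B$ as the value at $t = 0$), or $t_j \to t := I(w)^{1/s} > 0$, in which case $u^{(j)} = \pi\big((u^{(j)})_{t_j}\big) \to \pi(w)$ by continuity of the projection $\pi$ of \eqref{60}, so $\pi(w)$ belongs to the closed set $A_0$ (resp.\ $B_0$) and $w = \pi(w)_t$ accordingly belongs to $X$ (resp.\ to $A$, for which $I(w) = R^s$ forces $t = R$, resp.\ to $B$). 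That $A$ is a proper subset of $X$ follows because $0 = u_0 \in X$ while $0 \notin A$ by \eqref{7}; and $A \cap B = \emptyset$ since $u_R = v_t$ with $u \in A_0$, $v \in B_0$ forces $t > 0$ (otherwise $u_R = 0$, impossible by \eqref{7}) and then $u = \pi(u_R) = \pi(v_t) = v$, contradicting $A_0 \cap B_0 = \emptyset$.

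\textbf{Step 2: linking and minimax.} By Proposition~\ref{Proposition 8}, the hypothesis $i(A_0) = i(\M \setminus B_0) < \infty$ ensures that $A$ links $B$ with respect to $X$. The energy inequalities in \eqref{33} are precisely the remaining hypotheses of Proposition~\ref{Proposition 6}, which then yields $\inf_{u \in B} \Phi(u) \le c \le \sup_{u \in X} \Phi(u)$ and, whenever $\Phi$ satisfies the \PS{c} condition, that $c$ is a critical value of $\Phi$; this is exactly the assertion of Theorem~\ref{Theorem 4}.

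As for the main obstacle: there is essentially none internal to this proof, since the conceptual difficulty was already dispatched in Proposition~\ref{Proposition 8}. The only point requiring genuine care is the closedness of the concrete sets $X$, $A$, and $B$ (needed both for Proposition~\ref{Proposition 6} to apply and for $\id \in \Gamma$), which rests on the continuity of the radial projection $\pi$ together with the boundedness of $\M$, as sketched in Step~1.
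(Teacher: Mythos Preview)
Your proposal is correct and follows exactly the route the paper takes: the paper's entire proof is the single sentence ``Combining Proposition~\ref{Proposition 6} with Proposition~\ref{Proposition 8} gives us the following saddle point theorem.'' Your Step~1 spells out the verification of the structural hypotheses of Proposition~\ref{Proposition 6} (closedness of $X$, $A$, $B$; properness of $A\subset X$; disjointness of $A$ and $B$) that the paper leaves implicit, and your Step~2 is precisely the combination the paper invokes.
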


\subsubsection{Scaled linking theorem}

Combining Proposition \ref{Proposition 6} with Proposition \ref{Proposition 9} gives us the following linking theorem.

\begin{theorem} \label{Theorem 5}
Let $\Phi$ be a $C^1$-functional on $W$. Let $A_0$ and $B_0$ be disjoint nonempty closed symmetric subsets of $\M$ such that
\[
i(A_0) = i(\M \setminus B_0) < \infty,
\]
let $R > \rho > 0$, let $e \in \M \setminus A_0$, and let
\begin{gather*}
X = \set{(\pi((1 - \tau)u + \tau e))_t : u \in A_0,\, \tau \in [0,1],\, 0 \le t \le R},\\
A = \set{u_t : u \in A_0,\, 0 \le t \le R} \cup \set{(\pi((1 - \tau)u + \tau e))_R : u \in A_0,\, \tau \in [0,1]},\\
B = \set{u_\rho : u \in B_0}.
\end{gather*}
Assume that
\begin{equation} \label{47}
\sup_{u \in A}\, \Phi(u) \le \inf_{u \in B}\, \Phi(u), \qquad \sup_{u \in X}\, \Phi(u) < \infty.
\end{equation}
Let
\[
\Gamma = \bgset{\gamma \in C(X,W) : \gamma(X) \text{ is closed and } \restr{\gamma}{A} = \id}
\]
and set
\begin{equation} \label{48}
c := \inf_{\gamma \in \Gamma}\, \sup_{u \in \gamma(X)}\, \Phi(u).
\end{equation}
Then
\[
\inf_{u \in B}\, \Phi(u) \le c \le \sup_{u \in X}\, \Phi(u).
\]
If $\Phi$ satisfies the {\em \PS{c}} condition, then $c$ is a critical value of $\Phi$.
\end{theorem}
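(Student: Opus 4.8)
The plan is to obtain Theorem \ref{Theorem 5} directly from the abstract minimax principle Proposition \ref{Proposition 6}, with the required linking supplied by Proposition \ref{Proposition 9}. First I would check that $X$, $A$, $B$ fit the framework of Proposition \ref{Proposition 6}. Boundedness is immediate: $A_0$, $B_0$ lie in the bounded set $\M$, and $u_t$ is bounded on bounded sets by $(H_4)$, so $X$, $A$, $B$ are all bounded. For $A \subset X$ one takes $\tau = 0$ (so $\pi((1-\tau)u + \tau e) = \pi(u) = u$ for $u \in A_0 \subset \M$) to see that $\set{u_t : u \in A_0,\, 0 \le t \le R} \subset X$, and $t = R$ to see that the second piece of $A$ lies in $X$; the inclusion is proper because, e.g., $e_t \in X$ for $0 < t < R$ but $e_t \notin A$, since $e \notin A_0$ (so $\pi(e_t) = e$ forces $e_t$ out of the first piece) and $I(e_t) = t^s < R^s$ (so $e_t$ is not in the second piece). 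Finally $A \cap B = \emptyset$: a point $u_\rho \in B$ has $I$-value $\rho^s \ne R^s$, so it cannot lie in the second piece of $A$, and it cannot lie in the first piece $\set{u_t : u \in A_0,\, 0 \le t \le R}$ either, because the map $u \mapsto u_\rho$ is injective on $\M$ (its inverse being $\pi$) while $A_0 \cap B_0 = \emptyset$.

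With these structural points settled, Proposition \ref{Proposition 9} applies verbatim to the present $A_0$, $B_0$, $R$, $\rho$, $e$, $X$, $A$, $B$ and gives that $A$ links $B$ with respect to $X$. The hypothesis \eqref{47} is precisely the pair of inequalities $\sup_A \Phi \le \inf_B \Phi$ and $\sup_X \Phi < \infty$ needed in Proposition \ref{Proposition 6}, and $\Gamma$ and $c$ in \eqref{48} are defined exactly as there. Proposition \ref{Proposition 6} then yields both conclusions of the theorem at once: $\inf_{u \in B} \Phi(u) \le c \le \sup_{u \in X} \Phi(u)$, and, provided $\Phi$ satisfies the \PS{c} condition, $c$ is a critical value of $\Phi$. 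Thus the theorem is a formal consequence of the two propositions.

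The only point needing a little care is the closedness of $X$ (closedness of $A$ follows in the same way, and closedness of $\gamma(X)$ is built into the definition of $\Gamma$): since $A_0$ is assumed only closed and bounded in the reflexive space $W$, the parameter set $A_0 \times [0,1] \times [0,R]$ need not be norm-compact, so closedness of its continuous image under $(u,\tau,t) \mapsto (\pi((1-\tau)u + \tau e))_t$ is not entirely automatic and should be verified from the structure at hand; when $A_0$ is compact — which is the case in all the intended applications, where $A_0$ is a sphere, or the $\pi$-image of one, in a finite-dimensional subspace of eigenfunctions — $X$ and $A$ are compact and there is nothing further to check. Apart from this bookkeeping there is no genuine obstacle: all the substantive work lies in the linking argument of Proposition \ref{Proposition 9}, and Theorem \ref{Theorem 5} merely packages it together with Proposition \ref{Proposition 6}.
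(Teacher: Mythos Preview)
Your proposal is correct and matches the paper's approach exactly: the paper states Theorem \ref{Theorem 5} simply as ``Combining Proposition \ref{Proposition 6} with Proposition \ref{Proposition 9} gives us the following linking theorem'' with no further argument. Your additional bookkeeping (verifying $A \subsetneq X$, $A \cap B = \emptyset$, boundedness) and your flag about closedness of $X$ when $A_0$ is merely closed rather than compact are details the paper omits; as you note, in all the applications $A_0$ is taken compact, so this is not an issue in practice.
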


\subsubsection{Asymptotically scaled equations}

As an application of our saddle point theorem, Theorem \ref{Theorem 4}, we prove an existence result for the equation
\begin{equation} \label{25}
\As(u) = \lambda \Bs(u) + g(u)
\end{equation}
in $W^\ast$, where $\As, \Bs \in \A_s$ satisfy $(H_6)$--$(H_{12})$, $g \in C(W,W^\ast)$ is a compact potential operator satisfying
\begin{equation} \label{27}
g(u_t) v_t = \o(t^s) \norm{v} \text{ as } t \to \infty
\end{equation}
uniformly in $u$ on bounded sets for all $v \in W$, and $\lambda \in \R$. We will refer to this equation as an asymptotically scaled equation. We have the following theorem.

\begin{theorem} \label{Theorem 6}
Assume $(H_1)$--$(H_{12})$ and \eqref{27}, and let $\lambda \in \R \setminus \sigma(\As,\Bs)$. Then equation \eqref{25} has a solution.
\end{theorem}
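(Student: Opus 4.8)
\medskip
\noindent\textbf{Proof proposal.}
I would obtain the solution as a critical point of the associated functional $\Phi(u) = I_s(u) - \lambda\, J_s(u) - G(u)$, where $G$ is the potential of $g$ with $G(0) = 0$, distinguishing the cases $\lambda < \lambda_1$ and $\lambda > \lambda_1$. (Any eigenfunction of $(\As,\Bs)$ may be taken on $\M_s$, where $\widetilde{\Psi} \ge \lambda_1$ by Theorem \ref{Theorem 1} \ref{Theorem 1.i}; hence $\sigma(\As,\Bs) \subset [\lambda_1,\infty)$ and the hypothesis $\lambda \notin \sigma(\As,\Bs)$ is only restrictive when $\lambda \ge \lambda_1$.)

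\emph{The case $\lambda < \lambda_1$.} Here I would argue by direct minimization, exactly as in Lemma \ref{Lemma 2} and Theorem \ref{Theorem 7}. For $u \ne 0$, writing $u = \widetilde{u}_{\widetilde{t}_u}$ with $\widetilde{u} = \pi(u) \in \M_s$ and $\widetilde{t}_u = I_s(u)^{1/s}$, the scaling identities \eqref{12} and \eqref{6} give $\Phi_\lambda(u) = I_s(u)\, \Phi_\lambda(\widetilde{u}) = I_s(u)\,\big(1 - \lambda/\widetilde{\Psi}(\widetilde{u})\big) \ge \kappa\, I_s(u)$ with $\kappa := \min\set{1,\, 1 - \lambda/\lambda_1} > 0$; and the computation of $G(u)$ through $(H_2)$ and \eqref{27} that appears in the proof of Lemma \ref{Lemma 2} yields $G(u) = \o(I_s(u))$ as $\norm{u} \to \infty$. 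Hence $\Phi(u) = (\kappa + \o(1))\, I_s(u) \to \infty$ by $(H_{10})$, so $\Phi$ is coercive and bounded below, and since $g$, hence $\lambda \Bs + g$, is compact, $\Phi$ satisfies \PS{} by Proposition \ref{Proposition 3}. A minimizer of $\Phi$ solves \eqref{25}.

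\emph{The case $\lambda_k < \lambda < \lambda_{k+1}$, $k \ge 1$.} Here I would apply the scaled saddle point theorem, Theorem \ref{Theorem 4}, with $I = I_s$, $\M = \M_s$, and
\[
A_0 = \widetilde{\Psi}^{\lambda_k}, \qquad B_0 = \widetilde{\Psi}_{\lambda_{k+1}}.
\]
These are disjoint (since $\lambda_k < \lambda_{k+1}$), nonempty (the minimum $\lambda_1 \le \lambda_k$ and the eigenvalue $\lambda_{k+1}$ are both attained on $\M_s$), closed, and symmetric, and Theorem \ref{Theorem 1} \ref{Theorem 1.iii} gives $i(A_0) = i(\M_s \setminus B_0) = k < \infty$. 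On $\M_s$ one has $\Phi_\lambda = 1 - \lambda/\widetilde{\Psi}$, so $\Phi_\lambda \le 1 - \lambda/\lambda_k < 0$ on $A_0$ and $\Phi_\lambda \ge 1 - \lambda/\lambda_{k+1} > 0$ on $B_0$. Since $G(u_t) = \o(t^s)$ as $t \to \infty$ uniformly for $u$ in the bounded set $A_0 \cup B_0$ (again from $(H_2)$ and \eqref{27}), I get $\sup_{u \in A_0} \Phi(u_R) = (-\alpha + \o(1))\, R^s$ with $\alpha := \lambda/\lambda_k - 1 > 0$, while $\Phi(u_t) \ge \tfrac12(1 - \lambda/\lambda_{k+1})\, t^s \ge 0$ for $u \in B_0$ and $t$ large, and $\Phi$ is bounded on $\set{u_t : u \in B_0,\, 0 \le t \le T_0}$ by $(H_4)$; thus $\inf_B \Phi$ is bounded below by a constant independent of $R$. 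Choosing $R$ large therefore gives $\sup_A \Phi \le \inf_B \Phi$, and $\sup_X \Phi < \infty$ because $X$ is bounded by $(H_4)$. Granting the \PS{} condition, Theorem \ref{Theorem 4} produces a critical point.

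\emph{The main obstacle} is the Palais--Smale condition for $\Phi$; I expect this to be the crux. By Proposition \ref{Proposition 3} every bounded \PS{} sequence has a convergent subsequence, so it suffices to rule out an unbounded \PS{} sequence $\seq{u_j}$. Writing $u_j = (\widetilde{u}_j)_{\widetilde{t}_j}$ with $\widetilde{u}_j \in \M_s$ and $\widetilde{t}_j = I_s(u_j)^{1/s}$, boundedness of $\M_s$ forces $\widetilde{t}_j \to \infty$. Testing $\Phi'(u_j)$ against $v_{\widetilde{t}_j}$ and using $\As, \Bs \in \A_s$, the asymptotic estimate \eqref{27}, the bound $\norm{v_{\widetilde{t}_j}} = \O(\widetilde{t}_j^s)$ from $(H_5)$, and $\Phi'(u_j) \to 0$ in $W^\ast$, I would obtain, after dividing by $\widetilde{t}_j^s$, that $\As(\widetilde{u}_j) - \lambda\, \Bs(\widetilde{u}_j) \to 0$ in $W^\ast$. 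Passing to a subsequence with $\widetilde{u}_j \wto \widetilde{u}$, $(H_9)$ gives $\Bs(\widetilde{u}_j) \to \Bs(\widetilde{u})$ and hence $\Bs(\widetilde{u}_j)(\widetilde{u}_j - \widetilde{u}) \to 0$, so $\As(\widetilde{u}_j)(\widetilde{u}_j - \widetilde{u}) \to 0$ and $\widetilde{u}_j \to \widetilde{u}$ by $(H_7)$; then $\widetilde{u} \in \M_s$ (so $\widetilde{u} \ne 0$) and $\As(\widetilde{u}) = \lambda\, \Bs(\widetilde{u})$, i.e. $\lambda \in \sigma(\As,\Bs)$, a contradiction. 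The delicate points in filling this in are the uniformity in $v$ (over bounded sets) of the $\o(t^s)$ in \eqref{27}, needed to legitimately take $v = \widetilde{u}_j - \widetilde{u}$, and the somewhat fussy bookkeeping in expressing $G(u_t)$ via $(H_2)$ throughout.
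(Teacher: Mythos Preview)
Your proposal is correct and follows essentially the same route as the paper: the paper first proves the \PS{} condition via exactly the rescaling argument you describe (this is Lemma \ref{Lemma 1}), and then, in the proof of Theorem \ref{Theorem 6}, distinguishes $\lambda < \lambda_1$ (coercivity and minimization) from $\lambda_k < \lambda < \lambda_{k+1}$ (Theorem \ref{Theorem 4} with $A_0 = \widetilde{\Psi}^{\lambda_k}$ and $B_0 = \widetilde{\Psi}_{\lambda_{k+1}}$), using the asymptotic formula $\Phi(u_t) = t^s\big(1 - \lambda/\widetilde{\Psi}(u) + \o(1)\big)$ as $t \to \infty$ to verify \eqref{33}. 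Your concern about uniformity in $v$ in \eqref{27} is a fair observation; the paper treats it the same way you do, and in the concrete applications (see Lemma \ref{Lemma 3}) the estimates are indeed uniform on bounded sets in $v$ as well.
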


The variational functional associated with equation \eqref{25} is
\[
\Phi(u) = I_s(u) - \lambda\, J_s(u) - G(u), \quad u \in W,
\]
where $I_s$ and $J_s$ are as in \eqref{3} and \eqref{8}, respectively, and
\[
G(u) = \int_0^1 g(\tau u) u\, d\tau
\]
is the potential of $g$ with $G(0) = 0$. Since the compact operator $g$ maps bounded sets into precompact, and hence bounded, sets, $G$ is bounded on bounded sets. Since $I_s$ and $J_s$ are also bounded on bounded sets, then so is $\Phi$.

\begin{lemma} \label{Lemma 1}
If \eqref{27} holds and $\lambda \in \R \setminus \sigma(\As,\Bs)$, then $\Phi$ satisfies the {\em \PS{}} condition.
\end{lemma}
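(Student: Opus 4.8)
The plan is to verify the Palais--Smale condition for $\Phi(u) = I_s(u) - \lambda\, J_s(u) - G(u)$ in two stages: first establish that every \PS{} sequence is bounded, then invoke compactness to extract a convergent subsequence. Since $G$ is compact, the second stage follows immediately from Proposition \ref{Proposition 3} once boundedness is known: a bounded \PS{} sequence of $\Phi = \Phi_\lambda - G$ is a bounded \PS{} sequence for the functional whose "non-$I_s$" part, namely $\lambda J_s + G$, has compact gradient $\lambda \Bs + g$ (using $(H_9)$ to see $\Bs$ is compact, and the hypothesis that $g$ is compact). So the real content is boundedness.

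For boundedness I would argue by contradiction. Suppose $\seq{u_j}$ is a \PS{} sequence with $\norm{u_j} \to \infty$. Using the projection from \eqref{13}, write $u_j = \widetilde{u}_j{}_{\,\widetilde{t}_j}$ with $\widetilde{u}_j \in \M_s$ and $\widetilde{t}_j = I_s(u_j)^{1/s}$; by $(H_{10})$, $\M_s$ is bounded so $\seq{\widetilde{u}_j}$ is bounded and, passing to a renamed subsequence, $\widetilde{u}_j \wto \widetilde{u}$ in $W$ and $\widetilde{t}_j \to \infty$. The hypothesis $\norm{u_j} \to \infty$ together with $(H_5)$ and $I_s$ coercive forces $\widetilde{t}_j \to \infty$. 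Now I use the equation $\Phi'(u_j) = \As(u_j) - \lambda \Bs(u_j) - g(u_j) \to 0$ in $W^\ast$, tested against $v_{\,\widetilde{t}_j}$ for arbitrary $v \in W$: the scaling identities for $\As$ and $\Bs$ give $\As(u_j) v_{\widetilde{t}_j} = \widetilde{t}_j^{\,s}\, \As(\widetilde{u}_j) v$ and similarly for $\Bs$, while \eqref{27} gives $g(u_j) v_{\widetilde{t}_j} = \o(\widetilde{t}_j^{\,s})\norm{v}$ (here I need the estimate to be uniform in $u$ on bounded sets, which applies since $\seq{\widetilde{u}_j}$ is bounded). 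Also $\Phi'(u_j) v_{\widetilde{t}_j} = \o(1)\norm{v_{\widetilde{t}_j}} = \o(\widetilde{t}_j^{\,s})\norm{v}$ by $(H_5)$. Dividing through by $\widetilde{t}_j^{\,s}$ yields
\[
\As(\widetilde{u}_j) v = \lambda\, \Bs(\widetilde{u}_j) v + \o(1)\norm{v} \quad \forall v \in W.
\]
Taking $v = \widetilde{u}_j - \widetilde{u}$ and using $(H_9)$ (so $\Bs(\widetilde{u}_j)(\widetilde{u}_j - \widetilde{u}) \to 0$ since $\Bs(\widetilde{u}_j) \to \Bs(\widetilde{u})$ and $\seq{\widetilde{u}_j - \widetilde{u}}$ is bounded) shows $\As(\widetilde{u}_j)(\widetilde{u}_j - \widetilde{u}) \to 0$, hence $\widetilde{u}_j \to \widetilde{u}$ strongly along a further subsequence by $(H_7)$. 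Passing to the limit in the displayed equation (using that $\As$ is bounded on bounded sets and continuous, and $\Bs(\widetilde{u}_j) \to \Bs(\widetilde{u})$) gives $\As(\widetilde{u}) = \lambda \Bs(\widetilde{u})$; since $\widetilde{u}_j \in \M_s$ and $\M_s$ is closed, $\widetilde{u} \in \M_s$, so $\widetilde{u} \neq 0$ and therefore $\lambda \in \sigma(\As,\Bs)$, contradicting the assumption $\lambda \notin \sigma(\As,\Bs)$.

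**The main obstacle** I anticipate is the bookkeeping in the test-function computation: making precise that all the $\o$-terms are genuinely $\o(\widetilde{t}_j^{\,s})$ with constants uniform over the bounded sequence $\seq{\widetilde{u}_j}$, and that the identity $(u_j) = (\widetilde{u}_j)_{\widetilde{t}_j}$ combined with $(H_2)$ lets one rewrite $g((\widetilde{u}_j)_{\widetilde{t}_j}) (v)_{\widetilde{t}_j}$ using \eqref{27} applied with the bounded argument $\widetilde{u}_j$. This is exactly the style of argument carried out in the proof of Theorem \ref{Theorem 3}, and I would model the dilation manipulations on \eqref{19} there. Once boundedness is in hand, the conclusion is routine via Proposition \ref{Proposition 3}.
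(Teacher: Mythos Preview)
Your proposal is correct and follows essentially the same route as the paper's proof: reduce to boundedness via Proposition~\ref{Proposition 3} (with $f = \lambda \Bs + g$ compact), then argue by contradiction using the projection $u_j = (\widetilde u_j)_{\widetilde t_j}$ onto $\M_s$, test $\Phi'(u_j)$ against $v_{\widetilde t_j}$, rescale, and conclude $\As(\widetilde u) = \lambda \Bs(\widetilde u)$ with $\widetilde u \in \M_s$. The only point where the paper is marginally more careful is the error from $\Phi'(u_j)\, v_{\widetilde t_j}$: rather than writing it as $\o(\widetilde t_j^{\,s})\norm{v}$, the paper keeps it as $\o(1)\big(\norm{v} + \|v_{\widetilde t_j}\|/\widetilde t_j^{\,s}\big)$, since $(H_5)$ gives $\|v_{\widetilde t_j}\|/\widetilde t_j^{\,s}$ bounded only uniformly on bounded sets in $v$, not linearly in $\norm{v}$; this distinction is harmless for the two uses you make (taking $v = \widetilde u_j - \widetilde u$, then fixed $v$), and you already flagged it as the bookkeeping to watch.
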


\begin{proof}
We will show that every sequence $\seq{u_j}$ in $W$ such that $\Phi'(u_j) \to 0$ is bounded. The desired conclusion will then follow from Proposition \ref{Proposition 3} since $B_s$ and $g$ are compact. Suppose $\norm{u_j} \to \infty$ for a renamed subsequence. Set
\[
t_j = t_{u_j} = \frac{1}{I_s(u_j)^{1/s}}, \quad \widetilde{u}_j = (u_j)_{t_j}, \quad \widetilde{t}_j = \frac{1}{t_j} = I_s(u_j)^{1/s}
\]
(see \eqref{13}). Then $\widetilde{u}_j \in \M_s$ and
\[
u_j = (\widetilde{u}_j)_{\widetilde{t}_j}
\]
by \eqref{18}. Since $\M_s$ is a bounded manifold by $(H_{10})$, $\seq{\widetilde{u}_j}$ is bounded and hence converges weakly to some $\widetilde{u} \in W$ for a renamed subsequence. Since $\norm{u_j} \to \infty$, $I_s(u_j) \to \infty$ by $(H_{10})$ and hence $\widetilde{t}_j \to \infty$.

Since $\Phi'(u_j) \to 0$ and $u_j = (\widetilde{u}_j)_{\widetilde{t}_j}$,
\[
\As((\widetilde{u}_j)_{\widetilde{t}_j}) v_{\widetilde{t}_j} = \lambda \Bs((\widetilde{u}_j)_{\widetilde{t}_j}) v_{\widetilde{t}_j} + g((\widetilde{u}_j)_{\widetilde{t}_j}) v_{\widetilde{t}_j} + \o(1) \|v_{\widetilde{t}_j}\|
\]
for any $v \in W$. Since $\As, \Bs \in \A_s$, $g$ satisfies \eqref{27}, $\widetilde{t}_j \to \infty$, and $\seq{\widetilde{u}_j}$ is bounded, this reduces to
\[
\widetilde{t}_j^s\, \As(\widetilde{u}_j) v = \lambda \widetilde{t}_j^s\, \Bs(\widetilde{u}_j) v + \o(\widetilde{t}_j^s) \norm{v} + \o(1) \|v_{\widetilde{t}_j}\|,
\]
or
\begin{equation} \label{28}
\As(\widetilde{u}_j) v = \lambda \Bs(\widetilde{u}_j) v + \o(1) \left(\norm{v} + \frac{\|v_{\widetilde{t}_j}\|}{\widetilde{t}_j^s}\right).
\end{equation}
Taking $v = \widetilde{u}_j - \widetilde{u}$ and noting that $\Bs(\widetilde{u}_j) \to \Bs(\widetilde{u})$ by $(H_9)$ and $\|v_{\widetilde{t}_j}\|/\widetilde{t}_j^s$ is bounded by $(H_5)$ shows that $\As(\widetilde{u}_j)(\widetilde{u}_j - \widetilde{u}) \to 0$, so $\widetilde{u}_j \to \widetilde{u}$ for a further subsequence by $(H_7)$. Passing to the limit in \eqref{28} now gives
\[
\As(\widetilde{u}) = \lambda \Bs(\widetilde{u})
\]
since $v$ is arbitrary. Since $\widetilde{u}_j \in \M_s$ and $\M_s$ is closed, $\widetilde{u} \in \M_s$ and hence $\widetilde{u} \ne 0$. So $\lambda \in \sigma(\As,\Bs)$, a contradiction.
\end{proof}

We are now ready to prove Theorem \ref{Theorem 6}.

\begin{proof}[Proof of Theorem \ref{Theorem 6}]
By $(H_2)$ and \eqref{27},
\begin{equation} \label{30}
G(u_t) = \int_0^1 g(\tau u_t) u_t\, d\tau = \int_0^1 g((\tau u)_t) u_t\, d\tau = \o(t^s) \text{ as } t \to \infty
\end{equation}
uniformly in $u$ on bounded sets. We will make use of the parametrization
\[
W = \set{u_t : u \in \M_s,\, t \ge 0}
\]
(see \eqref{18}). By \eqref{12}, \eqref{6}, and \eqref{30},
\begin{equation} \label{29}
\Phi(u_t) = I_s(u_t) - \lambda\, J_s(u_t) - G(u_t) = t^s\, \bigg(1 - \frac{\lambda}{\widetilde{\Psi}(u)} + \o(1)\bigg) \text{ as } t \to \infty
\end{equation}
uniformly in $u \in \M_s$.

First suppose $\lambda < \lambda_1$. Then \eqref{29} together with Theorem \ref{Theorem 1} \ref{Theorem 1.i} gives
\[
\Phi(u_t) \ge t^s \left(1 - \frac{\lambda^+}{\lambda_1} + \o(1)\right) \text{ as } t \to \infty
\]
uniformly in $u \in \M_s$, where $\lambda^+ = \max \set{\lambda,0} < \lambda_1$, so $\Phi$ is bounded from below. Since $\Phi$ satisfies the \PS{} condition by Lemma \ref{Lemma 1}, then it has a minimizer.

Now suppose $\lambda > \lambda_1$. Since $\lambda \notin \sigma(\As,\Bs)$, then $\lambda_k < \lambda < \lambda_{k+1}$ for some $k \ge 1$, so
\begin{equation} \label{31}
i(\widetilde{\Psi}^{\lambda_k}) = i(\M_s \setminus \widetilde{\Psi}_{\lambda_{k+1}}) = k
\end{equation}
by Theorem \ref{Theorem 1} \ref{Theorem 1.iii}. We apply Theorem \ref{Theorem 4} with $A_0 = \widetilde{\Psi}^{\lambda_k}$ and $B_0 = \widetilde{\Psi}_{\lambda_{k+1}}$, noting that \eqref{32} holds by \eqref{31}. Let $R > 0$ and let
\begin{gather*}
X = \bgset{u_t : u \in \widetilde{\Psi}^{\lambda_k},\, 0 \le t \le R},\\
A = \bgset{u_R : u \in \widetilde{\Psi}^{\lambda_k}},\\
B = \bgset{u_t : u \in \widetilde{\Psi}_{\lambda_{k+1}},\, t \ge 0}.
\end{gather*}
We have
\[
\Phi(u_t) \ge t^s \left(1 - \frac{\lambda}{\lambda_{k+1}} + \o(1)\right) \text{ as } t \to \infty
\]
uniformly in $u \in \widetilde{\Psi}_{\lambda_{k+1}}$ by \eqref{29}. Since $\lambda < \lambda_{k+1}$, it follows from this that $\Phi$ is bounded from below on $B$. On the other hand, \eqref{29} also gives
\[
\Phi(u_t) \le - t^s \left(\frac{\lambda}{\lambda_k} - 1 + \o(1)\right) \text{ as } t \to \infty
\]
uniformly in $u \in \widetilde{\Psi}^{\lambda_k}$. Since $\lambda > \lambda_k$, it follows that the first inequality in \eqref{33} holds if $R$ is sufficiently large. The second inequality also holds since $X$ is bounded by $(H_4)$ and $\Phi$ is bounded on bounded sets. Since $\Phi$ satisfies the \PS{} condition, then it has a critical point by Theorem \ref{Theorem 4}.
\end{proof}

\subsubsection{Superscaled equations}

Now we consider the equation
\begin{equation} \label{40}
\As(u) = f(u)
\end{equation}
in $W^\ast$, where $\As \in \A_s$ satisfies $(H_6)$, $(H_7)$, and $(H_{10})$, and $f \in C(W,W^\ast)$ is a potential operator with potential
\[
F(u) = \int_0^1 f(\tau u) u\, d\tau
\]
satisfying
\begin{equation} \label{42}
\frac{F(u_t)}{t^s} \to \infty \quad \text{as } t \to \infty
\end{equation}
uniformly in $u$ on compact subsets of $\M_s$. We will refer to this equation as a superscaled equation.

The variational functional associated with equation \eqref{40} is
\[
\Phi(u) = I_s(u) - F(u), \quad u \in W,
\]
where $I_s$ is as in \eqref{3}. We have
\[
\frac{\Phi(u_t)}{t^s} = I_s(u) - \frac{F(u_t)}{t^s} \quad \forall u \in W,\, t > 0
\]
by \eqref{12}, and $I_s$ is bounded on bounded sets, so it follows from \eqref{42} that
\begin{equation} \label{46}
\Phi(u_t) \to - \infty \quad \text{as } t \to \infty
\end{equation}
uniformly in $u$ on compact subsets of $\M_s$. However, this does not guarantee that every \PS{} sequence of $\Phi$ is bounded. Moreover, even bounded \PS{} sequences may not have convergent subsequences since $f$ is not assumed to be a compact operator.

We assume that
\begin{equation} \label{49}
f(u) = \lambda \Bs(u) + g(u),
\end{equation}
where $\Bs \in \A_s$ satisfies $(H_8)$, $(H_9)$, and $(H_{12})$, $g \in C(W,W^\ast)$ is a potential operator \linebreak satisfying
\begin{equation} \label{43}
g(u_t) v_t = \o(t^s) \norm{v} \text{ as } t \to 0
\end{equation}
uniformly in $u$ on bounded sets for all $v \in W$, and $\lambda \in \R \setminus \sigma(\As,\Bs)$. Then equation \eqref{40} has the trivial solution $u = 0$ by \eqref{39}. We will use our linking theorem, Theorem \ref{Theorem 5}, to produce a minimax level $c > 0$ that is critical when $\Phi$ satisfies the \PS{c} condition.

We have
\[
\Phi(u) = I_s(u) - \lambda\, J_s(u) - G(u), \quad u \in W,
\]
where $J_s$ is as in \eqref{8} and
\[
G(u) = \int_0^1 g(\tau u) u\, d\tau
\]
is the potential of $g$ with $G(0) = 0$. By $(H_2)$ and \eqref{43},
\begin{equation} \label{50}
G(u_t) = \int_0^1 g(\tau u_t) u_t\, d\tau = \int_0^1 g((\tau u)_t) u_t\, d\tau = \o(t^s) \text{ as } t \to 0
\end{equation}
uniformly in $u$ on bounded sets. We assume that
\begin{equation} \label{45}
G(u) \ge 0 \quad \forall u \in W.
\end{equation}

We use the parametrization
\[
W = \set{u_t : u \in \M_s,\, t \ge 0}
\]
(see \eqref{18}). By \eqref{12}, \eqref{6}, and \eqref{50},
\begin{equation} \label{44}
\Phi(u_t) = I_s(u_t) - \lambda\, J_s(u_t) - G(u_t) = t^s\, \bigg(1 - \frac{\lambda}{\widetilde{\Psi}(u)} + \o(1)\bigg) \text{ as } t \to 0
\end{equation}
uniformly in $u \in \M_s$. If $\lambda < \lambda_1$, then this together with Theorem \ref{Theorem 1} \ref{Theorem 1.i} gives
\[
\Phi(u_t) \ge t^s \left(1 - \frac{\lambda^+}{\lambda_1} + \o(1)\right) \text{ as } t \to 0
\]
uniformly in $u \in \M_s$, where $\lambda^+ = \max \set{\lambda,0} < \lambda_1$, so $\Phi$ has the mountain pass geometry. So suppose $\lambda > \lambda_1$. Since $\lambda \notin \sigma(\As,\Bs)$, then $\lambda_k < \lambda < \lambda_{k+1}$ for some $k \ge 1$, so we have
\[
i(\M_s \setminus \widetilde{\Psi}_\lambda) = i(\M_s \setminus \widetilde{\Psi}_{\lambda_{k+1}}) = k
\]
by Theorem \ref{Theorem 1} \ref{Theorem 1.iii}. Since $\M_s \setminus \widetilde{\Psi}_\lambda$ is an open symmetric subset of $\M_s$, then it has a compact symmetric subset $C$ of index $k$ (see the proof of Proposition 3.1 in Degiovanni and Lancelotti \cite{MR2371112}). We apply Theorem \ref{Theorem 5} with $A_0 = C$ and $B_0 = \widetilde{\Psi}_{\lambda_{k+1}}$.

Let $R > \rho > 0$, let $e \in \M_s \setminus C$, and let
\begin{gather*}
X = \bgset{(\pi((1 - \tau)u + \tau e))_t : u \in C,\, \tau \in [0,1],\, 0 \le t \le R},\\
A = \bgset{u_t : u \in C,\, 0 \le t \le R} \cup \bgset{(\pi((1 - \tau)u + \tau e))_R : u \in C,\, \tau \in [0,1]},\\
B = \bgset{u_\rho : u \in \widetilde{\Psi}_{\lambda_{k+1}}}.
\end{gather*}
By \eqref{45}, \eqref{12}, and \eqref{6},
\[
\Phi(u_t) \le I_s(u_t) - \lambda\, J_s(u_t) = t^s\, \bigg(1 - \frac{\lambda}{\widetilde{\Psi}(u)}\bigg) \le 0
\]
for all $u \in C$ and $t \ge 0$. Since $\bgset{\pi((1 - \tau)u + \tau e) : u \in C,\, \tau \in [0,1]}$ is a compact subset of $\M_s$, it follows from \eqref{46} that $\Phi \le 0$ on $\bgset{(\pi((1 - \tau)u + \tau e))_R : u \in C,\, \tau \in [0,1]}$ if $R$ is sufficiently large. Then
\[
\sup_{u \in A}\, \Phi(u) \le 0.
\]
On the other hand, \eqref{44} gives
\[
\Phi(u_t) \ge t^s \left(1 - \frac{\lambda}{\lambda_{k+1}} + \o(1)\right) \text{ as } t \to 0
\]
uniformly in $u \in \widetilde{\Psi}_{\lambda_{k+1}}$. Since $\lambda < \lambda_{k+1}$, it follows that
\[
\inf_{u \in B}\, \Phi(u) > 0
\]
if $\rho$ is sufficiently small. So the first inequality in \eqref{47} holds. The second inequality also holds since $X$ is bounded by $(H_4)$ and $\Phi$ is bounded from above on bounded sets by \eqref{45}. Theorem \ref{Theorem 5} now gives the following result.

\begin{theorem} \label{Theorem 9}
Assume $(H_1)$--$(H_{12})$, \eqref{42}, \eqref{49}, \eqref{43}, and \eqref{45}, let $\lambda \in \R \setminus \sigma(\As,\Bs)$, and let $c$ be as in \eqref{48}. If $\Phi$ satisfies the {\em \PS{c}} condition, then equation \eqref{40} has a nontrivial solution at the level $c > 0$.
\end{theorem}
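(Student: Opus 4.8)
The plan is to deduce the statement directly from the scaled linking theorem, Theorem~\ref{Theorem 5}: the discussion preceding the statement has already assembled the requisite linking geometry and the sign estimates, so what remains is to package these and to read off nontriviality from the minimax level bound. We treat the essential case $\lambda > \lambda_1$; when $\lambda < \lambda_1$ the functional $\Phi$ has the mountain pass geometry (this was observed above using \eqref{44} and Theorem~\ref{Theorem 1}~\ref{Theorem 1.i}), and one argues analogously with the usual mountain pass minimax level in place of \eqref{48}.

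First I would fix the linking data. Since $\lambda \notin \sigma(\As,\Bs)$ and $\lambda > \lambda_1$, there is $k \ge 1$ with $\lambda_k < \lambda < \lambda_{k+1}$, and Theorem~\ref{Theorem 1}~\ref{Theorem 1.iii} gives $i(\M_s \setminus \widetilde{\Psi}_\lambda) = i(\M_s \setminus \widetilde{\Psi}_{\lambda_{k+1}}) = k$. Since $\M_s \setminus \widetilde{\Psi}_\lambda$ is open and symmetric in $\M_s$, it contains a compact symmetric subset $C$ with $i(C) = k$. I set $A_0 = C$ and $B_0 = \widetilde{\Psi}_{\lambda_{k+1}}$, so that $i(A_0) = i(\M_s \setminus B_0) = k < \infty$, which is the structural hypothesis of Theorem~\ref{Theorem 5}; then I pick $R > \rho > 0$ and $e \in \M_s \setminus C$ and form the sets $X$, $A$, $B$ exactly as in Theorem~\ref{Theorem 5}.

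Next I would verify hypothesis \eqref{47}, reproducing the estimates from the paragraphs above. Using $G \ge 0$ from \eqref{45}, the scaling identities \eqref{12} and \eqref{6}, and $\widetilde{\Psi}(u) < \lambda$ for $u \in C$, one gets $\Phi(u_t) \le t^s(1 - \lambda/\widetilde{\Psi}(u)) \le 0$ for all $u \in C$ and $t \ge 0$; moreover $\set{\pi((1 - \tau)u + \tau e) : u \in C,\, \tau \in [0,1]}$ is a compact subset of $\M_s$, so \eqref{46} forces $\Phi \le 0$ on the second piece of $A$ once $R$ is large, whence $\sup_A \Phi \le 0$. From \eqref{44}, $\Phi(u_t) \ge t^s(1 - \lambda/\lambda_{k+1} + \o(1))$ as $t \to 0$ uniformly for $u \in \widetilde{\Psi}_{\lambda_{k+1}}$, and $\lambda < \lambda_{k+1}$ then gives $\inf_B \Phi > 0$ for $\rho$ small; thus $\sup_A \Phi \le 0 < \inf_B \Phi$, which is the first inequality in \eqref{47}. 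The second inequality holds because $X$ is bounded by $(H_4)$ and $\Phi$ is bounded from above on bounded sets by \eqref{45}.

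Finally, with $c$ as in \eqref{48} and $\Phi$ satisfying the \PS{c} condition by hypothesis, Theorem~\ref{Theorem 5} yields that $c$ is a critical value of $\Phi$ and that $\inf_B \Phi \le c \le \sup_X \Phi$; hence equation \eqref{40} has a solution $u$ with $\Phi(u) = c \ge \inf_B \Phi > 0 = \Phi(0)$, so $u \ne 0$, and $c > 0$ as claimed. I do not anticipate a real obstacle, since the substance lies in Theorem~\ref{Theorem 5} and in the geometric estimates above, both already available. The one point worth stressing is that $f$, equivalently $g$, is \emph{not} assumed to be a compact operator, so the \PS{c} condition cannot be derived from Proposition~\ref{Proposition 3} and must be retained as a hypothesis; and the estimates are arranged so as to give $\sup_A \Phi \le 0 \le \inf_B \Phi$ precisely in order to meet the inequality $\sup_A \Phi \le \inf_B \Phi$ required by Theorem~\ref{Theorem 5}.
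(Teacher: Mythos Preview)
Your proposal is correct and follows essentially the same route as the paper: the paper's ``proof'' of Theorem~\ref{Theorem 9} is precisely the discussion preceding the statement, which constructs $A_0 = C \subset \M_s \setminus \widetilde{\Psi}_\lambda$ compact with $i(C) = k$, sets $B_0 = \widetilde{\Psi}_{\lambda_{k+1}}$, verifies $\sup_A \Phi \le 0 < \inf_B \Phi$ and $\sup_X \Phi < \infty$ via \eqref{45}, \eqref{46}, and \eqref{44}, and then invokes Theorem~\ref{Theorem 5}. Your handling of the case split on $\lambda$, the index computation, the sign estimates, and the nontriviality conclusion from $c \ge \inf_B \Phi > 0$ all match the paper's argument.
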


\subsection{Local linking based on scaling}

The notion of a local linking is useful for obtaining nontrivial critical points via Morse theory. Let $\Phi$ be a $C^1$-functional on $W$ and let $I$, $\M$, and $\pi$ be as in the beginning of Section \ref{Section 2.3.1}.

\begin{definition} \label{Definition 2}
We will say that $\Phi$ has a scaled local linking near the origin in dimension $k \ge 1$ if there are disjoint nonempty closed symmetric subsets $A_0$ and $B_0$ of $\M$ with
\begin{equation} \label{56}
i(A_0) = i(\M \setminus B_0) = k
\end{equation}
and $\rho > 0$ such that
\begin{equation} \label{57}
\begin{cases}
\Phi(u_t) \le 0 & \forall u \in A_0,\, 0 \le t \le \rho\\[7.5pt]
\Phi(u_t) > 0 & \forall u \in B_0,\, 0 < t \le \rho.
\end{cases}
\end{equation}
\end{definition}

The special case $u_t = tu$ of this definition was given in Perera \cite{MR1700283} (see also Perera et al.\! \cite[Definition 3.33]{MR2640827} and Degiovanni et al.\! \cite{MR2661274}). The usefulness of this notion lies in the following theorem.

\begin{theorem} \label{Theorem 16}
If $\Phi$ has a scaled local linking near the origin in dimension $k$, then
\[
C^k(\Phi,0) \ne 0.
\]
\end{theorem}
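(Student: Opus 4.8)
The plan is to first replace the global sublevel set $\Phi^0$ by an arbitrarily small scaled neighbourhood of the origin, and then deduce $C^k(\Phi,0)=H^k(\Phi^0,\Phi^0\setminus\set{0})\ne 0$ from two facts: a cone built over $A_0$ is contractible and contained in $\set{\Phi\le 0}$, whereas the strict inequality in the second line of \eqref{57} confines the punctured sublevel set near $0$ to a symmetric set of cohomological index exactly $k$. This follows the pattern of Perera \cite{MR1700283} for the linear local linking ($u_t=tu$), with linear subspaces replaced by scaled cones and $\dim W_1$ by $i(\M\setminus B_0)$.

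Since $I$ is continuous, $O:=\set{u\in W:I(u)<\rho^s}$ is an open symmetric neighbourhood of $0$ (note $I(0)=0$, by \eqref{15} and \eqref{21}), and $Z:=\Phi^0\cap\set{u:I(u)\ge\rho^s}$ is closed in $\Phi^0$ with $Z\subseteq\Phi^0\setminus\set{0}$; excising $Z$ from $(\Phi^0,\Phi^0\setminus\set{0})$ gives $C^k(\Phi,0)\cong H^k(V,V\setminus\set{0})$, where $V:=\Phi^0\cap O$. Now set $P:=\set{u_t:u\in A_0,\ 0\le t<\rho}$. By the first line of \eqref{57} and \eqref{21}, $P\subseteq V$; $P$ is contractible in itself via $(u_t,\tau)\mapsto u_{(1-\tau)t}$; and, using \eqref{7} and the projection $\pi$ of \eqref{60}, $P\setminus\set{0}$ deformation retracts, via an odd deformation, onto $A_{\rho/2}:=\set{u_{\rho/2}:u\in A_0}$, which is odd-homeomorphic to $A_0$, so $i(P\setminus\set{0})=i(A_0)=k$. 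On the other hand, every $w\in V\setminus\set{0}$ equals $(\pi(w))_t$ with $t=I(w)^{1/s}\in(0,\rho)$ and $\Phi(w)\le 0$, so the second line of \eqref{57} forces $\pi(w)\notin B_0$; since $\pi$ is odd (by $(H_2)$ and evenness of $I$) and continuous, $(i_2)$ of Proposition \ref{Proposition 7} gives $i(V\setminus\set{0})\le i(\M\setminus B_0)=k$, while $A_{\rho/2}\subseteq P\setminus\set{0}\subseteq V\setminus\set{0}$ gives the reverse inequality, so $i(V\setminus\set{0})=k$.

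Finally I would run the long exact cohomology sequence of the pair $(V,V\setminus\set{0})$ and test it against $P$. As $P\setminus\set{0}\subseteq V\setminus\set{0}$ are symmetric sets of equal finite index $k$, the index--cohomology dictionary of Perera et al.\ \cite{MR2640827} gives that the restriction $\widetilde H^{k-1}(V\setminus\set{0})\to\widetilde H^{k-1}(P\setminus\set{0})$ is nonzero; pick $\alpha$ whose image there is nonzero. Since $P$ is contractible, the composite $\widetilde H^{k-1}(V)\to\widetilde H^{k-1}(P)\to\widetilde H^{k-1}(P\setminus\set{0})$ vanishes, so $\alpha$ does not lie in the image of the restriction $\widetilde H^{k-1}(V)\to\widetilde H^{k-1}(V\setminus\set{0})$; by exactness the connecting homomorphism $\widetilde H^{k-1}(V\setminus\set{0})\to H^k(V,V\setminus\set{0})$ is nonzero, hence $H^k(V,V\setminus\set{0})\ne 0$ and therefore $C^k(\Phi,0)\ne 0$.

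The step I expect to carry the weight is the upper bound $i(V\setminus\set{0})\le k$: it is the one place where the strict positivity in \eqref{57} is used, through the oddness of $\pi$ and the hypothesis $i(\M\setminus B_0)=k$ of \eqref{56}. The remaining ingredients --- the excision, the contractibility of the scaled cone $P$, and the exact-sequence bookkeeping --- are formal; one should only take some care that the deformation retraction of $P\setminus\set{0}$ onto $A_{\rho/2}$ is chosen odd (which is immediate from $(H_2)$ and \eqref{60}) so that the index identities apply.
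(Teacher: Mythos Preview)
Your proof is correct and follows essentially the same route as the paper's: localize to a scaled neighbourhood of $0$, use that the scaled cone over $A_0$ is contractible while the punctured sublevel set maps oddly into $\M\setminus B_0$, invoke the index--cohomology dictionary (the paper's \cite[Proposition 2.14 ({\em iv})]{MR2640827}) to get a nonzero restriction on $\widetilde H^{k-1}$, and finish with the long exact sequence. The only cosmetic difference is that the paper routes the index argument through an intermediate deformation retract $U\setminus B \to \M_\rho\setminus B_\rho$ in a commutative square, whereas you compute $i(V\setminus\{0\})=k$ directly via $\pi$; both lead to the same nonvanishing of the connecting homomorphism.
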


\begin{proof}
We have
\begin{equation} \label{59}
C^k(\Phi,0) = H^k(\Phi^0 \cap U,\Phi^0 \cap U \setminus \set{0}),
\end{equation}
where $\Phi^0 = \bgset{u \in W : \Phi(u) \le 0}$ and $U = \set{u_t : u \in \M,\, 0 \le t \le \rho}$.

Let
\[
\M_\rho = \set{u_\rho : u \in \M}, \qquad A_\rho = \set{u_\rho : u \in A_0}, \qquad B_\rho = \set{u_\rho : u \in B_0}
\]
and note that \eqref{56} together with Proposition \ref{Proposition 7} $(i_2)$ gives
\begin{equation} \label{58}
i(A_\rho) = i(\M_\rho \setminus B_\rho) = k
\end{equation}
since the restrictions of $\pi$ to $A_\rho$ and $\M_\rho \setminus B_\rho$ are odd homeomorphisms onto $A_0$ and $\M \setminus B_0$, respectively. Let
\[
A = \set{u_t : u \in A_0,\, 0 \le t \le \rho}, \qquad B = \set{u_t : u \in B_0,\, 0 \le t \le \rho}.
\]
By \eqref{57}, $A \subset \Phi^0 \cap U$ and $A_\rho \subset \Phi^0 \cap U \setminus \set{0} \subset U \setminus B$, so we have the following commutative diagram of reduced cohomology groups induced by inclusions.
\begin{equation*}
\begin{CD}
\widetilde{H}^{k-1}(A) @>>> \widetilde{H}^{k-1}(A_\rho) @<j^\ast<< \widetilde{H}^{k-1}(\M_\rho \setminus B_\rho)\\
@AAA @Al^\ast AA @Ak^\ast AA\\
\widetilde{H}^{k-1}(\Phi^0 \cap U) @>i^\ast>> \widetilde{H}^{k-1}(\Phi^0 \cap U \setminus \set{0}) @<<< \widetilde{H}^{k-1}(U \setminus B)
\end{CD}
\end{equation*}
By \eqref{58} and Perera et al.\! \cite[Proposition 2.14 ({\em iv})]{MR2640827}, $j^\ast \ne 0$. Since $U \setminus B$ deformation retracts to $\M_\rho \setminus B_\rho$ via
\[
(U \setminus B) \times [0,1] \to U \setminus B, \quad (u,\tau) \mapsto u_{1 - \tau + \tau\, t_u \rho}
\]
(see \eqref{60}), $k^\ast$ is an isomorphism. So the square on the right gives $l^\ast \ne 0$. On the other hand, since $A$ contracts to $\set{0}$ via
\[
A \times [0,1] \to A, \quad (u,\tau) \mapsto u_{1 - \tau},
\]
the square on the left gives $l^\ast i^\ast = 0$. It follows that $i^\ast$ is not onto.

Now consider the following portion of the exact sequence of the pair $(\Phi^0 \cap U,\Phi^0 \cap U \setminus \set{0})$.
\[
\begin{CD}
\widetilde{H}^{k-1}(\Phi^0 \cap U) @>i^\ast>> \widetilde{H}^{k-1}(\Phi^0 \cap U \setminus \set{0}) @>\delta>> H^k(\Phi^0 \cap U,\Phi^0 \cap U \setminus \set{0})
\end{CD}
\]
Since $\ker \delta = \im i^\ast$ and $i^\ast$ is not onto, $\delta \ne 0$ and hence $H^k(\Phi^0 \cap U,\Phi^0 \cap U \setminus \set{0}) \ne 0$. So the desired conclusion follows from \eqref{59}.
\end{proof}

We can now prove the following theorem on the existence of nontrivial critical points for functionals with a scaled local linking near the origin. For $a \in \R$, let $\Phi^a = \bgset{u \in W : \Phi(u) \le a}$.

\begin{theorem} \label{Theorem 17}
Let $\Phi$ be a $C^1$-functional on $W$ that satisfies the {\em \PS{}} condition. Assume that $\Phi$ has a scaled local linking near the origin in dimension $k$ and that $\widetilde{H}^{k-1}(\Phi^a) = 0$ for some $a < 0$. Then $\Phi$ has a nontrivial critical point.
\end{theorem}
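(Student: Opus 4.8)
The plan is to argue by contradiction. Suppose $\Phi$ has no nonzero critical point, so that $0$ is the only critical point of $\Phi$, and I will derive a contradiction with the nontriviality of $C^k(\Phi,0)$ supplied by Theorem \ref{Theorem 16}.

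First I would record two easy preliminary facts. Taking $t=0$ in the first line of \eqref{57} gives $\Phi(0)=\Phi(u_0)\le 0$, while letting $t\to 0^+$ in the second line and using continuity of $\Phi$ and of the scaling gives $\Phi(0)\ge 0$; hence $\Phi(0)=0$. Consequently $0$ is a critical value of $\Phi$, and under the contradiction hypothesis it is the \emph{only} critical value, so every $a'\ne 0$ is a regular value of $\Phi$. In particular the number $a<0$ provided by the hypothesis is a regular value, and I may fix an arbitrary regular value $b>0$. I may also assume $\Phi^a\ne\emptyset$: otherwise $\Phi$ is bounded below, hence, since it satisfies \PS{} and $0$ is its only critical point, attains its infimum only at $0$, so $0$ is a strict global minimizer, $\Phi^0=\set{0}$, and $C^k(\Phi,0)=H^k(\set{0},\emptyset)=0$ for $k\ge 1$, already contradicting Theorem \ref{Theorem 16}.

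Next I would compute $H^k(\Phi^b,\Phi^a)$ in two ways. On the one hand, since $0$ is the only critical point of $\Phi$ in $\Phi^{-1}([a,b])$, $\Phi(0)=0\in(a,b)$, and $\Phi$ satisfies \PS{}, standard Morse theory (the first and second deformation lemmas; see, e.g., Perera et al.\ \cite{MR2640827}) gives the identification $H^k(\Phi^b,\Phi^a)\isom H^k(\Phi^0,\Phi^0\setminus\set{0})\isom C^k(\Phi,0)$, which is nontrivial by Theorem \ref{Theorem 16}. On the other hand, since $\Phi$ satisfies \PS{} and has no critical values in $[b,\infty)$, $\Phi^b$ is a deformation retract of the convex set $W$ and is therefore contractible, so $\widetilde{H}^q(\Phi^b)=0$ for all $q$. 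Feeding these into the long exact reduced cohomology sequence of the pair $(\Phi^b,\Phi^a)$,
\[
\widetilde{H}^{k-1}(\Phi^b)\longrightarrow \widetilde{H}^{k-1}(\Phi^a)\longrightarrow H^k(\Phi^b,\Phi^a)\longrightarrow \widetilde{H}^k(\Phi^b),
\]
the two outer terms vanish, so $H^k(\Phi^b,\Phi^a)\isom \widetilde{H}^{k-1}(\Phi^a)$, which is $0$ by hypothesis. This contradicts $H^k(\Phi^b,\Phi^a)\ne 0$, so $\Phi$ must have a critical point other than $0$.

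The routine parts are the identity $\Phi(0)=0$ and the manipulation of the exact sequence. The one place needing care — which I would handle by invoking the standard deformation lemmas under \PS{} rather than reproving them — is the pair of retraction statements: that $(\Phi^b,\Phi^a)$ carries the critical groups of $0$ when $0$ is the sole critical point in the strip $\Phi^{-1}([a,b])$, and that $\Phi^b$ is a deformation retract of $W$ when there are no critical values above $b$. Both are classical, and their hypotheses are met here precisely because, under the contradiction hypothesis, $0$ is the only critical value of $\Phi$, so $a$ and $b$ are automatically regular and the deformations stay away from $0$.
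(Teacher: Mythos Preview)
Your proof is correct and follows essentially the same route as the paper's: argue by contradiction, invoke Theorem \ref{Theorem 16} to get $C^k(\Phi,0)\ne 0$, then use the second deformation lemma under \PS{} to identify $C^k(\Phi,0)$ with $\widetilde{H}^{k-1}(\Phi^a)$ and reach a contradiction. The only cosmetic difference is that the paper works directly with the pair $(W,\Phi^a)$ (using that $\Phi^0$ deformation retracts onto $W$ and $\Phi^0\setminus\{0\}$ onto $\Phi^a$), whereas you pass through an auxiliary regular level $b>0$ and the pair $(\Phi^b,\Phi^a)$; since $\Phi^b$ is itself a deformation retract of $W$, this extra step is harmless but also unnecessary.
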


\begin{proof}
Suppose $\Phi$ has no nontrivial critical point. Then $\Phi^0$ and $\Phi^a$ are deformation retracts of $W$ and $\Phi^0 \setminus \set{0}$, respectively, by the second deformation lemma (see, e.g., Perera et al.\! \cite[Lemma 3.9]{MR2640827}). So
\[
C^k(\Phi,0) = H^k(\Phi^0,\Phi^0 \setminus \set{0}) \isom H^k(W,\Phi^a).
\]
If $\Phi^a \ne \emptyset$, since $W$ is contractible,
\[
H^k(W,\Phi^a) \isom \widetilde{H}^{k-1}(\Phi^a) = 0
\]
by assumption (see, e.g., Perera et al.\! \cite[Proposition 2.4 $(ii)$]{MR2640827}). On the other hand, if $\Phi^a = \emptyset$, then
\[
H^k(W,\Phi^a) = H^k(W) = 0
\]
since $k \ge 1$. So $C^k(\Phi,0) = 0$, contrary to the conclusion of Theorem \ref{Theorem 16}.
\end{proof}

In particular, we have the following corollary, which we will use to obtain nontrivial solutions of superscaled Schr\"{o}dinger--Poisson--Slater equations that are resonant at zero (see Theorem \ref{Theorem 18}).

\begin{corollary} \label{Corollary 2}
Let $\Phi$ be a $C^1$-functional on $W$ that satisfies the {\em \PS{}} condition. Assume that $\Phi$ has a scaled local linking near the origin and that $\Phi^a$ is contractible for some $a < 0$. Then $\Phi$ has a nontrivial critical point.
\end{corollary}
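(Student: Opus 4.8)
The plan is to deduce the statement immediately from Theorem \ref{Theorem 17}. By the phrase ``$\Phi$ has a scaled local linking near the origin'' I understand that there is some dimension $k \ge 1$ for which the conditions \eqref{56}--\eqref{57} of Definition \ref{Definition 2} hold; fix such a $k$. The only observation needed is that a contractible topological space has the reduced cohomology of a point, so $\widetilde{H}^\ast$ vanishes in every degree. In particular, since $\Phi^a$ is assumed contractible, $\widetilde{H}^{k-1}(\Phi^a) = 0$.

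With this in hand, all hypotheses of Theorem \ref{Theorem 17} are met for the given $a < 0$: $\Phi \in C^1(W,\R)$ satisfies the \PS{} condition, it has a scaled local linking near the origin in dimension $k$, and $\widetilde{H}^{k-1}(\Phi^a) = 0$. Theorem \ref{Theorem 17} then produces a nontrivial critical point of $\Phi$, which is exactly the assertion. (If one prefers a self-contained argument, one can instead repeat the proof of Theorem \ref{Theorem 17}: assuming $\Phi$ has no nontrivial critical point, the second deformation lemma gives $C^k(\Phi,0) \isom H^k(W,\Phi^a)$, and contractibility of both $W$ and $\Phi^a$ forces this group to vanish, contradicting Theorem \ref{Theorem 16}.)

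There is essentially no obstacle here; the proof is a one-line reduction. The only point deserving a word of care is the reading of ``scaled local linking near the origin'' as ``in dimension $k$ for some $k \ge 1$,'' so that Theorem \ref{Theorem 17} may be invoked for that particular $k$, and the elementary fact that contractibility kills reduced cohomology in all degrees (hence in degree $k-1$ regardless of the value of $k$).
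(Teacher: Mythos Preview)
Your proposal is correct and matches the paper's approach exactly: the paper states Corollary \ref{Corollary 2} as an immediate consequence of Theorem \ref{Theorem 17} (introduced with ``In particular, we have the following corollary''), and your argument spells out precisely the one-line reduction the paper leaves implicit, namely that contractibility of $\Phi^a$ forces $\widetilde{H}^{k-1}(\Phi^a) = 0$ for whichever $k$ witnesses the scaled local linking.
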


\subsection{Multiplicity based on scaling}

In this section we prove a multiplicity result for even functionals that only satisfy a local \PS{} condition and is therefore applicable to scaled equations with critical growth. Let $\Phi \in C^1(W,\R)$ be an even functional, i.e., $\Phi(-u) = \Phi(u)$ for all $u \in W$. Assume that $\exists c^\ast > 0$ such that $\Phi$ satisfies the \PS{c} condition for all $c \in (0,c^\ast)$. Let $\Gamma$ denote the group of odd homeomorphisms of $W$ that are the identity outside $\Phi^{-1}(0,c^\ast)$. Let $\A^\ast$ denote the class of symmetric subsets of $W$, let $I$, $\M$, and $\pi$ be as in the beginning of Section \ref{Section 2.3.1}, and let
\[
\M_\rho = \set{u \in W : I(u) = \rho^s} = \set{u_\rho : u \in \M}
\]
for $\rho > 0$.

\begin{definition}[Benci \cite{MR84c:58014}]
The pseudo-index of $M \in \A^\ast$ related to $i$, $\M_\rho$, and $\Gamma$ is defined by
\begin{equation} \label{74}
i^\ast(M) = \min_{\gamma \in \Gamma}\, i(\gamma(M) \cap \M_\rho).
\end{equation}
\end{definition}

We have the following multiplicity result.

\begin{theorem} \label{Theorem 14}
Let $A_0$ and $B_0$ be symmetric subsets of $\M$ such that $A_0$ is compact, $B_0$ is closed, and
\begin{equation} \label{73}
i(A_0) \ge k + m - 1, \qquad i(\M \setminus B_0) \le k - 1
\end{equation}
for some $k, m \ge 1$. Let $R > \rho > 0$ and let
\begin{gather*}
X = \set{u_t : u \in A_0,\, 0 \le t \le R},\\
A = \set{u_R : u \in A_0},\\
B = \set{u_\rho : u \in B_0}.
\end{gather*}
Assume that
\begin{equation} \label{72}
\sup_{u \in A}\, \Phi(u) \le 0 < \inf_{u \in B}\, \Phi(u), \qquad \sup_{u \in X}\, \Phi(u) < c^\ast.
\end{equation}
For $j = k,\dots,k + m - 1$, let
\[
\A_j^\ast = \set{M \in \A^\ast : M \text{ is compact and } i^\ast(M) \ge j}
\]
and set
\[
c_j^\ast := \inf_{M \in \A_j^\ast}\, \max_{u \in M}\, \Phi(u).
\]
Then $0 < c_k^\ast \le \dotsb \le c_{k+m-1}^\ast < c^\ast$, each $c_j^\ast$ is a critical value of $\Phi$, and $\Phi$ has $m$ distinct pairs of associated critical points.
\end{theorem}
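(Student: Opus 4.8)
The plan is to run Benci's pseudo-index minimax scheme (see \cite{MR84c:58014}), in the scaled form already used for $u_t = tu$ in Yang and Perera \cite{MR3616328}: I would first establish three structural facts --- that $A$ ``links'' $B$ at the level of the pseudo-index, a lower bound $c_k^\ast > 0$, and an upper bound $c_{k+m-1}^\ast < c^\ast$ --- and then read off the critical points by a deformation argument. As preliminaries, note that by $(H_2)$ (so $(-u)_t = -u_t$) and continuity of the scaling, $X$, $A$, $B$ are compact and symmetric, that $0 = u_0 \in X$ but $0 \notin A$ since $I(u_R) = R^s > 0$ for $u \in A_0$, and that $\Phi \le 0$ on $A$ by \eqref{72}, so every $\gamma \in \Gamma$ restricts to the identity on $A$ and fixes the origin; also $\A^\ast_{j+1} \subset \A^\ast_j$, so the $c_j^\ast$ are nondecreasing in $j$.

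For the lower bound, let $M \in \A_k^\ast$. Taking $\gamma = \id \in \Gamma$ in \eqref{74} gives $i(M \cap \M_\rho) \ge i^\ast(M) \ge k$, while the restriction of $\pi$ to $\M_\rho$ is an odd homeomorphism onto $\M$ carrying $\M_\rho \setminus B$ onto $\M \setminus B_0$, so $i(\M_\rho \setminus B) = i(\M \setminus B_0) \le k - 1$ by $(i_2)$ of Proposition \ref{Proposition 7} and \eqref{73}. Hence $M \cap \M_\rho$ is not contained in $\M_\rho \setminus B$, so $M \cap B \ne \emptyset$ and $\max_{u \in M} \Phi(u) \ge \inf_{u \in B} \Phi(u) > 0$; taking the infimum over $M$ gives $c_k^\ast > 0$.

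For the upper bound I would show $i^\ast(X) \ge k + m - 1$, which forces $X \in \A_j^\ast$ for every $j \le k + m - 1$ and hence $c_{k+m-1}^\ast \le \sup_{u \in X} \Phi(u) < c^\ast$ by \eqref{72}. Fix $\gamma \in \Gamma$ and apply the piercing property $(i_7)$ of Proposition \ref{Proposition 7} with $C = A_0$, $C_0 = \set{u \in W : I(u) \le \rho^s}$, $C_1 = \set{u \in W : I(u) \ge \rho^s}$ (so $C_0 \cap C_1 = \M_\rho$), and $\varphi(u,\tau) = \gamma(u_{\tau R})$ for $(u,\tau) \in A_0 \times [0,1]$: $\varphi$ is continuous and odd, $\varphi(A_0 \times [0,1]) = \gamma(X)$ is closed (since $X$ is compact and $\gamma$ a homeomorphism), $\varphi(A_0 \times \set{0}) = \set{\gamma(0)} = \set{0} \subset C_0$, and $\varphi(A_0 \times \set{1}) = \gamma(A) = A \subset C_1$ because $\restr{\gamma}{A} = \id$ and $I(u_R) = R^s > \rho^s$ for $u \in A_0$. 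Therefore $i(\gamma(X) \cap \M_\rho) = i(\varphi(A_0 \times [0,1]) \cap C_0 \cap C_1) \ge i(A_0) \ge k + m - 1$ by \eqref{73}; minimizing over $\gamma$ yields $i^\ast(X) \ge k + m - 1$.

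Combining these, $0 < c_k^\ast \le \dotsb \le c_{k+m-1}^\ast < c^\ast$, so all these levels lie in the range where $\Phi$ satisfies the \PS{} condition, and I would finish with the standard pseudo-index deformation argument. If $c := c_j^\ast = \dotsb = c_{j+p-1}^\ast$ for a block of consecutive indices and the compact symmetric critical set $K_c$ had $i(K_c) \le p - 1$, then by $(i_4)$ there is a closed symmetric neighborhood $N$ of $K_c$ with $i(N) = i(K_c)$, and by the quantitative deformation lemma (see, e.g., Perera et al.\ \cite{MR2640827}) an odd homeomorphism $\eta$ that is the identity outside $\Phi^{-1}(c - \eps, c + \eps) \subset \Phi^{-1}(0,c^\ast)$ --- hence $\eta \in \Gamma$ --- with $\eta(\Phi^{c+\eps} \setminus \operatorname{int} N) \subset \Phi^{c-\eps}$. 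Choosing $M \in \A_{j+p-1}^\ast$ with $\max_{u \in M} \Phi(u) < c + \eps$, the set $\overline{M \setminus N}$ is compact with $\overline{M \setminus N} \subset \Phi^{c+\eps} \setminus \operatorname{int} N$; then from $M \subset \overline{M \setminus N} \cup N$, the subadditivity $(i_5)$, oddness of $\gamma$, and the $\Gamma$-invariance of $i^\ast$ (valid since $\Gamma$ is a group) one gets $i^\ast(\eta(\overline{M \setminus N})) = i^\ast(\overline{M \setminus N}) \ge i^\ast(M) - i(N) \ge j$, so $\eta(\overline{M \setminus N}) \in \A_j^\ast$, whereas $\max_{u \in \eta(\overline{M \setminus N})} \Phi(u) \le c - \eps < c_j^\ast$, a contradiction. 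Hence $i(K_c) \ge p$; in particular each $c_j^\ast$ is a critical value, and a value of multiplicity $p$ among $c_k^\ast, \dotsc, c_{k+m-1}^\ast$ carries a critical set of index $\ge p$, hence at least $p$ distinct antipodal pairs, so summing over distinct values gives $m$ pairs of critical points, at positive levels. The most delicate point is the upper-bound step: one must verify carefully the hypotheses of $(i_7)$ --- closedness of $\gamma(X)$ (the reason $A_0$ is assumed compact) and that the two ends of the cylinder $A_0 \times [0,1]$ go into $C_1$ and $C_0$, which is where $R > \rho > 0$ and $s > 0$ enter --- together with the bookkeeping in the last step keeping the truncated, deformed sets simultaneously below level $c$ and inside $\A_j^\ast$.
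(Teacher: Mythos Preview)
Your proposal is correct and follows essentially the same route as the paper: the lower bound $c_k^\ast>0$ via $\id\in\Gamma$ and the index comparison $i(M\cap\M_\rho)\ge k>i(\M_\rho\setminus B)$, the upper bound $c_{k+m-1}^\ast<c^\ast$ via the piercing property $(i_7)$ applied to $\gamma(X)$, and then the standard pseudo-index deformation. The only cosmetic differences are that the paper takes $C=A$ (and then invokes $i(A)=i(A_0)$) whereas you take $C=A_0$ directly, and that you spell out the multiplicity argument the paper simply cites; one small slip is that $B$ need not be compact ($B_0$ is only closed), but you never use compactness of $B$.
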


\begin{proof}
First we show that $c_k^\ast > 0$. Let $M \in \A_k^\ast$. Since the identity map is in $\Gamma$, \eqref{74} gives
\begin{equation} \label{75}
i(M \cap \M_\rho) \ge i^\ast(M) \ge k.
\end{equation}
On the other hand, since the restriction of $\pi$ to $\M_\rho \setminus B$ is an odd homeomorphism onto $\M \setminus B_0$, Proposition \ref{Proposition 7} $(i_2)$ gives
\begin{equation} \label{76}
i(\M_\rho \setminus B) = i(\M \setminus B_0) \le k - 1
\end{equation}
by \eqref{73}. Combining \eqref{75} and \eqref{76} gives $i(M \cap \M_\rho) > i(\M_\rho \setminus B)$, so $M$ intersects $B$ by Proposition \ref{Proposition 7} $(i_2)$ again. It follows that
\[
c_k^\ast \ge \inf_{u \in B}\, \Phi(u) > 0
\]
by \eqref{72}.

Next we show that $c_{k+m-1}^\ast < c^\ast$. Let $\gamma \in \Gamma$. Since $A_0$ is compact, so are $X$ and $A$, in particular, $A$ is closed. Consider the continuous map $\varphi : A \times [0,1] \to W$ defined by
\[
\varphi(u,t) = \gamma(u_t).
\]
By $(H_2)$ and since $\gamma$ is odd,
\[
\varphi(-u,t) = \gamma((-u)_t) = \gamma(-u_t) = - \gamma(u_t) = - \varphi(u,t) \quad \forall (u,t) \in A \times [0,1].
\]
We have $\varphi(A \times [0,1]) = \gamma(X)$. Since $X$ is compact, so is $\gamma(X)$, so $\varphi(A \times [0,1])$ is closed. By $(H_3)$,
\[
\varphi(A \times \set{0}) = \set{\gamma(0)} = \set{0}
\]
since $\gamma$ is odd. On the other hand, since $\Phi \le 0$ on $A$ by \eqref{72} and $\gamma$ is the identity outside $\Phi^{-1}(0,c^\ast)$, $\gamma$ is the identity on $A$ and hence $(H_3)$ also gives
\[
\varphi(A \times \set{1}) = \gamma(A) = A.
\]
Noting that
\[
I(u) = R^s > \rho^s \quad \forall u \in A
\]
by \eqref{21} and applying Proposition \ref{Proposition 7} $(i_7)$ with $C = A$, $C_0 = \set{u \in W : I(u) \le \rho^s}$, and $C_1 = \set{u \in W : I(u) \ge \rho^s}$ now gives
\begin{equation} \label{77}
i(\gamma(X) \cap \M_\rho) = i(\varphi(A \times [0,1]) \cap C_0 \cap C_1) \ge i(A).
\end{equation}
Since the restriction of $\pi$ to $A$ is an odd homeomorphism onto $A_0$, Proposition \ref{Proposition 7} $(i_2)$ gives
\begin{equation} \label{78}
i(A) = i(A_0) \ge k + m - 1
\end{equation}
by \eqref{73}. Combining \eqref{77} and \eqref{78} gives $i(\gamma(X) \cap \M_\rho) \ge k + m - 1$. Since $\gamma \in \Gamma$ is arbitrary, it follows that $i^\ast(X) \ge k + m - 1$. So $X \in \A_{k+m-1}^\ast$ and hence
\[
c_{k+m-1}^\ast \le \max_{u \in X}\, \Phi(u) < c^\ast
\]
by \eqref{72}.

The rest now follows from standard results in critical point theory (see, e.g., Perera et al.\! \cite[Proposition 3.42]{MR2640827}).
\end{proof}

In particular, we have the following corollary when $B_0 = \M$ and $k = 1$ (the special case $u_t = tu$ of this corollary was recently proved in Perera \cite{Pe23}).

\begin{corollary} \label{Corollary 1}
Let $A_0$ be a compact symmetric subset of $\M$ with $i(A_0) = m \ge 1$, let $R > \rho > 0$, and let
\[
A = \set{u_R : u \in A_0}, \qquad X = \set{u_t : u \in A_0,\, 0 \le t \le R}.
\]
Assume that
\[
\sup_{u \in A}\, \Phi(u) \le 0 < \inf_{u \in \M_\rho}\, \Phi(u), \qquad \sup_{u \in X}\, \Phi(u) < c^\ast.
\]
Then $\Phi$ has $m$ distinct pairs of critical points at levels in $(0,c^\ast)$.
\end{corollary}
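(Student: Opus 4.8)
The plan is to derive Corollary \ref{Corollary 1} as the special case of Theorem \ref{Theorem 14} obtained by taking $B_0 = \M$ and $k = 1$, after checking that all the hypotheses of the theorem are met. First I would observe that with $k = 1$ and $B_0 = \M$, the set $B_0$ is closed (it is $\M$ itself), the index condition $i(\M \setminus B_0) \le k - 1 = 0$ holds trivially since $\M \setminus B_0 = \emptyset$ and $i(\emptyset) = 0$ by $(i_1)$, and the other index condition $i(A_0) \ge k + m - 1 = m$ is exactly the hypothesis $i(A_0) = m$. Thus \eqref{73} holds with these choices and $A_0$ is compact as required.

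Next I would identify the set $B$ appearing in Theorem \ref{Theorem 14}: with $B_0 = \M$ we have $B = \set{u_\rho : u \in \M} = \M_\rho$. Therefore the saddle-geometry hypothesis \eqref{72}, namely $\sup_{u \in A} \Phi(u) \le 0 < \inf_{u \in B} \Phi(u)$ and $\sup_{u \in X} \Phi(u) < c^\ast$, becomes precisely the hypothesis of the corollary, $\sup_{u \in A} \Phi(u) \le 0 < \inf_{u \in \M_\rho} \Phi(u)$ and $\sup_{u \in X} \Phi(u) < c^\ast$. The definitions of $X$ and $A$ in the corollary coincide verbatim with those in Theorem \ref{Theorem 14} for this choice of $A_0$, $R$, and $\rho$.

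Finally I would invoke Theorem \ref{Theorem 14} directly: it yields $m$ critical values $0 < c_k^\ast \le \dotsb \le c_{k+m-1}^\ast < c^\ast$, which here read $0 < c_1^\ast \le \dotsb \le c_m^\ast < c^\ast$, each a critical value of $\Phi$, together with $m$ distinct pairs of associated critical points. Since all these critical values lie in $(0,c^\ast)$, this gives exactly the assertion of the corollary that $\Phi$ has $m$ distinct pairs of critical points at levels in $(0,c^\ast)$. There is essentially no obstacle here; the only point requiring a moment's care is the bookkeeping that $B = \M_\rho$ when $B_0 = \M$ and that the index hypotheses degenerate correctly when $k = 1$, but both are immediate from the definitions and the properties of the cohomological index in Proposition \ref{Proposition 7}.
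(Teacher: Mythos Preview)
Your proposal is correct and matches the paper's approach exactly: the paper explicitly states that Corollary \ref{Corollary 1} is the special case of Theorem \ref{Theorem 14} obtained by taking $B_0 = \M$ and $k = 1$, and offers no further proof. Your verification that the hypotheses degenerate correctly (in particular $i(\M \setminus B_0) = i(\emptyset) = 0$ and $B = \M_\rho$) is precisely the bookkeeping needed to justify this specialization.
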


\section{Schr\"{o}dinger--Poisson--Slater equation}

\subsection{Variational setting}

In this section we prove our results for the Schr\"{o}dinger--Poisson--Slater equation by applying our abstract results. We take $W = E_r(\R^3)$ endowed with the norm
\begin{gather*}
\norm{u} = \left[\int_{\R^3} |\nabla u|^2\, dx + \left(\int_{\R^3} \int_{\R^3} \frac{u^2(x)\, u^2(y)}{|x - y|}\, dx\, dy\right)^{1/2}\right]^{1/2},\\[7.5pt]
u_t(x) = t^2\, u(tx), \quad x \in \R^3,
\intertext{and}
s = 3.
\end{gather*}
Note that
\begin{multline} \label{110}
\norm{u_t} = \left[t^3 \int_{\R^3} |\nabla u|^2\, dx + t^{3/2} \left(\int_{\R^3} \int_{\R^3} \frac{u^2(x)\, u^2(y)}{|x - y|}\, dx\, dy\right)^{1/2}\right]^{1/2}\\[7.5pt]
\le \max \set{t^{3/2},t^{3/4}} \norm{u}.
\end{multline}

\begin{lemma}
The mapping $E_r(\R^3) \times [0,\infty) \to E_r(\R^3),\, (u,t) \mapsto u_t$ is continuous.
\end{lemma}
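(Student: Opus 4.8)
The plan is to reduce the joint continuity of $(u,t)\mapsto u_t$ to two facts: an estimate that controls the first variable uniformly in $t$ on bounded sets, and the continuity of $t\mapsto u_t$ for each fixed $u$. Since the scaling is linear in its first variable (clear from $u_t(x)=t^2u(tx)$), if $(u_j,t_j)\to(u,t)$ in $E_r(\R^3)\times[0,\infty)$ then \eqref{110} gives
\[
\norm{(u_j)_{t_j}-u_{t_j}}=\norm{(u_j-u)_{t_j}}\le\max\set{t_j^{3/2},t_j^{3/4}}\,\norm{u_j-u},
\]
and the right-hand side tends to $0$ because $\seq{t_j}$ is bounded; so it is enough to show $\norm{u_{t_j}-u_t}\to 0$, i.e.\ that $s\mapsto u_s$ is continuous for fixed $u$. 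Writing $D(u)=\int_{\R^3}\int_{\R^3}u^2(x)\,u^2(y)/|x-y|\,dx\,dy$, the equality in \eqref{110} reads $\norm{u_s}^2=s^3\norm[2]{\nabla u}^2+s^{3/2}D(u)^{1/2}$, which handles the case $t=0$ at once. So the substantive point is continuity at a fixed $t_0>0$.

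For $t_0>0$ I would proceed by density. Given $u\in E_r(\R^3)$ and $\eps>0$, pick $\phi\in E_r(\R^3)$ with compact support, say in the ball $B_R$, and $\norm{u-\phi}<\eps$; such $\phi$ are dense by a routine truncation $\phi=\chi_n u$ with $\chi_n$ a radial cutoff, the Dirichlet part converging by dominated convergence together with the bound $\norm[2]{u\nabla\chi_n}\le C\,\norm[L^6(|x|>n)]{u}$, and the Coulomb part because $|u-\chi_n u|\le|u|\,\mathbf 1_{\{|x|>n\}}$ while $u^2(x)\,u^2(y)/|x-y|\in L^1(\R^3\times\R^3)$. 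By \eqref{110}, for $s$ in the compact interval $[t_0/2,2t_0]$ one has $\norm{u_s-u_{t_0}}\le C(t_0)\,\eps+\norm{\phi_s-\phi_{t_0}}$, so it remains to show $\norm{\phi_s-\phi_{t_0}}\to 0$ as $s\to t_0$ for $\phi$ with compact support. All of $\phi_s$ ($s\in[t_0/2,2t_0]$) and $\phi_{t_0}$ are then supported in the fixed ball $B:=B_{2R/t_0}$. For the Dirichlet part, with $V=\nabla\phi\in L^2(\R^3;\R^3)$,
\[
\norm[2]{\nabla\phi_s-\nabla\phi_{t_0}}\le|s^3-t_0^3|\,\norm[2]{V(s\,\cdot)}+t_0^3\,\norm[2]{V(s\,\cdot)-V(t_0\,\cdot)}\longrightarrow 0
\]
as $s\to t_0$, using $\norm[2]{V(s\,\cdot)}=s^{-3/2}\norm[2]{V}$ and the classical strong continuity of dilations in $L^2(\R^3)$. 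For the Coulomb part, since $\psi:=\phi_s-\phi_{t_0}$ is supported in $B$, H\"older's inequality (as $12/5<6$), the Sobolev inequality, and the Hardy--Littlewood--Sobolev inequality give
\[
D(\phi_s-\phi_{t_0})\le C\,\norm[12/5]{\psi}^4\le C\,|B|\,\norm[6]{\psi}^4\le C'\,|B|\,\norm[2]{\nabla\psi}^4\longrightarrow 0.
\]
Hence $\norm{\phi_s-\phi_{t_0}}^2=\norm[2]{\nabla(\phi_s-\phi_{t_0})}^2+D(\phi_s-\phi_{t_0})^{1/2}\to 0$, so $\limsup_{s\to t_0}\norm{u_s-u_{t_0}}\le C(t_0)\,\eps$, and letting $\eps\to 0$ finishes the argument.

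The step I expect to be the real obstacle is the Coulomb term $D(u_s-u_{t_0})$. It cannot be bounded by a single $E_r(\R^3)$-norm of the difference: $E_r(\R^3)$ embeds into $L^q(\R^3)$ only for $q>18/7$ whereas $12/5<18/7$, so the Hardy--Littlewood--Sobolev bound $D(w)\lesssim\norm[12/5]{w}^4$ is unavailable for a general $w\in E_r(\R^3)$, and since $D$ is a quartic form, smallness of $D$ of a difference does not follow formally from smallness of the difference. The density reduction is precisely what removes this difficulty: once $\phi$ is compactly supported, all the functions in play are supported in one fixed ball, on which $L^{12/5}$ is dominated by $L^6$ and hence by the Dirichlet norm, and the Dirichlet norm varies continuously under the scaling by the elementary continuity of $L^2$ dilations.
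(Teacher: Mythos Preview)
Your proof is correct and follows essentially the same strategy as the paper: reduce joint continuity to continuity in $t$ for fixed $u$ via the linearity estimate \eqref{110}, dispose of $t=0$ directly, and then handle $t_0>0$ by density and an explicit check on a dense class. The only cosmetic difference is that the paper first uses the semigroup identity $u_{t_j}-u_t=(u_{t_j/t}-u)_t$ to reduce to continuity at $t=1$ before invoking density of $C^\infty_0(\R^3)$, whereas you work directly at an arbitrary $t_0>0$ and supply the details (truncation, $L^2$-continuity of dilations, HLS on a fixed ball) that the paper leaves implicit.
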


\begin{proof}
We will show that if $u_j \to u$ in $E_r(\R^3)$ and $t_j \to t$ in $[0,\infty)$, then $(u_j)_{t_j} \to u_t$ in $E_r(\R^3)$. If $t = 0$, then $u_t = 0$ and $(u_j)_{t_j} \to 0$ by \eqref{110}, so suppose $t > 0$. Then
\[
\|(u_j)_{t_j} - u_t\| \le \|(u_j)_{t_j} - u_{t_j}\| + \|u_{t_j} - u_t\|,
\]
and \eqref{110} gives
\[
\|(u_j)_{t_j} - u_{t_j}\| = \|(u_j - u)_{t_j}\| \le \max \bgset{t_j^{3/2},t_j^{3/4}} \norm{u_j - u} \to 0
\]
and
\[
\|u_{t_j} - u_t\| = \|(u_{t_j/t})_t - u_t\| = \|(u_{t_j/t} - u)_t\| \le \max \set{t^{3/2},t^{3/4}} \norm{u_{t_j/t} - u}.
\]
So it suffices to show that $u_{t_j} \to u$ if $t_j \to 1$. This is easy to see for $u \in C^\infty_0(\R^3)$, and the general case then follows from density.
\end{proof}

Conditions $(H_1)$--$(H_3)$ are clearly satisfied, while $(H_4)$ and $(H_5)$ follow from \eqref{110}. The operators $\As$ and $\Bs$ are given by
\[
\As(u) v = \int_{\R^3} \nabla u \cdot \nabla v\, dx + \frac{1}{4 \pi} \int_{\R^3} \int_{\R^3} \frac{u^2(x)\, u(y)\, v(y)}{|x - y|}\, dx\, dy
\]
and
\[
\Bs(u) v = \int_{\R^3} |u|\, uv\, dx.
\]
Clearly, $\As, \Bs \in \A_s$ and satisfy $(H_6)$ and $(H_8)$. Condition $(H_7)$ is verified in the following lemma, while $(H_9)$ follows from the compactness of the embedding $E_r(\R^3) \hookrightarrow L^3(\R^3)$.

\begin{lemma}
Every sequence $\seq{u_j}$ in $E_r(\R^3)$ such that $u_j \wto u$ and $\As(u_j)(u_j - u) \to 0$ has a subsequence that converges strongly to $u$.
\end{lemma}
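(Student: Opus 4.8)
The plan is to verify condition $(H_7)$ for $\As$ directly, exploiting the splitting of $\As$ into its Dirichlet part and its Coulombic (Hartree) part. Write
\[
\As(u) v = \int_{\R^3} \nabla u \cdot \nabla v\, dx + \frac{1}{4\pi} \int_{\R^3} \int_{\R^3} \frac{u^2(x)\, u(y)\, v(y)}{|x-y|}\, dx\, dy =: a(u,v) + b(u)(v).
\]
Given $u_j \wto u$ in $E_r(\R^3)$ with $\As(u_j)(u_j - u) \to 0$, the first step is to observe that $u_j \wto u$ implies $\nabla u_j \wto \nabla u$ in $L^2(\R^3)$, so $a(u, u_j - u) \to 0$; hence $\As(u_j)(u_j - u) - a(u, u_j - u) = a(u_j - u, u_j - u) + b(u_j)(u_j - u) \to 0$. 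Both terms here are nonnegative — $a(u_j-u,u_j-u) = \|\nabla(u_j-u)\|_2^2 \ge 0$ and $b(u_j)(u_j-u)$ needs a little care — so the crux is to show the Coulombic cross term cannot go to $-\infty$, after which $\|\nabla(u_j-u)\|_2 \to 0$ follows, and then the Coulomb part of the norm is handled separately.

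The second and main step is to control $b(u_j)(u_j-u)$. The natural approach is to use the compact embedding $E_r(\R^3) \hookrightarrow L^q(\R^3)$ for $q \in (18/7, 6)$ established in Ruiz \cite{MR2679375}: passing to a subsequence, $u_j \to u$ strongly in $L^q$ for such $q$ and $u_j \to u$ a.e. Since the Hartree functional $u \mapsto D(u^2,u^2) := \int\int u^2(x)u^2(y)/|x-y|\,dx\,dy$ is continuous on $E_r(\R^3)$ (indeed it appears in the norm) and is weakly lower semicontinuous, one can expand
\[
b(u_j)(u_j - u) = \frac{1}{4\pi}\left[ D(u_j^2, u_j^2) - D(u_j^2, u_j u) \right],
\]
and the plan is to show $D(u_j^2, u_j u) \to D(u^2, u^2)$ using the $L^{12/5}$ (or $L^{q}$ for appropriate $q$) strong convergence together with the Hardy–Littlewood–Sobolev inequality: $D(f,g) \lesssim \|f\|_{6/5}\|g\|_{6/5}$, so $D(u_j^2, u_j u) = D(u_j^2 - u^2, u_j u) + D(u^2, u_j u - u^2)$ with the first factor controlled by $\|u_j^2 - u^2\|_{6/5} \le \|u_j - u\|_{12/5}\|u_j + u\|_{12/5} \to 0$ (valid since $12/5 \in (18/7,6)$... wait, $12/5 = 2.4 < 18/7 \approx 2.571$, so one must instead route through the genuine bound $D(u^2,v^2) \le C\|u\|^2\|v\|^2$ coming directly from the norm and interpolation, or use $\|u_j^2-u^2\|_{6/5}$ estimated via Hölder with exponents landing inside $(18/7,6)$ by splitting $u_j^2 = u_j \cdot u_j$ with one factor bounded in $L^2$-gradient... ). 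The cleanest route is: since $D(\phi,\psi) \le C\,D(\phi,\phi)^{1/2} D(\psi,\psi)^{1/2}$ by Cauchy–Schwarz for the positive-definite bilinear form $D$, and $D(u_j^2,u_j^2)$ is bounded, it suffices to show $D\big((u_j u - u^2)^{?},\cdot\big)$-type quantities vanish; more robustly, write everything in terms of the Riesz potential $\phi_{u_j} = \frac{1}{4\pi|x|}\star u_j^2$, which is bounded in $D^{1,2}(\R^3)$, converges weakly there and strongly in $L^q_{\loc}$, and then pass to the limit in $\int \phi_{u_j} u_j(u_j - u)\,dx$ using a.e. convergence and the generalized dominated convergence / Brezis–Lieb argument. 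The main obstacle is precisely making this Coulomb-term limit rigorous without a global compact embedding at the right exponent; the resolution is to combine the $L^q$ compactness of $E_r(\R^3)$ for $q$ strictly between $18/7$ and $6$ with boundedness of $\phi_{u_j}$ in $D^{1,2}$ and Hölder's inequality, choosing exponents so that the product $\phi_{u_j}\cdot u_j \cdot (u_j - u)$ is estimated by $\|\phi_{u_j}\|_{6}\,\|u_j\|_{q}\,\|u_j - u\|_{q'}$ with $1/6 + 1/q + 1/q' = 1$ and both $q, q' \in (18/7,6)$, which is possible.

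Once $b(u_j)(u_j - u) \to 0$ is established (or at least $\liminf \ge 0$), the third step concludes: $\|\nabla(u_j - u)\|_2^2 \le \As(u_j)(u_j-u) - a(u,u_j-u) - b(u_j)(u_j-u) + o(1) \to 0$, so $\nabla u_j \to \nabla u$ in $L^2$. It remains to upgrade this to strong convergence in $E_r(\R^3)$, i.e. to show the Coulomb part $D(u_j^2,u_j^2)^{1/2} \to D(u^2,u^2)^{1/2}$ of the norm. This follows from the limit computation already performed: $D(u_j^2, u_j u) \to D(u^2,u^2)$ and $b(u_j)(u_j - u) \to 0$ give $D(u_j^2,u_j^2) \to D(u^2,u^2)$; combined with $\nabla u_j \to \nabla u$ in $L^2$ and the explicit form of $\norm{\cdot}$, we get $\norm{u_j} \to \norm{u}$, which together with $u_j \wto u$ in the uniformly convex (reflexive, and in fact the norm is built from a strictly convex expression — here one should invoke that $E_r(\R^3)$ with this norm has the Radon–Riesz / Kadec–Klee property, or simply argue componentwise: $\nabla u_j \to \nabla u$ strongly and $D(u_j^2,u_j^2) \to D(u^2,u^2)$ with $u_j \to u$ a.e. forces $u_j \to u$ strongly via Brezis–Lieb applied to $D$) yields $u_j \to u$ strongly in $E_r(\R^3)$. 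This completes the verification of $(H_7)$.
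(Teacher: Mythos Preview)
Your overall strategy---split $\As$ into Dirichlet and Coulomb parts, show the Coulomb cross term vanishes, then conclude via Brezis--Lieb---matches the paper's, but the key step has a genuine gap. You claim one can estimate $\int \phi_{u_j} u_j (u_j-u)\,dx$ by $\|\phi_{u_j}\|_6\,\|u_j\|_q\,\|u_j-u\|_{q'}$ with $1/q+1/q'=5/6$ and \emph{both} $q,q'\in(18/7,6)$, asserting ``which is possible.'' It is not: for $q,q'\in(18/7,6]$ one has $1/q+1/q'\le 14/18=7/9<5/6$. The same arithmetic obstruction kills your earlier attempt via $\|u_j^2-u^2\|_{6/5}$ and the Cauchy--Schwarz route through $D((u_j-u)u,(u_j-u)u)$. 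This is not a cosmetic issue: the embedding $E_r(\R^3)\hookrightarrow L^q(\R^3)$ genuinely fails at and below $q=18/7$, so no pure H\"older/HLS argument with these ingredients can close the gap.

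The paper handles exactly this point by citing Ianni and Ruiz \cite[Lemma~2.2 and Lemma~2.3]{MR2902293}, which establish the convergence $\int\!\!\int \frac{u_j^2(x)\,u_j(y)\,u(y)}{|x-y|}\,dx\,dy \to \int\!\!\int \frac{u^2(x)\,u^2(y)}{|x-y|}\,dx\,dy$ for $u_j\wto u$ in $E_r(\R^3)$ using radial pointwise decay estimates that go beyond the plain $L^q$ embeddings. With this in hand the paper writes
\[
\As(u_j)(u_j-u)=\int_{\R^3}|\nabla u_j|^2-\int_{\R^3}|\nabla u|^2+\tfrac{1}{4\pi}\Big(D(u_j^2,u_j^2)-D(u^2,u^2)\Big)+o(1),
\]
applies the classical Brezis--Lieb lemma to the gradient term and the nonlocal Brezis--Lieb inequality of Mercuri et al.\ \cite[Proposition~4.1]{MR3568051} to the Coulomb term, obtaining $\As(u_j)(u_j-u)\ge \int|\nabla v_j|^2+\tfrac{1}{4\pi}D(v_j^2,v_j^2)+o(1)$ with $v_j=u_j-u$. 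Both summands are nonnegative, so $v_j\to 0$ in $E_r(\R^3)$ follows immediately, with no need for a Kadec--Klee argument.
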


\begin{proof}
We have
\[
\As(u_j)(u_j - u) = \int_{\R^3} \nabla u_j \cdot \nabla (u_j - u)\, dx + \frac{1}{4 \pi} \int_{\R^3} \int_{\R^3} \frac{u_j^2(x)\, u_j(y)\, (u_j(y) - u(y))}{|x - y|}\, dx\, dy.
\]
Since $u_j \wto u$,
\[
\int_{\R^3} \nabla u_j \cdot \nabla u\, dx \to \int_{\R^3} |\nabla u|^2\, dx
\]
and
\[
\int_{\R^3} \int_{\R^3} \frac{u_j^2(x)\, u_j(y)\, u(y)}{|x - y|}\, dx\, dy \to \int_{\R^3} \int_{\R^3} \frac{u^2(x)\, u^2(y)}{|x - y|}\, dx\, dy
\]
by Ianni and Ruiz \cite[Lemma 2.2 and Lemma 2.3]{MR2902293}, so
\begin{multline*}
\As(u_j)(u_j - u) = \int_{\R^3} |\nabla u_j|^2\, dx - \int_{\R^3} |\nabla u|^2\, dx + \frac{1}{4 \pi}\, \bigg(\int_{\R^3} \int_{\R^3} \frac{u_j^2(x)\, u_j^2(y)}{|x - y|}\, dx\, dy\\[7.5pt]
- \int_{\R^3} \int_{\R^3} \frac{u^2(x)\, u^2(y)}{|x - y|}\, dx\, dy\bigg) + \o(1).
\end{multline*}
Moreover, $\seq{u_j}$ is bounded in $E_r(\R^3)$ and hence converges to $u$ a.e.\! in $\R^3$ for a renamed subsequence, so
\[
\int_{\R^3} |\nabla u_j|^2\, dx - \int_{\R^3} |\nabla u|^2\, dx = \int_{\R^3} |\nabla v_j|^2\, dx + \o(1)
\]
by the Brezis-Lieb lemma and
\[
\int_{\R^3} \int_{\R^3} \frac{u_j^2(x)\, u_j^2(y)}{|x - y|}\, dx\, dy - \int_{\R^3} \int_{\R^3} \frac{u^2(x)\, u^2(y)}{|x - y|}\, dx\, dy\\[7.5pt]
\ge \int_{\R^3} \int_{\R^3} \frac{v_j^{\,2}(x)\, v_j^{\,2}(y)}{|x - y|}\, dx\, dy + \o(1)
\]
by Mercuri et al.\! \cite[Proposition 4.1]{MR3568051}, where $v_j = u_j - u$. So
\[
\As(u_j)(u_j - u) \ge \int_{\R^3} |\nabla v_j|^2\, dx + \frac{1}{4 \pi} \int_{\R^3} \int_{\R^3} \frac{v_j^{\,2}(x)\, v_j^{\,2}(y)}{|x - y|}\, dx\, dy + \o(1).
\]
Since $\As(u_j)(u_j - u) \to 0$, this implies that $v_j \to 0$ in $E_r(\R^3)$.
\end{proof}

We have
\begin{gather*}
I_s(u) = \frac{1}{2} \int_{\R^3} |\nabla u|^2\, dx + \frac{1}{16 \pi} \int_{\R^3} \int_{\R^3} \frac{u^2(x)\, u^2(y)}{|x - y|}\, dx\, dy,\\[7.5pt]
\M_s = \set{u \in E_r(\R^3) : \frac{1}{2} \int_{\R^3} |\nabla u|^2\, dx + \frac{1}{16 \pi} \int_{\R^3} \int_{\R^3} \frac{u^2(x)\, u^2(y)}{|x - y|}\, dx\, dy = 1},\\[7.5pt]
J_s(u) = \frac{1}{3} \int_{\R^3} |u|^3\, dx,\\[7.5pt]
\widetilde{\Psi}(u) = \frac{1}{J_s(u)}, \quad u \in \M_s,
\intertext{and}
\Phi_\lambda(u) = \frac{1}{2} \int_{\R^3} |\nabla u|^2\, dx + \frac{1}{16 \pi} \int_{\R^3} \int_{\R^3} \frac{u^2(x)\, u^2(y)}{|x - y|}\, dx\, dy - \frac{\lambda}{3} \int_{\R^3} |u|^3\, dx.
\end{gather*}
Clearly, $(H_{10})$ holds. Since
\[
I_s(tu) = \frac{t^2}{2} \int_{\R^3} |\nabla u|^2\, dx + \frac{t^4}{16 \pi} \int_{\R^3} \int_{\R^3} \frac{u^2(x)\, u^2(y)}{|x - y|}\, dx\, dy
\]
is a strictly increasing function of $t$ for each $u \in E_r(\R^3) \setminus \set{0}$, $(H_{11})$ also holds.

If $u \in E_r(\R^3)$ is a solution of problem \eqref{55}, then testing with $u$ itself gives
\begin{equation} \label{62}
\int_{\R^3} |\nabla u|^2\, dx + \frac{1}{4 \pi} \int_{\R^3} \int_{\R^3} \frac{u^2(x)\, u^2(y)}{|x - y|}\, dx\, dy = \lambda \int_{\R^3} |u|^3\, dx,
\end{equation}
and $u$ also satisfies the Poho\v{z}aev type identity
\begin{equation} \label{63}
\frac{1}{2} \int_{\R^3} |\nabla u|^2\, dx + \frac{5}{16 \pi} \int_{\R^3} \int_{\R^3} \frac{u^2(x)\, u^2(y)}{|x - y|}\, dx\, dy = \lambda \int_{\R^3} |u|^3\, dx
\end{equation}
by Ianni and Ruiz \cite[Proposition 2.5]{MR2902293}. Multiplying \eqref{62} by $2/3$ and \eqref{63} by $1/3$ and subtracting gives
\[
\frac{1}{2} \int_{\R^3} |\nabla u|^2\, dx + \frac{1}{16 \pi} \int_{\R^3} \int_{\R^3} \frac{u^2(x)\, u^2(y)}{|x - y|}\, dx\, dy = \frac{\lambda}{3} \int_{\R^3} |u|^3\, dx,
\]
so $(H_{12})$ also holds.

We denote the operators $f$ and $g$ and their potentials $F$ and $G$ appearing in Section \ref{Section 2} by $\widetilde{f}$, $\widetilde{g}$, $\widetilde{F}$, and $\widetilde{G}$, respectively, to avoid confusing them with the nonlinearities $f$ and $g$ and their primitives $F$ and $G$ in the introduction. We have
\[
\widetilde{f}(u) v = \int_{\R^3} f(|x|,u)\, v\, dx
\]
and
\[
\widetilde{g}(u) v = \int_{\R^3} g(|x|,u)\, v\, dx.
\]

\begin{lemma} \label{Lemma 3}
We have the following asymptotic estimates on $\widetilde{f}$ and $\widetilde{g}$:
\begin{enumroman}
\item \label{Lemma 3.i} If $f$ satisfies \eqref{53} with $18/7 < q_1 < q_2 < 3$, then
    \[
    \widetilde{f}(u_t) v_t = \o(t^3) \norm{v} \text{ as } t \to \infty
    \]
    uniformly in $u$ on bounded sets for all $v \in E_r(\R^3)$.
\item \label{Lemma 3.ii} If $g$ satisfies \eqref{65}, then
    \[
    \widetilde{g}(u_t) v_t = \o(t^3) \norm{v} \text{ as } t \to 0
    \]
    uniformly in $u$ on bounded sets for all $v \in E_r(\R^3)$.
\item \label{Lemma 3.iii} If $g$ satisfies \eqref{66}, then
    \[
    \widetilde{g}(u_t) v_t = \o(t^3) \norm{v} \text{ as } t \to \infty
    \]
    uniformly in $u$ on bounded sets for all $v \in E_r(\R^3)$.
\end{enumroman}
\end{lemma}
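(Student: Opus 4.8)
The plan is to deduce all three estimates from a single elementary scaling identity for $L^p$-norms, combined with H\"older's inequality and the continuous embeddings $E_r(\R^3) \incl L^p(\R^3)$ for $p \in (18/7,6]$ from Ruiz \cite[Theorem 1.2]{MR2679375}.

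First I would record that, for the scaling $u_t(x) = t^2\, u(tx)$, the change of variables $y = tx$ gives
\[
\norm[p]{u_t}^p = \int_{\R^3} \abs{t^2\, u(tx)}^p\, dx = t^{2p-3} \int_{\R^3} \abs{u(y)}^p\, dy = t^{2p-3}\, \norm[p]{u}^p,
\]
so $\norm[p]{u_t} = t^{2 - 3/p}\, \norm[p]{u}$ for every $p \in [1,\infty)$. Consequently, for any $q \ge 1$, H\"older's inequality with exponents $q/(q-1)$ and $q$ yields, for all $u, v \in E_r(\R^3)$,
\[
\int_{\R^3} \abs{u_t}^{q-1}\, \abs{v_t}\, dx \le \norm[q]{u_t}^{q-1}\, \norm[q]{v_t} = t^{2q-3}\, \norm[q]{u}^{q-1}\, \norm[q]{v},
\]
and, when in addition $q \in (18/7,6]$, the embedding bounds $\norm[q]{u}^{q-1}\norm[q]{v}$ by $C\, \norm{u}^{q-1}\norm{v}$. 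Separately, a perturbation term $\int_{\R^3} b(x)\, \abs{v_t}\, dx$ with $b \in L^r(\R^3)$, $r \in [6/5,18/11)$, is estimated by H\"older as
\[
\int_{\R^3} b\, \abs{v_t}\, dx \le \norm[r]{b}\, \norm[r']{v_t} = \norm[r]{b}\, t^{2-3/r'}\, \norm[r']{v},
\]
where the dual exponent $r' = r/(r-1)$ lies in $(18/7,6]$ precisely because $r \in [6/5,18/11)$, so $\norm[r']{v} \le C\, \norm{v}$.

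For \ref{Lemma 3.i} I would estimate $\abs{\widetilde{f}(u_t) v_t}$ by \eqref{53} as a sum of two terms of the first type with $q = q_1, q_2 \in (18/7,3)$ plus one term of the second type with $b = a$. As $t \to \infty$ the relevant exponents are $2q_i - 3 < 3$ (since $q_i < 3$) and $2 - 3/r' < 3$ (always), so each term is $\o(t^3)\norm{v}$; uniformity on bounded sets of $u$ is automatic, since the constants depend on $u$ only through $\norm{u}^{q_i-1}$ and the $a$-term is $u$-independent. Part \ref{Lemma 3.ii} is the same argument applied to $\widetilde{g}$ via \eqref{65} with $q = q_3, q_4 \in (3,6)$: now $2q_i - 3 > 3$, which makes the bound $\o(t^3)\norm{v}$ as $t \to 0$. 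Part \ref{Lemma 3.iii} combines the two: the $\abs{t}^{q_5-1}$ term in \eqref{66}, with $q_5 \in (18/7,3)$, gives $t^{2q_5-3} = \o(t^3)$ as $t \to \infty$, while the $h(x)$ term is handled exactly like the $a(x)$ term in \ref{Lemma 3.i}.

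There is no real obstacle here; the one point requiring care is the bookkeeping of exponents relative to $s = 3$: the hypothesis $q < 3$ is exactly what forces $t^{2q-3} = \o(t^3)$ as $t \to \infty$, whereas $q > 3$ is exactly what forces it as $t \to 0$, and one must verify that the conjugate exponent of each $L^r$-perturbation falls in the admissible embedding window $(18/7,6]$, which it does precisely because $r \in [6/5,18/11)$.
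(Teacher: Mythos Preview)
Your proposal is correct and takes essentially the same approach as the paper: both arguments reduce to the H\"older estimate $\int |u_t|^{q-1}|v_t| \le t^{2q-3}\,\pnorm[q]{u}^{q-1}\pnorm[q]{v}$ together with the embedding $E_r(\R^3)\incl L^q(\R^3)$ for $q\in(18/7,6]$, and then read off whether $2q-3$ lies above or below $3$. The only cosmetic difference is that the paper first performs the change of variables $x\mapsto x/t$ in the integral and then applies H\"older, whereas you apply H\"older directly and invoke the scaling identity $\pnorm[p]{u_t}=t^{2-3/p}\pnorm[p]{u}$; the resulting exponents are identical.
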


\begin{proof}
\ref{Lemma 3.i} For $t > 0$,
\[
\widetilde{f}(u_t) v_t = \int_{\R^3} f(|x|,t^2\, u(tx))\, t^2\, v(tx)\, dx = \frac{1}{t} \int_{\R^3} f(|x|/t,t^2\, u(x))\, v(x)\, dx.
\]
If $f$ satisfies \eqref{53}, then this together with the H\"{o}lder inequality gives
\begin{multline*}
|\widetilde{f}(u_t) v_t| \le a_1\, t^{2q_1 - 3} \int_{\R^3} |u|^{q_1 - 1}\, |v|\, dx + a_2\, t^{2q_2 - 3} \int_{\R^3} |u|^{q_2 - 1}\, |v|\, dx + \frac{1}{t} \int_{\R^3} a(x)\, |v|\, dx\\[7.5pt]
\le a_1\, t^{2q_1 - 3} \pnorm[q_1]{u}^{q_1 - 1} \pnorm[q_1]{v} + a_2\, t^{2q_2 - 3} \pnorm[q_2]{u}^{q_2 - 1} \pnorm[q_2]{v} + \frac{1}{t} \pnorm[r]{a} \pnorm[r']{v},
\end{multline*}
where $r' = r/(r-1) \in (18/7,6]$. The desired conclusion follows from this since $q_1, q_2 < 3$.

\ref{Lemma 3.ii} For $t > 0$,
\[
\widetilde{g}(u_t) v_t = \int_{\R^3} g(|x|,t^2\, u(tx))\, t^2\, v(tx)\, dx = \frac{1}{t} \int_{\R^3} g(|x|/t,t^2\, u(x))\, v(x)\, dx.
\]
If $g$ satisfies \eqref{65}, then this together with the H\"{o}lder inequality gives
\begin{multline*}
|\widetilde{g}(u_t) v_t| \le a_3\, t^{2 q_3 - 3} \int_{\R^3} |u|^{q_3 - 1}\, |v|\, dx + a_4\, t^{2 q_4 - 3} \int_{\R^3} |u|^{q_4 - 1}\, |v|\, dx\\[7.5pt]
\le a_3\, t^{2 q_3 - 3} \pnorm[q_3]{u}^{q_3 - 1} \pnorm[q_3]{v} + a_4\, t^{2 q_4 - 3} \pnorm[q_4]{u}^{q_4 - 1} \pnorm[q_4]{v}.
\end{multline*}
The desired conclusion follows from this since $q_3, q_4 > 3$.

\ref{Lemma 3.iii} Same as the proof of \ref{Lemma 3.i}.
\end{proof}

\subsection{Subcritical case}

In this section we prove the results of Section \ref{1.2.1}.

\subsubsection{Proofs of Theorem \ref{Theorem 10}, Theorem \ref{Theorem 11}, and Theorem \ref{Theorem 12}}

\begin{proof}[Proof of Theorem \ref{Theorem 10}]
Since \eqref{17} holds by Lemma \ref{Lemma 3} \ref{Lemma 3.ii}, Theorem \ref{Theorem 3} gives
\[
C^l(\Phi,0) \isom C^l(\Phi_\lambda,0) \quad \forall l \ge 0.
\]
The desired conclusions now follow from Theorem \ref{Theorem 2}.
\end{proof}

\begin{proof}[Proof of Theorem \ref{Theorem 11}]
Since \eqref{35} and \eqref{38} hold by Lemma \ref{Lemma 3} \ref{Lemma 3.i} and \ref{Lemma 3.ii}, respectively, the desired conclusions follow from Theorem \ref{Theorem 8}.
\end{proof}

\begin{proof}[Proof of Theorem \ref{Theorem 12}]
Since \eqref{27} holds by Lemma \ref{Lemma 3} \ref{Lemma 3.iii}, the desired conclusion follows from Theorem \ref{Theorem 6}.
\end{proof}

\subsubsection{Proof of Theorem \ref{Theorem 13}}

We will show that Theorem \ref{Theorem 13} follows from Theorem \ref{Theorem 9}. We have
\begin{multline*}
\Phi(u) = \frac{1}{2} \int_{\R^3} |\nabla u|^2\, dx + \frac{1}{16 \pi} \int_{\R^3} \int_{\R^3} \frac{u^2(x)\, u^2(y)}{|x - y|}\, dx\, dy - \frac{\lambda}{3} \int_{\R^3} |u|^3\, dx - \int_{\R^3} G(|x|,u)\, dx,\\[7.5pt]
u \in E_r(\R^3).
\end{multline*}
First we verify the \PS{} condition.

\begin{lemma} \label{Lemma 4}
If \eqref{65} and \eqref{61} hold, then $\Phi$ satisfies the {\em \PS{c}} condition for all $c \in \R$.
\end{lemma}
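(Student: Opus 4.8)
The plan is to show that every sequence $\seq{u_j}$ in $E_r(\R^3)$ with $\Phi(u_j) \to c$ and $\Phi'(u_j) \to 0$ has a convergent subsequence, in two stages: first boundedness, then strong convergence. For boundedness, I would combine the energy identity $\Phi(u_j) = c + \o(1)$ with the derivative estimate $\Phi'(u_j) u_j = \o(\norm{u_j})$. Writing out $\Phi'(u_j) u_j$ gives
\[
\int |\nabla u_j|^2 + \frac{1}{4\pi} \iint \frac{u_j^2(x)\, u_j^2(y)}{|x-y|}\, dx\, dy - \lambda \int |u_j|^3 - \int g(|x|,u_j)\, u_j\, dx = \o(\norm{u_j}).
\]
The key is the one-sided bound $G(|x|,t) \le \frac1q\, g(|x|,t)\, t$ from \eqref{61}: a linear combination $\Phi(u_j) - \frac1q \Phi'(u_j) u_j$ (with $q \in (4,6)$) makes the $\int G - \frac1q \int g\, u$ term have a favorable sign, leaves the Dirichlet term with a positive coefficient $\frac12 - \frac1q > 0$, the nonlocal term with coefficient $\frac{1}{16\pi} - \frac{1}{4\pi q} > 0$ (since $q > 4$), and the $|u_j|^3$ term with coefficient $-\frac{\lambda}{3} + \frac{\lambda}{q}$, which is of lower order in $\norm{u_j}$ because $\int |u_j|^3 \le C\norm{u_j}^3$ while the quadratic Dirichlet part and the degree-$3/2$-to-$3$ nonlocal part together control $\norm{u_j}^{\min}$ appropriately — more precisely, using $\norm{u_j}^2 \le \int|\nabla u_j|^2 + (\iint \cdots)^{1/2}$ and Young's inequality one absorbs the cubic $L^3$ term. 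This yields $\norm{u_j}$ bounded.

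Once boundedness is established, strong convergence is comparatively routine: pass to a subsequence with $u_j \wto u$ in $E_r(\R^3)$; by the compact embedding $E_r(\R^3) \hookrightarrow L^q(\R^3)$ for $q \in (18/7,6)$ we get $u_j \to u$ in $L^3$, and the growth bound \eqref{65} with $3 < q_3 < q_4 < 6$ together with Hölder shows that the operator $v \mapsto \int g(|x|,u_j)\, v\, dx$ — i.e.\ $\widetilde{g}(u_j)$ — converges in the dual, and likewise $\widetilde{\Bs}(u_j) = \int |u_j|\, u_j\, v$ converges by $L^3$ convergence. Then from $\Phi'(u_j) = \As(u_j) - \lambda \Bs(u_j) - \widetilde{g}(u_j) \to 0$ we obtain $\As(u_j)(u_j - u) = \big(\Phi'(u_j) + \lambda\Bs(u_j) + \widetilde{g}(u_j)\big)(u_j - u) \to 0$, since $\Bs(u_j)(u_j-u) \to 0$ and $\widetilde{g}(u_j)(u_j - u) \to 0$ by the convergences just noted, and $\Phi'(u_j)(u_j-u) \to 0$ by boundedness. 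Finally $(H_7)$ — verified for this $\As$ in the lemma immediately preceding — gives $u_j \to u$ strongly along a further subsequence.

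I expect the boundedness step to be the main obstacle, specifically the bookkeeping needed to confirm that after forming $\Phi(u_j) - \frac1q \Phi'(u_j) u_j$ the genuinely quadratic-and-higher positive terms dominate both the stray cubic $L^3$ contribution (controlled via the interpolation $\int|u_j|^3 \le \eps\, \text{(quadratic + nonlocal)} + C_\eps$, using that the Coulomb–Sobolev norm controls $L^3$) and the $\o(\norm{u_j})$ error, so that $\norm{u_j} \to \infty$ is impossible. The left-hand inequality $c_0 |t|^q \le G(|x|,t)$ in \eqref{61} is not needed here; only the right-hand one is used. Everything else is a standard Brezis–Lieb / compact-embedding argument and I would keep it brief.
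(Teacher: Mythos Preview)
Your strong-convergence step is fine, but the boundedness argument has a genuine gap when $\lambda>0$. After forming $\Phi(u_j)-\tfrac1q\Phi'(u_j)u_j$ you are left with
\[
\Big(\tfrac12-\tfrac1q\Big)\!\int|\nabla u_j|^2+\Big(\tfrac{1}{16\pi}-\tfrac{1}{4\pi q}\Big)\!\iint\frac{u_j^2(x)u_j^2(y)}{|x-y|}
-\lambda\Big(\tfrac13-\tfrac1q\Big)\!\int|u_j|^3+\text{(nonneg.)}\le c+\o(\|u_j\|),
\]
and you propose to absorb the cubic term via ``$\int|u_j|^3\le\eps(\text{quadratic}+\text{nonlocal})+C_\eps$''. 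That inequality is false: under the scaling $u_t(x)=t^2u(tx)$, each of $\int|\nabla u|^2$, $\iint u^2u^2/|x-y|$, and $\int|u|^3$ is multiplied by $t^3$, so letting $t\to\infty$ would force $\int|u|^3=0$. For the same reason no linear combination $\alpha\Phi+\beta\Phi'u$ can make all three coefficients (on $\int|\nabla u|^2$, on the Coulomb term, and on $-\lambda\int|u|^3$) simultaneously nonnegative; one checks that the resulting sign constraints on $\alpha,\beta$ are inconsistent. So for large $\lambda>0$ the combination you wrote is not coercive and yields no bound.

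Relatedly, your claim that the left inequality $c_0|t|^q\le G$ in \eqref{61} is not needed is incorrect. The paper's proof uses precisely this lower bound to break the scale-invariance: assuming $\|u_j\|\to\infty$, one normalizes $\widetilde u_j=(u_j)_{t_j}\in\M_s$ with $\widetilde t_j:=I_s(u_j)^{1/3}\to\infty$, and the bound $\int G(|x|,u_j)\ge c_0\,\widetilde t_j^{\,2q-3}\int|\widetilde u_j|^q$ inserted into $\Phi(u_j)=c+\o(1)$ forces $\widetilde u_j\to0$ in $L^q$ (since $2q-3>3$). Interpolation with the bounded $L^r$ norm for $r\in(18/7,3)$ then gives $\widetilde u_j\to0$ in $L^3$, which kills the troublesome cubic term; only then does the combination $q\cdot\eqref{68}-\eqref{69}$ together with the upper bound $qG\le gt$ yield $\widetilde u_j\to0$ in $E_r(\R^3)$, contradicting $\widetilde u_j\in\M_s$. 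Both halves of \eqref{61} are used, and the scaling normalization is what makes the argument work for all $\lambda$.
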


\begin{proof}
We will show that every \PS{c} sequence $\seq{u_j}$ of $\Phi$ is bounded. The desired conclusion will then follow as in Proposition \ref{Proposition 3}. Suppose $\norm{u_j} \to \infty$ for a renamed subsequence. Set
\[
t_j = t_{u_j} = \frac{1}{I_s(u_j)^{1/3}}, \quad \widetilde{u}_j = (u_j)_{t_j}, \quad \widetilde{t}_j = \frac{1}{t_j} = I_s(u_j)^{1/3}
\]
(see \eqref{13}). Then $\widetilde{u}_j \in \M_s$ and
\begin{equation} \label{67}
u_j = (\widetilde{u}_j)_{\widetilde{t}_j} = \widetilde{t}_j^{\,2}\, \widetilde{u}_j(\widetilde{t}_j\, \cdot)
\end{equation}
by \eqref{18}. Since $\M_s$ is a bounded manifold, $\seq{\widetilde{u}_j}$ is bounded. Since $\norm{u_j} \to \infty$, $I_s(u_j) \to \infty$ and hence $\widetilde{t}_j \to \infty$.

We have $\Phi(u_j) = c + \o(1)$ and $\Phi'(u_j) u_j = \o(\norm{u_j})$. Using \eqref{67} and noting that
\[
\norm{u_j} = \|(\widetilde{u}_j)_{\widetilde{t}_j}\| = \O(\widetilde{t}_j^{\,3/2}),
\]
these equations can be written as
\begin{multline} \label{68}
\widetilde{t}_j^{\,3} \left(\frac{1}{2} \int_{\R^3} |\nabla \widetilde{u}_j|^2\, dx + \frac{1}{16 \pi} \int_{\R^3} \int_{\R^3} \frac{\widetilde{u}_j^{\,2}(x)\, \widetilde{u}_j^{\,2}(y)}{|x - y|}\, dx\, dy - \frac{\lambda}{3} \int_{\R^3} |\widetilde{u}_j|^3\, dx\right)\\[7.5pt]
= \int_{\R^3} G(|x|,u_j)\, dx + c + \o(1)
\end{multline}
and
\begin{multline} \label{69}
\widetilde{t}_j^{\,3} \left(\int_{\R^3} |\nabla \widetilde{u}_j|^2\, dx + \frac{1}{4 \pi} \int_{\R^3} \int_{\R^3} \frac{\widetilde{u}_j^{\,2}(x)\, \widetilde{u}_j^{\,2}(y)}{|x - y|}\, dx\, dy - \lambda \int_{\R^3} |\widetilde{u}_j|^3\, dx\right)\\[7.5pt]
= \int_{\R^3} g(|x|,u_j)\, u_j\, dx + \o(\widetilde{t}_j^{\,3/2}),
\end{multline}
respectively. By \eqref{61},
\[
\int_{\R^3} G(|x|,u_j)\, dx \ge c_0 \int_{\R^3} |u_j|^q\, dx = c_0\, \widetilde{t}_j^{\,2q-3} \int_{\R^3} |\widetilde{u}_j|^q\, dx.
\]
Since $\widetilde{t}_j \to \infty$, combining this with \eqref{68} gives
\[
c_0\, \widetilde{t}_j^{\,2\,(q-3)} \int_{\R^3} |\widetilde{u}_j|^q\, dx \le \frac{1}{2} \int_{\R^3} |\nabla \widetilde{u}_j|^2\, dx + \frac{1}{16 \pi} \int_{\R^3} \int_{\R^3} \frac{\widetilde{u}_j^{\,2}(x)\, \widetilde{u}_j^{\,2}(y)}{|x - y|}\, dx\, dy - \frac{\lambda}{3} \int_{\R^3} |\widetilde{u}_j|^3\, dx + \o(1).
\]
Since $q > 3$ and the right-hand side is bounded, this implies that $\widetilde{u}_j \to 0$ in $L^q(\R^3)$. Since $\seq{\widetilde{u}_j}$ is bounded in $L^r(\R^3)$ for $r \in (18/7,3)$, then $\widetilde{u}_j \to 0$ in $L^3(\R^3)$ by interpolation. Now multiplying \eqref{68} by $q$, subtracting \eqref{69}, and using the second inequality in \eqref{61} gives
\[
\left(\frac{q}{2} - 1\right) \int_{\R^3} |\nabla \widetilde{u}_j|^2\, dx + \frac{1}{4 \pi} \left(\frac{q}{4} - 1\right) \int_{\R^3} \int_{\R^3} \frac{\widetilde{u}_j^{\,2}(x)\, \widetilde{u}_j^{\,2}(y)}{|x - y|}\, dx\, dy \le \o(1).
\]
Since $q > 4$, this implies that $\widetilde{u}_j \to 0$ in $E_r(\R^3)$, contradicting $\widetilde{u}_j \in \M_s$.
\end{proof}

We are now ready to prove Theorem \ref{Theorem 13}.

\begin{proof}[Proof of Theorem \ref{Theorem 13}]
We have
\[
\widetilde{G}(u) = \int_{\R^3} G(|x|,u)\, dx \ge c_0 \int_{\R^3} |u|^q\, dx
\]
by \eqref{61}, so \eqref{45} holds and
\[
\widetilde{F}(u) = \frac{\lambda}{3} \int_{\R^3} |u|^3\, dx + \widetilde{G}(u) \ge \frac{\lambda}{3} \int_{\R^3} |u|^3\, dx + c_0 \int_{\R^3} |u|^q\, dx.
\]
So
\[
\frac{\widetilde{F}(u_t)}{t^3} \ge \frac{\lambda}{3} \int_{\R^3} |u|^3\, dx + c_0\, t^{2\,(q-3)} \int_{\R^3} |u|^q\, dx
\]
for $t > 0$, which implies \eqref{42} since $q > 3$. Since \eqref{43} holds by Lemma \ref{Lemma 3} \ref{Lemma 3.ii} and $\Phi$ satisfies the \PS{} condition by Lemma \ref{Lemma 4}, the desired conclusion now follows from Theorem \ref{Theorem 9}.
\end{proof}

\subsubsection{Proofs of Theorem \ref{Theorem 19} and Theorem \ref{Theorem 18}}

We have
\[
\Phi(u) = I_s(u) - \lambda\, J_s(u) - \widetilde{G}(u), \quad u \in E_r(\R^3),
\]
where
\[
\widetilde{G}(u) = \int_{\R^3} G(u)\, dx.
\]
In particular,
\begin{equation} \label{104}
\Phi(u_t) = t^3\, \bigg(1 - \frac{\lambda}{\widetilde{\Psi}(u)}\bigg) - \widetilde{G}(u_t), \quad u \in \M_s,\, t \ge 0.
\end{equation}
By \eqref{101} and Lemma \ref{Lemma 3} \ref{Lemma 3.ii}, \eqref{43} holds and hence $\widetilde{G}(u_t) = \o(t^3)$ as $t \to 0$ uniformly in $u$ on bounded sets as in \eqref{50}. So
\begin{equation} \label{111}
\Phi(u_t) = t^3\, \bigg(1 - \frac{\lambda}{\widetilde{\Psi}(u)} + \o(1)\bigg) \text{ as } t \to 0
\end{equation}
uniformly in $u$ on $\M_s$. First we show that $\Phi$ has a scaled local linking near the origin when $G$ has a definite sign.

\begin{lemma} \label{Lemma 6}
If \eqref{101} holds, then $\Phi$ has a scaled local linking near the origin in dimension $k$ in each of the following cases:
\begin{enumroman}
\item \label{Lemma 6.i} $\lambda_k < \lambda \le \lambda_{k+1}$ and $G(t) < 0$ for all $t \in \R \setminus \set{0}$,
\item \label{Lemma 6.ii} $\lambda_k \le \lambda < \lambda_{k+1}$ and $G(t) \ge 0$ for all $t \in \R$.
\end{enumroman}
\end{lemma}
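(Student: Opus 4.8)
The plan is to build $A_0$ and $B_0$ out of the sublevel and superlevel sets of $\widetilde\Psi$ and to read off their indices from Theorem \ref{Theorem 1} \ref{Theorem 1.iii}, while the two geometric inequalities in \eqref{57} will come from the identity
\[
\Phi(u_t) = t^3\Big(1 - \frac{\lambda}{\widetilde\Psi(u)}\Big) - \widetilde G(u_t), \qquad u \in \M_s,\ t \ge 0,
\]
of \eqref{104} together with the uniform smallness $\widetilde G(u_t) = \o(t^3)$ as $t \to 0$ on bounded subsets of $\M_s$ (which holds by \eqref{43} and Lemma \ref{Lemma 3} \ref{Lemma 3.ii}, and is also the content of \eqref{111}). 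The role of the sign of $G$ is precisely to decide on which of $A_0, B_0$ the perturbation $-\widetilde G(u_t)$ pushes in the favorable direction, so the two cases call for mirror-image choices.

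In case \ref{Lemma 6.i} ($\lambda_k < \lambda \le \lambda_{k+1}$, $G < 0$ off the origin) I would take $A_0 = \widetilde\Psi^{\lambda_k}$ and $B_0 = \widetilde\Psi_\lambda$. These are nonempty ($i(A_0) = k \ge 1$, and $B_0 \supset E_{\lambda_{k+1}}$ since $\lambda \le \lambda_{k+1}$), closed and symmetric since $\widetilde\Psi$ is continuous and even, and disjoint because $\lambda > \lambda_k$. By Theorem \ref{Theorem 1} \ref{Theorem 1.iii}, $i(A_0) = i(\widetilde\Psi^{\lambda_k}) = k$, and $i(\M_s \setminus B_0) = i\big(\set{u \in \M_s : \widetilde\Psi(u) < \lambda}\big) = k$ — using $i(\M_s \setminus \widetilde\Psi_\lambda) = k$ when $\lambda < \lambda_{k+1}$ and $i(\M_s \setminus \widetilde\Psi_{\lambda_{k+1}}) = k$ when $\lambda = \lambda_{k+1}$. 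For $u \in A_0$ one has $1 - \lambda/\widetilde\Psi(u) \le 1 - \lambda/\lambda_k < 0$, so \eqref{111} furnishes a $\rho > 0$ with $\Phi(u_t) \le 0$ for $0 \le t \le \rho$; for $u \in B_0$ one has $1 - \lambda/\widetilde\Psi(u) \ge 0$, while $-\widetilde G(u_t) = -\int_{\R^3} G(u_t)\, dx > 0$ for every $t > 0$ because $G < 0$ away from $0$ and $u_t \ne 0$, hence $\Phi(u_t) > 0$ for all $t > 0$, in particular for $0 < t \le \rho$.

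In case \ref{Lemma 6.ii} ($\lambda_k \le \lambda < \lambda_{k+1}$, $G \ge 0$) I would instead take $A_0 = \widetilde\Psi^\lambda$ and $B_0 = \widetilde\Psi_{\lambda_{k+1}}$, which are again nonempty ($i(A_0) = k \ge 1$, $B_0 \supset E_{\lambda_{k+1}}$), closed, symmetric, and disjoint (now because $\lambda < \lambda_{k+1}$). Theorem \ref{Theorem 1} \ref{Theorem 1.iii} gives $i(A_0) = i(\widetilde\Psi^\lambda) = k$ (the value at $\lambda = \lambda_k$ being the identity $i(\widetilde\Psi^{\lambda_k}) = k$ there) and $i(\M_s \setminus B_0) = i(\M_s \setminus \widetilde\Psi_{\lambda_{k+1}}) = k$. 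For $u \in A_0$ one has $1 - \lambda/\widetilde\Psi(u) \le 0$ and, since $G \ge 0$, $-\widetilde G(u_t) \le 0$, hence $\Phi(u_t) \le 0$ for all $t \ge 0$; for $u \in B_0$ one has $1 - \lambda/\widetilde\Psi(u) \ge 1 - \lambda/\lambda_{k+1} =: \eps > 0$, so $\Phi(u_t) \ge \eps\, t^3 - \widetilde G(u_t)$, and the uniform bound $\widetilde G(u_t) = \o(t^3)$ yields $\rho > 0$ with $\Phi(u_t) \ge \tfrac{\eps}{2}\, t^3 > 0$ for $0 < t \le \rho$.

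The substantive difficulty is not analytic — the estimates are carried entirely by \eqref{104} and the already established $\widetilde G(u_t) = \o(t^3)$ — but rather the bookkeeping at the endpoint values of $\lambda$: one must take the index of $\set{u \in \M_s : \widetilde\Psi(u) < \lambda_{k+1}}$ from the identity $i(\M_s \setminus \widetilde\Psi_{\lambda_{k+1}}) = k$ when $\lambda = \lambda_{k+1}$ in case \ref{Lemma 6.i}, and the index of $\widetilde\Psi^{\lambda_k}$ from $i(\widetilde\Psi^{\lambda_k}) = k$ when $\lambda = \lambda_k$ in case \ref{Lemma 6.ii}, rather than from the interior identities. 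The other point requiring care is that in case \ref{Lemma 6.i} the strict inequality $\Phi(u_t) > 0$ on $B_0$ must survive on the (possibly nonempty) slice where $\widetilde\Psi(u) = \lambda$ exactly, where the leading term vanishes; this is exactly where the hypothesis $G < 0$ is indispensable, and symmetrically the hypothesis $G \ge 0$ is what keeps $\Phi(u_t) \le 0$ on the corresponding slice of $A_0$ in case \ref{Lemma 6.ii}.
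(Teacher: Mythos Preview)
Your proof is correct and follows essentially the same approach as the paper: both use sublevel/superlevel sets of $\widetilde\Psi$ for $A_0,B_0$, invoke Theorem \ref{Theorem 1} \ref{Theorem 1.iii} for the index identity \eqref{56}, and obtain the inequalities \eqref{57} from \eqref{104} and \eqref{111} together with the sign of $G$. The only cosmetic difference is that the paper takes the single pair $A_0=\widetilde\Psi^{\lambda_k}$, $B_0=\widetilde\Psi_{\lambda_{k+1}}$ for \emph{both} cases (so the index computation is done once), whereas you use the $\lambda$-dependent pairs $(\widetilde\Psi^{\lambda_k},\widetilde\Psi_\lambda)$ and $(\widetilde\Psi^\lambda,\widetilde\Psi_{\lambda_{k+1}})$; your sets contain the paper's and the verifications are the same.
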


\begin{proof}
By Theorem \ref{Theorem 1} \ref{Theorem 1.iii},
\[
i(\widetilde{\Psi}^{\lambda_k}) = i(\M_s \setminus \widetilde{\Psi}_{\lambda_{k+1}}) = k,
\]
so \eqref{56} holds with $A_0 = \widetilde{\Psi}^{\lambda_k}$ and $B_0 = \widetilde{\Psi}_{\lambda_{k+1}}$.

\ref{Lemma 6.i} For $u \in A_0$ and $t \ge 0$, \eqref{111} gives
\[
\Phi(u_t) \le - t^3 \left(\frac{\lambda}{\lambda_k} - 1 + \o(1)\right) \text{ as } t \to 0
\]
uniformly in $u$. Since $\lambda > \lambda_k$, it follows that
\[
\Phi(u_t) \le 0 \quad \forall u \in A_0,\, 0 \le t \le \rho
\]
if $\rho > 0$ is sufficiently small. On the other hand, we have $\widetilde{G}(u) < 0$ for all $u \in E_r(\R^3) \setminus \set{0}$, so for $u \in B_0$ and $t > 0$, \eqref{104} gives
\[
\Phi(u_t) > t^3 \left(1 - \frac{\lambda}{\lambda_{k+1}}\right) \ge 0
\]
since $\lambda \le \lambda_{k+1}$. So \eqref{57} also holds.

\ref{Lemma 6.ii} We have $\widetilde{G}(u) \ge 0$ for all $u \in E_r(\R^3)$. So for $u \in A_0$ and $t \ge 0$, \eqref{104} gives
\[
\Phi(u_t) \le - t^3 \left(\frac{\lambda}{\lambda_k} - 1\right) \le 0
\]
since $\lambda \ge \lambda_k$. On the other hand, for $u \in B_0$ and $t \ge 0$, \eqref{111} gives
\[
\Phi(u_t) \ge t^3 \left(1 - \frac{\lambda}{\lambda_{k+1}} + \o(1)\right) \text{ as } t \to 0
\]
uniformly in $u$. Since $\lambda < \lambda_{k+1}$, it follows that
\[
\Phi(u_t) > 0 \quad \forall u \in B_0,\, 0 < t \le \rho
\]
if $\rho > 0$ is sufficiently small. So \eqref{57} also holds.
\end{proof}

We are now ready to prove Theorem \ref{Theorem 19} and Theorem \ref{Theorem 18}.

\begin{proof}[Proof of Theorem \ref{Theorem 19}]
Since $\lambda > \lambda_1$, $\lambda_k < \lambda \le \lambda_{k+1}$ for some $k \ge 1$. Then $\Phi$ has a scaled local linking near the origin in dimension $k$ by Lemma \ref{Lemma 6} and hence \[
C^k(\Phi,0) \ne 0
\]
by Theorem \ref{Theorem 16}. The desired conclusions now follow as in the proof of Theorem \ref{Theorem 8}.
\end{proof}

\begin{proof}[Proof of Theorem \ref{Theorem 18}]
The functional $\Phi$ satisfies the \PS{} condition by Lemma \ref{Lemma 4}. If $\lambda < \lambda_1$, then $\Phi$ has the mountain pass geometry as in the proof of Theorem \ref{Theorem 9}, so we may assume that $\lambda \ge \lambda_1$. Then $\lambda_k \le \lambda < \lambda_{k+1}$ for some $k \ge 1$. Then $\Phi$ has a scaled local linking near the origin by Lemma \ref{Lemma 6}. We will show that $\Phi^a$ is contractible for some $a < 0$. The desired conclusion will then follow from Corollary \ref{Corollary 2}.

Fix
\[
a < \inf_{u \in \M_s,\, 0 \le t \le 1}\, \Phi(u_t).
\]
We have
\begin{equation} \label{105}
\Phi^a = \set{u_t : u \in \M_s,\, t > 1,\, \varphi_u(t) \le a},
\end{equation}
where $\varphi_u(t) = \Phi(u_t)$. Since
\[
\widetilde{G}(u_t) = \int_{\R^3} G(t^2\, u(tx))\, dx = t^{-3} \int_{\R^3} G(t^2\, u(x))\, dx,
\]
\eqref{104} gives
\[
\varphi_u(t) = t^3\, \bigg(1 - \frac{\lambda}{\widetilde{\Psi}(u)}\bigg) - t^{-3} \int_{\R^3} G(t^2 u)\, dx.
\]
Then \eqref{102} gives
\[
\varphi_u(t) \le t^3\, \bigg(1 - \frac{\lambda}{\widetilde{\Psi}(u)}\bigg) - c_0\, t^{2q-3} \int_{\R^3} |u|^q\, dx,
\]
which implies that $\varphi_u(t) \to - \infty$ as $t \to \infty$ since $q > 3$. Moreover,
\begin{multline*}
\varphi_u'(t) = 3t^2\, \bigg(1 - \frac{\lambda}{\widetilde{\Psi}(u)}\bigg) + 3t^{-4} \int_{\R^3} G(t^2 u)\, dx - 2t^{-2} \int_{\R^3} g(t^2 u)\, u\, dx\\[7.5pt]
= \frac{3}{t}\, \varphi_u(t) + 2t^{-4} \int_{\R^3} \left(3G(t^2 u) - g(t^2 u)\, t^2 u\right) dx \le \frac{3}{t}\, \varphi_u(t)
\end{multline*}
by the second inequality in \eqref{102}, so $\varphi_u'(t) < 0$ whenever $\varphi_u(t) < 0$. So it follows from the implicit function theorem that there is a $C^1$-map $\sigma : \M_s \to (1,\infty)$ such that $\varphi_u(t) > a$ for $0 \le t < \sigma(u)$, $\varphi_u(\sigma(u)) = a$, and $\varphi_u(t) < a$ for $t > \sigma(u)$. Then \eqref{105} gives
\[
\Phi^a = \set{u_t : u \in \M_s,\, t \ge \sigma(u)},
\]
which is a deformation retract of the contractible set $E_r(\R^3) \setminus \set{0}$ via
\[
(E_r(\R^3) \setminus \set{0}) \times [0,1] \to E_r(\R^3) \setminus \set{0}, \quad (u,\tau) \mapsto \begin{cases}
\widetilde{u}_{(1 - \tau)/t_u + \tau\, \sigma(\widetilde{u})} & \text{if } u \notin \Phi^a\\[5pt]
u & \text{if } u \in \Phi^a
\end{cases}
\]
(see \eqref{13} and \eqref{18}).
\end{proof}

\subsection{Critical case}

In this section we prove the results of Section \ref{1.2.2}. We have
\begin{multline*}
\Phi(u) = \frac{1}{2} \int_{\R^3} |\nabla u|^2\, dx + \frac{1}{16 \pi} \int_{\R^3} \int_{\R^3} \frac{u^2(x)\, u^2(y)}{|x - y|}\, dx\, dy - \frac{\lambda}{q} \int_{\R^3} |u|^q\, dx - \frac{1}{6} \int_{\R^3} |u|^6\, dx,\\[7.5pt]
u \in E_r(\R^3).
\end{multline*}
First we prove a local \PS{} condition. Let
\begin{equation} \label{88}
S = \inf_{u \in \D^{1,\,2}(\R^3) \setminus \set{0}}\, \frac{\dint_{\R^3} |\nabla u|^2\, dx}{\left(\dint_{\R^3} |u|^6\, dx\right)^{1/3}}
\end{equation}
be the best Sobolev constant.

\begin{lemma} \label{Lemma 5}
If $\lambda > 0$ and $q \in [3,6)$, then $\Phi$ satisfies the {\em \PS{c}} condition for $0 < c < \dfrac{1}{3}\, S^{3/2}$.
\end{lemma}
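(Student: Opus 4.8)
The plan is to follow the classical Brezis--Nirenberg strategy adapted to the Coulomb--Sobolev space $E_r(\R^3)$. First I would show that any \PS{c} sequence $\seq{u_j}$ is bounded in $E_r(\R^3)$: from $\Phi(u_j) = c + \o(1)$ and $\Phi'(u_j) u_j = \o(\norm{u_j})$ one forms the combination $\Phi(u_j) - \frac{1}{6}\, \Phi'(u_j) u_j$, in which the $|u_j|^6$ terms cancel; using $q < 6$ and the positivity of the gradient, Hartree, and $L^q$ terms (together with the continuous embedding $E_r(\R^3) \hookrightarrow L^q(\R^3)$ for $q \in (18/7,6]$, and interpolation with $L^3$), this yields a bound on $\int |\nabla u_j|^2\, dx$ and on the double integral, hence on $\norm{u_j}$. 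By reflexivity, pass to a subsequence with $u_j \wto u$ in $E_r(\R^3)$, $u_j \to u$ in $L^q(\R^3)$ for every $q \in (18/7,6)$ (compactness of the radial embedding), and $u_j \to u$ a.e.

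Next I would show $u$ is a critical point of $\Phi$ and extract the Brezis--Lieb decomposition. Set $v_j = u_j - u$. Passing to the limit in $\Phi'(u_j) \varphi \to 0$ for $\varphi \in E_r(\R^3)$, using the weak continuity of the gradient form, the convergence of the Hartree term (Ianni--Ruiz \cite[Lemma 2.2 and Lemma 2.3]{MR2902293} and Mercuri et al. \cite[Proposition 4.1]{MR3568051} as in the verification of $(H_7)$ above), and the a.e.\ convergence plus Vitali for the subcritical term $|u_j|^{q-2} u_j$, and the Brezis--Lieb lemma for the critical term $|u_j|^4 u_j$, gives $\Phi'(u) = 0$, so in particular $\int |\nabla u|^2 + \frac{1}{4\pi}\int\int \frac{u^2 u^2}{|x-y|} = \lambda \int |u|^q + \int |u|^6$. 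Applying Brezis--Lieb to $\int |\nabla u_j|^2$ and to $\int |u_j|^6$, and the analogue for the Hartree functional (which for $v_j \to 0$ in the relevant norms contributes only $\o(1)$, again by \cite[Proposition 4.1]{MR3568051}), together with $v_j \to 0$ in $L^q$, one finds, writing $A = \lim \int |\nabla v_j|^2\, dx$ and $B = \lim \int |v_j|^6\, dx$, that $A = B$ and that $\Phi(u) + \big(\tfrac12 - \tfrac16\big) A = c + \o(1)$, i.e.\ $c = \Phi(u) + \tfrac13 A$.

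Then I would invoke the Sobolev inequality \eqref{88}: $A \ge S\, B^{1/3} = S\, A^{1/3}$, so either $A = 0$ or $A \ge S^{3/2}$. In the second case $\tfrac13 A \ge \tfrac13 S^{3/2}$, and since $\Phi(u) \ge 0$ — here I would use that $\Phi'(u) = 0$ forces $\Phi(u) = \Phi(u) - \tfrac16 \Phi'(u) u = \tfrac13 \int|\nabla u|^2 + \tfrac{1}{24\pi}\int\int\frac{u^2u^2}{|x-y|} + \lambda\big(\tfrac16 - \tfrac1q\big)\int|u|^q \ge 0$ because $q \ge 3 > $ does not immediately give the sign of the last term, so more carefully one uses $\tfrac16 - \tfrac1q \le 0$... \emph{this is the delicate point} — one gets instead $\Phi(u) = \big(\tfrac13\big)\int |\nabla u|^2\,dx + \tfrac{1}{24\pi}\int\int \ge 0$ only after also controlling the $L^q$ term; since $q < 6$ and $\tfrac16 < \tfrac1q$ fails for $q>6$ but holds... actually $\tfrac16 - \tfrac1q < 0$ iff $q < 6$, which is assumed, and with $\lambda > 0$ this term is $\le 0$, so one instead combines $\Phi(u) - \tfrac1q \Phi'(u)u$, whose gradient coefficient $\tfrac12 - \tfrac1q \ge \tfrac12 - \tfrac13 > 0$ and Hartree coefficient $\tfrac{1}{16\pi} - \tfrac{1}{4\pi q} > 0$ and critical coefficient $\tfrac16 - \tfrac1q < 0$ — again a sign issue. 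The clean resolution, which I would use, is: from $\Phi'(u) = 0$ one has $\Phi(u) = \big(\tfrac12 - \tfrac1q\big)\int|\nabla u|^2 + \big(\tfrac{1}{16\pi} - \tfrac{1}{4\pi q}\big)\int\int\frac{u^2u^2}{|x-y|} + \big(\tfrac1q - \tfrac16\big)\int|u|^6 \ge 0$ since all three coefficients are $\ge 0$ for $q \in [3,6)$ — this is the correct identity. Hence $c \ge \tfrac13 S^{3/2}$, contradicting $c < \tfrac13 S^{3/2}$. Therefore $A = 0$, i.e.\ $\int |\nabla v_j|^2\, dx \to 0$; combined with $v_j \to 0$ in $L^q$ and the vanishing of the Hartree part of $v_j$, and using that $\Phi'(u_j)(u_j - u) \to 0$ together with the argument verifying $(H_7)$ (which shows $\As(u_j)(u_j-u) \to 0$ implies $u_j \to u$), one concludes $u_j \to u$ strongly in $E_r(\R^3)$.

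The main obstacle is the handling of the nonlocal Hartree term in the Brezis--Lieb step: unlike the purely local Sobolev functional, one must verify that the Coulomb energy splits as $\int\int\frac{u_j^2u_j^2}{|x-y|} = \int\int\frac{u^2u^2}{|x-y|} + \int\int\frac{v_j^2v_j^2}{|x-y|} + \o(1)$ with no surviving cross terms, and that the $v_j$-part does not interfere with the Sobolev concentration analysis; this is exactly the content of \cite[Proposition 4.1]{MR3568051}, which I would cite, and the bookkeeping of signs in the energy identity $\Phi(u) \ge 0$ described above, which must be done with the combination that makes all coefficients nonnegative, namely $\Phi(u) - \tfrac1q\Phi'(u)u$ yielding coefficients $\tfrac12 - \tfrac1q$, $\tfrac{1}{16\pi} - \tfrac{1}{4\pi q}$ and $\tfrac1q - \tfrac16$, all $\ge 0$ when $q \in [3,6)$.
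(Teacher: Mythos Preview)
Your argument has two genuine gaps, both caused by the Hartree term being quartic rather than quadratic in $u$, which breaks the usual sign bookkeeping.

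\textbf{The energy inequality $\Phi(u) \ge 0$.} Your ``clean resolution'' computes $\Phi(u) - \tfrac{1}{q}\,\Phi'(u)u$ and asserts that the Hartree coefficient $\tfrac{1}{16\pi} - \tfrac{1}{4\pi q}$ is nonnegative for $q \in [3,6)$. This is false: it equals $\tfrac{1}{4\pi}\big(\tfrac{1}{4} - \tfrac{1}{q}\big)$, which is \emph{negative} for every $q \in [3,4)$, so the combination using only $\Phi'(u)u = 0$ cannot yield $\Phi(u) \ge 0$ in that range. The paper instead invokes, in addition to \eqref{84}, the Poho\v{z}aev identity \eqref{163} for the weak limit $u$; multiplying \eqref{84} by $\tfrac{1}{2}$ and \eqref{163} by $-\tfrac{1}{3}$ and adding to the energy inequality replaces the gradient and Hartree contributions by $2\lambda\big(\tfrac{1}{3} - \tfrac{1}{q}\big)\int_{\R^3} |u|^q\,dx + \tfrac{1}{3}\int_{\R^3} |u|^6\,dx$, which is nonnegative precisely because $q \ge 3$ and $\lambda > 0$ (see \eqref{187}). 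The Poho\v{z}aev identity is not optional here.

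\textbf{Boundedness.} The combination $\Phi(u_j) - \tfrac{1}{6}\,\Phi'(u_j)u_j$ leaves the term $-\lambda\big(\tfrac{1}{q} - \tfrac{1}{6}\big)\int_{\R^3} |u_j|^q\,dx$ with the wrong sign. Since $\int_{\R^3}|u|^q\,dx$ scales as $t^{2q-3}$ under $u \mapsto u_t$ while $I_s(u)$ scales as $t^3$, for $q \ge 3$ no Coulomb--Sobolev embedding or interpolation lets you absorb this term into the gradient and Hartree parts uniformly in $\lambda > 0$; your sketch does not address this. The paper avoids linear combinations altogether: it rescales $u_j = (\widetilde{u}_j)_{\widetilde{t}_j}$ with $\widetilde{u}_j \in \M_s$ and $\widetilde{t}_j \to \infty$, divides \eqref{169} by $\widetilde{t}_j^{\,3}$ to force $\int_{\R^3} |\widetilde{u}_j|^6\,dx = \O(\widetilde{t}_j^{\,-6})$, interpolates to kill the $L^q$ contribution, and then combines \eqref{168} and \eqref{169} to conclude $\widetilde{u}_j \to 0$ in $E_r(\R^3)$, contradicting $\widetilde{u}_j \in \M_s$.

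A minor point: the Brezis--Lieb splitting for the Coulomb energy in \cite[Proposition 4.1]{MR3568051} is only an inequality (cf.\ \eqref{90}), not the equality you assume, though the inequality is all that is needed.
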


\begin{proof}
Let $\seq{u_j} \subset E_r(\R^3)$ be a \PS{c} sequence of $\Phi$. Then
\begin{equation} \label{81}
\frac{1}{2} \int_{\R^3} |\nabla u_j|^2\, dx + \frac{1}{16 \pi} \int_{\R^3} \int_{\R^3} \frac{u_j^2(x)\, u_j^2(y)}{|x - y|}\, dx\, dy - \frac{\lambda}{q} \int_{\R^3} |u_j|^q\, dx - \frac{1}{6} \int_{\R^3} |u_j|^6\, dx = c + \o(1)
\end{equation}
and
\begin{multline} \label{83}
\int_{\R^3} \nabla u_j \cdot \nabla v\, dx + \frac{1}{4 \pi} \int_{\R^3} \int_{\R^3} \frac{u_j^2(x)\, u_j(y)\, v(y)}{|x - y|}\, dx\, dy - \lambda \int_{\R^3} |u_j|^{q-2}\, u_j v\, dx - \int_{\R^3} u_j^5\, v\, dx\\[7.5pt]
= \o(\norm{v})
\end{multline}
for all $v \in E_r(\R^3)$. Taking $v = u_j$ in \eqref{83} gives
\begin{equation} \label{82}
\int_{\R^3} |\nabla u_j|^2\, dx + \frac{1}{4 \pi} \int_{\R^3} \int_{\R^3} \frac{u_j^2(x)\, u_j^2(y)}{|x - y|}\, dx\, dy - \lambda \int_{\R^3} |u_j|^q\, dx - \int_{\R^3} |u_j|^6\, dx = \o(\norm{u_j}).
\end{equation}

First we show that $\seq{u_j}$ is bounded in $E_r(\R^3)$. Suppose $\norm{u_j} \to \infty$ for a renamed subsequence. Set
\[
t_j = t_{u_j} = \frac{1}{I_s(u_j)^{1/3}}, \quad \widetilde{u}_j = (u_j)_{t_j}, \quad \widetilde{t}_j = \frac{1}{t_j} = I_s(u_j)^{1/3}
\]
(see \eqref{13}). Then $\widetilde{u}_j \in \M_s$ and
\begin{equation} \label{167}
u_j = (\widetilde{u}_j)_{\widetilde{t}_j} = \widetilde{t}_j^{\,2}\, \widetilde{u}_j(\widetilde{t}_j\, \cdot)
\end{equation}
by \eqref{18}. Since $\M_s$ is a bounded manifold, $\seq{\widetilde{u}_j}$ is bounded. Since $\norm{u_j} \to \infty$, $I_s(u_j) \to \infty$ and hence $\widetilde{t}_j \to \infty$. Using \eqref{167} and noting that
\[
\norm{u_j} = \|(\widetilde{u}_j)_{\widetilde{t}_j}\| = \O(\widetilde{t}_j^{\,3/2}),
\]
\eqref{81} and \eqref{82} can be written as
\begin{multline} \label{168}
\widetilde{t}_j^{\,3} \left(\frac{1}{2} \int_{\R^3} |\nabla \widetilde{u}_j|^2\, dx + \frac{1}{16 \pi} \int_{\R^3} \int_{\R^3} \frac{\widetilde{u}_j^{\,2}(x)\, \widetilde{u}_j^{\,2}(y)}{|x - y|}\, dx\, dy\right)\\[7.5pt]
= \frac{\widetilde{t}_j^{\,9}}{6} \int_{\R^3} |\widetilde{u}_j|^6\, dx + \frac{\lambda\, \widetilde{t}_j^{\,2q-3}}{q} \int_{\R^3} |\widetilde{u}_j|^q\, dx + c + \o(1)
\end{multline}
and
\begin{multline} \label{169}
\widetilde{t}_j^{\,3} \left(\int_{\R^3} |\nabla \widetilde{u}_j|^2\, dx + \frac{1}{4 \pi} \int_{\R^3} \int_{\R^3} \frac{\widetilde{u}_j^{\,2}(x)\, \widetilde{u}_j^{\,2}(y)}{|x - y|}\, dx\, dy\right)\\[7.5pt]
= \widetilde{t}_j^{\,9} \int_{\R^3} |\widetilde{u}_j|^6\, dx + \lambda\, \widetilde{t}_j^{\,2q-3} \int_{\R^3} |\widetilde{u}_j|^q\, dx + \o(\widetilde{t}_j^{\,3/2}),
\end{multline}
respectively. Since $\widetilde{t}_j \to \infty$, dividing \eqref{169} by $\widetilde{t}_j^{\,3}$ gives
\[
\widetilde{t}_j^{\,6} \int_{\R^3} |\widetilde{u}_j|^6\, dx + \lambda\, \widetilde{t}_j^{\,2\,(q-3)} \int_{\R^3} |\widetilde{u}_j|^q\, dx = \int_{\R^3} |\nabla \widetilde{u}_j|^2\, dx + \frac{1}{4 \pi} \int_{\R^3} \int_{\R^3} \frac{\widetilde{u}_j^{\,2}(x)\, \widetilde{u}_j^{\,2}(y)}{|x - y|}\, dx\, dy + \o(1).
\]
Since the right-hand side is bounded and $\lambda > 0$, this in turn gives
\[
\int_{\R^3} |\widetilde{u}_j|^6\, dx = \O(\widetilde{t}_j^{\,-6}).
\]
Since $\seq{\widetilde{u}_j}$ is bounded in $L^r(\R^3)$ for $r \in (18/7,3)$, then the interpolation inequality
\[
\pnorm[q]{\widetilde{u}_j} \le \pnorm[r]{\widetilde{u}_j}^{1 - \theta} \pnorm[6]{\widetilde{u}_j}^\theta,
\]
where $1/q = (1 - \theta)/r + \theta/6$, gives
\[
\widetilde{t}_j^{\,2\,(q-3)} \int_{\R^3} |\widetilde{u}_j|^q\, dx = \O(\widetilde{t}_j^{\,-2(6-q)(3-r)/(6-r)}),
\]
so
\[
\widetilde{t}_j^{\,2\,(q-3)} \int_{\R^3} |\widetilde{u}_j|^q\, dx \to 0.
\]
Now multiplying \eqref{168} by $6$, subtracting \eqref{169}, and dividing by $\widetilde{t}_j^{\,3}$ gives
\[
2 \int_{\R^3} |\nabla \widetilde{u}_j|^2\, dx + \frac{1}{8 \pi} \int_{\R^3} \int_{\R^3} \frac{\widetilde{u}_j^{\,2}(x)\, \widetilde{u}_j^{\,2}(y)}{|x - y|}\, dx\, dy = \o(1).
\]
This implies that $\widetilde{u}_j \to 0$ in $E_r(\R^3)$, contradicting $\widetilde{u}_j \in \M_s$.

Since $\seq{u_j}$ is bounded and $E_r(\R^3)$ is a reflexive Banach space, a renamed subsequence of $\seq{u_j}$ converges weakly to some $u \in E_r(\R^3)$. Then $u_j$ also converges to $u$ strongly in $L^q(\R^3)$, weakly in $L^6(\R^3)$, and a.e.\! in $\R^3$ for a further subsequence. Now passing to the limit in \eqref{83} using Lemma 2.2 and Lemma 2.3 in Ianni and Ruiz \cite{MR2902293} gives
\[
\int_{\R^3} \nabla u \cdot \nabla v\, dx + \frac{1}{4 \pi} \int_{\R^3} \int_{\R^3} \frac{u^2(x)\, u(y)\, v(y)}{|x - y|}\, dx\, dy - \lambda \int_{\R^3} |u|^{q-2}\, uv\, dx - \int_{\R^3} u^5\, v\, dx = 0
\]
for all $v \in E_r(\R^3)$. Taking $v = u$ gives
\begin{equation} \label{84}
\int_{\R^3} |\nabla u|^2\, dx + \frac{1}{4 \pi} \int_{\R^3} \int_{\R^3} \frac{u^2(x)\, u^2(y)}{|x - y|}\, dx\, dy - \lambda \int_{\R^3} |u|^q\, dx - \int_{\R^3} |u|^6\, dx = 0,
\end{equation}
and $u$ also satisfies the Poho\v{z}aev type identity
\begin{equation} \label{163}
\frac{1}{2} \int_{\R^3} |\nabla u|^2\, dx + \frac{5}{16 \pi} \int_{\R^3} \int_{\R^3} \frac{u^2(x)\, u^2(y)}{|x - y|}\, dx\, dy - \frac{3 \lambda}{q} \int_{\R^3} |u|^q\, dx - \frac{1}{2} \int_{\R^3} |u|^6\, dx = 0,
\end{equation}
which can be shown, e.g., as in Dutko et al.\! \cite[Lemma 2.4]{MR4292779}.

Set $v_j = u_j - u$. We will show that $v_j \to 0$ in $E_r(\R^3)$ for a renamed subsequence. By the Brezis-Lieb lemma and Mercuri et al.\! \cite[Proposition 4.1]{MR3568051},
\begin{equation} \label{85}
\int_{\R^3} |\nabla u_j|^2\, dx - \int_{\R^3} |\nabla u|^2\, dx = \int_{\R^3} |\nabla v_j|^2\, dx + \o(1),
\end{equation}
\begin{multline} \label{90}
\int_{\R^3} \int_{\R^3} \frac{u_j^2(x)\, u_j^2(y)}{|x - y|}\, dx\, dy - \int_{\R^3} \int_{\R^3} \frac{u^2(x)\, u^2(y)}{|x - y|}\, dx\, dy\\[7.5pt]
\ge \int_{\R^3} \int_{\R^3} \frac{v_j^{\,2}(x)\, v_j^{\,2}(y)}{|x - y|}\, dx\, dy + \o(1),
\end{multline}
and
\begin{equation} \label{86}
\int_{\R^3} |u_j|^6\, dx - \int_{\R^3} |u|^6\, dx = \int_{\R^3} |v_j|^6\, dx + \o(1).
\end{equation}
Subtracting \eqref{84} from \eqref{82} and combining with \eqref{85}--\eqref{86} and \eqref{88} gives
\begin{multline} \label{87}
\int_{\R^3} |\nabla v_j|^2\, dx + \frac{1}{4 \pi} \int_{\R^3} \int_{\R^3} \frac{v_j^{\,2}(x)\, v_j^{\,2}(y)}{|x - y|}\, dx\, dy \le \int_{\R^3} |v_j|^6\, dx + \o(1)\\[7.5pt]
\le S^{-3} \left(\int_{\R^3} |\nabla v_j|^2\, dx\right)^3 + \o(1).
\end{multline}
So it suffices to show that $\int_{\R^3} |\nabla v_j|^2\, dx \to 0$ for a renamed subsequence. Suppose this is not the case. Then \eqref{87} gives
\begin{equation} \label{89}
\int_{\R^3} |\nabla v_j|^2\, dx \ge S^{3/2} + \o(1).
\end{equation}
Dividing \eqref{82} by $6$ and subtracting from \eqref{81} gives
\begin{equation} \label{91}
c = \frac{1}{3} \int_{\R^3} |\nabla u_j|^2\, dx + \frac{1}{48 \pi} \int_{\R^3} \int_{\R^3} \frac{u_j^2(x)\, u_j^2(y)}{|x - y|}\, dx\, dy - \lambda \left(\frac{1}{q} - \frac{1}{6}\right) \int_{\R^3} |u|^q\, dx + \o(1).
\end{equation}
Since
\[
\int_{\R^3} |\nabla u_j|^2\, dx \ge S^{3/2} + \int_{\R^3} |\nabla u|^2\, dx + \o(1)
\]
by \eqref{85} and \eqref{89}, and
\[
\int_{\R^3} \int_{\R^3} \frac{u_j^2(x)\, u_j^2(y)}{|x - y|}\, dx\, dy \ge \int_{\R^3} \int_{\R^3} \frac{u^2(x)\, u^2(y)}{|x - y|}\, dx\, dy + \o(1)
\]
by \eqref{90}, \eqref{91} gives
\[
c \ge \frac{1}{3}\, S^{3/2} + \frac{1}{3} \int_{\R^3} |\nabla u|^2\, dx + \frac{1}{48 \pi} \int_{\R^3} \int_{\R^3} \frac{u^2(x)\, u^2(y)}{|x - y|}\, dx\, dy - \lambda \left(\frac{1}{q} - \frac{1}{6}\right) \int_{\R^3} |u|^q\, dx.
\]
Multiplying \eqref{84} by $1/2$ and \eqref{163} by $-1/3$ and adding to this inequality gives
\begin{equation} \label{187}
c \ge \frac{1}{3}\, S^{3/2} + 2 \lambda \left(\frac{1}{3} - \frac{1}{q}\right) \int_{\R^3} |u|^q\, dx + \frac{1}{3} \int_{\R^3} |u|^6\, dx \ge \frac{1}{3}\, S^{3/2}
\end{equation}
since $\lambda > 0$ and $q \ge 3$, contrary to assumption.
\end{proof}

\subsubsection{Proof of Theorem \ref{Theorem 25}}

In view of Lemma \ref{Lemma 5}, we apply Theorem \ref{Theorem 14} with $c^\ast = \dfrac{1}{3}\, S^{3/2}$. Let $\eps \in (0,\lambda_{k+m} - \lambda_{k+m-1})$. Then
\[
i(\M_s \setminus \widetilde{\Psi}_{\lambda_{k+m-1} + \eps}) = k + m - 1
\]
by Theorem \ref{Theorem 1} \ref{Theorem 1.iii}. Since $\M_s \setminus \widetilde{\Psi}_{\lambda_{k+m-1} + \eps}$ is an open symmetric subset of $\M_s$, then it has a compact symmetric subset $C$ of index $k + m - 1$ (see the proof of Proposition 3.1 in Degiovanni and Lancelotti \cite{MR2371112}). We apply Theorem \ref{Theorem 14} with $A_0 = C$ and $B_0 = \widetilde{\Psi}_{\lambda_k}$. We have either $\lambda_1 = \cdots = \lambda_k$, or $\lambda_{l-1} < \lambda_l = \cdots = \lambda_k$ for some $2 \le l \le k$. In the latter case,
\[
i(\M_s \setminus B_0) = i(\M_s \setminus \widetilde{\Psi}_{\lambda_l}) = l - 1 \le k - 1
\]
by Theorem \ref{Theorem 1} \ref{Theorem 1.iii}. In the former case, $B_0 = \widetilde{\Psi}_{\lambda_1} = \M_s$ by Theorem \ref{Theorem 1} \ref{Theorem 1.i} and hence
\[
i(\M_s \setminus B_0) = 0 \le k - 1
\]
by Proposition \ref{Proposition 7} $(i_1)$.

Let $R > \rho > 0$ and let
\begin{gather*}
X = \set{u_t : u \in A_0,\, 0 \le t \le R},\\
A = \set{u_R : u \in A_0},\\
B = \set{u_\rho : u \in B_0}.
\end{gather*}
For $u \in \M_s$ and $t \ge 0$, we have
\begin{equation} \label{180}
\Phi(u_t) = t^3\, \bigg(1 - \frac{\lambda}{\widetilde{\Psi}(u)} - \frac{t^6}{6} \int_{\R^3} |u|^6\, dx\bigg).
\end{equation}
Since $\M_s$ is bounded, this gives
\[
\Phi(u_t) \ge t^3 \left(1 - \frac{\lambda}{\lambda_k} - c_1\, t^6\right) \quad \forall u \in B_0,\, t \ge 0
\]
for some constant $c_1 > 0$. So
\[
\inf_{u \in B}\, \Phi(u) > 0
\]
if $\lambda < \lambda_k$ and $\rho$ is sufficiently small. For $u \in A_0 \subset \M_s \setminus \widetilde{\Psi}_{\lambda_{k+m-1} + \eps}$,
\[
J_s(u) = \frac{1}{3} \int_{\R^3} |u|^3\, dx > \frac{1}{\lambda_{k+m-1} + \eps} > \frac{1}{\lambda_{k+m}}
\]
since $\eps < \lambda_{k+m} - \lambda_{k+m-1}$. Since $\M_s$ is bounded in $L^r(\R^3)$ for $r \in (18/7,3)$, then the interpolation inequality
\[
\pnorm[3]{u} \le \pnorm[r]{u}^{1 - \theta} \pnorm[6]{u}^\theta,
\]
where $1/3 = (1 - \theta)/r + \theta/6$, implies that $\pnorm[6]{u}$ is bounded from below by a constant independent of $\eps$. Then \eqref{180} together with $\lambda_{k+m-1} = \lambda_k$ gives
\[
\Phi(u_t) \le t^3 \left(1 - \frac{\lambda}{\lambda_k + \eps} - c_2\, t^6\right) \quad \forall u \in A_0,\, t \ge 0
\]
for some constant $c_2 > 0$ independent of $\eps$. So
\[
\sup_{u \in A}\, \Phi(u) \le R^3 \left(1 - c_2\, R^6\right) \le 0
\]
if $R$ is sufficiently large, and
\begin{equation} \label{181}
\sup_{u \in X}\, \Phi(u) \le \max_{t \ge 0}\, t^3 \left(1 - \frac{\lambda}{\lambda_k + \eps} - c_2\, t^6\right) = \frac{2}{3 \sqrt{3 c_2}} \left(1 - \frac{\lambda}{\lambda_k + \eps}\right)^{3/2}.
\end{equation}
Let
\[
\delta_k = \left(\frac{3 c_2}{4}\right)^{1/3} \lambda_k\, S.
\]
Then it is easily seen that the last expression in \eqref{181} is less than $\dfrac{1}{3}\, S^{3/2}$ if $\lambda > \lambda_k - \delta_k$ and $\eps$ is sufficiently small. The desired conclusion now follows from Theorem \ref{Theorem 14}.

\subsubsection{Proof of Theorem \ref{Theorem 15}}

We apply Corollary \ref{Corollary 1} with $c^\ast = \dfrac{1}{3}\, S^{3/2}$. Let $A_0$ be a compact symmetric subset of $\M_s$ with $i(A_0) = m$, let $R > \rho > 0$, and let
\[
A = \set{u_R : u \in A_0}, \qquad X = \set{u_t : u \in A_0,\, 0 \le t \le R}.
\]
For $u \in \M_s$ and $t \ge 0$, we have
\begin{equation} \label{80}
\Phi(u_t) = t^3 \left(1 - \frac{\lambda\, t^{2\,(q-3)}}{q} \int_{\R^3} |u|^q\, dx - \frac{t^6}{6} \int_{\R^3} |u|^6\, dx\right).
\end{equation}
Since $q > 3$ and $\M_s$ is bounded, this implies that
\[
\inf_{u \in \M_s}\, \Phi(u_\rho) > 0
\]
if $\rho$ is sufficiently small. Since $A_0$ is compact and $0 \notin A_0$, \eqref{80} also gives
\[
\Phi(u_t) \le t^3 \left(1 - c_1\, \lambda\, t^{2\,(q-3)} - c_2\, t^6\right) \quad \forall u \in A_0,\, t \ge 0
\]
for some constants $c_1, c_2 > 0$. So
\[
\sup_{u \in A}\, \Phi(u) \le R^3 \left(1 - c_2\, R^6\right) \le 0
\]
if $R$ is sufficiently large and
\[
\sup_{u \in X}\, \Phi(u) \le \max_{t \ge 0}\, t^3 \left(1 - c_1\, \lambda\, t^{2\,(q-3)}\right) = 2\, (q - 3)\, \frac{(3/c_1\, \lambda)^{3/2\,(q-3)}}{(2q - 3)^{(2q-3)/2\,(q-3)}} < \frac{1}{3}\, S^{3/2}
\]
if $\lambda > 0$ is sufficiently large. The desired conclusion now follows from Corollary \ref{Corollary 1}.

\subsection{Equations subscaled near zero}

In this section we prove the results of Section \ref{1.2.3}. The proofs will be based on Corollary \ref{Corollary 1} and the following standard result in critical point theory (see, e.g., Perera et al.\! \cite[Proposition 3.36]{MR2640827}).

\begin{proposition} \label{Proposition 11}
Let $\Phi$ be an even $C^1$-functional on a Banach space $W$ such that $\Phi(0) = 0$ and $\Phi$ satisfies the {\em \PS{c}} condition for all $c < 0$. Let $\F$ denote the class of symmetric subsets of $W \setminus \set{0}$. For $k \ge 1$, let
\[
\F_k = \bgset{M \in \F : i(M) \ge k}
\]
and set
\begin{equation} \label{121}
c_k := \inf_{M \in \F_k}\, \sup_{u \in M}\, \Phi(u).
\end{equation}
If there exists a $k_0 \ge 1$ such that $- \infty < c_k < 0$ for all $k \ge k_0$, then $c_{k_0} \le c_{k_0 + 1} \le \cdots \to 0$ is a sequence of critical values of $\Phi$.
\end{proposition}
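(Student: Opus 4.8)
The plan is to run the standard cohomological-index minimax argument of Ljusternik--Schnirelmann type, combined with an odd (equivariant) deformation lemma. There are three things to verify: that $\seq{c_k}$ is nondecreasing, that each $c_k$ with $k \ge k_0$ is a critical value, and that $c_k \nearrow 0$. The monotonicity is immediate: by the monotonicity property $(i_2)$ of the index (Proposition \ref{Proposition 7}) one has $\F_{k+1} \subset \F_k$, so the infimum defining $c_{k+1}$ in \eqref{121} is taken over a smaller family, giving $c_k \le c_{k+1}$; since $c_k < 0$ for all $k \ge k_0$, the tail $\seq{c_k}_{k \ge k_0}$ is nondecreasing and bounded above by $0$, hence converges to some $c^\ast \le 0$, and it remains to establish criticality and $c^\ast = 0$.

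For criticality, I would fix $k \ge k_0$, set $c = c_k \in (-\infty,0)$, and argue by contradiction. Since $\Phi$ is even and $C^1$ we have $\Phi'(0) = 0$, but because $c < 0 = \Phi(0)$ the sublevel set $\Phi^{c+\eps}$ avoids the origin whenever $c + \eps < 0$, so everything below level $0$ lies in $W \setminus \set{0}$ where $i$ is defined. If $c$ were a regular value, then --- using \PS{c} and the evenness of $\Phi$, which lets one choose an odd pseudo-gradient field and hence an odd flow --- a deformation lemma produces $\eps > 0$ with $c + \eps < 0$ and an odd homeomorphism $\eta$ of $W$ with $\eta(\Phi^{c+\eps}) \subset \Phi^{c-\eps}$. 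Picking $M \in \F_k$ with $\sup_M \Phi < c + \eps$, the set $\eta(M)$ is symmetric, contained in $\Phi^{c-\eps} \subset W \setminus \set{0}$, and has $i(\eta(M)) \ge i(M) \ge k$, so $\eta(M) \in \F_k$ and $c_k \le \sup_{\eta(M)} \Phi \le c - \eps < c_k$, a contradiction.

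For $c^\ast = 0$, I would again argue by contradiction, assuming $c^\ast < 0$. The critical set $K = \bgset{u \in W : \Phi(u) = c^\ast,\ \Phi'(u) = 0}$ is closed, symmetric (by evenness of $\Phi$), avoids $0$ (since $\Phi(0) = 0 \ne c^\ast$), and is compact by \PS{c^\ast}; as a compact symmetric subset of $W \setminus \set{0}$ it has finite cohomological index, say $m := i(K) < \infty$, and by the continuity property $(i_4)$ there is an open symmetric neighborhood $U$ of $K$ with $i(\overline{U}) = m$. A deformation lemma near the level $c^\ast$ (with odd flow) then gives $\eps > 0$ with $c^\ast + \eps < 0$ and an odd homeomorphism $\eta$ with $\eta(\Phi^{c^\ast + \eps} \setminus U) \subset \Phi^{c^\ast - \eps}$. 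Since $c_k \nearrow c^\ast$, I fix $k \ge k_0$ with $c_k > c^\ast - \eps$; because $c_{k+m} \le c^\ast < c^\ast + \eps$ I can choose $M \in \F_{k+m}$ with $M \subset \Phi^{c^\ast+\eps}$, and replacing $M$ by its closure (still inside the closed set $\Phi^{c^\ast+\eps} \subset W\setminus\set{0}$) I may take $M$ closed. Then $M \subset (M \cap \overline{U}) \cup (M \setminus U)$ with both pieces closed and symmetric, so subadditivity $(i_5)$ together with $i(M \cap \overline{U}) \le i(\overline{U}) = m$ gives $i(M \setminus U) \ge i(M) - m \ge k$. As $M \setminus U \subset \Phi^{c^\ast + \eps} \setminus U$, the set $M' := \eta(M \setminus U)$ is closed, symmetric, contained in $\Phi^{c^\ast - \eps} \subset W \setminus \set{0}$, with $i(M') \ge k$, hence $M' \in \F_k$ and $c_k \le \sup_{M'} \Phi \le c^\ast - \eps < c_k$, a contradiction. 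Thus $c^\ast = 0$, and combined with the previous step $c_{k_0} \le c_{k_0+1} \le \cdots \to 0$ is a sequence of critical values.

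The routine index inequalities and minimax estimates pose no difficulty. I expect the delicate points to be (a) arranging that the deformation, and with it the homeomorphism $\eta$, is odd --- this is exactly where evenness of $\Phi$ enters; and (b) reconciling the \emph{open} neighborhood $U$ that the deformation lemma requires with the \emph{closed} neighborhood supplied by $(i_4)$, so that the splitting $M \subset (M \cap \overline{U}) \cup (M \setminus U)$ and the ensuing estimate $i(M\setminus U) \ge i(M) - i(\overline{U})$ are legitimate; the standard but not entirely trivial fact that a compact symmetric subset of $W \setminus \set{0}$ has finite cohomological index is also used here. These are the main technical obstacles.
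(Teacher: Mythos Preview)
Your argument is correct and is exactly the standard Ljusternik--Schnirelmann-type proof one would expect: monotonicity from $\F_{k+1}\subset\F_k$, criticality of each $c_k$ via an equivariant deformation lemma, and $c^\ast=0$ via the index-splitting argument using $(i_4)$ and $(i_5)$ around the compact critical set at a hypothetical negative limit level. The technical points you flag (odd deformation from evenness, open vs.\ closed neighborhood, finiteness of the index of compact symmetric sets) are real but routine.

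The paper itself does not give a proof of this proposition at all: it simply states it as a ``standard result in critical point theory'' and cites \cite[Proposition~3.36]{MR2640827}. So there is nothing to compare --- you have supplied the standard proof that the paper omits by reference.
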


Set
\begin{equation} \label{200}
K(u) = \frac{1}{\sigma} \int_{\R^3} |u|^\sigma\, dx.
\end{equation}

\begin{lemma} \label{Lemma 7}
If $\sigma \in (18/7,3)$, then
\[
\inf_{u \in \widetilde{\Psi}^a}\, K(u) > 0 \quad \forall a \ge \lambda_1.
\]
\end{lemma}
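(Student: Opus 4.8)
The plan is to bound $\pnorm[\sigma]{u}$ from below uniformly over $\widetilde{\Psi}^a$ by pairing the lower bound on $\pnorm[3]{u}$ that is forced by the constraint $\widetilde{\Psi}(u) \le a$ with the upper bound on $\pnorm[6]{u}$ that is forced by the constraint $u \in \M_s$, and then interpolating $L^3$ between $L^\sigma$ and $L^6$.

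First I would note that $\widetilde{\Psi}^a \ne \emptyset$ for $a \ge \lambda_1$ by Theorem \ref{Theorem 1} \ref{Theorem 1.i}, so the infimum is over a nonempty set. Fix $u \in \widetilde{\Psi}^a$. Since $\widetilde{\Psi}(u) = 1/J_s(u)$ and $J_s(u) = \frac{1}{3} \int_{\R^3} |u|^3\, dx$ in this setting, the inequality $\widetilde{\Psi}(u) \le a$ gives $\int_{\R^3} |u|^3\, dx \ge 3/a$, i.e.\ $\pnorm[3]{u} \ge (3/a)^{1/3}$. On the other hand $u \in \M_s$ means $I_s(u) = 1$, whence $\int_{\R^3} |\nabla u|^2\, dx \le 2$, and the Sobolev inequality \eqref{88} yields $\pnorm[6]{u} \le (2/S)^{1/2}$.

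Then, since $\sigma < 3 < 6$, I would solve $\frac{1}{3} = \frac{1 - \theta}{\sigma} + \frac{\theta}{6}$ to obtain $\theta = \frac{6 - 2\sigma}{6 - \sigma}$, which lies in $(0,1)$ precisely because $0 < \sigma < 3$ (only $\sigma < 3$ is actually needed). The interpolation inequality $\pnorm[3]{u} \le \pnorm[\sigma]{u}^{1-\theta}\, \pnorm[6]{u}^\theta$ combined with the two bounds above gives
\[
(3/a)^{1/3} \le \pnorm[\sigma]{u}^{1-\theta}\, (2/S)^{\theta/2},
\]
hence $\pnorm[\sigma]{u} \ge \bigl[(3/a)^{1/3} (S/2)^{\theta/2}\bigr]^{1/(1-\theta)} =: c_a > 0$, with $c_a$ independent of $u \in \widetilde{\Psi}^a$. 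Therefore $K(u) = \frac{1}{\sigma} \pnorm[\sigma]{u}^\sigma \ge c_a^\sigma/\sigma > 0$ for every $u \in \widetilde{\Psi}^a$, and passing to the infimum proves the lemma.

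I do not expect a serious obstacle here: the argument is a short interpolation once the two elementary bounds on $\pnorm[3]{u}$ and $\pnorm[6]{u}$ are recorded. The only points that require a moment's attention are checking that $\theta \in (0,1)$ and recalling that $\widetilde{\Psi}^a$ is nonempty for $a \ge \lambda_1$, so that the infimum is not vacuous. One could equally replace the appeal to \eqref{88} by the boundedness of $\M_s$ granted by $(H_{10})$ together with the continuous embedding $E_r(\R^3) \hookrightarrow L^6(\R^3)$, but keeping the Sobolev constant explicit makes the dependence of $c_a$ on the data transparent.
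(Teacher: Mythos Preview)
Your proof is correct and follows essentially the same route as the paper: both arguments record the bounds $\pnorm[3]{u} \ge (3/a)^{1/3}$ and $\int_{\R^3}|\nabla u|^2\,dx \le 2$ on $\widetilde{\Psi}^a$ and then interpolate, the only cosmetic difference being that the paper quotes the Gagliardo--Nirenberg inequality $\pnorm[3]{u} \le C\,\|\nabla u\|_2^{\theta}\,\pnorm[\sigma]{u}^{1-\theta}$ directly (with the same $\theta = 2(3-\sigma)/(6-\sigma)$), whereas you factor it through the Sobolev embedding $\pnorm[6]{u} \le (2/S)^{1/2}$ followed by the $L^\sigma$--$L^6$ interpolation.
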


\begin{proof}
For $u \in \widetilde{\Psi}^a$,
\[
\int_{\R^3} |u|^3\, dx = \frac{3}{\widetilde{\Psi}(u)} \ge \frac{3}{a}, \qquad \int_{\R^3} |\nabla u|^2\, dx \le 2 I_s(u) = 2.
\]
The desired conclusion now follows from the Gagliardo-Nirenberg inequality
\[
\left(\int_{\R^3} |u|^3\, dx\right)^{1/3} \le C \left(\int_{\R^3} |\nabla u|^2\, dx\right)^{\theta/2} \left(\int_{\R^3} |u|^\sigma\, dx\right)^{(1 - \theta)/\sigma},
\]
where $C > 0$ is a constant and $\theta = 2\, (3 - \sigma)/(6 - \sigma) \in (0,1)$.
\end{proof}

\subsubsection{Proofs of Theorem \ref{Theorem 20} and Corollary \ref{Corollary 3}}

\begin{proof}[Proof of Theorem \ref{Theorem 20}]
Since $\sigma, \widetilde{\sigma} \in (18/7,3)$, the operator
\[
\widetilde{f}(u) v = \int_{\R^3} \left(|u|^{\sigma - 2}\, u + \widetilde{\sigma}\, C\, |u|^{\widetilde{\sigma} - 2}\, u\right) v\, dx
\]
that corresponds to the nonlinearity $f(t) = |t|^{\sigma - 2}\, t + \widetilde{\sigma}\, C\, |t|^{\widetilde{\sigma} - 2}\, t$ satisfies
\[
\widetilde{f}(u_t) v_t = \o(t^3) \norm{v} \text{ as } t \to \infty
\]
uniformly in $u$ on bounded sets for all $v \in E_r(\R^3)$ by Lemma \ref{Lemma 3} \ref{Lemma 3.i}. So the corresponding functional
\begin{multline*}
\widetilde{\Phi}(u) = \frac{1}{2} \int_{\R^3} |\nabla u|^2\, dx + \frac{1}{16 \pi} \int_{\R^3} \int_{\R^3} \frac{u^2(x)\, u^2(y)}{|x - y|}\, dx\, dy - \frac{1}{\sigma} \int_{\R^3} |u|^\sigma\, dx - C \int_{\R^3} |u|^{\widetilde{\sigma}}\, dx,\\[7.5pt]
u \in E_r(\R^3)
\end{multline*}
is coercive by Lemma \ref{Lemma 2}. Since $\Phi \ge \widetilde{\Phi}$ by \eqref{305}, then so is $\Phi$. Since $\Phi$ is bounded on bounded sets, this implies that it is bounded from below.

We have
\[
\Phi(u) = I_s(u) - K(u) - \widetilde{G}(u), \quad u \in E_r(\R^3),
\]
where $K$ is as in \eqref{200} and
\[
\widetilde{G}(u) = \int_{\R^3} G(|x|,u)\, dx.
\]
In particular,
\[
\Phi(u_t) = t^3 - t^{2 \sigma - 3}\, K(u) - \widetilde{G}(u_t), \quad u \in \M_s,\, t \ge 0.
\]
By \eqref{301},
\[
|G(|x|,t)| \le \frac{a_6}{q_6}\, |t|^{q_6} + \frac{a_7}{q_7}\, |t|^{q_7} \quad \text{for a.a.\! } x \in \R^3 \text{ and all } t \in \R
\]
and hence
\begin{multline*}
|\widetilde{G}(u_t)| = \abs{\int_{\R^3} G(|x|,t^2\, u(tx))\, dx} \le \frac{1}{t^3} \int_{\R^3} |G(|x|/t,t^2\, u(x))|\, dx\\[7.5pt]
\le \frac{a_6}{q_6}\, t^{2 q_6 - 3} \pnorm[q_6]{u}^{q_6} + \frac{a_7}{q_7}\, t^{2 q_7 - 3} \pnorm[q_7]{u}^{q_7} = \o(t^{2 \sigma - 3}) \text{ as } t \to 0
\end{multline*}
uniformly in $u \in \M_s$ since $q_6, q_7 > \sigma$. This together with the assumption that $\sigma < 3$ now gives
\begin{equation} \label{304}
\Phi(u_t) = - t^{2 \sigma - 3}\, [K(u) + \o(1)] \text{ as } t \to 0
\end{equation}
uniformly in $u \in \M_s$.

Let $c_k$ be as in Proposition \ref{Proposition 11} and note that $c_k > - \infty$ since $\Phi$ is bounded from below. Let $M_t = \bgset{u_t : u \in \widetilde{\Psi}^{\lambda_k}}$ for $t > 0$. Since the mapping $\widetilde{\Psi}^{\lambda_k} \to M_t,\, u \mapsto u_t$ is an odd homeomorphism, $i(M_t) = i(\widetilde{\Psi}^{\lambda_k})$ by Proposition \ref{Proposition 7} $(i_2)$. We have $\lambda_k = \dotsb = \lambda_{k+m-1} < \lambda_{k+m}$ for some $m \ge 1$ and hence
\[
i(\widetilde{\Psi}^{\lambda_k}) = i(\widetilde{\Psi}^{\lambda_{k+m-1}}) = k + m - 1 \ge k
\]
by Theorem \ref{Theorem 1} \ref{Theorem 1.iii}, so $i(M_t) \ge k$. So $M_t \in \F_k$ and hence
\begin{equation} \label{303}
c_k \le \sup_{u \in M_t}\, \Phi(u) = \sup_{u \in \widetilde{\Psi}^{\lambda_k}}\, \Phi(u_t).
\end{equation}
Since
\[
\inf_{u \in \widetilde{\Psi}^{\lambda_k}}\, K(u) > 0
\]
by Lemma \ref{Lemma 7}, taking $t$ sufficiently small in \eqref{304} and combining with \eqref{303} gives $c_k < 0$. The desired conclusion now follows from Proposition \ref{Proposition 11}.
\end{proof}

\begin{proof}[Proof of Corollary \ref{Corollary 3}]
As in the proof of Theorem \ref{Theorem 20}, the functional
\[
\Phi(u) = \frac{1}{2} \int_{\R^3} |\nabla u|^2\, dx + \frac{1}{16 \pi} \int_{\R^3} \int_{\R^3} \frac{u^2(x)\, u^2(y)}{|x - y|}\, dx\, dy - \frac{1}{\sigma} \int_{\R^3} |u|^\sigma\, dx, \quad u \in E_r(\R^3)
\]
is coercive. So every \PS{} sequence of $\Phi$ is bounded and hence $\Phi$ satisfies the \PS{} condition by Proposition \ref{Proposition 3}.
\end{proof}

\subsubsection{Proof of Theorem \ref{Theorem 29}}

Let
\[
\widetilde{g}(|x|,t) = \begin{cases}
g(|x|,t) & \text{if } |t| < t_0\\[7.5pt]
- |t|^{\sigma - 2}\, t & \text{if } |t| \ge t_0.
\end{cases}
\]
Then $\widetilde{g}$ is a Carath\'{e}odory function on $[0,\infty) \times \R$ by \eqref{307}, and \eqref{301}--\eqref{305} hold with $\widetilde{g}$ and $\widetilde{G}(|x|,t) = \int_0^t \widetilde{g}(|x|,\tau)\, d\tau$ in place of $g$ and $G$, respectively. As in the proof of Theorem \ref{Theorem 20}, the variational functional
\begin{multline*}
\widetilde{\Phi}(u) = \frac{1}{2} \int_{\R^3} |\nabla u|^2\, dx + \frac{1}{16 \pi} \int_{\R^3} \int_{\R^3} \frac{u^2(x)\, u^2(y)}{|x - y|}\, dx\, dy - \frac{1}{\sigma} \int_{\R^3} |u|^\sigma\, dx - \int_{\R^3} \widetilde{G}(|x|,u)\, dx,\\[7.5pt]
u \in E_r(\R^3)
\end{multline*}
associated with the truncated equation
\begin{equation} \label{308}
- \Delta u + \left(\frac{1}{4 \pi |x|} \star u^2\right) u = |u|^{\sigma - 2}\, u + \widetilde{g}(|x|,u) \quad \text{in } \R^3
\end{equation}
is coercive. So every \PS{} sequence of $\widetilde{\Phi}$ is bounded and hence $\widetilde{\Phi}$ satisfies the \PS{} condition by Proposition \ref{Proposition 3}. Thus, equation \eqref{308} has a sequence of solutions $\seq{u_k}$ such that $\widetilde{\Phi}(u_k) < 0$ for all $k$ and $\widetilde{\Phi}(u_k) \nearrow 0$ by Theorem \ref{Theorem 20}. Since $|t|^{\sigma - 2}\, t + \widetilde{g}(|x|,t) = 0$ for $|t| \ge t_0$, $|u_k| < t_0$ by the strong maximum principle, so $u_k$ is also a solution of \eqref{300} and $\widetilde{\Phi}(u_k) = \Phi(u_k)$.

\subsubsection{Proof of Theorem \ref{Theorem 24}}

Since $\sigma \in (18/7,3)$, the operator
\[
\widetilde{g}(u) v = \int_{\R^3} |u|^{\sigma - 2}\, uv\, dx
\]
that corresponds to the term $g(t) = |t|^{\sigma - 2}\, t$ satisfies
\[
\widetilde{g}(u_t) v_t = \o(t^3) \norm{v} \text{ as } t \to \infty
\]
uniformly in $u$ on bounded sets for all $v \in E_r(\R^3)$ by Lemma \ref{Lemma 3} \ref{Lemma 3.iii}. Since $\lambda$ is not an eigenvalue of problem \eqref{55}, then the associated variational functional
\begin{multline*}
\Phi(u) = \frac{1}{2} \int_{\R^3} |\nabla u|^2\, dx + \frac{1}{16 \pi} \int_{\R^3} \int_{\R^3} \frac{u^2(x)\, u^2(y)}{|x - y|}\, dx\, dy - \frac{1}{\sigma} \int_{\R^3} |u|^\sigma\, dx - \frac{\lambda}{3} \int_{\R^3} |u|^3\, dx,\\[7.5pt]
u \in E_r(\R^3)
\end{multline*}
satisfies the \PS{} condition by Lemma \ref{Lemma 1}.

We apply Proposition \ref{Proposition 11}, choosing $k_0$ as follows. Since $\lambda$ is not an eigenvalue of problem \eqref{55}, either $\lambda < \lambda_1$, in which case we take $k_0 = 1$, or $\lambda_{k_0 - 1} < \lambda < \lambda_{k_0}$ for some $k_0 \ge 2$. In the latter case,
\begin{equation} \label{120}
i(\M_s \setminus \widetilde{\Psi}_{\lambda_{k_0}}) = k_0 - 1
\end{equation}
by Theorem \ref{Theorem 1} \ref{Theorem 1.iii}. In the former case, $\widetilde{\Psi}_{\lambda_{k_0}} = \M_s$ by Theorem \ref{Theorem 1} \ref{Theorem 1.i}, so \eqref{120} holds in this case also by Proposition \ref{Proposition 7} $(i_1)$. For $k \ge k_0$, let $c_k$ be as in \eqref{121} and note that $c_k < 0$ as in the proof of Theorem \ref{Theorem 20}. It only remains to show that $c_k > - \infty$.

Let $M \in \F_k$. Then
\begin{equation} \label{122}
i(M) \ge k \ge k_0.
\end{equation}
We claim that $M$ intersects the set
\[
N = \bgset{u_t : u \in \widetilde{\Psi}_{\lambda_{k_0}},\, t \ge 0}.
\]
Suppose this is not the case. Then the restriction to $M$ of the projection $\pi : W \setminus \set{0} \to \M_s,\, u \mapsto \widetilde{u}$ is an odd continuous map from $M$ to $\M_s \setminus \widetilde{\Psi}_{\lambda_{k_0}}$ (see \eqref{13}), so
\[
i(M) \le i(\M_s \setminus \widetilde{\Psi}_{\lambda_{k_0}})
\]
by Proposition \ref{Proposition 7} $(i_2)$. This together with \eqref{120} contradicts \eqref{122}. So $M \cap N \ne \emptyset$ and hence
\[
\sup_{u \in M}\, \Phi(u) \ge \sup_{u \in M \cap N}\, \Phi(u) \ge \inf_{u \in M \cap N}\, \Phi(u) \ge \inf_{u \in N}\, \Phi(u).
\]
Since $M \in \F_k$ is arbitrary, it follows that
\[
c_k \ge \inf_{u \in N}\, \Phi(u).
\]
So it suffices to show that $\Phi$ is bounded from below on $N$.

We have
\[
\Phi(u) = I_s(u) - \lambda\, J_s(u) - K(u), \quad u \in E_r(\R^3),
\]
where $K$ is as in \eqref{200}. In particular,
\[
\Phi(u_t) = t^3\, \bigg(1 - \frac{\lambda}{\widetilde{\Psi}(u)}\bigg) - t^{2 \sigma - 3}\, K(u), \quad u \in \M_s,\, t \ge 0.
\]
Since $K$ is bounded on the bounded set $\M_s$, this gives
\[
\Phi(u_t) \ge t^3 \left(1 - \frac{\lambda^+}{\lambda_{k_0}}\right) - Ct^{2 \sigma - 3}
\]
for $u \in \widetilde{\Psi}_{\lambda_{k_0}}$, where $\lambda^+ = \max \set{\lambda,0} < \lambda_{k_0}$ and $C > 0$ is a constant. This implies that $\Phi$ is bounded from below on $N$ since $\sigma \in (18/7,3)$.

\subsubsection{Proof of Theorem \ref{Theorem 26}}

The variational functional associated with equation \eqref{179} is
\[
\Phi_\mu(u) = I_s(u) - \frac{\mu}{\sigma} \int_{\R^3} |u|^\sigma\, dx - \frac{1}{6} \int_{\R^3} |u|^6\, dx, \quad u \in E_r(\R^3).
\]
We will use a truncation of $\Phi_\mu$ inspired by Liu et al.\! \cite{MR3912770} (see also Garc{\'{\i}}a Azorero and Peral Alonso \cite{MR1083144}).

By Ianni and Ruiz \cite[Lemma 3.1]{MR2902293}, there are constants $c_1, c_2 > 0$ such that
\[
\Phi_\mu(u) \ge I_s(u) - c_1\, \mu\, I_s(u)^{(2 \sigma - 3)/3} - c_2\, I_s(u)^3 = g_\mu(I_s(u)),
\]
where
\[
g_\mu(t) = t - c_1\, \mu\, t^{(2 \sigma - 3)/3} - c_2\, t^3, \quad t \ge 0.
\]
Since $\sigma < 3$, $\exists \mu^\ast > 0$ such that for all $\mu \in (0,\mu^\ast)$, there are $0 < R_1(\mu) < R_2(\mu)$ such that
\[
g_\mu(t) < 0 \quad \forall t \in (0,R_1(\mu)) \cup (R_2(\mu),\infty), \qquad g_\mu(t) > 0 \quad \forall t \in (R_1(\mu),R_2(\mu)).
\]
Since $g_\mu \ge g_{\mu^\ast}$,
\begin{equation} \label{186}
R_1(\mu) \le R_1(\mu^\ast), \qquad R_2(\mu) \ge R_2(\mu^\ast).
\end{equation}

Take a smooth function $\xi_\mu : [0,\infty) \to [0,1]$ such that $\xi_\mu(t) = 1$ for $t \in [0,R_1(\mu)]$ and $\xi_\mu(t) = 0$ for $t \in [R_2(\mu),\infty)$, and set
\[
\widetilde{\Phi}_\mu(u) = \xi_\mu(I_s(u))\, \Phi_\mu(u).
\]
If $\widetilde{\Phi}_\mu(u) < 0$, then $I_s(u) \in (0,R_1(\mu))$ and hence $I_s(v) \in (0,R_1(\mu))$ for all $v$ in some neighborhood of $u$ by continuity. Then $\widetilde{\Phi}_\mu$ coincides with $\Phi_\mu$ in a neighborhood of $u$, so
\[
\widetilde{\Phi}_\mu(u) = \Phi_\mu(u), \qquad \widetilde{\Phi}_\mu'(u) = \Phi_\mu'(u).
\]
In particular, critical points of $\widetilde{\Phi}_\mu$ at negative levels are also critical points of $\Phi_\mu$ at the same levels.

First we show that $\widetilde{\Phi}_\mu$ satisfies the \PS{c} condition for all $c < 0$ if $\mu \in (0,\mu^\ast)$ is sufficiently small. Let $c < 0$ and let $\seq{u_j} \subset E_r(\R^3)$ be a \PS{c} sequence of $\widetilde{\Phi}_\mu$. Since $\widetilde{\Phi}_\mu(u_j) \to c$, for all sufficiently large $j$, $\widetilde{\Phi}_\mu(u_j) < 0$ and hence $\widetilde{\Phi}_\mu(u_j) = \Phi_\mu(u_j)$ and $\widetilde{\Phi}_\mu'(u_j) = \Phi_\mu'(u_j)$. So $\seq{u_j}$ has a renamed subsequence for which $\Phi_\mu(u_j) \to c$ and $\Phi_\mu'(u_j) \to 0$. Moreover, $I_s(u_j) \in (0,R_1(\mu))$, which together with the first inequality in \eqref{186} implies that $\seq{u_j}$ is bounded independently of $\mu \in (0,\mu^\ast)$. We may now proceed as in the proof of Lemma \ref{Lemma 5} to arrive at the inequality
\[
c \ge \frac{1}{3}\, S^{3/2} - 2 \mu \left(\frac{1}{\sigma} - \frac{1}{3}\right) \int_{\R^3} |u|^\sigma\, dx + \frac{1}{3} \int_{\R^3} |u|^6\, dx
\]
if $\seq{u_j}$ does not converge to its weak limit $u$ (see \eqref{187}). Since $c < 0$, this gives
\[
2 \mu \left(\frac{1}{\sigma} - \frac{1}{3}\right) \int_{\R^3} |u|^\sigma\, dx > \frac{1}{3}\, S^{3/2}.
\]
However, since $\seq{u_j}$ is bounded independently of $\mu$, so is $u$, and hence this inequality cannot hold for sufficiently small $\mu$.

Now we apply Proposition \ref{Proposition 11} to $\widetilde{\Phi}_\mu$. By taking $\mu^\ast$ smaller if necessary, we may assume that $\widetilde{\Phi}_\mu$ satisfies the \PS{c} condition for all $c < 0$ for $\mu \in (0,\mu^\ast)$. Set
\[
c_k := \inf_{M \in \F_k}\, \sup_{u \in M}\, \widetilde{\Phi}_\mu(u),
\]
where $\F_k$ is as in Proposition \ref{Proposition 11}, and note that $c_k > - \infty$ since $\widetilde{\Phi}_\mu$ is clearly bounded from below. It only remains to show that $c_k < 0$.

Let $M_t = \bgset{u_t : u \in \widetilde{\Psi}^{\lambda_k}}$ for $t > 0$. As in the proof of Theorem \ref{Theorem 20}, $M_t \in \F_k$ and
\[
\sup_{u \in M_t}\, \Phi_\mu(u) < 0
\]
if $t$ is sufficiently small. Since $I_s(u_t) = t^3$ for $u \in \M_s$ by \eqref{12}, if $t \le R_1(\mu)^{1/3}$, then $I_s(u) \in [0,R_1(\mu)]$ and hence $\widetilde{\Phi}_\mu(u) = \Phi_\mu(u)$ for $u \in M_t$. So taking $t$ sufficiently small gives
\[
c_k \le \sup_{u \in M_t}\, \widetilde{\Phi}_\mu(u) = \sup_{u \in M_t}\, \Phi_\mu(u) < 0.
\]

\subsubsection{Proof of Theorem \ref{Theorem 27}}

The variational functional associated with equation \eqref{201} is
\[
\Phi(u) = I_s(u) - \lambda\, J_s(u) + K(u), \quad u \in E_r(\R^3),
\]
where $K$ is as in \eqref{200}. Since $\lambda$ is not an eigenvalue of problem \eqref{55}, $\Phi$ satisfies the \PS{c} condition for all $c \in \R$ as in the proof of Theorem \ref{Theorem 24}. So we apply Corollary \ref{Corollary 1} with $c^\ast = \infty$. Since $\lambda > \lambda_k$ is not an eigenvalue, by taking $k$ larger if necessary, we may assume that $\lambda_k < \lambda < \lambda_{k+1}$. Then
\[
i(\M_s \setminus \widetilde{\Psi}_\lambda) = k
\]
by Theorem \ref{Theorem 1} \ref{Theorem 1.iii}. Since $\M_s \setminus \widetilde{\Psi}_\lambda$ is an open symmetric subset of $\M_s$, then it has a compact symmetric subset $C$ of index $k$ (see the proof of Proposition 3.1 in Degiovanni and Lancelotti \cite{MR2371112}). We take $A_0 = C$.

Let $R > \rho > 0$ and let
\[
A = \set{u_R : u \in A_0}, \qquad X = \set{u_t : u \in A_0,\, 0 \le t \le R}.
\]
For $u \in \M_s$ and $t \ge 0$, we have
\begin{equation} \label{280}
\Phi(u_t) = t^3\, \bigg(1 - \frac{\lambda}{\widetilde{\Psi}(u)}\bigg) + t^{2 \sigma - 3}\, K(u).
\end{equation}
This together with Lemma \ref{Lemma 7} gives
\[
\Phi(u_t) \ge c_1\, t^{2 \sigma - 3} - \bigg(\frac{\lambda}{\lambda_1} - 1\bigg)\, t^3 \quad \forall u \in \widetilde{\Psi}^{\lambda_{k+1}},\, t \ge 0
\]
for some constant $c_1 > 0$ and
\[
\Phi(u_t) \ge \bigg(1 - \frac{\lambda}{\lambda_{k+1}}\bigg)\, t^3 \quad \forall u \in \M_s \setminus \widetilde{\Psi}^{\lambda_{k+1}},\, t \ge 0.
\]
Since $\sigma < 3$ and $\lambda < \lambda_{k+1}$, it follows that
\[
\inf_{u \in \M_s}\, \Phi(u_\rho) > 0
\]
if $\rho$ is sufficiently small. Since $A_0$ is compact and $\widetilde{\Psi} < \lambda$ on $A_0$, \eqref{280} also gives
\[
\Phi(u_t) \le - t^3\, \bigg(\frac{\lambda}{c_2} - 1\bigg) + c_3\, t^{2 \sigma - 3} \quad \forall u \in A_0,\, t \ge 0
\]
for some constants $0 < c_2 < \lambda$ and $c_3 > 0$. So
\[
\sup_{u \in A}\, \Phi(u) \le 0
\]
if $R$ is sufficiently large and
\[
\sup_{u \in X}\, \Phi(u) < \infty.
\]
The desired conclusion now follows from Corollary \ref{Corollary 1}.

\subsubsection{Proof of Theorem \ref{Theorem 28}}

The variational functional associated with equation \eqref{202} is
\begin{multline*}
\Phi(u) = \frac{1}{2} \int_{\R^3} |\nabla u|^2\, dx + \frac{1}{16 \pi} \int_{\R^3} \int_{\R^3} \frac{u^2(x)\, u^2(y)}{|x - y|}\, dx\, dy - \frac{\lambda}{3} \int_{\R^3} |u|^3\, dx\\[7.5pt]
+ \frac{\mu}{\sigma} \int_{\R^3} |u|^\sigma\, dx - \frac{1}{6} \int_{\R^3} |u|^6\, dx, \quad u \in E_r(\R^3).
\end{multline*}
First we prove the following local \PS{} condition, where $S$ is as in \eqref{88}.

\begin{lemma} \label{Lemma 8}
If $\lambda, \mu > 0$ and $\sigma \in (18/7,3)$, then $\Phi$ satisfies the {\em \PS{c}} condition for $0 < c < \dfrac{1}{3}\, S^{3/2}$.
\end{lemma}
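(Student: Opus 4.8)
The plan is to follow the proof of Lemma~\ref{Lemma 5} almost verbatim, treating the extra subscaled term $\frac{\mu}{\sigma}\int_{\R^3}|u|^\sigma\,dx$ as a lower-order perturbation throughout. Let $\seq{u_j}\subset E_r(\R^3)$ be a \PS{c} sequence, so $\Phi(u_j)=c+\o(1)$, $\Phi'(u_j)v=\o(\norm{v})$ for all $v$, and $\Phi'(u_j)u_j=\o(\norm{u_j})$. \emph{Step 1 (boundedness).} Suppose $\norm{u_j}\to\infty$ and set $t_j=I_s(u_j)^{-1/3}$, $\widetilde u_j=(u_j)_{t_j}\in\M_s$, $\widetilde t_j=1/t_j=I_s(u_j)^{1/3}$, so $u_j=(\widetilde u_j)_{\widetilde t_j}=\widetilde t_j^{\,2}\,\widetilde u_j(\widetilde t_j\cdot)$, $\seq{\widetilde u_j}$ is bounded since $\M_s$ is bounded by $(H_{10})$, and $\widetilde t_j\to\infty$. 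Expressing $\Phi(u_j)$ and $\Phi'(u_j)u_j$ through $\widetilde u_j$ via $\int_{\R^3}|u_j|^p\,dx=\widetilde t_j^{\,2p-3}\int_{\R^3}|\widetilde u_j|^p\,dx$ and using $\norm{u_j}=\O(\widetilde t_j^{\,3/2})$ from \eqref{110}, the $|u|^\sigma$ contributions carry a factor $\widetilde t_j^{\,2\sigma-6}\to0$ (as $\sigma<3$) and drop out after dividing by $\widetilde t_j^{\,3}$; one is then left with exactly the relations from Lemma~\ref{Lemma 5} with $q=3$. Dividing the derivative relation by $\widetilde t_j^{\,3}$ gives $\int_{\R^3}|\widetilde u_j|^6\,dx=\O(\widetilde t_j^{\,-6})$, then interpolation against the bounded $L^r$-norm, $r\in(18/7,3)$, gives $\int_{\R^3}|\widetilde u_j|^3\,dx\to0$, and finally subtracting the derivative relation from six times the energy relation yields
\[
2\int_{\R^3}|\nabla\widetilde u_j|^2\,dx+\frac{1}{8\pi}\int_{\R^3}\int_{\R^3}\frac{\widetilde u_j^{\,2}(x)\,\widetilde u_j^{\,2}(y)}{|x-y|}\,dx\,dy=\o(1),
\]
contradicting $\widetilde u_j\in\M_s$. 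Hence $\seq{u_j}$ is bounded.

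\emph{Step 2 (weak limit and identities).} Passing to a renamed subsequence, $u_j\wto u$ in $E_r(\R^3)$, $u_j\to u$ in $L^3(\R^3)$ and in $L^\sigma(\R^3)$ (both compact, since $\sigma,3\in(18/7,6)$), $u_j\wto u$ in $L^6(\R^3)$, and $u_j\to u$ a.e. Passing to the limit in $\Phi'(u_j)=\o(1)$ using Lemma~2.2 and Lemma~2.3 of \cite{MR2902293} shows that $u$ solves \eqref{202}; testing with $u$ and invoking the Poho\v{z}aev type identity for \eqref{202} (obtained as in \cite[Lemma~2.4]{MR4292779}) yields
\[
\int_{\R^3}|\nabla u|^2\,dx+\frac{1}{4\pi}\int_{\R^3}\int_{\R^3}\frac{u^2(x)\,u^2(y)}{|x-y|}\,dx\,dy-\lambda\int_{\R^3}|u|^3\,dx+\mu\int_{\R^3}|u|^\sigma\,dx-\int_{\R^3}|u|^6\,dx=0
\]
and
\[
\frac{1}{2}\int_{\R^3}|\nabla u|^2\,dx+\frac{5}{16\pi}\int_{\R^3}\int_{\R^3}\frac{u^2(x)\,u^2(y)}{|x-y|}\,dx\,dy-\lambda\int_{\R^3}|u|^3\,dx+\frac{3\mu}{\sigma}\int_{\R^3}|u|^\sigma\,dx-\frac{1}{2}\int_{\R^3}|u|^6\,dx=0.
\]

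\emph{Step 3 (strong convergence).} Put $v_j=u_j-u$. By the Brezis--Lieb lemma for the $L^6$ term, \cite[Proposition~4.1]{MR3568051} for the Coulomb term, and $u_j\to u$ in $L^3$ and $L^\sigma$, subtracting the first identity above from $\Phi'(u_j)u_j=\o(1)$ gives
\[
\int_{\R^3}|\nabla v_j|^2\,dx+\frac{1}{4\pi}\int_{\R^3}\int_{\R^3}\frac{v_j^{\,2}(x)\,v_j^{\,2}(y)}{|x-y|}\,dx\,dy\le\int_{\R^3}|v_j|^6\,dx+\o(1)\le S^{-3}\left(\int_{\R^3}|\nabla v_j|^2\,dx\right)^3+\o(1).
\]
If $\int_{\R^3}|\nabla v_j|^2\,dx\not\to0$ along a renamed subsequence, then $\int_{\R^3}|\nabla v_j|^2\,dx\ge S^{3/2}+\o(1)$. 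Combining $\Phi(u_j)=c+\o(1)$ with $\tfrac16\Phi'(u_j)u_j=\o(1)$, inserting the weak lower semicontinuity bounds $\int_{\R^3}|\nabla u_j|^2\,dx\ge S^{3/2}+\int_{\R^3}|\nabla u|^2\,dx+\o(1)$ and $\int_{\R^3}\int_{\R^3}\frac{u_j^2(x)u_j^2(y)}{|x-y|}\,dx\,dy\ge\int_{\R^3}\int_{\R^3}\frac{u^2(x)u^2(y)}{|x-y|}\,dx\,dy+\o(1)$, and eliminating $\tfrac13\int_{\R^3}|\nabla u|^2\,dx+\tfrac{1}{48\pi}\int_{\R^3}\int_{\R^3}\frac{u^2(x)u^2(y)}{|x-y|}\,dx\,dy$ by taking $\tfrac12$ times the first identity minus $\tfrac13$ times the Poho\v{z}aev identity, one arrives at
\[
c\ge\frac13\,S^{3/2}+\frac13\int_{\R^3}|u|^6\,dx+2\mu\left(\frac1\sigma-\frac13\right)\int_{\R^3}|u|^\sigma\,dx\ge\frac13\,S^{3/2},
\]
the last inequality because $\mu>0$ and $\sigma<3$, contradicting $c<\tfrac13\,S^{3/2}$. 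Hence $\int_{\R^3}|\nabla v_j|^2\,dx\to0$, and then the displayed inequality forces $\int_{\R^3}\int_{\R^3}\frac{v_j^2(x)v_j^2(y)}{|x-y|}\,dx\,dy\to0$, so $v_j\to0$ in $E_r(\R^3)$.

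The scaling bookkeeping of Step~1 and the Brezis--Lieb splittings of Step~3 are already carried out in the proof of Lemma~\ref{Lemma 5}, so those are routine. The one genuinely new point, and the step I expect to demand the most care, is checking in Step~3 that the term $-\mu|u|^{\sigma-2}u$, which enters with the sign opposite to the subcritical terms treated before, does not lower the compactness threshold: the computation above shows that after using \emph{both} the Nehari and the Poho\v{z}aev identities the $\lambda$-contributions cancel identically and the $\mu$-contribution reduces to $2\mu(\tfrac1\sigma-\tfrac13)\int_{\R^3}|u|^\sigma\,dx\ge0$, precisely because $\sigma<3$, so the threshold $\tfrac13\,S^{3/2}$ is preserved exactly as in the $\mu=0$ case of Lemma~\ref{Lemma 5}.
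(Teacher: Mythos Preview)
Your proof is correct and follows essentially the same approach as the paper's own proof. The paper carries out Step~1 explicitly (writing the analogues of \eqref{168}--\eqref{169} with the $|u|^3$ term kept on the left and the $\mu|u|^\sigma$ term on the right, then arguing exactly as you do), and for Steps~2--3 it simply says ``proceed as in the proof of Lemma~\ref{Lemma 5}'' to reach the same final inequality $c\ge\tfrac13 S^{3/2}+2\mu(\tfrac1\sigma-\tfrac13)\int_{\R^3}|u|^\sigma\,dx+\tfrac13\int_{\R^3}|u|^6\,dx$; your write-up just makes those details explicit.
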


\begin{proof}
Let $\seq{u_j} \subset E_r(\R^3)$ be a \PS{c} sequence of $\Phi$. Then
\begin{multline} \label{381}
\frac{1}{2} \int_{\R^3} |\nabla u_j|^2\, dx + \frac{1}{16 \pi} \int_{\R^3} \int_{\R^3} \frac{u_j^2(x)\, u_j^2(y)}{|x - y|}\, dx\, dy - \frac{\lambda}{3} \int_{\R^3} |u_j|^3\, dx\\[7.5pt]
+ \frac{\mu}{\sigma} \int_{\R^3} |u_j|^\sigma\, dx - \frac{1}{6} \int_{\R^3} |u_j|^6\, dx = c + \o(1)
\end{multline}
and
\begin{multline} \label{383}
\int_{\R^3} \nabla u_j \cdot \nabla v\, dx + \frac{1}{4 \pi} \int_{\R^3} \int_{\R^3} \frac{u_j^2(x)\, u_j(y)\, v(y)}{|x - y|}\, dx\, dy - \lambda \int_{\R^3} |u_j|\, u_j v\, dx\\[7.5pt]
+ \mu \int_{\R^3} |u_j|^{\sigma - 2}\, u_j v\, dx - \int_{\R^3} u_j^5\, v\, dx = \o(\norm{v})
\end{multline}
for all $v \in E_r(\R^3)$. Taking $v = u_j$ in \eqref{383} gives
\begin{multline} \label{382}
\int_{\R^3} |\nabla u_j|^2\, dx + \frac{1}{4 \pi} \int_{\R^3} \int_{\R^3} \frac{u_j^2(x)\, u_j^2(y)}{|x - y|}\, dx\, dy - \lambda \int_{\R^3} |u_j|^3\, dx\\[7.5pt]
+ \mu \int_{\R^3} |u_j|^\sigma\, dx - \int_{\R^3} |u_j|^6\, dx = \o(\norm{u_j}).
\end{multline}

First we show that $\seq{u_j}$ is bounded in $E_r(\R^3)$. Suppose $\norm{u_j} \to \infty$ for a renamed subsequence. Set
\[
t_j = t_{u_j} = \frac{1}{I_s(u_j)^{1/3}}, \quad \widetilde{u}_j = (u_j)_{t_j}, \quad \widetilde{t}_j = \frac{1}{t_j} = I_s(u_j)^{1/3}
\]
(see \eqref{13}). Then $\widetilde{u}_j \in \M_s$ and
\begin{equation} \label{367}
u_j = (\widetilde{u}_j)_{\widetilde{t}_j} = \widetilde{t}_j^{\,2}\, \widetilde{u}_j(\widetilde{t}_j\, \cdot)
\end{equation}
by \eqref{18}. Since $\M_s$ is a bounded manifold, $\seq{\widetilde{u}_j}$ is bounded. Since $\norm{u_j} \to \infty$, $I_s(u_j) \to \infty$ and hence $\widetilde{t}_j \to \infty$. Using \eqref{367} and noting that
\[
\norm{u_j} = \|(\widetilde{u}_j)_{\widetilde{t}_j}\| = \O(\widetilde{t}_j^{\,3/2}),
\]
\eqref{381} and \eqref{382} can be written as
\begin{multline} \label{368}
\widetilde{t}_j^{\,3} \left(\frac{1}{2} \int_{\R^3} |\nabla \widetilde{u}_j|^2\, dx + \frac{1}{16 \pi} \int_{\R^3} \int_{\R^3} \frac{\widetilde{u}_j^{\,2}(x)\, \widetilde{u}_j^{\,2}(y)}{|x - y|}\, dx\, dy - \frac{\lambda}{3} \int_{\R^3} |\widetilde{u}_j|^3\, dx\right)\\[7.5pt]
= \frac{\widetilde{t}_j^{\,9}}{6} \int_{\R^3} |\widetilde{u}_j|^6\, dx - \frac{\mu\, \widetilde{t}_j^{\,2 \sigma - 3}}{\sigma} \int_{\R^3} |\widetilde{u}_j|^\sigma\, dx + c + \o(1)
\end{multline}
and
\begin{multline} \label{369}
\widetilde{t}_j^{\,3} \left(\int_{\R^3} |\nabla \widetilde{u}_j|^2\, dx + \frac{1}{4 \pi} \int_{\R^3} \int_{\R^3} \frac{\widetilde{u}_j^{\,2}(x)\, \widetilde{u}_j^{\,2}(y)}{|x - y|}\, dx\, dy - \lambda \int_{\R^3} |\widetilde{u}_j|^3\, dx\right)\\[7.5pt]
= \widetilde{t}_j^{\,9} \int_{\R^3} |\widetilde{u}_j|^6\, dx - \mu\, \widetilde{t}_j^{\,2 \sigma - 3} \int_{\R^3} |\widetilde{u}_j|^\sigma\, dx + \o(\widetilde{t}_j^{\,3/2}),
\end{multline}
respectively. Since $\widetilde{t}_j \to \infty$, dividing \eqref{369} by $\widetilde{t}_j^{\,3}$ gives
\begin{multline*}
\widetilde{t}_j^{\,6} \int_{\R^3} |\widetilde{u}_j|^6\, dx - \frac{\mu}{\widetilde{t}_j^{\,2\,(3 - \sigma)}} \int_{\R^3} |\widetilde{u}_j|^\sigma\, dx = \int_{\R^3} |\nabla \widetilde{u}_j|^2\, dx + \frac{1}{4 \pi} \int_{\R^3} \int_{\R^3} \frac{\widetilde{u}_j^{\,2}(x)\, \widetilde{u}_j^{\,2}(y)}{|x - y|}\, dx\, dy\\[7.5pt]
- \lambda \int_{\R^3} |\widetilde{u}_j|^3\, dx + \o(1).
\end{multline*}
Since the right-hand side is bounded and $\sigma < 3$, this implies that $\widetilde{u}_j \to 0$ in $L^6(\R^3)$. Since $\seq{\widetilde{u}_j}$ is bounded in $L^r(\R^3)$ for $r \in (18/7,3)$, then $\widetilde{u}_j \to 0$ in $L^3(\R^3)$ by interpolation. Now multiplying \eqref{368} by $6$, subtracting \eqref{369}, and dividing by $\widetilde{t}_j^{\,3}$ gives
\[
2 \int_{\R^3} |\nabla \widetilde{u}_j|^2\, dx + \frac{1}{8 \pi} \int_{\R^3} \int_{\R^3} \frac{\widetilde{u}_j^{\,2}(x)\, \widetilde{u}_j^{\,2}(y)}{|x - y|}\, dx\, dy = \o(1).
\]
This implies that $\widetilde{u}_j \to 0$ in $E_r(\R^3)$, contradicting $\widetilde{u}_j \in \M_s$.

We may now proceed as in the proof of Lemma \ref{Lemma 5} to arrive at the inequality
\[
c \ge \frac{1}{3}\, S^{3/2} + 2 \mu \left(\frac{1}{\sigma} - \frac{1}{3}\right) \int_{\R^3} |u|^\sigma\, dx + \frac{1}{3} \int_{\R^3} |u|^6\, dx
\]
if $\seq{u_j}$ does not converge to its weak limit $u$ (see \eqref{187}). Since $\mu > 0$ and $\sigma < 3$, this gives $c \ge \dfrac{1}{3}\, S^{3/2}$, contrary to assumption.
\end{proof}

We are now ready to prove Theorem \ref{Theorem 28}.

\begin{proof}[Proof of Theorem \ref{Theorem 28}]
In view of Lemma \ref{Lemma 8}, we apply Corollary \ref{Corollary 1} with $c^\ast = \dfrac{1}{3}\, S^{3/2}$. By taking $k$ larger if necessary, we may assume that $\lambda_k < \lambda_{k+1}$. Fix $\widetilde{\lambda} \le \lambda$ such that $\lambda_k < \widetilde{\lambda} < \lambda_{k+1}$. Then
\[
i(\M_s \setminus \widetilde{\Psi}_{\widetilde{\lambda}}) = k
\]
by Theorem \ref{Theorem 1} \ref{Theorem 1.iii}. Since $\M_s \setminus \widetilde{\Psi}_{\widetilde{\lambda}}$ is an open symmetric subset of $\M_s$, then it has a compact symmetric subset $C$ of index $k$ (see the proof of Proposition 3.1 in Degiovanni and Lancelotti \cite{MR2371112}). We take $A_0 = C$.

Let $R > \rho > 0$ and let
\[
A = \set{u_R : u \in A_0}, \qquad X = \set{u_t : u \in A_0,\, 0 \le t \le R}.
\]
For $u \in \M_s$ and $t \ge 0$, we have
\begin{equation} \label{380}
\Phi(u_t) = t^3\, \bigg(1 - \frac{\lambda}{\widetilde{\Psi}(u)}\bigg) + \mu\, t^{2 \sigma - 3}\, K(u) - \frac{t^9}{6} \int_{\R^3} |u|^6\, dx.
\end{equation}
Taking $a > \lambda$ in Lemma \ref{Lemma 7}, combining with \eqref{380}, and noting that
\[
S \left(\int_{\R^3} |u|^6\, dx\right)^{1/3} \le \int_{\R^3} |\nabla u|^2\, dx \le 2 I_s(u) = 2
\]
gives
\[
\Phi(u_t) \ge c_1\, \mu\, t^{2 \sigma - 3} - \bigg(\frac{\lambda}{\lambda_1} - 1\bigg)\, t^3 - \frac{4}{3 S^3}\, t^9 \quad \forall u \in \widetilde{\Psi}^a,\, t \ge 0
\]
for some constant $c_1 > 0$ and
\[
\Phi(u_t) \ge \bigg(1 - \frac{\lambda}{a}\bigg)\, t^3 - \frac{4}{3 S^3}\, t^9 \quad \forall u \in \M_s \setminus \widetilde{\Psi}^a,\, t \ge 0.
\]
Since $\sigma < 3$ and $a > \lambda$, it follows that
\[
\inf_{u \in \M_s}\, \Phi(u_\rho) > 0
\]
if $\rho$ is sufficiently small. Since $\widetilde{\Psi} < \widetilde{\lambda} \le \lambda$ on $A_0$ and $0$ is not in the compact set $A_0$, \eqref{380} also gives
\[
\Phi(u_t) \le c_2\, \mu\, t^{2 \sigma - 3} - c_3\, t^9 \quad \forall u \in A_0,\, t \ge 0
\]
for some constants $c_2, c_3 > 0$. So
\[
\sup_{u \in A}\, \Phi(u) \le 0
\]
if $R$ is sufficiently large and
\[
\sup_{u \in X}\, \Phi(u) \le \max_{t \ge 0} \left(c_2\, \mu\, t^{2 \sigma - 3} - c_3\, t^9\right) < \frac{1}{3}\, S^{3/2}
\]
if $\mu > 0$ is sufficiently small. The desired conclusion now follows from Corollary \ref{Corollary 1}.
\end{proof}

\section*{Acknowledgements}

Carlo Mercuri is a member of the group GNAMPA of Istituto Nazionale di Alta Matematica (INdAM); he would like to thank GNAMPA for the support within the programme Progetti GNAMPA 2024, CUP E53C23001670001. Both authors would like to thank GNAMPA for the support within the programme “Bandi Professori Visitatori gruppi Indam 2024”, GNAMPA - CUP E53C22001930001.

This work was partially completed while Kanishka Perera was visiting the Department of Physical, Computer and Mathematical Sciences at the University of Modena and Reggio Emilia, and he is grateful for the kind hospitality of the host department. His visit was partially supported by the Simons Foundation Award 962241 “Local and nonlocal variational problems with lack of compactness”.


\begin{thebibliography}{10}

\bibitem{MR2465993}
Antonio Ambrosetti.
\newblock On {S}chr\"{o}dinger-{P}oisson systems.
\newblock {\em Milan J. Math.}, 76:257--274, 2008.

\bibitem{MR0370183}
Antonio Ambrosetti and Paul~H. Rabinowitz.
\newblock Dual variational methods in critical point theory and applications.
\newblock {\em J. Functional Analysis}, 14:349--381, 1973.

\bibitem{MR2569902}
Arnaud Anantharaman and Eric Canc\`es.
\newblock Existence of minimizers for {K}ohn-{S}ham models in quantum
  chemistry.
\newblock {\em Ann. Inst. H. Poincar\'{e} C Anal. Non Lin\'{e}aire},
  26(6):2425--2455, 2009.

\bibitem{MR3852465}
Jacopo Bellazzini, Marco Ghimenti, Carlo Mercuri, Vitaly Moroz, and Jean
  Van~Schaftingen.
\newblock Sharp {G}agliardo-{N}irenberg inequalities in fractional
  {C}oulomb-{S}obolev spaces.
\newblock {\em Trans. Amer. Math. Soc.}, 370(11):8285--8310, 2018.

\bibitem{MR84c:58014}
Vieri Benci.
\newblock On critical point theory for indefinite functionals in the presence
  of symmetries.
\newblock {\em Trans. Amer. Math. Soc.}, 274(2):533--572, 1982.

\bibitem{MR1659454}
Vieri Benci and Donato Fortunato.
\newblock An eigenvalue problem for the {S}chr\"{o}dinger-{M}axwell equations.
\newblock {\em Topol. Methods Nonlinear Anal.}, 11(2):283--293, 1998.

\bibitem{MR695535}
H.~Berestycki and P.-L. Lions.
\newblock Nonlinear scalar field equations. {I}. {E}xistence of a ground state.
\newblock {\em Arch. Rational Mech. Anal.}, 82(4):313--345, 1983.

\bibitem{MR695536}
H.~Berestycki and P.-L. Lions.
\newblock Nonlinear scalar field equations. {II}. {E}xistence of infinitely
  many solutions.
\newblock {\em Arch. Rational Mech. Anal.}, 82(4):347--375, 1983.

\bibitem{MR2013491}
Olivier Bokanowski, Jos\'{e}~L. L\'{o}pez, and Juan Soler.
\newblock On an exchange interaction model for quantum transport: the
  {S}chr\"{o}dinger-{P}oisson-{S}later system.
\newblock {\em Math. Models Methods Appl. Sci.}, 13(10):1397--1412, 2003.

\bibitem{MR1702877}
Olivier Bokanowski and Norbert~J. Mauser.
\newblock Local approximation for the {H}artree-{F}ock exchange potential: a
  deformation approach.
\newblock {\em Math. Models Methods Appl. Sci.}, 9(6):941--961, 1999.

\bibitem{MR709644}
Ha{\"{\i}}m Br\'{e}zis and Louis Nirenberg.
\newblock Positive solutions of nonlinear elliptic equations involving critical
  {S}obolev exponents.
\newblock {\em Comm. Pure Appl. Math.}, 36(4):437--477, 1983.

\bibitem{MR3078678}
I.~Catto, J.~Dolbeault, O.~S\'{a}nchez, and J.~Soler.
\newblock Existence of steady states for the
  {M}axwell-{S}chr\"{o}dinger-{P}oisson system: exploring the applicability of
  the concentration-compactness principle.
\newblock {\em Math. Models Methods Appl. Sci.}, 23(10):1915--1938, 2013.

\bibitem{MR1422006}
K.~C. Chang and N.~Ghoussoub.
\newblock The {C}onley index and the critical groups via an extension of
  {G}romoll-{M}eyer theory.
\newblock {\em Topol. Methods Nonlinear Anal.}, 7(1):77--93, 1996.

\bibitem{MR1926378}
J.-N. Corvellec and A.~Hantoute.
\newblock Homotopical stability of isolated critical points of continuous
  functionals.
\newblock {\em Set-Valued Anal.}, 10(2-3):143--164, 2002.
\newblock Calculus of variations, nonsmooth analysis and related topics.

\bibitem{MR2371112}
Marco Degiovanni and Sergio Lancelotti.
\newblock Linking over cones and nontrivial solutions for {$p$}-{L}aplace
  equations with {$p$}-superlinear nonlinearity.
\newblock {\em Ann. Inst. H. Poincar\'e Anal. Non Lin\'eaire}, 24(6):907--919,
  2007.

\bibitem{MR2661274}
Marco Degiovanni, Sergio Lancelotti, and Kanishka Perera.
\newblock Nontrivial solutions of {$p$}-superlinear {$p$}-{L}aplacian problems
  via a cohomological local splitting.
\newblock {\em Commun. Contemp. Math.}, 12(3):475--486, 2010.

\bibitem{MR4292779}
Tomas Dutko, Carlo Mercuri, and Teresa~Megan Tyler.
\newblock Groundstates and infinitely many high energy solutions to a class of
  nonlinear {S}chr\"odinger-{P}oisson systems.
\newblock {\em Calc. Var. Partial Differential Equations}, 60(5):Paper No. 174,
  46, 2021.

\bibitem{MR0478189}
Edward~R. Fadell and Paul~H. Rabinowitz.
\newblock Generalized cohomological index theories for {L}ie group actions with
  an application to bifurcation questions for {H}amiltonian systems.
\newblock {\em Invent. Math.}, 45(2):139--174, 1978.

\bibitem{MR3762278}
Rupert~L. Frank, Phan~Th\`anh Nam, and Hanne Van Den~Bosch.
\newblock The ionization conjecture in {T}homas-{F}ermi-{D}irac--von
  {W}eizs\"{a}cker theory.
\newblock {\em Comm. Pure Appl. Math.}, 71(3):577--614, 2018.

\bibitem{MR1083144}
J.~Garc{\'{\i}}a~Azorero and I.~Peral~Alonso.
\newblock Multiplicity of solutions for elliptic problems with critical
  exponent or with a nonsymmetric term.
\newblock {\em Trans. Amer. Math. Soc.}, 323(2):877--895, 1991.

\bibitem{MR1441856}
Filippo Gazzola and Bernhard Ruf.
\newblock Lower-order perturbations of critical growth nonlinearities in
  semilinear elliptic equations.
\newblock {\em Adv. Differential Equations}, 2(4):555--572, 1997.

\bibitem{MR2902293}
Isabella Ianni and David Ruiz.
\newblock Ground and bound states for a static
  {S}chr\"odinger-{P}oisson-{S}later problem.
\newblock {\em Commun. Contemp. Math.}, 14(1):1250003, 22, 2012.

\bibitem{MR2149087}
Claude Le~Bris and Pierre-Louis Lions.
\newblock From atoms to crystals: a mathematical journey.
\newblock {\em Bull. Amer. Math. Soc. (N.S.)}, 42(3):291--363, 2005.

\bibitem{MR986153}
Shu~Jie Li and J.~Q. Liu.
\newblock Morse theory and asymptotic linear {H}amiltonian system.
\newblock {\em J. Differential Equations}, 78(1):53--73, 1989.

\bibitem{MR1312028}
Shu~Jie Li and Michel Willem.
\newblock Applications of local linking to critical point theory.
\newblock {\em J. Math. Anal. Appl.}, 189(1):6--32, 1995.

\bibitem{MR629207}
Elliott~H. Lieb.
\newblock Thomas-{F}ermi and related theories of atoms and molecules.
\newblock {\em Rev. Modern Phys.}, 53(4):603--641, 1981.

\bibitem{MR641371}
Elliott~H. Lieb.
\newblock Erratum: ``{T}homas-{F}ermi and related theories of atoms and
  molecules''.
\newblock {\em Rev. Modern Phys.}, 54(1):311, 1982.

\bibitem{MR636734}
P.-L. Lions.
\newblock Some remarks on {H}artree equation.
\newblock {\em Nonlinear Anal.}, 5(11):1245--1256, 1981.

\bibitem{MR802575}
Jia~Quan Liu and Shu~Jie Li.
\newblock An existence theorem for multiple critical points and its
  application.
\newblock {\em Kexue Tongbao (Chinese)}, 29(17):1025--1027, 1984.

\bibitem{MR3912770}
Zhisu Liu, Zhitao Zhang, and Shuibo Huang.
\newblock Existence and nonexistence of positive solutions for a static
  {S}chr\"odinger-{P}oisson-{S}later equation.
\newblock {\em J. Differential Equations}, 266(9):5912--5941, 2019.

\bibitem{MR3251907}
Jianfeng Lu and Felix Otto.
\newblock Nonexistence of a minimizer for {T}homas-{F}ermi-{D}irac-von
  {W}eizs\"{a}cker model.
\newblock {\em Comm. Pure Appl. Math.}, 67(10):1605--1617, 2014.

\bibitem{MR1836081}
N.~J. Mauser.
\newblock The {S}chr\"{o}dinger-{P}oisson-{$X\alpha$} equation.
\newblock {\em Appl. Math. Lett.}, 14(6):759--763, 2001.

\bibitem{MR3568051}
Carlo Mercuri, Vitaly Moroz, and Jean Van~Schaftingen.
\newblock Groundstates and radial solutions to nonlinear
  {S}chr\"odinger-{P}oisson-{S}later equations at the critical frequency.
\newblock {\em Calc. Var. Partial Differential Equations}, 55(6):Art. 146, 58,
  2016.

\bibitem{Pe23}
K.~Perera.
\newblock Abstract multiplicity theorems and applications to critical growth
  problems.
\newblock J. Analyse Math., to appear,
  \href{http://arxiv.org/abs/2308.07901}{\tt arXiv:2308.07901 [math.AP]}.

\bibitem{MR1749421}
Kanishka Perera.
\newblock Critical groups of critical points produced by local linking with
  applications.
\newblock {\em Abstr. Appl. Anal.}, 3(3-4):437--446, 1998.

\bibitem{MR1700283}
Kanishka Perera.
\newblock Homological local linking.
\newblock {\em Abstr. Appl. Anal.}, 3(1-2):181--189, 1998.

\bibitem{MR1998432}
Kanishka Perera.
\newblock Nontrivial critical groups in {$p$}-{L}aplacian problems via the
  {Y}ang index.
\newblock {\em Topol. Methods Nonlinear Anal.}, 21(2):301--309, 2003.

\bibitem{MR2640827}
Kanishka Perera, Ravi~P. Agarwal, and Donal O'Regan.
\newblock {\em Morse theoretic aspects of {$p$}-{L}aplacian type operators},
  volume 161 of {\em Mathematical Surveys and Monographs}.
\newblock American Mathematical Society, Providence, RI, 2010.

\bibitem{MR0488128}
Paul~H. Rabinowitz.
\newblock Some critical point theorems and applications to semilinear elliptic
  partial differential equations.
\newblock {\em Ann. Scuola Norm. Sup. Pisa Cl. Sci. (4)}, 5(1):215--223, 1978.

\bibitem{MR0501092}
Paul~H. Rabinowitz.
\newblock Some minimax theorems and applications to nonlinear partial
  differential equations.
\newblock In {\em Nonlinear analysis (collection of papers in honor of Erich H.
  Rothe)}, pages 161--177. Academic Press, New York, 1978.

\bibitem{MR2679375}
David Ruiz.
\newblock On the {S}chr\"odinger-{P}oisson-{S}later system: behavior of
  minimizers, radial and nonradial cases.
\newblock {\em Arch. Ration. Mech. Anal.}, 198(1):349--368, 2010.

\bibitem{MR2032129}
\'{O}scar S\'{a}nchez and Juan Soler.
\newblock Long-time dynamics of the {S}chr\"{o}dinger-{P}oisson-{S}later
  system.
\newblock {\em J. Statist. Phys.}, 114(1-2):179--204, 2004.

\bibitem{Slater}
J. C. Slater.
\newblock A Simplification of the Hartree-Fock Method.
\newblock {\em Phys. Rev.}, 81, 385 – Published 1 February 1951.

\bibitem{MR3616328}
Yang Yang and Kanishka Perera.
\newblock {$N$}-{L}aplacian problems with critical {T}rudinger-{M}oser
  nonlinearities.
\newblock {\em Ann. Sc. Norm. Super. Pisa Cl. Sci. (5)}, 16(4):1123--1138,
  2016.

\end{thebibliography}
\end{document}